\setlist[itemize]{align=parleft,left=0pt..1.5em}
\DeclareMathAlphabet{\mathcalligra}{T1}{calligra}{m}{n}
\DeclareMathOperator{\Res}{Res}
\DeclareMathOperator{\Hom}{Hom}
\DeclareMathOperator{\Sing}{Sing}
\DeclareMathOperator{\Rat}{Rat}
\DeclareMathOperator{\Span}{Span}
\DeclareMathOperator{\End}{End}
\DeclareMathOperator{\diag}{diag}
\DeclareMathOperator{\Com}{Com}
\DeclareMathOperator{\obj}{obj}
\begin{document}

\newtheorem{thm}{Theorem}[section]
\newtheorem{prop}[thm]{Proposition}
\newtheorem{coro}[thm]{Corollary}
\newtheorem{conj}[thm]{Conjecture}
\newtheorem{example}[thm]{Example}
\newtheorem{lem}[thm]{Lemma}
\newtheorem{hy}[thm]{Hypothesis}
\newtheorem*{acks}{Acknowledgements}
\theoremstyle{definition}
\newtheorem{rem}[thm]{Remark}
\newtheorem{de}[thm]{Definition}
\newtheorem{ex}[thm]{Example}

\newtheorem{convention}[thm]{Convention}

\newtheorem{bfproof}[thm]{{\bf Proof}}

\newcommand{\C}{{\mathbb{C}}}
\newcommand{\Z}{{\mathbb{Z}}}
\newcommand{\N}{{\mathbb{N}}}
\newcommand{\Q}{{\mathbb{Q}}}
\newcommand{\te}[1]{\mbox{#1}}
\newcommand{\set}[2]{{
    \left.\left\{
        {#1}
    \,\right|\,
        {#2}
    \right\}
}}
\newcommand{\sett}[2]{{
    \left\{
        {#1}
    \,\left|\,
        {#2}
    \right\}\right.
}}

\newcommand{\choice}[2]{{
\left[
\begin{array}{c}
{#1}\\{#2}
\end{array}
\right]
}}
\def \<{{\langle}}
\def \>{{\rangle}}

\def\({\left(}

\def\){\right)}

\def \:{\mathopen{\overset{\circ}{
    \mathsmaller{\mathsmaller{\circ}}}
    }}
\def \;{\mathclose{\overset{\circ}{\mathsmaller{\mathsmaller{\circ}}}}}

\newcommand{\overit}[2]{{
    \mathop{{#1}}\limits^{{#2}}
}}
\newcommand{\belowit}[2]{{
    \mathop{{#1}}\limits_{{#2}}
}}

\newcommand{\wt}[1]{\widetilde{#1}}

\newcommand{\wh}[1]{\widehat{#1}}

\newcommand{\wck}[1]{\reallywidecheck{#1}}

\newlength{\dhatheight}
\newcommand{\dwidehat}[1]{%
    \settoheight{\dhatheight}{\ensuremath{\widehat{#1}}}%
    \addtolength{\dhatheight}{-0.45ex}%
    \widehat{\vphantom{\rule{1pt}{\dhatheight}}%
    \smash{\widehat{#1}}}}
\newcommand{\dhat}[1]{%
    \settoheight{\dhatheight}{\ensuremath{\hat{#1}}}%
    \addtolength{\dhatheight}{-0.35ex}%
    \hat{\vphantom{\rule{1pt}{\dhatheight}}%
    \smash{\hat{#1}}}}

\newcommand{\dwh}[1]{\dwidehat{#1}}

\newcommand{\dis}{\displaystyle}

\newcommand{\pd}[1]{\frac{\partial}{\partial {#1}}}

\newcommand{\pdiff}[2]{\frac{\partial^{#2}}{\partial #1^{#2}}}


\newcommand{\g}{{\mathfrak g}}
\newcommand{\ff}{{\mathfrak f}}
\newcommand{\f}{\ff}
\newcommand{\gc}{{\bar{\g'}}}
\newcommand{\h}{{\mathfrak h}}
\newcommand{\cent}{{\mathfrak c}}
\newcommand{\notc}{{\not c}}
\newcommand{\Loop}{{\mathcal L}}
\newcommand{\G}{{\mathcal G}}
\newcommand{\D}{\mathcal D}
\newcommand{\T}{\mathcal T}
\newcommand{\Free}{\mathcal F}
\newcommand{\Cfk}{\mathcal C}
\newcommand{\nil}{\mathfrak n}
\newcommand{\al}{\alpha}
\newcommand{\be}{\beta}
\newcommand{\beck}{\be^\vee}
\newcommand{\ssl}{{\mathfrak{sl}}}
\newcommand{\id}{\te{id}}
\newcommand{\rtu}{{\xi}}
\newcommand{\period}{{N}}
\newcommand{\half}{{\frac{1}{2}}}
\newcommand{\reciprocal}[1]{{\frac{1}{#1}}}
\newcommand{\inverse}{^{-1}}
\newcommand{\inv}{\inverse}
\newcommand{\SumInZm}[2]{\sum\limits_{{#1}\in\Z_{#2}}}
\newcommand{\uce}{{\mathfrak{uce}}}
\newcommand{\Rcat}{\mathcal R}
\newcommand{\cS}{{\mathcal{S}}}


\newcommand{\E}{{\mathcal{E}}}
\newcommand{\F}{{\mathcal{F}}}

\newcommand{\Etopo}{{\mathcal{E}_{\te{topo}}}}

\newcommand{\Ye}{{\mathcal{Y}_\E}}

\newcommand{\rh}{{{\bf h}}}
\newcommand{\rp}{{{\bf p}}}
\newcommand{\rrho}{{{\pmb \varrho}}}
\newcommand{\ral}{{{\pmb \al}}}

\newcommand{\comp}{{\mathfrak{comp}}}
\newcommand{\ctimes}{{\widehat{\boxtimes}}}
\newcommand{\ptimes}{{\widehat{\otimes}}}
\newcommand{\ptimeslt}{{
{}_{\te{t}}\ptimes
}}
\newcommand{\ptimesrt}{{\ot_{\te{t}} }}
\newcommand{\ttp}[1]{{
    {}_{{#1}}\ptimes
}}
\newcommand{\bigptimes}{{\widehat{\bigotimes}}}
\newcommand{\bigptimeslt}{{
{}_{\te{t}}\bigptimes
}}
\newcommand{\bigptimesrt}{{\bigptimes_{\te{t}} }}
\newcommand{\bigttp}[1]{{
    {}_{{#1}}\bigptimes
}}

\newcommand{\ot}{\otimes}
\newcommand{\Ot}{\bigotimes}
\newcommand{\bt}{\boxtimes}

\newcommand{\affva}[1]{V_{\wh\g}\(#1,0\)}
\newcommand{\saffva}[1]{L_{\wh\g}\(#1,0\)}
\newcommand{\saffmod}[1]{L_{\wh\g}\(#1\)}

\newcommand{\otcopies}[2]{\belowit{\underbrace{{#1}\ot \cdots \ot {#1}}}{{#2}\te{-times}}}

\newcommand{\wtotcopies}[3]{\belowit{\underbrace{{#1}\wh\ot_{#2} \cdots \wh\ot_{#2} {#1}}}{{#3}\te{-times}}}


\newcommand{\tar}{{\mathcal{DY}}_0\(\mathfrak{gl}_{\ell+1}\)}
\newcommand{\U}{{\mathcal{U}}}
\newcommand{\htar}{\mathcal{DY}_\hbar\(A\)}
\newcommand{\hhtar}{\widetilde{\mathcal{DY}}_\hbar\(A\)}
\newcommand{\htarz}{\mathcal{DY}_0\(\mathfrak{gl}_{\ell+1}\)}
\newcommand{\hhtarz}{\widetilde{\mathcal{DY}}_0\(A\)}
\newcommand{\qhei}{\U_\hbar\left(\hat{\h}\right)}
\newcommand{\n}{{\mathfrak{n}}}
\newcommand{\vac}{{{\mathbbm 1}}}
\newcommand{\vtar}{{{
    \mathcal{V}_{\hbar,\tau}\left(\ell,0\right)
}}}

\newcommand{\qtar}{
    \U_q\(\wh\g_\mu\)}
\newcommand{\rk}{{\bf k}}

\newcommand{\hctvs}[1]{Hausdorff complete linear topological vector space}
\newcommand{\hcta}[1]{Hausdorff complete linear topological algebra}
\newcommand{\ons}[1]{open neighborhood system}
\newcommand{\B}{\mathcal{B}}
\newcommand{\rx}{{\bf x}}
\newcommand{\re}{{\bf e}}
\newcommand{\rphi}{{\boldsymbol{ \phi}}}

\newcommand{\der}{\mathcal D}

\newcommand{\prodlim}{\mathop{\prod_{\longrightarrow}}\limits}

\newcommand{\lp}[1]{\mathcal L(f)}

\newcommand{\cha}{\check a}
\newcommand{\chh}{\check \h}


\makeatletter
\@addtoreset{equation}{section}
\def\theequation{\thesection.\arabic{equation}}
\makeatother \makeatletter

\title{Quantization of parafermion vertex algebras}

\author{Fei Kong}
\email{kongmath@hunnu.edu.cn}
\address{Key Laboratory of Computing and Stochastic Mathematics (Ministry of Education), School of Mathematics and Statistics, Hunan Normal University, Changsha, China 410081}


\begin{abstract}
Let $\mathfrak g$ be a finite dimensional simple Lie algebra over $\mathbb C$, and let $\ell$ be a positive integer.
In this paper, we construct the quantization $K_{\hat{\mathfrak g},\hbar}^\ell$ of the parafermion vertex algebra $K_{\hat{\mathfrak g}}^\ell$ as an $\hbar$-adic quantum vertex subalgebra inside the simple quantum affine vertex algebra $L_{\hat{\mathfrak g},\hbar}^\ell$.
We show that $L_{\hat{\mathfrak g},\hbar}^\ell$ contains an $\hbar$-adic quantum vertex subalgebra isomorphic to the quantum lattice vertex algebra $V_{\sqrt\ell Q_L}^{\eta_\ell}$, where $Q_L$ is the lattice generated by the long roots of $\mathfrak g$.
Moreover, we prove the double commutant property of $K_{\hat{\mathfrak g},\hbar}^\ell$ and $V_{\sqrt\ell Q_L}^{\eta_\ell}$ in $L_{\hat{\mathfrak g},\hbar}^\ell$.
\end{abstract}


\keywords{Quantum vertex algebras, parafermion vertex algebras, quantum parafermion vertex algebras}

\subjclass[2020]{17B69}
%
\maketitle


\section{Introduction}

Let $\g$ be a finite dimensional simple Lie algebra over $\C$,
and let $L_{\hat\g}^\ell$ be the simple affine vertex operator algebra of positive integer level $\ell$.
The parafermion vertex operator algebra $K_{\hat\g}^\ell$ is the commutant of the Heisenberg vertex operator algebra in $L_{\hat\g}^\ell$,
and can also be regarded as the commutant of the lattice vertex operator algebra $V_{\sqrt\ell Q_L}$ in $L_{\hat\g}^\ell$ (\cite{DW-para-structure-double-comm})
where $Q_L$ is the lattice spanned by the long roots of $\g$.
A set of generators of the parafermion vertex operator algebra $K_{\hat\g}^\ell$ was determined in
\cite{DLY-para-gen-1,DLW-para-gen-2,DW-para-structure-gen,DR-para-rational}.
While the $C_2$-cofiniteness of $K_{\hat\g}^\ell$ was proved in \cite{DW-para-cofiniteness-1,ALY-para-cofiniteness-2},
and the rationality was proved in \cite{DR-para-rational} with the help of a result in \cite{CM-orbifold}
on the abelian orbifolds for rational and $C_2$-cofinite vertex operator algebras.
The irreducible modules of $K_{\hat\ssl_2}^\ell$ were classified in \cite{ALY-para-cofiniteness-2}
and their fusion rules and quantum dimensions (\cite{DJX-qdim-qGalois}) were computed in \cite{DW-para-irr-mods-fusion-ssl2}.
For general $\g$, the irreducible modules of $K_{\hat\g}^\ell$ were classified in \cite{DR-para-rational, ADJR-para-irr-mods-fusion},
the fusion rules were determined in \cite{ADJR-para-irr-mods-fusion} with the help of quantum dimensions,
and their trace functions were computed in \cite{DKR-trace-para}.
Parafermion vertex operator algebras also have a close relationship with $W$-algebras (\cite{DLY-para-gen-1, ALY-para-W-algebra}).

It is well known that vertex algebras are closely related to affine Kac-Moody Lie algebras (\cite{FZ,Li-local,LL,DL,MP1,MP2,DLM}).
In \cite{Dr-hopf-alg} and \cite{JimboM}, Drinfeld and Jimbo independently introduced the notion of quantum enveloping algebras,
which are formal deformations of the universal enveloping algebras of Kac-Moody Lie algebras.
The quantum enveloping algebras of affine types, which are called quantum affine algebras, are one of the most important subclasses.
Like the affinization realization of affine Kac-Moody Lie algebras, Drinfeld provided a quantum affinization realization of quantum affine algebras in \cite{Dr-new}.
Based on the Drinfeld presentation, Frenkel and Jing constructed vertex representations for simply-laced untwisted quantum affine algebras in
\cite{FJ-vr-qaffine} and formulated a fundamental problem of developing a certain ``quantum vertex algebra theory'' associated to quantum affine algebras,
in parallel with the connection between vertex algebras and affine Kac-Moody Lie algebras.

In \cite{EK-qva}, Etingof and Kazhdan developed a theory of quantum vertex operator algebras in the sense of formal deformations of vertex algebras.
Partly motivated by the work of Etingof and Kazhdan, H. Li
conducted a series of studies.
While vertex algebras are analogues of commutative associative algebras,
Kac introduced the notion of the field algebras \cite{Kac-VA} (see also \cite{bk}), which are analogues of noncommutative associative algebras.
These algebras were also called weak axiomatic $G_1$-vertex algebras in \cite{li-g1}, or nonlocal vertex algebras
in \cite{Li-nonlocal}.
A weak quantum vertex algebra \cite{Li-nonlocal} is a nonlocal vertex algebra satisfying the $S$-locality. In addition, it becomes a quantum vertex algebra \cite{Li-nonlocal} if the $S$-locality is controlled by a rational quantum Yang-Baxter operator.
The $\hbar$-adic counterparts of these notions were introduced in \cite{Li-h-adic}.
In this framework, a quantum vertex operator algebra in sense of Etingof-Kazhdan is an $\hbar$-adic quantum vertex algebra whose classical limit is a vertex algebra.

In the very paper \cite{EK-qva}, Etingof and Kazhdan constructed quantum vertex operator algebras as formal deformations of
the universal affine vertex algebras
$V_{\hat{\mathfrak gl}_n}^\ell$ and $V_{\hat\ssl_n}^\ell$, by using the $R$-matrix type relations given in \cite{RS-RTT}.
Butorac, Jing and Ko\v{z}i\'{c} (\cite{BJK-qva-BCD}) extended Etingof-Kazhdan's construction
to type $B$, $C$ and $D$ rational $R$-matrices.
The quantum vertex operator algebras associated with trigonometric $R$-matrices of type $A$, $B$, $C$ and $D$ were constructed in \cite{Kozic-qva-tri-A, K-qva-phi-mod-BCD}.
In \cite{JKLT-Defom-va}, we developed a method to construct quantum vertex operator algebras by using vertex bialgebras.
By using this method, we constructed the quantum lattice vertex algebras, which is a family of quantum vertex operator algebras as deformations of lattice vertex algebras.
Moreover, based on the Drinfeld's quantum affinization construction (\cite{Dr-new,J-KM,Naka-quiver}), in \cite{K-Quantum-aff-va} we constructed the quantum affine vertex algebras $V_{\hat\g,\hbar}^\ell$ and $L_{\hat\g,\hbar}^\ell$ for all symmetric Kac-Moody Lie algebras $\g$.
When $\g$ is of finite type, we proved that $L_{\hat\g,\hbar}^\ell/\hbar L_{\hat\g,\hbar}^\ell\cong L_{\hat\g}^\ell$.

%

In this paper, we study the commutant $K_{\hat\g,\hbar}^\ell$ of the quantum Heisenberg vertex algebra within $L_{\hat\g,\hbar}^\ell$.
This commutant admits a natural structure as an $\hbar$-adic nonlocal vertex subalgebra.
In Section \ref{sec:qpara}, we construct a set of generators for $K_{\hat\g,\hbar}^\ell$, representing quantum analogues of the generators of the parafermion vertex operator algebra $K_{\hat \g}^\ell$ contained in $L_{\hat\g}^\ell$.
Utilizing these generators, we prove that $K_{\hat\g,\hbar}^\ell/\hbar K_{\hat\g,\hbar}^\ell\cong K_{\hat\g}^\ell$.
Furthermore, we surprisingly find that the quantum Yang-Baxter operator associated with $L_{\hat\g,\hbar}^\ell$ acts trivially on $K_{\hat\g,\hbar}^\ell$.
Consequently, $K_{\hat\g,\hbar}^\ell$ possesses not only the structure of an $\hbar$-adic quantum vertex algebra but also that of an $\hbar$-adic vertex algebra.

The double commutant property of the parafermion vertex operator algebra $K_{\hat\g}^\ell$ and the lattice vertex algebra $V_{\sqrt\ell Q_L}$ within $L_{\hat\g}^\ell$ is fundamental in the study of parafermion vertex algebras.
Our next objective is to establish its quantum analogue (Theorem \ref{thm:qlattice-inj} and Theorem \ref{thm:double-comutant}).
Compared to the classical case, proving the existence of an embedding of the quantum lattice vertex algebra $V_{\sqrt\ell Q_L}^{\eta_\ell}$
into $L_{\hat\g,\hbar}^\ell$ (Theorem \ref{thm:qlattice-inj}) requires substantially more intricate techniques.
In \cite[Corollary 4.17]{K-q-lattice-va}, we established the existence of an embedding of a quantum lattice vertex algebra into an $\hbar$-adic nonlocal vertex algebra provided it contains a set of elements whose associated vertex operators satisfy certain relations. Applying this result requires proving that $L_{\hat\g,\hbar}^\ell$ contains a set of elements satisfying exactly the relations \eqref{A1}--\eqref{A7}.
The verification of relations \eqref{A1}--\eqref{A4} is straightforward. The proof for the remaining relations reduces to the case $\g=\ssl_2$. When $\ell=1$, the quantum affine vertex algebra $L_{\hat\ssl_2,\hbar}^1$ is isomorphic to the quantum lattice vertex algebra $V_{\Z\al_1}^{\eta_1}$, where $\Z\al_1$ denotes the root lattice of $\ssl_2$; consequently, the embedding's existence is immediate in this case. We then utilize the injection $\Delta: L_{\hat\ssl_2,\hbar}^{\ell+1} \to L_{\hat\ssl_2,\hbar}^{\ell} \hat\ot L_{\hat\ssl_2,\hbar}^1$ from \cite{K-Coproduct-q-aff-va} to establish the embedding for general $\ell$ by induction.
Finally, applying \cite[Corollary 4.18]{K-q-lattice-va}, the proof of the double commutant property (Theorem \ref{thm:double-comutant}) reduces to showing that $K_{\hat\g,\hbar}^\ell$ lies in the commutant of the quantum Heisenberg vertex algebra within $L_{\hat\g,\hbar}^\ell$—which holds by the very definition of $K_{\hat\g,\hbar}^\ell$.

The paper is organized as follows.
Section \ref{sec:qvas} presents the basics about $\hbar$-adic quantum vertex algebras and the theory of twisted tensor products.
Section \ref{sec:qaff-va} presents the construction of quantum affine vertex algebras as described in \cite{K-Quantum-aff-va},
along with their twisted tensor products and coproducts.
Section \ref{sec:qlattice} discusses the construction of quantum lattice vertex algebras, and demonstrates that
there exists an embedding from the quantum lattice vertex algebra $V_{\sqrt\ell Q_L}^{\eta_\ell}$ to $L_{\hat\g,\hbar}^\ell$.
The proof of the embedding can be reduced to the case $\g=\ssl_2$ with the proof provided in Section \ref{subsec:sl2-case}.
Section \ref{sec:qpara} focuses on identifying a generating subset of the quantum parafermion vertex algebra $K_{\hat\g,\hbar}^\ell$,  whose verification is given in Section \ref{subsec:pf-prop-W}. Additionally, Section \ref{sec:qpara} establishes the double commutant property of $K_{\hat\g,\hbar}^\ell$ and $V_{\sqrt\ell Q_L}^{\eta_\ell}$.

Throughout this paper, we denote by $\Z_+$ and $\N$ the set of positive and nonnegative integers, respectively.
For a vector space $W$ and $g(z)\in W[[z,z\inv]]$, we denote by $g(z)^+$ (resp. $g(z)^-$) the regular (singular) part of $g(z)$.

\section{Quantum vertex algebras}\label{sec:qvas}

In this section, we recall the we recall some basic results on vertex algebras, $\hbar$-adic nonlocal vertex algebras,
$\hbar$-adic quantum vertex algebras, $\hbar$-adic $n$-quantum vertex algebras and their twisted tensor products.

A \emph{vertex algebra} (VA) is a vector space $V$ together with a \emph{vacuum vector} $\vac\in V$ and a vertex operator map
\begin{align}
    Y(\cdot,z):&V\longrightarrow \E(V):=\Hom(V,V((z)));\quad
    v\mapsto Y(v,z)=\sum_{n\in\Z}v_nz^{-n-1},
\end{align}
such that
\begin{align}\label{eq:vacuum-property}
    Y(\vac,z)v=v,\quad Y(v,z)\vac\in V[[z]],\quad \lim_{z\to 0}Y(v,z)\vac=v,
    \quad\te{for }v\in V,
\end{align}
and that
\begin{align}\label{eq:Jacobi}
    &z_0\inv\delta\(\frac{z_1-z_2}{z_0}\)Y(u,z_1)Y(v,z_2)
    -z_0\inv\delta\(\frac{z_2-z_1}{-z_0}\)Y(v,z_2)Y(u,z_1)\\
    &\quad=z_1\inv\delta\(\frac{z_2+z_0}{z_1}\)Y(Y(u,z_0)v,z_2)
    \quad\quad\te{for }u,v\in V.\nonumber
\end{align}

A \emph{module} $W$ of a VA $V$ is a vector space $W$ together with a vertex operator map
\begin{align}
    Y_W(\cdot,z):&V\longrightarrow \E(W);\quad
    v\mapsto Y_W(v,z)=\sum_{n\in\Z}v_nz^{-n-1},
\end{align}
such that $Y_W(\vac,z)=1_W$ and
\begin{align*}
    &z_0\inv\delta\(\frac{z_1-z_2}{z_0}\)Y_W(u,z_1)Y_W(v,z_2)
    -z_0\inv\delta\(\frac{z_2-z_1}{-z_0}\)Y_W(v,z_2)Y_W(u,z_1)\\
    &\quad=z_1\inv\delta\(\frac{z_2+z_0}{z_1}\)Y_W(Y(u,z_0)v,z_2)
    \quad\quad\te{for }u,v\in V.\nonumber
\end{align*}

In this paper, we let $\hbar$ be a formal variable, and let $\C[[\hbar]]$ be the ring of formal power series in $\hbar$.
A $\C[[\hbar]]$-module $V$ is \emph{topologically free} if $V=V_0[[\hbar]]$
for some vector space $V_0$ over $\C$.
It is known that a $\C[[\hbar]]$-module $W$ is topologically free if and only if it is Hausdorff complete under the $\hbar$-adic topology and
\begin{align*}
  \hbar w=0\quad\te{implies}\quad w=0\quad\te{for any }w\in W
\end{align*}
(see \cite{Kassel-topologically-free}).
For another topologically free $\C[[\hbar]]$-module $U=U_0[[\hbar]]$, we recall the complete tensor
\begin{align*}
    U\wh\ot V=(U_0\ot V_0)[[\hbar]].
\end{align*}
The following useful result can be found in \cite{Kassel-topologically-free}.
\begin{lem}\label{lem:topo-free-inj-surj}
Let $U$ and $V$ be two topologically free $\C[[\hbar]]$-modules and let $f:U\to V$ be a $\C[[\hbar]]$-module map.
Denote by $f_0:U/\hbar U\to V/\hbar V$ the $\C$-linear map induced from $f$.
Then $f$ is injective (resp. surjective, bijective) if $f_0$ is injective (resp. surjective, bijective).
\end{lem}

We view a vector space as a $\C[[\hbar]]$-module by letting $\hbar=0$.
Fix a $\C[[\hbar]]$-module $W$. For $k\in\Z_+$, and some formal variables $z_1,\dots,z_k$, we define
\begin{align}
    \E^{(k)}(W;z_1,\dots,z_k)=\Hom_{\C[[\hbar]]}\(W,W((z_1,\dots,z_k))\)
\end{align}
We will denote $\E^{(k)}(W;z_1,\dots,z_k)$ by $\E^{(k)}(W)$ if there is no ambiguity,
and will denote $\E^{(1)}(W)$ by $\E(W)$.
An ordered sequence $(a_1(z),\dots,a_k(z))$ in $\E(W)$ is said to be \emph{quasi-compatible} (\cite{Li-nonlocal})
if there exists a nonzero polynomial $p(z_1,z_2)$ such that
\begin{align*}
    \(\prod_{1\le i<j\le k}p(z_i,z_j)\)a_1(z_1)\cdots a_k(z_k)\in\E^{(k)}(W).
\end{align*}
This ordered sequence is called \emph{compatible} if $p(z_1,z_2)=(z_1-z_2)^m$ for some $m\in\N$ (see \cite[Definition 3.5]{Li-G-phi}).

Now, we assume that $W=W_0[[\hbar]]$ is topologically free.
Define
\begin{align}
    \E_\hbar^{(k)}(W;z_1,\dots,z_k)=\Hom_{\C[[\hbar]]}\(W,W_0((z_1,\dots,z_k))[[\hbar]]\).
\end{align}
Similarly, we denote $\E_\hbar^{(k)}(W;z_1,\dots,z_k)$ by $\E_\hbar^{(k)}(W)$ if there is no ambiguity,
and denote $\E_\hbar^{(1)}(W)$ by $\E_\hbar(W)$ for short.
We note that $\E_\hbar^{(k)}(W)=\E^{(k)}(W_0)[[\hbar]]$ is topologically free.
For $n,k\in\Z_+$, the quotient map from $W$ to $W/\hbar^nW$ induces the following $\C[[\hbar]]$-module map
\begin{align*}
    \wt\pi_n^{(k)}:\End_{\C[[\hbar]]}(W)[[z_1^{\pm 1},\dots,z_k^{\pm 1}]]
    \to \End_{\C[[\hbar]]}(W/\hbar^nW)[[z_1^{\pm 1},\dots,z_k^{\pm 1}]].
\end{align*}
For $A(z_1,z_2),B(z_1,z_2)\in\Hom_{\C[[\hbar]]}(W,W_0((z_1))((z_2))[[\hbar]])$, we write $A(z_1,z_2)\sim B(z_2,z_1)$
if for each $n\in\Z_+$ there exists $k\in\N$, such that
\begin{align*}
    (z_1-z_2)^k\wt\pi_n^{(2)}(A(z_1,z_2))=(z_1-z_2)^k\wt\pi_n^{(2)}(B(z_2,z_1)).
\end{align*}

For each $k\in\Z_+$, the inverse system
\begin{align*}
    \xymatrix{
    0&W/\hbar W\ar[l]&W/\hbar^2W\ar[l]&W/\hbar^3W\ar[l]&\cdots\ar[l]
    }
\end{align*}
induces the following inverse system
\begin{align}\label{eq:E-h-inv-sys}
    \xymatrix{
    0&\E^{(k)}(W/\hbar W)\ar[l]&\E^{(k)}(W/\hbar^2W)\ar[l]&\cdots\ar[l]
    }
\end{align}
Then $\E_\hbar^{(k)}(W)$ is isomorphic to the inverse limit of \eqref{eq:E-h-inv-sys}.
The map $\wt \pi_n^{(k)}$ induces a $\C[[\hbar]]$-module $\pi_n^{(k)}:\E_\hbar^{(k)}(W)\to \E^{(k)}(W/\hbar^nW)$.
It is easy to verify that $\ker \pi_n^{(k)}=\hbar^n\E_\hbar^{(k)}(W)$.
We will denote $\pi_n^{(1)}$ by $\pi_n$ for short.
An ordered sequence $(a_1(z),\dots,a_r(z))$ in $\E_\hbar(W)$ is called \emph{$\hbar$-adically compatible} if for every $n\in\Z_+$,
the sequence $$(\pi_n(a_1(z)),\dots,\pi_n(a_r(z)))$$ in $\E(W/\hbar^nW)$ is compatible.
A subset $U$ of $\E_\hbar(W)$ is called \emph{$\hbar$-adically compatible} if every finite sequence in $U$ is $\hbar$-adically compatible.
Let $(a(z),b(z))$ in $\E_\hbar(W)$ be $\hbar$-adically compatible. That is, for any $n\in\Z_+$, we have
\begin{align*}
    (z_1-z_2)^{k_n}\pi_n(a(z_1))\pi_n(b(z_2))\in\E^{(2)}(W/\hbar^nW)\quad\te{for some }k_n\in\N.
\end{align*}
We recall the following vertex operator map (\cite{Li-h-adic}):
\begin{align}\label{eq:def-Y-E}
    &Y_\E(a(z),z_0)b(z)=\sum_{n\in\Z}a(z)_nb(z)z_0^{-n-1}\\
    =&\varprojlim_{n>0}z_0^{-k_n}\left.\((z_1-z)^{k_n}\pi_n(a(z_1))\pi_n(b(z))\)\right|_{z_1=z+z_0}.\nonumber
\end{align}

An \emph{$\hbar$-adic nonlocal VA} (\cite{Li-h-adic}, see also \cite[Definition 2.1 and Remark 2.3]{JKLT-Defom-va}) is a topologically free $\C[[\hbar]]$-module $V$ equipped with a vacuum vector $\vac$ and a vertex operator map $Y(\cdot,z):V\to \E_\hbar(V)$ such that the vacuum property \eqref{eq:vacuum-property} hold,
\begin{align*}
    \set{Y(u,z)}{u\in V}\subset\E_\hbar(V)\quad\te{is $\hbar$-adically compatible},
\end{align*}
and that
\begin{align}\label{eq:weak-asso}
    Y_\E(Y(u,z),z_0)Y(v,z)=Y(Y(u,z_0)v,z)\quad\te{for }u,v\in V.
\end{align}
We denote by $\partial$ the canonical derivation of $V$:
\begin{align}
    u\to\partial u=\lim_{z\to 0}\frac{d}{dz}Y(u,z)\vac.
\end{align}
Moreover, a \emph{$V$-module} is a topologically free $\C[[\hbar]]$-module $W$ equipped with a vertex operator map $Y_W(\cdot,z):V\to \E_\hbar(W)$, such that $Y_W(\vac,z)=1_W$,
\begin{align*}
    \set{Y_W(u,z)}{u\in V}\subset\E_\hbar(W)\quad\te{is $\hbar$-adically compatible},
\end{align*}
and that
\begin{align*}
    Y_\E(Y_W(u,z),z_0)Y_W(v,z)=Y_W(Y(u,z_0)v,z)\quad\te{for }u,v\in V.
\end{align*}

For a topologically free $\C[[\hbar]]$-module $V$ and a submodule $U\subset V$, we denote by $\bar U$ the closure of $U$ and set
\begin{align}
  [U]=\set{u\in V}{\hbar^n u\in U\,\,\te{for some }n\in\Z_+}.
\end{align}
Then $\overline{[U]}$ is the minimal closed submodule that is invariant under the operation $[\cdot]$.
Suppose further that $V$ is an $\hbar$-adic nonlocal VA, and $\vac\in U$.
We set
\begin{align}
  U'=\Span_{\C[[\hbar]]}\set{u_mv}{u,v\in U,\,m\in\Z},\quad\te{and}\quad \wh U=\overline{[U']}.
\end{align}
For a subset $S\subset V$, we define
\begin{align*}
  S^{(1)}=\overline{\left[\Span_{\C[[\hbar]]}S\cup\{\vac\}\right]},\quad\te{and}\quad
  S^{(n+1)}=\wh{S^{(n)}}\supset S^{(n)} \quad\te{for }n\ge 1.
\end{align*}
Set
\begin{align*}
  \<S\>=\overline{\left[\cup_{n\ge 1}S^{(n)}\right]}.
\end{align*}
Then $\<S\>$ is the unique minimal closed $\hbar$-adic nonlocal subVA of $V$, such that $[\<S\>]=\<S\>$.
In addition, we say that $V$ is generated by $S$ if $\<S\>=V$.

\begin{de}\label{de:qyb-mult}
Let $(V_i,Y_i,\vac)$ ($1\le i\le n$) be $\hbar$-adic nonlocal VAs.
We call a family of $\C[[\hbar]]$-module maps $S_{ij}(z):V_i\wh\ot V_j\to V_i\wh\ot V_j\wh\ot \C((z))[[\hbar]]$ ($1\le i,j\le n$)
\emph{quantum Yang-Baxter operators} of the $n$-tuple $(V_1,\dots, V_n)$, if
\begin{align}
  &S_{ij}(z)(v\ot \vac)=v\ot \vac\quad
   S_{ij}(z)(\vac\ot u)=\vac\ot u\quad\te{for }u\in V_j,\,\,v\in V_i,\label{eq:qyb-vac}\\
  &[\partial\ot 1,S_{ij}(z)]=-\frac{d}{dz}S_{ij}(z),
  \quad [1\ot\partial, S_{ij}(z)]=\frac{d}{dz}S_{ij}(z),\quad S_{ji}^{21}(z)S_{ij}(-z)=1,\label{eq:qyb-der-shift}\\
  &Y_i(u,z_1)Y_i(v,z_2)\sim Y_i(z_2)(1\ot Y_i(z_1))S_{ii}(z_2-z_1)(v\ot u),\quad\te{for }u,v\in V_i,\label{eq:qyb-locality}\\
  &S_{ij}(z_1)Y_i^{12}(z_2)=Y_i^{12}(z_2)S_{ij}^{23}(z_1)S_{ij}^{13}(z_1+z_2),\label{eq:qyb-hex1}\\
  &S_{ij}(z_1)Y_j^{23}(z_2)=Y_j^{23}(z_2)S_{ij}^{12}(z_1-z_2)S_{ij}^{13}(z_1),\label{eq:qyb-hex2}\\
  &S_{ij}^{12}(z_1)S_{ik}^{13}(z_1+z_2)S_{jk}^{23}(z_2)
  =S_{jk}^{23}(z_2)S_{ik}^{13}(z_1+z_2)S_{ij}^{12}(z_1)\quad\te{for } 1\le i,j,k\le n.\label{eq:qyb-qybeq}
\end{align}
We call the pair $((V_i)_{i=1}^n,(S_{ij}(z))_{i,j=1}^n)$ an \emph{$\hbar$-adic $n$-quantum VA}.
Moreover, let $$((V'_i)_{i=1}^n,(S'_{ij}(z))_{i,j=1}^n)$$ another $\hbar$-adic $n$-quantum VA, and let $f_i:V_i\to V'_i$ ($1\le i\le n$) be $\hbar$-adic nonlocal VA homomorphisms, that is, each $f_i$ preserves vertex operator maps and maps vacuum vectors to vacuum vectors.
We call $(f_1,\dots,f_n)$ an \emph{$\hbar$-adic $n$-quantum VA homomorphism} if
\begin{align}
  (f_i\ot f_j)\circ S_{ij}(z)=S'_{ij}(z)\circ(f_i\ot f_j)\quad\te{for }1\le i,j\le n.
\end{align}
\end{de}

\begin{rem}
The notion of $\hbar$-adic $1$-quantum VAs is identical to the notion of \emph{$\hbar$-adic quantum VAs}.
\end{rem}

\begin{rem}\label{rem:twisted-tensor}
Let $V_1$ and $V_2$ be two $\hbar$-adic nonlocal VAs, and let $S_{12}(z):V_1\wh\ot V_2\to V_1\wh\ot V_2\wh\ot \C((z))[[\hbar]]$ be a $\C[[\hbar]]$-module map satisfying the relations \eqref{eq:qyb-vac}, \eqref{eq:qyb-hex1} and \eqref{eq:qyb-hex2}.
Then $S_{ij}(z)\sigma$ is called a \emph{twisting operator} for the ordered pair $(V_1,V_2)$ (\cite{LS-twisted-tensor}).
It was proved in \cite{LS-twisted-tensor} that $V_1\wh\ot V_2$ carries an $\hbar$-adic nonlocal VA structure with vertex operator map defined as follows:
\begin{align*}
  Y_{\{12\}}(z)=Y_1^{12}(z)Y_2^{34}(z)S_{12}^{23}(-z)\sigma.
\end{align*}
\end{rem}

\begin{rem}\label{rem:iterated-twisted-tensor}
Let $V_1,V_2,\dots,V_n$ be $\hbar$-adic nonlocal VAs.
For each $1\le i<j\le n$, let $S_{ij}(z):V_i\wh\ot V_j\to V_i\wh\ot V_j\wh\ot \C((z))[[\hbar]]$ be $\C[[\hbar]]$-module maps satisfying the relations \eqref{eq:qyb-vac}, \eqref{eq:qyb-hex1}, \eqref{eq:qyb-hex2} and \eqref{eq:qyb-qybeq}.
Set $V_{\{1,2,\dots,n\}}=V_1\wh\ot V_2\wh\ot \cdots\wh\ot V_n$.
It was proved in \cite{S-iter-twisted-tensor} that $V_{\{1,2,\dots,n\}}$ carries an $\hbar$-adic nonlocal vertex algebra structure with vertex operator map defined as follows:
\begin{align*}
  &Y_{\{1,2,\dots,n\}}(z)=Y_1^{12}(z) Y_2^{34}(z)\cdots Y_n^{2n-1,2n}(z)\\
  \times&\prod_{\substack{a-b=-1\\ 1\le a,b\le n}}S_{a,b}^{2a,2b-1}(-z)
  \prod_{\substack{a-b=-2\\ 1\le a,b\le n}}S_{a,b}^{2a,2b-1}(-z)\cdots
  \prod_{\substack{a-b=1-k\\ 1\le a,b\le n}}S_{a,b}^{2a,2b-1}(-z)
  \sigma^{23}\sigma^{345}\cdots  \sigma^{n,n+1,\dots,2n-1},
\end{align*}
where $\sigma^{a,a+1,\dots,b}=\sigma^{a,a+1}\sigma^{a+1,a+2}\cdots\sigma^{b-1,b}$.
\end{rem}

Let $(V_i,Y_i,\vac_i)_{i=1}^n$ be an $\hbar$-adic $n$-quantum VA with quantum Yang-Baxter operators $(S_{ij}(z))_{i,j=1}^n$,
and let $K=\{i_1<i_2<\cdots<i_k\}$ be the subset of $\{1,2,\dots,n\}$.
It is obvious that $((V_i)_{i\in K},(S_{ij}(z))_{i,j\in K})$ is an $\hbar$-adic $k$-quantum VA.
According to Remark \ref{rem:iterated-twisted-tensor}, one has an iterated twisted tensor product $\hbar$-adic nonlocal VA $V_K$.
Let $L=\{j_1<j_2<\cdots<j_l\}$ be another subset of $\{1,2,\dots,n\}$. We define
\begin{align}\label{eq:def-S-Delta}
  S_{K,L}(z)=\prod_{\substack{a-b=k-1\\ 1\le a\le k\\ 1\le b\le l}}S_{i_a,j_b}^{a,k+b}(z)
    \prod_{\substack{a-b=k-2\\ 1\le a\le k\\ 1\le b\le l}}S_{i_a,j_b}^{a,k+b}(z)\cdots
    \prod_{\substack{a-b=1-l\\ 1\le a\le k\\ 1\le b\le l}}S_{i_a,j_b}^{a,k+b}(z).
\end{align}

The following result was proved in \cite{K-Coproduct-q-aff-va}.
\begin{prop}\label{prop:n-qva-to-k-qva}
Let $((V_i)_{i=1}^n,(S_{ij}(z))_{i,j=1}^n)$ be an $\hbar$-adic $n$-quantum VA, and $(K_1,\dots,K_k)$ a partition of the set $\{1,2,\dots,n\}$.
Then $((V_{K_i})_{i=1}^k,(S_{K_i,K_j}(z))_{i,j=1}^k)$ is an $\hbar$-adic $k$-quantum VA.
\end{prop}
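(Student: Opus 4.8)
The plan is to verify, one relation at a time, the defining identities \eqref{eq:multqyb-vac1}--\eqref{eq:multqyb-qybeq} of Definition \ref{de:qyb-mult} for the family $(S_{K_i,K_j}(z))_{i,j=1}^k$, and to reduce all the substantive cases to the single computation already carried out in Lemma \ref{lem:3-qva-to-2-qva}. The vacuum relations \eqref{eq:multqyb-vac1}, \eqref{eq:multqyb-vac2} and the derivation shifts \eqref{eq:multqyb-der-shift} I expect to be immediate: expanding $S_{K_i,K_j}(z)$ as the displayed product of the operators $S_{i_a,j_b}(z)$ and invoking the corresponding identities for the $S_{ij}(z)$, together with $\vac_{V_K}=\vac\ot\cdots\ot\vac$ and the fact that $\partial_{V_K}=\sum_a 1\ot\cdots\ot\partial\ot\cdots\ot 1$ on the factors, gives them at once; this is the part declared ``clear'' in Lemma \ref{lem:3-qva-to-2-qva}.

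For the hexagon relations \eqref{eq:multqyb-hex1}, \eqref{eq:multqyb-hex2} with $K_i$ preceding $K_j$, and the Yang--Baxter equation \eqref{eq:multqyb-qybeq} for an increasing triple, I would appeal to the twisted-tensor-product machinery already in place. By Theorems \ref{thm:3-twisted-tensor} and \ref{thm:coherence} the operators $S_{K_i,K_j}(z)$ (for $K_i$ before $K_j$) are twisting operators for the pairs $(V_{K_i},V_{K_j})$, which is exactly \eqref{eq:multqyb-hex1}, \eqref{eq:multqyb-hex2} for the block vertex operators $Y_{K_i}$ of Lemma \ref{lem:vertex-op-K}, and they satisfy the hexagon equation \eqref{eq:twisting-op-qybeq}, which is \eqref{eq:multqyb-qybeq} in the increasing case. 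Thus the genuinely new content is the unitarity \eqref{eq:multqyb-unitary}, the self-operators $S_{K_i,K_i}(z)$ (which must make each $V_{K_i}$ a quantum VA in its own right), and the remaining non-increasing instances of \eqref{eq:multqyb-hex1}, \eqref{eq:multqyb-hex2}, \eqref{eq:multqyb-qybeq}.

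For these I would argue by induction on the block sizes, reducing each relation to Lemma \ref{lem:3-qva-to-2-qva}. The reduction rests on two points. First, every relation in Definition \ref{de:qyb-mult} involves at most three of the blocks, so by the remark following that definition (sub-tuples of a quantum VA are again quantum VAs) I may restrict the original operators to the indices lying in the union of the blocks occurring in the relation and assume there are at most three blocks present. Second, using Theorem \ref{thm:coherence} to write a block of size $\ge 2$ as a twisted tensor product $V_{K'}\wh\ot_{S_{K',\{i_s\}}}V_{\{i_s\}}$, with $S_{K_i,K_j}(z)$ matching the inductive formulas $S_{K_1\cup K_2,K_3}(z)$ and $S_{K_1,K_2\cup K_3}(z)$ of that theorem, each relation for the larger blocks is expressed through relations for strictly smaller ones; the base case $|K_i|\le 2$ is precisely Lemma \ref{lem:3-qva-to-2-qva}, where the unitarity of $S_{K_1,K_1}(z)$, the hexagons, and the Yang--Baxter equation $S_{K_1,K_1}S_{K_1,K_1}S_{K_1,K_1}$ are established by hand.

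I expect the difficulty to lie entirely in the tensor-leg bookkeeping rather than in any new idea. The explicit derivation in Lemma \ref{lem:3-qva-to-2-qva} already shows how long a single chain of applications of \eqref{eq:multqyb-unitary} and \eqref{eq:multqyb-qybeq} becomes once the superscript leg positions in products such as $\prod_{a-b=m}S_{i_a,j_b}^{a,k+b}(z)$ are tracked, and the role of the inductive scheme is to guarantee that every relation for $(S_{K_i,K_j}(z))$ collapses onto one of these already-verified patterns, so that the general argument \emph{organizes} rather than repeats that computation. The most delicate step will be confirming that the blocks not taking part in a given relation pass through the hexagon and Yang--Baxter moves without disturbing the leg indices, and that the inductive splitting of a block is compatible, via the coherence theorem, with the defining product formula for $S_{K,L}(z)$.
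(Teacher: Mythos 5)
Your proposal is correct and follows essentially the same route as the paper, whose entire proof is the one-line statement that the result is immediate from induction on the size of each block $K_i$ together with Lemma \ref{lem:3-qva-to-2-qva}. The extra structure you supply (sorting the relations into those that are formal, those covered by Theorems \ref{thm:3-twisted-tensor} and \ref{thm:coherence}, and those requiring the inductive reduction to Lemma \ref{lem:3-qva-to-2-qva}) is a reasonable elaboration of that same argument rather than a different method.
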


\section{Quantum affine vertex algebras}\label{sec:qaff-va}

In this section, we recall the quantum affine VA introduced in \cite{K-Quantum-aff-va} and their twisted tensor products and coproducts given in \cite{K-Coproduct-q-aff-va}.

Let $A=(a_{ij})_{i,j\in I}$ be a Cartan matrix, and let $\g=\g(A)$ be the corresponding finite dimensional simple Lie algebra over $\C$.
We fix a realization $(\h,\Pi,\Pi^\vee)$ of $\g$, where $\h$ is a Cartan subalgebra of $\g$, $\Pi=\set{\al_i}{i\in I}\subset\h^\ast$ is the set of simple roots and
$\Pi^\vee=\set{h_i}{i\in I}\subset\h$ is the set of simple coroots.
Define
\begin{align}
    I_L=\set{i\in I}{\al_i\,\,\te{is a long root}},\quad
    I_S=\set{i\in I}{\al_i\,\,\te{is a short root}}.
\end{align}
And define
\begin{align}\label{eq:def-r-i}
    r=\begin{cases}
        1,&\mbox{if $\g$ is simply-laced},\\
        2,&\mbox{if $\g$ is of type $B_n,C_n,F_4$},\\
        3,&\mbox{if $\g$ is of type $G_2$},
    \end{cases}
    \quad r_i=\begin{cases}
        1,&\mbox{if }i\in I_S,\\
        r,&\mbox{if }i\in I_L.
    \end{cases}
\end{align}
Then we get from \cite[(6.22)]{Kac-book} and \cite[Table Aff]{Kac-book} that
\begin{align}\label{eq:sym}
    r_ia_{ij}=r_ja_{ji}\quad\te{for }i,j\in I.
\end{align}
Define a bilinear map $\<\cdot,\cdot\>$ on $\h^\ast$ by
\begin{align}\label{eq:bilinear-form}
    \<\al_i,\al_j\>=r_ia_{ij}/r\quad\te{for }i,j\in I.
\end{align}
Then $\<\cdot,\cdot\>$ is the normalized symmetric bilinear form on $\h^\ast$,
such that $\<\al,\al\>=2$ for any long root $\al$.
Since $\<\cdot,\cdot\>$ is non-degenerated, we can identify $\h^\ast$ with $\h$ via $\<\cdot,\cdot\>$. To be more precise, we identify $\al_i$ with $r_ih_i /r$ for $i\in I$.
Then
\begin{align}
    \<h_i,h_j\>=ra_{ij}/r_j\quad\te{for }i,j\in I.
\end{align}

Let $\hat\g=\g\ot\C[t,t\inv]\oplus\C c$ be the affinization of $\g$. One has that (\cite{Gar-loop-alg}):
\begin{prop}
The Lie algebra $\hat\g$ is isomorphic to the Lie algebra generated by
\begin{align*}
    \set{h_i(m),\,x_i^\pm(m)}{i\in I}
\end{align*}
and a central element $c$, subject to the relations written in terms of generating functions in $z$:
\begin{align*}
    h_i(z)=\sum_{m\in\Z}h_i(m)z^{-m-1},\quad x_i^\pm(z)=\sum_{m\in\Z}x_i^\pm(m)z^{-m-1},\quad i\in I.
\end{align*}
The relations are $(i,j\in I)$:
\begin{align}
    \tag{L1}\label{L1} &[h_i(z_1),h_j(z_2)]={r_i}{a_{ij}}rc\pd{z_2}z_1\inv\delta\(\frac{z_2}{z_1}\),\\
    \tag{L2}\label{L2} &[h_i(z_1),x_j^\pm(z_2)]=\pm r_ia_{ij}x_j^\pm(z_2)z_1\inv\delta\(\frac{z_2}{z_1}\),\\
    \tag{L3}\label{L3} &\left[x_i^+(z_1),x_j^-(z_2)\right]=\frac{\delta_{ij}}{r_i}\(h_i(z_2)z_1\inv\delta\(\frac{z_2}{z_1}\)+rc\pd{z_2}z_1\inv\delta\(\frac{z_2}{z_1}\)  \),\\
    \tag{L4}\label{L4}&(z_1-z_2)^{n_{ij}}\left[x_i^\pm(z_1),x_j^\pm(z_2)\right]=0,\\
    \tag{S}\label{S} &\left[ x_i^\pm(z_1),\left[ x_i^\pm(z_2),\dots,\left[ x_i^\pm(z_{m_{ij}}),x_j^\pm(z_0) \right]\cdots \right] \right]=0,\quad \te{if }a_{ij}\le 0,
\end{align}
where $n_{ij}=1-\delta_{ij}$ for $i,j\in I$ and $m_{ij}=1-a_{ij}$ for $i,j\in I$ with $a_{ij}\le 0$.
\end{prop}

Introduce a set $\mathcal B=\set{h_i,\,x_i^\pm}{i\in I}$, and defined a function $N:\mathcal B\times\mathcal B\to\N$ by
\begin{align*}
    N(h_i,h_j)=2,\quad N(h_i,x_j^\pm)=1,\quad N(x_i^\pm,x_j^\pm)=n_{ij},\quad N(x_i^+,x_j^-)=\delta_{ij}2.
\end{align*}
By utilizing the theory of free VAs introduced in \cite{R-free-conformal-free-va}, one gets a VA $V(\mathcal B,N)$, such that
for any VA $V$ containing $\mathcal B$ as a subset and
\begin{align}
  (z_1-z_2)^{N(a,b)}[Y(a,z_1),Y(b,z_2)]=0\quad \te{for }a,b\in\mathcal B,
\end{align}
there exists a VA homomorphism $f:V(\mathcal B,N)\to V$ uniquely determined by $f(a)=a$ for $a\in \mathcal B$.

\begin{de}\label{de:affVAs}
For $\ell\in \C$, we let $F_{\hat\g}^\ell$ be the quotient VA of $V(\mathcal B,N)$ modulo the ideal generated by
\begin{align*}
    (h_i)_0(h_j),\quad (h_i)_1(h_j)-\frac{a_{ij}}{r_j}r\ell\vac,\quad (h_i)_0(x_j^\pm)\mp a_{ij} x_j^\pm \quad \te{for }i,j\in I.
\end{align*}
Furthermore, let $V_{\hat\g}^\ell$ be the quotient VA of $F_{\hat\g}^\ell$ modulo the ideal generated by
\begin{align*}
    &(x_i^+)_0(x_j^-)-\delta_{ij}h_i,\quad (x_i^+)_1(x_j^-)-\delta_{ij}r\ell/r_i \vac\quad\te{for }i,j\in I,\\
    &\(x_i^\pm\)_0^{m_{ij}}(x_j^\pm)\quad\te{for }i,j\in I\,\,\te{with }a_{ij}\le 0.
\end{align*}
If $\ell\in\Z_+$, we define $L_{\hat\g}^\ell$ to be the quotient VA $V_{\hat\g}^\ell$ modulo the ideal generated by
\begin{align*}
    \(x_i^\pm\)_{-1}^{r\ell/r_i}(x_i^\pm)\quad\te{for }i\in I.
\end{align*}
\end{de}

\begin{rem}\label{rem:aff-vas}
Set $\hat\g_+=\g\ot \C[t]\oplus\C c$. For $\ell\in \C$, we let $\C_\ell:=\C$ be a $\hat\g_+$-module, with $\g\ot\C[t].\C_\ell=0$ and $c=\ell$. Then there are $\hat\g$-module structures on both $V_{\hat\g}^\ell$ and $L_{\hat\g}^\ell$,
such that
\begin{align*}
    h_i(z)=Y(h_i,z),\quad x_i^\pm(z)=Y(x_i^\pm,z),\quad i\in I.
\end{align*}
In addition, as $\hat\g$-modules, we have that
\begin{align*}
    V_{\hat\g}^\ell\cong \U(\hat\g)\ot_{\U(\hat\g_+)}\C_\ell.
\end{align*}
And $L_{\hat\g}^\ell$ is the unique simple quotient $\hat\g$-module of $V_{\hat\g}^\ell$, when $\ell\in\Z_+$.
\end{rem}

\begin{rem}\label{rem:M-epsilon}
In $F_{\hat\g}^\ell$, the fields $Y(h_i,z)$ and $Y(x_i^\pm,z)$ satisfy the relations \eqref{L1},
\eqref{L2}, \eqref{L4} with $h_i(z)$ (resp. $x_i^\pm(z)$) replaced by $Y(h_i,z)$ (resp. $Y(x_i^\pm,z)$),
and satisfy the following relation
\begin{align}
  \tag{L3.5}\label{L3.5}&(z_1-z_2)^{2\delta_{ij}}[Y(x_i^\pm,z_1),Y(x_j^\mp,z_2)]=0.
\end{align}
\end{rem}

Now, we recall the construction of quantum affine VAs introduced in \cite{K-Quantum-aff-va}.
Let $q=\exp \hbar$.
For $0\ne P(z)\in\C(z)[[\hbar]]$ and $g(q)=\sum_{k=1}^n a_kq^{m_k}\in\Z[q,q\inv]$, we let
\begin{align}
  P(z)^{g(q)}=\prod_{k=1}^n \left(q^{m_k\pd{z}}P(z)^{a_k}\right)=\prod_{k=1}^n \left( P(z+m_k\hbar)^{a_k} \right) \in \C(z)[[\hbar]].
\end{align}
It is straightforward to check that
\begin{align*}
  P(z)^{g_1(q)}P(z)^{g_2(q)}=P(z)^{g_1(q)+g_2(q)},\quad \(P(z)^{g_1(q)}\)^{g_2(q)}=P(z)^{g_1(q)g_2(q)}
\end{align*}
for any $g_1(q),g_2(q)\in\Z[q]$, and
\begin{align*}
  \(P(z_1)^{g(q)}\)|_{z_1=-z}=\(Q(z)\)^{g(q\inv)},\quad\te{with}\,\,Q(z)=P(-z).
\end{align*}

For $n\in\Z$ and any invertible element $\nu$ in an algebra over $\C$ with $\nu^2\ne 1$, we set
\begin{align*}
  [n]_\nu=\frac{\nu^n-\nu^{-n}}{\nu-\nu\inv}.
\end{align*}\
Furthermore, for nonnegative integers $s\le t$, define
\begin{align*}
  [s]_\nu!=[s]_\nu [s-1]_\nu \cdots [1]_\nu,\quad \binom{t}{s}_\nu=\frac{[t]_\nu!}{[s]_\nu![s-t]_\nu!}.
\end{align*}

Let $\ell$ be a complex number. We define following functions in $\C((z))[[\hbar]]$:
\begin{align}
  &\wh\ell_{ij}(z)=-[r_ia_{ij}]_{q^{\pd{z}}}[r\ell]_{q^{\pd{z}}}q^{r\ell\pd{z}}\pdiff{z}{2}\log f(z)
    -r_ia_{ij}r\ell z^{-2},\label{eq:tau-1}\\
  &\wh\ell_{ij}^{1,\pm}(z)=\wh\ell_{ji}^{2,\pm}(z)=[r_ia_{ij}]_{q^{\pd{z}}}q^{r\ell\pd{z}}\pd{z}\log f(z)
    -r_ia_{ij}z\inv,\label{eq:tau-2}\\
  &\wh\ell_{ij}^{\pm,\pm}(z)=\begin{cases}
  f(z)^{q^{-r_ia_{ii}}-1},&\mbox{if }a_{ij}> 0,\\
  z\inv f(z)^{q^{-r_ia_{ij}}},&\mbox{if }a_{ij}\le 0,
  \end{cases}
  \label{eq:tau-3}\\
  &\wh\ell_{ij}^{+,-}(z)=z^{-\delta_{ij}}(z+2r\ell\hbar)^{\delta_{ij}},\label{eq:tau-4}\\
  &\wh\ell_{ij}^{-,+}(z)=z^{-\delta_{ij}}(z-2r\ell\hbar)^{\delta_{ij}}
    f(z)^{q^{r_ia_{ij}}-q^{-r_ia_{ij}}},\label{eq:tau-5}
\end{align}
for $i,j\in I$, where 
\begin{align}
  f(z)=e^{z/2}-e^{-z/2}\in\C[[z]].
\end{align}

\begin{rem}
Note that
\begin{align*}
  \pd z \log f(z)=\frac{1+e^{-z}}{2-2e^{-z}}\quad\te{and}\quad \pdiff z 2 \log f(z)=-\frac{e^{-z}}{(1-e^{-z})^2}.
\end{align*}
Then
\begin{align*}
  &\wh\ell_{ij}(z)=[r_ia_{ij}]_{q^{\pd{z}}}[r\ell]_{q^{\pd{z}}}q^{r\ell\pd{z}}\frac{e^{-z}}{(1-e^{-z})^2}
    -r_ia_{ij}r\ell z^{-2},\\
  &\wh\ell_{ij}^{1,\pm}(z)=\wh\ell_{ji}^{2,\pm}(z)=[r_ia_{ij}]_{q^{\pd{z}}}q^{r\ell\pd{z}}\frac{1+e^{-z}}{2-2e^{-z}}
    -r_ia_{ij}z\inv.
\end{align*}
Moreover,
\begin{align*}
  \wh\ell_{ij}^{\pm,\pm}(z)=&
  \begin{cases}
    f(z)\inv f(z-r_ia_{ii}\hbar),&\mbox{if }a_{ij}>0,\\
    z\inv f(z-r_ia_{ij}\hbar),&\mbox{if }a_{ij}\le 0
  \end{cases}\\
  =&\begin{cases}
    (e^{z/2}-e^{-z/2})\inv (q^{-r_i}e^{z/2}-q^{r_i}e^{-z/2}),&\mbox{if }a_{ij}>0,\\
    z\inv (q^{-r_ia_{ij}/2}e^{z/2}-q^{r_ia_{ij}/2}e^{-z/2}),&\mbox{if }a_{ij}\le 0,
  \end{cases}
\end{align*}
and
\begin{align*}
  \wh\ell_{ij}^{-,+}(z)=&z^{-\delta_{ij}}(z-2r\ell\hbar)^{\delta_{ij}}f(z+r_ia_{ij}\hbar)f(z-r_ia_{ij}\hbar)\inv\\
  =&z^{-\delta_{ij}}(z-2r\ell\hbar)^{\delta_{ij}}
  \frac{q^{r_ia_{ij}/2}e^{z/2}-q^{-r_ia_{ij}/2}e^{-z/2}}{ q^{-r_ia_{ij}/2}e^{z/2}-q^{r_ia_{ij}/2}e^{-z/2} }\\
  =&z^{-\delta_{ij}}(z-2r\ell\hbar)^{\delta_{ij}}
  \frac{q^{r_ia_{ij}}-e^{-z}}{ 1-q^{r_ia_{ij}}e^{-z} }.
\end{align*}
Therefore, the functions $\wh\ell_{ij}(z)$, $\wh\ell_{ij}^{1,\pm}(z)$, $\wh\ell_{ij}^{2,\pm}(z)$, $\wh\ell_{ij}^{\pm,\pm}(z)$,
$\wh\ell_{ij}^{+,-}(z)$ and $\wh\ell_{ij}^{-,+}(z)$ coincide with the functions $\tau_{ij}(z)$, $\tau_{ij}^{1,\pm}(z)$,
$\tau_{ij}^{2,\pm}(z)$, $\tau_{ij}^{\pm,\pm}(z)$, $\tau_{ij}^{+,-}(z)$ and $\tau_{ij}^{-,+}(z)$ given in
\cite[(6.1)-(6.5)]{K-Quantum-aff-va}, respectively.
\end{rem}

Recall from \cite{Li-smash} that a \emph{pseudo-endomorphism} of an $\hbar$-adic nonlocal VA $V$ is a $\C[[\hbar]]$-module map $A(z):V\to V\wh\ot\C((z))[[\hbar]]$, such that
\begin{align*}
  A(z)\vac=\vac\ot 1,\quad
  A(z_1)Y(u,z_2)v=Y(A(z_1-z_2)u,z_2)v\quad\te{for } u,v\in V.
\end{align*}
A \emph{pseudo-derivation} of $V$ is a $\C[[\hbar]]$-module map $D(z):V\to V\wh\ot\C((z))[[\hbar]]$, such that
\begin{align*}
  [D(z_1),Y(u,z_2)]=Y(D(z_1-z_2)u,z_2)\quad\te{for } u\in V.
\end{align*}
It was proved in \cite{K-Quantum-aff-va}:
\begin{lem}
For each $i\in I$, there are pseudo-derivations $\wh\ell_i(z)$ and pseudo-endomorphisms $\wh\ell_i^\pm(z)$ on
$F_{\hat\g}^\ell[[\hbar]]$ uniquely determined by ($i,j\in I$)
\begin{align*}
  &\wh\ell_i(z)h_j=\vac\ot \wh\ell_{ij}(z),\quad \wh\ell_i(z)x_j^\pm=\pm x_j^\pm\ot \wh\ell_{ij}^{1,\pm}(z),\\
  &\wh\ell_i^\pm(z)h_j=h_j\ot 1\mp \vac\ot \wh\ell_{ij}^{2,\pm}(z),\quad
  \wh\ell_i^\pm(z)x_j^\epsilon=x_j^\epsilon\ot \wh\ell_{ij}^{\pm,\epsilon}(z)\inv.
\end{align*}
\end{lem}

\begin{de}
Define $\mathcal M_\hbar^\ell(\g)$ to be the category consisting of topologically free $\C[[\hbar]]$-modules $W$ equipped with fields
$h_{i,\hbar}(z),x_{i,\hbar}^\pm(z)\in\E_\hbar(W)$ ($i\in I$) satisfying the following relations
\begin{align}
  &[h_{i,\hbar}(z_1),h_{j,\hbar}(z_2)]
  =[r_ia_{ij}]_{q^{\pd{z_2}}}[r\ell]_{q^{\pd{z_2}}}\tag{Q1}\label{eq:local-h-1}\\
  \times&
  \(\iota_{z_1,z_2}q^{-r\ell\pd{z_2}}-\iota_{z_2,z_1}q^{r\ell\pd{z_2}}\)
  \pd{z_1}\pd{z_2}\log f(z_1-z_2),\nonumber\\
  &[h_{i,\hbar}(z_1),x_{j,\hbar}^\pm(z_2)]
  =\pm x_{j,\hbar}^\pm(z_2)[r_ia_{ij}]_{q^{r_i\pd{z_2}}}\tag{Q2}\label{eq:local-h-2}\\
  \times&
  \(\iota_{z_1,z_2}q^{-r\ell\pd{z_2}}-\iota_{z_2,z_1}q^{r\ell\pd{z_2}}\)
  \pd{z_1}\log f(z_1-z_2),\nonumber\\
  &\iota_{z_1,z_2}f(z_1-z_2)^{\delta_{ij}+\delta_{ij}q^{2r\ell}}
  x_{i,\hbar}^+(z_1)x_{j,\hbar}^-(z_2)\tag{Q3.5}\label{eq:local-h-4}\\
    =&\iota_{z_2,z_1}f(z_1-z_2)^{\delta_{ij}+\delta_{ij}q^{2r\ell}+q^{-r_ia_{ij}}-q^{r_ia_{ij}}}
    x_{j,\hbar}^-(z_2)x_{i,\hbar}^+(z_1),\nonumber\\
  &\iota_{z_1,z_2}f(z_1-z_2)^{-\delta_{ij}+q^{-r_ia_{ij}}}
  x_{i,\hbar}^\pm(z_1)x_{j,\hbar}^\pm(z_2)\tag{Q4}\label{eq:local-h-3}\\
    =&\iota_{z_2,z_1} f(-z_2+z_1)^{-\delta_{ij}+q^{r_ia_{ij}}}
    x_{j,\hbar}^\pm(z_2)x_{i,\hbar}^\pm(z_1).\nonumber
\end{align}
\end{de}

The following result was proved in \cite[Lemma 5.2, Proposition 5.3 and Propositon 5.4]{K-Quantum-aff-va} (see also \cite[Proposition 6.1]{K-Coproduct-q-aff-va}):
\begin{prop}\label{prop:universal-M-tau}
There is a unique $\hbar$-adic nonlocal VA $(F_{\hat\g,\hbar}^\ell,Y_{\wh\ell},\vac)$, such that
\begin{itemize}
  \item[(1)] $F_{\hat\g,\hbar}^\ell$ is generated by $\set{h_i,\,x_i^\pm}{i\in I}$,
  and $$(F_{\hat\g,\hbar}^\ell,\{Y_{\wh\ell}(h_i,z)\}_{i\in I},\{Y_{\wh\ell}(x_i^\pm,z)\}_{i\in I})$$ is an object of $\mathcal M_\hbar^\ell(\g)$.

  \item[(2)] For any $\hbar$-adic nonlocal VA $(V,Y,\vac)$ containing $\bar h_i, \bar x_i^\pm$ ($i\in I$),
    such that $$(V,\{Y(\bar h_i,z)\}_{i\in I},\{Y(\bar x_i^\pm,z)\}_{i\in I})$$
    is an object of $\mathcal M_\hbar^\ell(\g)$,
    then there exists a unique $\hbar$-adic nonlocal VA homomorphism $\varphi:F_{\hat\g,\hbar}^\ell\to V$,
    such that $\varphi(h_i)=\bar h_i$ and $\varphi(x_i^\pm)=\bar x_i^\pm$ for $i\in I$.
\end{itemize}
\end{prop}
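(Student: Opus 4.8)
The plan is to realize $F_\tau(\fg,\ell)$ as a deformation of the classical affine object $F_{\hat\fg}^\ell[[\hbar]]$, following the same strategy used for the lattice case in Section~\ref{sec:qlattice}. First I would observe that at the group identity $\tau=\varepsilon$ the relations \eqref{eq:fqva-rel1}--\eqref{eq:fqva-rel4} collapse to the level-$\ell$ Heisenberg relation (L1), the weight relation (L2), and the bare locality relations (L4) together with the order-$2\delta_{ij}$ locality between $x_i^+$ and $x_j^-$; these are precisely the defining relations of $F_{\hat\fg}^\ell$ in Definition~\ref{de:affVAs}. Hence $F_{\hat\fg}^\ell[[\hbar]]$, with $Y(h_i,z)$ and $Y(x_i^\pm,z)$, is the universal object of $\mathcal M_\varepsilon(\fg)$: existence of the map in (b) for $\tau=\varepsilon$ is the universal property of $V(\mathcal B,N)$ passed through the defining ideal quotient, and uniqueness is generation by $\set{h_i,x_i^\pm}{i\in I}$.

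Next I would build the deformation datum. The role of the pairing $\eta$ in the lattice story is played here by $\tau\in\mathfrak T$: the additive entries $\tau_{ij},\tau_{ij}^{k,\pm}$ correspond to the $\<\eta',\cdot\>,\<\eta'',\cdot\>$ corrections and the multiplicative entries $\tau_{ij}^{\epsilon_1,\epsilon_2}$ to the exponential factors, which is exactly why $\mathfrak T$ is a group under addition in the first kind of entry and multiplication in the second. Concretely I would take the commutative cocommutative $\hbar$-adic vertex bialgebra $H$ generated by the Cartan currents $h_i$ together with group-like elements recording the root-lattice weights $\pm\al_i$, equip $F_{\hat\fg}^\ell[[\hbar]]$ with the comodule structure $\rho$ sending $h_i$ to a primitive and $x_i^\pm$ to its weight-$\pm\al_i$ coaction, and choose an invertible $\lambda\in\mathfrak L_H^\rho$ whose $\Phi$-pairings realize the prescribed functions $\tau$. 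Setting $F_\tau(\fg,\ell)=\mathfrak D_\lambda^\rho\(F_{\hat\fg}^\ell[[\hbar]]\)$, Theorem~\ref{thm:qva} (via Theorem~\ref{thm:deform-va}) makes it an $\hbar$-adic quantum, in particular nonlocal, VA, topologically free with the correct classical limit.

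For statement (a) I would compute the deformed vertex operators $Y_\tau(a,z)=\sum Y(a_{(1)},z)\lambda(a_{(2)},z)$ on the generators, exactly as in Theorem~\ref{thm:qlatticeVA}. The deformation leaves the delta-function parts of (L1)--(L2) untouched while adjoining the $\tau$-corrections to the $h$--$h$ and $h$--$x$ commutators, and it multiplies the $x^\pm x^\pm$ and $x^\pm x^\mp$ products by the factors $\tau_{ij}^{\epsilon_1,\epsilon_2}$; matching these outputs against \eqref{eq:fqva-rel1}--\eqref{eq:fqva-rel4} is a direct but lengthy calculation. For the universal property (b) I would argue as in Proposition~\ref{prop:qlattice-universal-property}: given $(V,Y)\in\mathcal M_\tau(\fg)$, the undeforming linear system of Lemma~\ref{lem:linear-sys} (in its affine form) and the exponential substitution \eqref{eq:def-e-i} produce undeformed fields $\bar h_i(z),\bar x_i^\pm(z)$ satisfying the $\tau=\varepsilon$ relations, making $V$ a module over $F_{\hat\fg}^\ell[[\hbar]]$ and hence over $F_\tau(\fg,\ell)$; one then sets $\psi^0$ on $U=\Span_\C\set{h_i,x_i^\pm,\vac}{i\in I}$ equal to the identity on generators, checks the vacuum-like property $Y_V(u,z)\vac\in V[[z]]$ and the $S$-closedness of $U$, and invokes Propositions~\ref{prop:vacuum-like} and~\ref{prop:vacuum-like-sp} to extend $\psi^0$ to a homomorphism $\varphi$. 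Uniqueness is again immediate from generation.

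The main obstacle I expect is two-fold. First, producing the deformation datum $\lambda$ realizing an \emph{arbitrary} $\tau\in\mathfrak T$ and verifying its invertibility requires solving the $\Phi$-pairing equations and tracking how the group law of $\mathfrak T$ corresponds to composition in $\mathfrak L_H^\rho$; this is the point where the separate additive and multiplicative structure of $\mathfrak T$ must be reproduced exactly. Second, the undeforming step in (b) is delicate: one must solve the affine analogue of the linear system in Lemma~\ref{lem:linear-sys} to recover $\bar h_i(z)$ from $h_{i,\hbar}(z)$ and then verify that the reconstructed $\bar h_i,\bar x_i^\pm$ satisfy the undeformed relations, i.e.\ carry out the affine counterpart of the computations in Subsection~\ref{subsec:pf-undeform-qlattice}. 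Establishing that $F_\tau(\fg,\ell)$ remains torsion-free and separated throughout, so that the classical limit is genuinely $F_{\hat\fg}^\ell$ and no relation collapses, is the remaining technical checkpoint.
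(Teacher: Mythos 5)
Your proposal follows essentially the same route the paper takes (via the construction it recalls from \cite{K-Quantum-aff-va}): realize $F_\varepsilon(\fg,\ell)\cong F_{\hat\fg}^\ell[[\hbar]]$ as the universal object at the group identity of $\mathfrak T$ and obtain the general case by the vertex-bialgebra deformation $\mathfrak D_\sigma^\rho$, with the universal property transported through the invertibility of the deformation — exactly the content of Remark \ref{rem:F-varepsilon=F} and Propositions \ref{prop:deform-datum} and \ref{prop:classical-limit}, and with part (b) handled by the undeforming linear system plus Propositions \ref{prop:vacuum-like} and \ref{prop:vacuum-like-sp}, as in the lattice case. The paper itself defers the detailed verification to the cited reference, and your plan correctly identifies the two technical checkpoints (realizing an arbitrary $\tau$ by an invertible element of $\mathfrak L_H^\rho$ compatibly with the group law of $\mathfrak T$, and reconstructing the undeformed fields in part (b)).
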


The $\hbar$-adic nonlocal VA $F_{\hat\g,\hbar}^\ell$ can also be realized as a deformation of $F_{\hat\g}^\ell[[\hbar]]$.
An \emph{$\hbar$-adic (nonlocal) vertex bialgebra} \cite{Li-smash} is an $\hbar$-adic (nonlocal) VA $H$ equipped with a classical coalgebra structure $(\Delta,\varepsilon)$ such that (the coproduct) $\Delta:H\to H\wh\ot H$
and (the counit) $\varepsilon:H\to\C[[\hbar]]$ are homomorphisms of $\hbar$-adic nonlocal VAs.
An \emph{$H$-module (nonlocal) VA} \cite{Li-smash} is an $\hbar$-adic nonlocal VA $V$ equipped with a module structure $\tau$ on $V$ for $H$ viewed as an $\hbar$-adic nonlocal VA such that
\begin{align}
  &\tau(h,z)v\in V\ot \C((z)),\qquad
  \tau(h,z)\vac_V=\varepsilon(h)\vac_V,\label{eq:mod-va-for-vertex-bialg1-2}\\
  &\tau(h,z_1)Y(u,z_2)v=\sum Y(\tau(h_{(1)},z_1-z_2)u,z_2)\tau(h_{(2)},z_1)v
  \label{eq:mod-va-for-vertex-bialg3}
\end{align}
for $h\in H$, $u,v\in V$, where $\vac_V$ denotes the vacuum vector of $V$
and $\Delta(h)=\sum h_{(1)}\ot h_{(2)}$ is the coproduct in the Sweedler notation.
A \emph{(right) $H$-comodule nonlocal VA} (\cite[Definition 2.22]{JKLT-Defom-va}) is a nonlocal VA $V$ equipped with a homomorphism
$\rho:V\to V\wh\ot H$ of $\hbar$-adic nonlocal VAs such that
\begin{align}
  (\rho\ot 1)\rho=(1\ot \Delta)\rho,\quad (1\ot \varepsilon)\rho=\te{Id}_V.
\end{align}
We say that $\rho$ and $\tau$ are \emph{compatible} (\cite[Definition 2.24]{JKLT-Defom-va}) if
\begin{align}
  \rho(\tau(h,z)v)=(\tau(h,z)\ot 1)\rho(v)\quad \te{for }h\in H,\,v\in V.
\end{align}

\begin{de}
Let $V$ be an $\hbar$-adic nonlocal VA.
A \emph{deforming triple} is a triple $(H,\rho,\tau)$, where $H$ is a cocommutative $\hbar$-adic nonlocal vertex bialgebra, $(V,\rho)$ is an $H$-nonlocal VA and $(V,\tau)$ is an $H$-module nonlocal VA, such that $\rho$ and $\tau$ are compatible.
\end{de}

The following represents the direct $\hbar$-adic analogue of \cite[Theorem 2.25]{JKLT-Defom-va}.

\begin{thm}\label{thm:deform-va}
Let $V$ be an $\hbar$-adic nonlocal VA, and let $(H,\rho,\tau)$ be a deforming triple.
Define
\begin{align}
\mathfrak D_\tau^\rho (Y)(a,z)=\sum Y(a_{(1)},z)\tau(a_{(2)},z)\quad\te{for }a\in V,
\end{align}
where $\rho(a)=\sum a_{(1)}\ot a_{(2)}\in V\otimes H$.
Then $(V,\mathfrak D_\tau^\rho (Y),\vac)$ forms an $\hbar$-adic nonlocal VA,
which is denoted by $\mathfrak D_\tau^\rho (V)$.
\end{thm}

We recall the special commutative and cocommutative $\hbar$-adic vertex bialgebra $H$ given in \cite[Section 5]{K-Quantum-aff-va} as follows:
Let $H_0$ be the symmetric algebra of the following vector space, and let $H=H_0[[\hbar]]$:
\begin{align*}
  \bigoplus_{i\in I}\(\C[\partial] \(\C A_i\oplus\C B_i^+\oplus\C B_i^-\)\).
\end{align*}
The coproduct $\Delta$ and counit $\varepsilon$ are uniquely determined by
($i\in I$, $n\in\N$):
\begin{align*}
  &\Delta(\partial^n A_i)=\partial^n A_i\ot 1+1\ot \partial^n A_i,
  \quad\Delta(\partial^n B_i^\pm)=\sum_{k=0}^n\binom{n}{k} \partial^k B_i^\pm\ot \partial^{n-k} B_i^\pm,\\
  &\varepsilon(\partial^n A_i)=0,
  \quad\varepsilon(\partial^n B_i^\pm)=\delta_{n,0}.
\end{align*}
The vertex operator map $Y$ is defined by $Y(u,z)v=(e^{z\partial}u)v$, where $\partial$ is a derivation on $H$ determined by
\begin{align*}
  \partial (\partial^n A_i)=\partial^{n+1}A_i,\quad \partial(\partial^n B_i^\pm)=\partial^{n+1}B_i^\pm\quad\te{for } i\in I,\,n\in\N.
\end{align*}

Combining \cite[Proposition 6.3]{K-Coproduct-q-aff-va} with \cite[Remark 6.2]{K-Coproduct-q-aff-va}, we immediately get the following result.
\begin{lem}
There is a deforming triple $(H,\rho,\wh\ell)$ on $F_{\hat\g}^\ell[[\hbar]]$, where $\rho$ is the $H$-comodule nonlocal VA structure uniquely determined by
\begin{align*}
  \rho(h_i)=h_i\ot 1+\vac\ot A_i,\quad \rho(x_i^\pm)=x_i^\pm\ot B_i^\pm,
\end{align*}
and $\wh\ell$ is the $H$-module nonlocal VA structure uniquely determined by
\begin{align*}
  \wh\ell(A_i,z)=\wh\ell_i(z),\quad \wh\ell(B_i^\pm,z)=\wh\ell_i^\pm(z).
\end{align*}
\end{lem}

Combining this with Theorem \ref{thm:deform-va} and \cite[Proposition 5.12]{K-Quantum-aff-va} (see also \cite[Proposition 6.4]{K-Coproduct-q-aff-va}),
we immediately get the following result.
\begin{prop}\label{prop:desc-nonlocalVA-F}
$\mathfrak D_{\wh\ell}^\rho(F_{\hat\g}^\ell[[\hbar]])\cong F_{\hat\g,\hbar}^\ell$.
Moreover, the vertex operator map $\mathfrak D_{\wh\ell}^\rho(Y)$ of $\mathfrak D_{\wh\ell}^\rho(F_{\hat\g}^\ell[[\hbar]])$
is uniquely determined by
\begin{align*}
  &\mathfrak D_{\wh\ell}^\rho(Y)(h_i,z)=Y(h_i,z)+\wh\ell_i(z),\quad \mathfrak D_{\wh\ell}^\rho(Y)(x_i^\pm,z)=Y(x_i^\pm,z)\wh\ell_i^\pm(z)
  \quad\te{for }i\in I.
\end{align*}
\end{prop}

\begin{rem}
In the remainder of this paper, we identify the two $\hbar$-adic nonlocal VAs $\mathfrak D_{\wh\ell}^\rho(F_{\hat\g}^\ell[[\hbar]])$
and $F_{\hat\g,\hbar}^\ell$.
\end{rem}

Define
\begin{align}\label{eq:def-f-0}
   f_0(z)=\frac{f(z)}{z}=\frac{e^{z/2}-e^{-z/2}}{z}=\sum_{n\ge 0}\frac{z^{2n}}{4^n(2n+1)!}\in 1+z^2\C[[z^2]].
\end{align}

\begin{de}\label{de:V-tau}
For $\ell\in\C$, we let $R_{1}^\ell$ be the minimal closed ideal of
$F_{\hat\g,\hbar}^{\ell}$ such that $[R_{1}^\ell]=R_{1}^\ell$ and contains the following elements
\begin{align*}
  &\(x_i^+\)_0x_i^--\frac{1}{q^{r_i}-q^{-r_i}}\(\vac-E_\ell(h_i)\),\quad
  \(x_i^+\)_1x_i^-+\frac{2r\ell\hbar}{q^{r_i}-q^{-r_i}}E_\ell(h_i)\quad\te{for } i\in I,\\
  &\(x_i^\pm\)_0^{m_{ij}}x_j^\pm\quad\te{for } i,j\in I\,\,\te{with}\,\,a_{ij}\le 0,
\end{align*}
where
\begin{align}\label{eq:def-E-h}
  &E_\ell(h_i)=\(\frac{f_0(2r_i\hbar+2r\ell\hbar)}{f_0(2r_i\hbar-2r\ell\hbar)}\)^\half
  \exp\(\(-q^{-r\ell\partial}2\hbar f_0(2\partial\hbar) h_i\)_{-1}\)\vac.
\end{align}
Define
\begin{align*}
  V_{\hat\g,\hbar}^{\ell}=F_{\hat\g,\hbar}^{\ell}/R_{1}^\ell.
\end{align*}
\end{de}

\begin{de}\label{de:L-tau}
Let $\ell\in \Z_+$, and
let $R_{2}^\ell$ be the minimal closed ideal of $V_{\hat\g,\hbar}^{\ell}$ such that $[R_{2}^\ell]=R_{2}^\ell$ and contains the following elements
\begin{align*}
  & \(x_i^\pm\)_{-1}^{r\ell/r_i}x_i^\pm\quad\te{for } i\in I.
\end{align*}
Define
\begin{align*}
  L_{\hat\g,\hbar}^{\ell}=V_{\hat\g,\hbar}^{\ell}/R_{2}^\ell.
\end{align*}
\end{de}

\begin{rem}
For the notation $L_{\hat\g,\hbar}^{\ell}$. we always assume that $\ell\in\Z_+$.
\end{rem}

It was proved in \cite[Theorem 8.14]{K-Quantum-aff-va}:

\begin{thm}\label{thm:classical-limit-L}
For each $\ell\in\Z_+$, we have that $L_{\hat\g,\hbar}^\ell/\hbar L_{\hat\g,\hbar}^\ell\cong L_{\hat\g}^\ell$.
\end{thm}


Proposition \ref{prop:universal-M-tau} and Definitions \ref{de:V-tau}, \ref{de:L-tau}
provides the following result.
\begin{prop}\label{prop:universal-qaff}
Let $(V,Y,\vac)$ be an $\hbar$-adic nonlocal VA containing a subset $$\set{\bar h_i,\bar x_i^\pm}{i\in I},$$
such that
\begin{align*}
    (V,\{Y(\bar h_i,z)\}_{i\in I},\{Y(\bar x_i^\pm,z)\}_{i\in I})\in\obj \mathcal M_\hbar^\ell(\g).
\end{align*}
Suppose that
\begin{align*}
    &(\bar x_i^+)_0\bar x_i^-=\frac{1}{q^{r_i}-q^{-r_i}}\(\vac-E_\ell(\bar h_i)\),\quad
    (\bar x_i^+)_1\bar x_i^-=-\frac{2r\ell\hbar}{q^{r_i}-q^{-r_i}}E_\ell(\bar h_i)\quad\te{for }i\in I,\\
    &(\bar x_i^\pm)_0^{m_{ij}}\bar x_j^\pm=0\quad\te{for }i,j\in I\,\,\te{with}\,\,a_{ij}\le 0.
\end{align*}
Then the unique $\hbar$-adic nonlocal VA homomorphism $\varphi:F_{\hat\g,\hbar}^\ell\to V$ provided in Proposition \ref{prop:universal-M-tau} factors through $V_{\hat\g,\hbar}^\ell$.
Suppose further that $\ell\in\Z_+$ and
\begin{align*}
    (\bar x_i^\pm)_{-1}^{r\ell/r_i}\bar x_i^\pm=0\quad\te{for }i\in I.
\end{align*}
Then $\varphi$ also factors through $L_{\hat\g,\hbar}^\ell$.
\end{prop}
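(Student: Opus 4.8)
The plan is to invoke the universal property of $F_{\hat\fg,\hbar}^\ell=F_{\wh\ell}(\fg,\ell)$ and then verify that the defining relations of the two quotients are sent to zero. By Lemma \ref{lem:M-wh-ell}, the hypothesis that $(V,\{Y(\bar h_i,z)\}_{i\in I},\{Y(\bar x_i^\pm,z)\}_{i\in I})$ lies in $\mathcal M_{\wh\ell}(\fg)$ is exactly what is needed to apply Proposition \ref{prop:universal-M-tau}(b), which produces the unique $\hbar$-adic nonlocal VA homomorphism $\varphi:F_{\hat\fg,\hbar}^\ell\to V$ with $\varphi(h_i)=\bar h_i$ and $\varphi(x_i^\pm)=\bar x_i^\pm$ for $i\in I$. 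The whole argument reduces to showing $R_{1}^\ell\subset\Ker\varphi$, and later $R_{2}^\ell\subset\Ker\bar\varphi$.

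First I would record that $\Ker\varphi$ is a closed two-sided ideal of $F_{\hat\fg,\hbar}^\ell$ with $[\Ker\varphi]=\Ker\varphi$. It is an ideal because $\varphi(u(m)a)=\varphi(u)(m)\varphi(a)=0$ and $\varphi(a(m)u)=\varphi(a)(m)\varphi(u)=0$ whenever $a\in\Ker\varphi$; it is closed because $V$ is separated and $\varphi$ is continuous; and it satisfies $[\Ker\varphi]=\Ker\varphi$ because $V$ is torsion-free, so $\hbar^n\varphi(v)=0$ forces $\varphi(v)=0$. Consequently, by the minimality defining $R_{1}^\ell$ in Definition \ref{de:V-tau}, it suffices to check that the three families of generators \eqref{eq:x+1x-1}, \eqref{eq:x+1x-2} and \eqref{eq:serre} lie in $\Ker\varphi$.

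The one nontrivial point is that $\varphi$ intertwines the construction of the element $E_\ell(h_i)$ of \eqref{eq:def-E-h}, i.e. $\varphi(E_\ell(h_i))=E_\ell(\bar h_i)$. This holds because $\varphi$ is a $\C[[\hbar]]$-module map preserving the vacuum and the canonical derivation $\partial$, commuting with each product $u(m)v$, and continuous for the $\hbar$-adic topology: since the exponent in \eqref{eq:def-E-h} carries a factor of $\hbar$, the exponential converges and $\varphi$ passes through both the power series in $\partial$ and the exponential term by term, while the scalar prefactor, lying in $\C[[\hbar]]$, is preserved by $\C[[\hbar]]$-linearity. One also notes $E_\ell(h_i)\equiv\vac\pmod{\hbar}$, so the apparent pole $1/(q^{r_i}-q^{-r_i})$ multiplies an $\hbar$-divisible element, and $\varphi$ respects the resulting division by $\hbar$ by torsion-freeness of $V$. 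Granting $\varphi(E_\ell(h_i))=E_\ell(\bar h_i)$, the three displayed hypotheses on $(\bar x_i^+)_0\bar x_i^-$, $(\bar x_i^+)_1\bar x_i^-$ and $(\bar x_i^\pm)_0^{m_{ij}}\bar x_j^\pm$ say precisely that $\varphi$ annihilates the generators \eqref{eq:x+1x-1}, \eqref{eq:x+1x-2} and \eqref{eq:serre} respectively. Hence $R_{1}^\ell\subset\Ker\varphi$ and $\varphi$ descends to $\bar\varphi:V_{\hat\fg,\hbar}^\ell\to V$.

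For the final assertion I would repeat the same mechanism one level down. The induced map $\bar\varphi$ still has closed, $[\cdot]$-stable kernel, and applying $\bar\varphi$ to the images of the generators \eqref{eq:integrable} of $R_{2}^\ell$ yields $(\bar x_i^\pm)_{-1}^{r\ell/r_i}\bar x_i^\pm$, which vanish by the additional hypothesis. By the minimality defining $R_{2}^\ell$ in Definition \ref{de:L-tau}, we obtain $R_{2}^\ell\subset\Ker\bar\varphi$, so $\bar\varphi$ factors through $L_{\hat\fg,\hbar}^\ell=V_{\hat\fg,\hbar}^\ell/R_{2}^\ell$, as required. The only genuine work is the bookkeeping of the third paragraph, namely checking that an $\hbar$-adic homomorphism commutes with the transcendental recipe defining $E_\ell$; everything else is the formal quotient argument, which I expect to go through routinely.
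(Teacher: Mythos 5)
Your proposal is correct and follows the same route the paper takes: the paper states that the proposition "is provided by" Proposition \ref{prop:universal-M-tau} together with Definitions \ref{de:V-tau} and \ref{de:L-tau}, i.e.\ exactly the argument that $\Ker\varphi$ is a closed, $[\cdot]$-stable ideal containing the generators of $R_{1}^\ell$ (and then $R_{2}^\ell$), so minimality forces the factorization. Your extra care about $\varphi(E_\ell(h_i))=E_\ell(\bar h_i)$ is a detail the paper leaves implicit, and it is handled correctly.
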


The following result was proved in \cite[Theorem 7.8]{K-Coproduct-q-aff-va}.
\begin{thm}\label{thm:quotient-algs}
Let $\ell_1,\ell_2,\dots,\ell_n\in\C$. Then $$((X_{\hat\g,\hbar}^{\ell_i})_{i=1}^n,(S_{\ell_i,\ell_j}(z))_{i,j=1}^n)$$
is an $\hbar$-adic $n$-quantum VA
for $X=F,V,L$,
where for any $\ell,\ell'\in\C$, the quantum Yang-Baxter operator $S_{\ell,\ell'}(z)$ is determined by ($i,j\in I$):
\begin{align}
  &S_{\ell,\ell'}(z)(h_j\ot h_i)=h_j\ot h_i+\vac\ot\vac\label{eq:S-twisted-1}
  \ot \pdiff{z}{2}\log f(z)^{[r_ia_{ij}]_{q}[r\ell]_{q} [r\ell']_q(q-q\inv) }
  ,\\
  &S_{\ell,\ell'}(z)(x_j^\pm\ot h_i)=x_j^\pm\ot h_i\pm x_j^\pm\ot \vac \label{eq:S-twisted-2}
  \ot
  \pd{z}\log f(z)^{ [r_ia_{ij}]_{q}[r\ell']_q(q-q\inv) },\\
  &S_{\ell,\ell'}(z)(h_j\ot x_i^\pm)=h_j\ot x_i^\pm\mp\vac\ot x_i^\pm\label{eq:S-twisted-3}
  \ot
  \pd{z}\log f(z)^{ [r_ja_{ji}]_{q}[r\ell]_q(q-q\inv) },\\
  &S_{\ell,\ell'}(z)(x_j^{\epsilon_1}\ot x_i^{\epsilon_2})=x_j^{\epsilon_1}\ot x_i^{\epsilon_2}\ot f(z)^{q^{-\epsilon_1\epsilon_2r_ia_{ij}}
    -q^{\epsilon_1\epsilon_2r_ia_{ij}}}.
    \label{eq:S-twisted-4}
\end{align}
\end{thm}

\begin{rem}
Let $X=F,V,L$, let $n\in\Z_+$ and let $\ell_1,\dots,\ell_n\in\C$. Denote by $X_{\hat\g,\hbar}^{\ell_1}\wh\ot X_{\hat\g,\hbar}^{\ell_2}\wh\ot\cdots\wh\ot X_{\hat\g,\hbar}^{\ell_n}$ the iterated twisted tensor product
$\hbar$-adic nonlocal VA associated with quantum Yang-Baxter operators $(S_{\ell_i,\ell_j}(z))_{i,j=1}^n$,
where the iterated twisted tensor product is defined in Remark \ref{rem:iterated-twisted-tensor}.
Moreover, Proposition \ref{prop:n-qva-to-k-qva} implies that
$$X_{\hat\g,\hbar}^{\ell_1}\wh\ot X_{\hat\g,\hbar}^{\ell_2}\wh\ot\cdots\wh\ot X_{\hat\g,\hbar}^{\ell_n}$$ is an $\hbar$-adic quantum VA.
Furthermore, for $1\le i<n$, applying Proposition \ref{prop:n-qva-to-k-qva} again yields that
$(X_{\hat\g,\hbar}^{\ell_1},\dots,X_{\hat\g,\hbar}^{\ell_{i-1}},
    X_{\hat\g,\hbar}^{\ell_i}\wh\ot X_{\hat\g,\hbar}^{\ell_{i+1}},
    X_{\hat\g,\hbar}^{\ell_{i+2}},\dots,
    X_{\hat\g,\hbar}^{\ell_n})$
is an $\hbar$-adic $(n-1)$-quantum VA.
\end{rem}

\begin{rem}\label{rem:twisted-tensor-classical-limit}
For each $g(q)\in\Z[q]$, we have that
\begin{align*}
  &f(z)^{g(q)}\equiv f(z)^{g(1)}\quad\te{and}\quad
  \pd z \log f(z)^{g(q)}\equiv \pd z \log f(z)^{g(1)}\mod \hbar\C[[z,z\inv,\hbar]].
\end{align*}
Then
\begin{align*}
  S_{\ell,\ell'}(z)(a_j\ot b_i)\equiv a_j\ot b_i\mod \hbar \left(X_{\hat\g,\hbar}^{\ell}\wh\ot X_{\hat\g,\hbar}^{\ell'}\wh\ot \C((z))[[\hbar]]\right)\quad\te{for }a,b=h,x^\pm.
\end{align*}
Since $\set{h_i,x_i^\pm}{i\in I}$ generates the whole $\hbar$-adic quantum VA (see Proposition \ref{prop:universal-M-tau}),
\begin{align*}
  S_{\ell,\ell'}(z)(v\ot u)\equiv v\ot u\mod \hbar \left(X_{\hat\g,\hbar}^{\ell}\wh\ot X_{\hat\g,\hbar}^{\ell'}\wh\ot \C((z))[[\hbar]]\right)\quad \te{for }v\in X_{\hat\g,\hbar}^{\ell},\,u\in X_{\hat\g,\hbar}^{\ell'}.
\end{align*}
Consequently, the twisted tensor product $X_{\hat\g,\hbar}^{\ell}\wh\ot X_{\hat\g,\hbar}^{\ell'}$ induces the usual tensor product
$X_{\hat\g,\hbar}^{\ell}/\hbar X_{\hat\g,\hbar}^{\ell} \ot X_{\hat\g,\hbar}^{\ell'}/\hbar X_{\hat\g,\hbar}^{\ell'}$.
\end{rem}

The following result was proved in \cite[Theorem 8.1]{K-Coproduct-q-aff-va}.

\begin{thm}\label{thm:coproduct}
Let $X=F,V,L$. For any $\ell,\ell'\in\C$, there is an $\hbar$-adic quantum VA homomorphism
$\Delta:X_{\hat\g,\hbar}^{\ell+\ell'}\to X_{\hat\g,\hbar}^{\ell}\wh\ot X_{\hat\g,\hbar}^{\ell'}$
defined by
\begin{align}
  &\Delta(h_i)=q^{-r\ell'\partial}h_i\ot\vac+\vac\ot q^{r\ell\partial}h_i,\tag{Co1}\label{eq:def-Delta-h}\\
  &\Delta(x_i^+)=x_i^+\ot\vac+q^{2r\ell\partial}E_\ell(h_i)\ot q^{2r\ell\partial}x_i^+,\tag{Co2}\label{eq:def-Delta-x+}\\
  &\Delta(x_i^-)=x_i^-\ot\vac+\vac\ot x_i^-,\label{eq:def-Delta-x-}\tag{Co3}
\end{align}
Moreover, let $n\in\Z_+$ and $\ell_1,\dots,\ell_n\in\C$. For $1\le i<n$, we have
\begin{equation*}
\begin{tikzcd}[column sep=1em, row sep=1.5em]
    \Big(X_{\hat\g,\hbar}^{\ell_1}\ar[d,equal]&\dots&X_{\hat\g,\hbar}^{\ell_{i-1}}\ar[d,equal]
    &X_{\hat\g,\hbar}^{\ell_i+\ell_{i+1}}\ar[d,"\Delta"]
    &X_{\hat\g,\hbar}^{\ell_{i+2}}\ar[d,equal]
    &\dots&X_{\hat\g,\hbar}^{\ell_n}\Big)\ar[d,equal]\\
    \Big(X_{\hat\g,\hbar}^{\ell_1}&\dots&X_{\hat\g,\hbar}^{\ell_{i-1}}&
    X_{\hat\g,\hbar}^{\ell_i}\wh\ot X_{\hat\g,\hbar}^{\ell_{i+1}}&
    X_{\hat\g,\hbar}^{\ell_{i+2}}&\dots&
    X_{\hat\g,\hbar}^{\ell_n}\Big)
\end{tikzcd}
\end{equation*}
is an $\hbar$-adic $(n-1)$-quantum VA homomorphism.
Furthermore, for $\ell,\ell',\ell''\in\C$, we have that
\begin{align*}
  (\Delta\ot 1)\circ\Delta=(1\ot\Delta)\circ\Delta:X_{\hat\g,\hbar}^{\ell+\ell'+\ell''}\to X_{\hat\g,\hbar}^{\ell}\wh\ot X_{\hat\g,\hbar}^{\ell'}\wh\ot X_{\hat\g,\hbar}^{\ell''}
\end{align*}
\end{thm}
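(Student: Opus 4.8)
The plan is to construct $\Delta$ first on the level of the ``free'' quantum affine vertex algebra $F_{\hat\fg,\hbar}^{\ell+\ell'}$ by invoking its universal property, and then to descend to $V_{\hat\fg,\hbar}^{\ell+\ell'}$ and $L_{\hat\fg,\hbar}^{\ell+\ell'}$. Concretely, inside the twisted tensor product $F_{\hat\fg,\hbar}^{\ell}\wh\ot F_{\hat\fg,\hbar}^{\ell'}$ I would set $\bar h_i,\bar x_i^\pm$ to be the right-hand sides of \eqref{eq:def-Delta-h}--\eqref{eq:def-Delta-x-}, where each shift $q^{c\partial}=e^{c\hbar\partial}$ is an $\hbar$-adically convergent operator that is grouplike for the tensor derivation $\partial\ot 1+1\ot\partial$. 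The first task is to show that $\big(F_{\hat\fg,\hbar}^{\ell}\wh\ot F_{\hat\fg,\hbar}^{\ell'},\{Y(\bar h_i,z)\}_{i\in I},\{Y(\bar x_i^\pm,z)\}_{i\in I}\big)$ is an object of $\mathcal M_{\wh{\ell+\ell'}}(\fg)$; once this is done, Proposition \ref{prop:universal-M-tau} yields a unique $\hbar$-adic nonlocal VA homomorphism $\Delta\colon F_{\hat\fg,\hbar}^{\ell+\ell'}\to F_{\hat\fg,\hbar}^{\ell}\wh\ot F_{\hat\fg,\hbar}^{\ell'}$ with the stated values on generators, and uniqueness of $\Delta$ throughout is automatic because these generators generate the source.

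The computational heart, and the step I expect to be the main obstacle, is verifying the four relations \eqref{eq:local-h-1}--\eqref{eq:local-h-4} of Lemma \ref{lem:M-wh-ell} for the fields $Y(\bar h_i,z),Y(\bar x_i^\pm,z)$. Using the vertex operator of the twisted tensor product $Y_S(z)=(Y_1(z)\ot Y_2(z))S^{23}(-z)\sigma$ from Theorem \ref{thm:twist-tensor}, each such field splits into a ``first-factor'' and a ``second-factor'' contribution, and each pairwise product splits into diagonal (within one factor) and cross (between the two factors) pieces. The diagonal pieces reproduce the level-$\ell$ and level-$\ell'$ relations of Lemma \ref{lem:M-wh-ell}, whose $\delta$-function and exponent data are additive in the level; the cross pieces are governed by $S_{\ell,\ell'}(z)$ and are computed from its explicit action in Lemma \ref{lem:S-twisted}. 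Summing the two contributions should produce exactly the level-$(\ell+\ell')$ data, e.g.\ $\tfrac{a_{ij}r(\ell+\ell')}{r_j}=\tfrac{a_{ij}r\ell}{r_j}+\tfrac{a_{ij}r\ell'}{r_j}$ for \eqref{eq:local-h-1}. The genuinely delicate cases are \eqref{eq:local-h-3} and \eqref{eq:local-h-4}, because $\bar x_i^+$ carries the extra factor $q^{2r\ell\partial}E_\ell(h_i)\ot q^{2r\ell\partial}x_i^+$; here I would compute the operator product expansions of the Heisenberg-type field attached to $E_\ell(h_i)$ (see \eqref{eq:def-E-h}) with $\bar x_j^\pm$ and with itself, and check that these precisely upgrade the level-$\ell$ exponents to the level-$(\ell+\ell')$ exponents appearing in \eqref{eq:local-h-3}--\eqref{eq:local-h-4}.

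Next I would descend $\Delta$ to the quotients using Proposition \ref{prop:universal-qaff}. The key auxiliary identity is a factorization of $E_{\ell+\ell'}(\Delta(h_i))$ in $F_{\hat\fg,\hbar}^{\ell}\wh\ot F_{\hat\fg,\hbar}^{\ell'}$ (a ``grouplike'' property of $E_\ell$ under the coproduct), which lets me evaluate $(\Delta(x_i^+))_0\Delta(x_i^-)$ and $(\Delta(x_i^+))_1\Delta(x_i^-)$ and match them to the prescribed combinations of $\vac$ and $E_{\ell+\ell'}(\Delta(h_i))$; together with $(\Delta(x_i^\pm))_0^{m_{ij}}\Delta(x_j^\pm)=0$ this shows, via Proposition \ref{prop:universal-qaff}, that $\Delta$ factors through $V_{\hat\fg,\hbar}^{\ell+\ell'}\to V_{\hat\fg,\hbar}^{\ell}\wh\ot V_{\hat\fg,\hbar}^{\ell'}$, and that adding the integrability relation gives the $L$-level statement for $\ell,\ell'\in\Z_+$. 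Equivalently one may use the closed-form ideal descriptions of Proposition \ref{prop:ideal-def-alt} together with the compatibility of $S_{\ell,\ell'}(z)$ with $A_i,Q_{ij}^\pm,M_i^\pm$ recorded in Lemma \ref{lem:S-ell-ell-prime} to see directly that $\Delta$ maps $R_1^{\ell+\ell'}$ (resp.\ $R_2^{\ell+\ell'}$) into the corresponding closed ideals of the target.

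Finally, for the $(n-1)$-quantum VA homomorphism property it suffices, by Definition \ref{de:qyb-mult}, to check that $\Delta$ intertwines the relevant quantum Yang-Baxter operators, and this need only be verified on the generators $h_i,x_i^\pm$ since all maps in sight are nonlocal VA homomorphisms and the algebras are generated by these elements. Using the explicit formulas of Lemma \ref{lem:S-twisted} (and the Sweedler-type computations of Lemmas \ref{lem:S-special-tech-gen2}--\ref{lem:S-special-tech-gen4}), I would compare $S_{\ell+\ell',\ell''}(z)$ acting after $\Delta\ot 1$ with the operator on the target $X_{\hat\fg,\hbar}^{\ell}\wh\ot X_{\hat\fg,\hbar}^{\ell'}\wh\ot X_{\hat\fg,\hbar}^{\ell''}$ built from $S_{\ell,\ell''}(z)$ and $S_{\ell',\ell''}(z)$, the point being that the level-weighted exponents telescope against the $q^{\pm r\ell\partial}$-shifts built into $\Delta$; the case where the remaining slot sits to the left is handled symmetrically using Lemma \ref{lem:S-twisted-inverse}. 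Coassociativity is then checked on generators directly: the shift operators satisfy $q^{c\partial}(a\ot b)=q^{c\partial}a\ot q^{c\partial}b$, so for $h_i$ both $(\Delta\ot 1)\Delta(h_i)$ and $(1\ot\Delta)\Delta(h_i)$ reduce to $q^{-r(\ell'+\ell'')\partial}h_i\ot\vac\ot\vac+\vac\ot q^{r(\ell-\ell'')\partial}h_i\ot\vac+\vac\ot\vac\ot q^{r(\ell+\ell')\partial}h_i$, and the analogous but longer computation for $x_i^\pm$ uses once more the coproduct-factorization of $E_\ell(h_i)$ to match the two sides.
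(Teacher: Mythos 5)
Your proposal follows essentially the same route as the paper: it constructs $\Delta$ on $F_{\hat\fg,\hbar}^{\ell+\ell'}$ via the universal property of Proposition \ref{prop:universal-M-tau} after verifying the relations of $\mathcal M_{\wh{\ell+\ell'}}(\fg)$ for the candidate fields in the twisted tensor product, descends to $V$ and $L$ through Proposition \ref{prop:universal-qaff} using the grouplike factorization $E_{\ell+\ell'}(\Delta(h_i))=q^{-2r\ell'\partial}E_\ell(h_i)\ot E_{\ell'}(h_i)$, checks the intertwining with the quantum Yang--Baxter operators and coassociativity on generators — which is precisely the content of Propositions \ref{prop:S-Delta}, \ref{prop:Y-Delta-cartan}, \ref{prop:Y-Delta-serre} and \ref{prop:Delta-coasso}. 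The plan is correct and identifies the same computational bottlenecks (the $x^+$-sector relations involving the $E_\ell(h_i)$ factor) that the paper resolves in Lemmas \ref{lem:Y-Delta-x+x-}, \ref{lem:E-ell+} and Propositions \ref{prop:delta-serre+}--\ref{prop:delta-int-}.
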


\begin{rem}\label{rem:inj}
Let $\ell,\ell'\in\Z_+$.
By utilizing Theorem \ref{thm:classical-limit-L}, the $\C[[\hbar]]$-module map $\Delta$ induces the following $\C$-linear map
\begin{align*}
  \Delta_0:L_{\hat\g}^{\ell+\ell'}\cong L_{\hat\g,\hbar}^{\ell+\ell'}/\hbar L_{\hat\g,\hbar}^{\ell+\ell'}
  \to L_{\hat\g,\hbar}^\ell/\hbar L_{\hat\g,\hbar}^\ell\ot L_{\hat\g,\hbar}^{\ell'}/\hbar L_{\hat\g,\hbar}^{\ell'}
  \cong L_{\hat\g}^\ell\ot L_{\hat\g}^{\ell'},
\end{align*}
where both instances of ``$\ot$'' denote the classical tensor product, as established in Remark \ref{rem:twisted-tensor-classical-limit}.
From Remark \ref{rem:aff-vas}, $L_{\hat\g}^\ell$, $L_{\hat\g}^{\ell'}$ and $L_{\hat\g}^{\ell+\ell'}$ are all simple $\hat\g$-modules.
Recalling the definition of $E_\ell(h_i)$ (see \eqref{eq:def-E-h}), we get that $\Delta_0$ is exactly a $\hat\g$-module homomorphism with
$L_{\hat\g}^\ell\ot L_{\hat\g}^{\ell'}$ equipped with the standard tensor $\hat\g$-module structure.
Since $\Delta_0(\vac)=\vac\ot\vac\ne 0$ and $L_{\hat\g}^{\ell+\ell'}$ is simple, $\Delta_0$ must be injective.
Applying Lemma \ref{lem:topo-free-inj-surj}, the map
$\Delta:L_{\hat\g,\hbar}^{\ell+\ell'}\to L_{\hat\g,\hbar}^\ell\wh\ot L_{\hat\g,\hbar}^{\ell'}$ is also injective.
\end{rem}

Finally, we collect some formulas given in \cite{K-Coproduct-q-aff-va} that will be used later on.

\begin{prop}\label{prop:normal-ordering-rel-general}
For any $i\in I$ and $n\in\Z_+$, we set
\begin{align*}
  &\:Y_{\wh\ell}(x_i^\pm,z_1)\cdots Y_{\wh\ell}(x_i^\pm,z_n)\;
  =\(\prod_{1\le s<t\le n} f(z_s-z_t)^{-1+q^{-2r_i}}\)
  Y_{\wh\ell}(x_i^\pm,z_1)\cdots Y_{\wh\ell}(x_i^\pm,z_n).
\end{align*}
Then we have that
\begin{align}
  &\:Y_{\wh\ell}(x_{\sigma(1)}^\pm,z_{\sigma(1)})\cdots Y_{\wh\ell}(x_{\sigma(n)}^\pm,z_{\sigma(n)})\;
  =\:Y_{\wh\ell}(x_i^\pm,z_1)\cdots Y_{\wh\ell}(x_i^\pm,z_n)\;,\quad\te{for }\sigma\in S_n,\\
  &\:Y_{\wh\ell}(x_i^\pm,z_1)\cdots Y_{\wh\ell}(x_i^\pm,z_n)\;\in\E_\hbar^{(n)}(W),\\
  &Y_{\wh\ell}\(\(x_i^\pm\)_{-1}^n\vac,z\)
  =\prod_{a=1}^{n-1}f_0(2ar_i\hbar)\\
  &\quad\times  \:Y_{\wh\ell}(x_i^\pm,z+2(n-1)r_i\hbar)
    Y_{\wh\ell}(x_i^\pm,z+2(n-2)r_i\hbar)\cdots Y_{\wh\ell}(x_i^\pm,z)\;,\nonumber\\
  &\Rat_{z_1\inv,\dots,z_n\inv}Y_{\wh\ell}(x_i^\pm,z_1)\cdots Y_{\wh\ell}(x_i^\pm,z_n)\vac
    =\prod_{a=1}^n\frac{z_a}{z_a-2(n-a)r_i\hbar}\(x_i^\pm\)_{-1}^n\vac.
\end{align}
\end{prop}

\begin{lem}\label{lem:com-formulas}
For $i\in I$, we define
\begin{align*}
  &h_i^-(z)=Y(h_i,z)^-
  +\wh\ell_i(z),\quad
   h_i^+(z)=Y(h_i,z)^+,\\
  &\wt h_i^\pm(z)=-q^{-r\ell\pd{z}}2\hbar f_0\(2\hbar\pd{z}\)h_i^\pm(z).
\end{align*}
Then we have that
\begin{align}
  &[h_i^-(z_1),h_j^+(z_2)]\label{eq:com-formulas-1}
    =\pd{z_1}\pd{z_2}\log f(z_1-z_2)^{ [r_ia_{ij}]_{q}[r\ell]_{q}q^{r\ell} },\\
  &[h_i^-(z_1),Y_{\wh\ell}(x_j^\pm,z_2)]=\pm Y_{\wh\ell}(x_j^\pm,z_2)
  \pd{z_1}\log f(z_1-z_2)^{[r_ia_{ij}]_{q}q^{r\ell}},\label{eq:com-formulas-2}\\
  &[\wt h_i^-(z_1),\wt h_j^+(z_2)]\label{eq:com-formulas-6}
  =\log f(z_1-z_2)^{( q^{2r\ell}-1 )( q^{-r_ia_{ij}}-q^{r_ia_{ij}} )},\\
  &[\wt h_i^-(z_1),Y_{\wh\ell}(x_j^\pm,z_2)]\label{eq:com-formulas-7}
  =\pm Y_{\wh\ell}(x_j^\pm,z_2)
  \log f(z_1-z_2)^{q^{-r_ia_{ij}}-q^{r_ia_{ij}}},\\
  &\left[h_i^-(z_1),\exp\(\wt h_j^+(z_2)\)\right]
  =\exp\(\wt h_j^+(z_2)\) \label{eq:com-formulas-10}
  \pd{z_1}\log f(z_1-z_2)^{[r_ia_{ij}]_{q}[r\ell]_qq^{2r\ell} },\\
  &S_{\ell,\ell'}(z)\(E_\ell(h_j)\ot h_i\)\label{eq:S-E-1}
  =E_\ell(h_j)\ot h_i+E_\ell(h_j)\ot \vac\\
  &\qquad\qquad\ot \pd{z}\log f(z)^{-[r_ia_{ij}]_{q}[r\ell']_{q}[r\ell]_{q}
  \(q-q\inv\)^2q^{-r\ell}}.\nonumber
\end{align}
\end{lem}

\begin{prop}\label{prop:Y-E}
For each $i\in I$, we have that
\begin{align*}
  Y_{\wh\ell}(E_\ell(h_i),z)=\exp(\wt h_i^+(z))\exp(\wt h_i^-(z)).
\end{align*}
\end{prop}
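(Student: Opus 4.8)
The plan is to move the computation into the nonlocal vertex algebra of fields and reduce the statement to a normally ordered exponential via Lemma \ref{lem:exp-cal}. Set $u=-q^{-r\ell\partial}2\hbar f_0(2\partial\hbar)[r_i]_{q^\partial}h_i$, so that \eqref{eq:def-E-h} reads
\begin{align*}
  E_\ell(h_i)=\(\frac{f_0(2r_i\hbar+2r\ell\hbar)}{f_0(2r_i\hbar-2r\ell\hbar)}\)^{\half}\exp\(u_{-1}\)\vac,
\end{align*}
and observe that $u\in\hbar F_{\hat\fg,\hbar}^\ell$, so this exponential converges $\hbar$-adically. First I would use that $v\mapsto Y_{\wh\ell}(v,z)$ is a homomorphism of $\hbar$-adic nonlocal VAs carrying $\vac$ to the identity field and the $(-1)$-product to the $(-1)$-product of $\E_\hbar(F_{\hat\fg,\hbar}^\ell)$; applying it to the convergent series gives
\begin{align*}
  Y_{\wh\ell}(E_\ell(h_i),z)=c\,\exp\((Y_{\wh\ell}(u,z))_{-1}\)1,
\end{align*}
where $c$ is the scalar prefactor above and $(\cdot)_{-1}$ now denotes the $\E_\hbar$-product. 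Then I would record that $Y_{\wh\ell}(u,z)=\wt h_i^+(z)+\wt h_i^-(z)$: by the translation property $Y_{\wh\ell}(g(\partial)h_i,z)=g(\pd{z})Y_{\wh\ell}(h_i,z)$, and since $Y_{\wh\ell}(h_i,z)=h_i^+(z)+h_i^-(z)$, applying the operator $g(\pd{z})=-q^{-r\ell\pd{z}}2\hbar f_0(2\hbar\pd{z})[r_i]_{q^{\pd{z}}}$ to each summand reproduces $\wt h_i^\pm(z)$ by the definition in Lemma \ref{lem:com-formulas2}.

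Next I would invoke Lemma \ref{lem:exp-cal} with $\al(z)=\wt h_i^-(z)$, $\beta(z)=\wt h_i^+(z)$ and $\xi(z)=1$. The hypotheses $[\al(z_1),\al(z_2)]=0=[\beta(z_1),\beta(z_2)]$ follow from Lemma \ref{lem:com-formulas} together with the fact that $g(\pd{z})$ preserves the vanishing of the $(+,+)$ and $(-,-)$ brackets of the $h_i$-fields, and $[\al(z_1),1]=0$ forces $\gamma_1=0$. The contraction $[\wt h_i^-(z_1),\wt h_i^+(z_2)]$ is central and is supplied by \eqref{eq:com-formulas-6} with $j=i$, i.e. $\log f(z_1-z_2)^{(q^{2r\ell}-1)(q^{-2r_i}-q^{2r_i})}$, which identifies $\gamma$. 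The lemma then gives
\begin{align*}
  \exp\((\wt h_i^-(z)+\wt h_i^+(z))_{-1}\)1=\exp\(\wt h_i^+(z)\)\exp\(\wt h_i^-(z)\)\exp\(\half\Res_z z\inv\gamma(-z)\),
\end{align*}
so that $Y_{\wh\ell}(E_\ell(h_i),z)=c\,\exp\(\half\Res_z z\inv\gamma(-z)\)\exp(\wt h_i^+(z))\exp(\wt h_i^-(z))$.

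The remaining task, and the step I expect to be the main obstacle, is the scalar identity $c\,\exp\(\half\Res_z z\inv\gamma(-z)\)=1$. Writing $f(z)=zf_0(z)$ with $f_0$ even and $f$ odd and expanding $(q^{2r\ell}-1)(q^{-2r_i}-q^{2r_i})$ into its four shift-monomials $q^{m_k\pd{z}}$, I would compute $\half\Res_z z\inv\gamma(-z)$ as a combination of values of $\log f_0$ at the arguments $2r_i\hbar\pm 2r\ell\hbar$, which is precisely $-\log c$. The delicate point is the bookkeeping of the $\hbar$-shifts and of the $\iota$-expansion prescription of \cite{K-Quantum-aff-va}, in particular checking that the rational $z$-factors arising from $f=zf_0$ do not contribute spuriously to the residue; this is exactly why the prefactor in \eqref{eq:def-E-h} is written in terms of $f_0$. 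Once the scalar cancels, the two sides coincide; along the way I would also verify the auxiliary identities $[\wt h_i^\pm(z_1),\wt h_i^\pm(z_2)]=0$ used above directly from Lemmas \ref{lem:com-formulas} and \ref{lem:com-formulas2}.
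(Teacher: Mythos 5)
Your proposal is correct and follows essentially the same route as the paper's proof: identify $Y_{\wh\ell}$ of the exponential generator with $\exp\((\wt h_i^+(z)+\wt h_i^-(z))_{-1}\)$ acting on the identity field, apply Lemma \ref{lem:exp-cal} using the contraction from \eqref{eq:com-formulas-6} with $j=i$, and check that $\exp\(\half\Res_zz\inv\gamma(-z)\)=\(f_0(2r_i\hbar-2r\ell\hbar)/f_0(2r_i\hbar+2r\ell\hbar)\)^{\half}$ cancels the normalizing prefactor of $E_\ell(h_i)$. The paper's proof is exactly this computation, with the preliminary reductions you spell out (the translation property and the vanishing of the $\gamma_1$-term for $\xi=1$) left implicit.
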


\begin{prop}\label{prop:delta-int+}
For $i\in I$ and $k\ge 0$, we have that
\begin{align*}
  &\(\Delta\(x_i^+\)\)_{-1}^k(\vac\ot\vac)
  =\sum_{t=0}^k f_0(2tr_i\hbar)\binom{k-1}{t}_{q^{r_i}}
  \binom{k}{t}_{q^{r_i}}\binom{k-1}{t}\inv\\
  \times&\prod_{a=0}^{k-t-1}\exp\(\wt h_i^+(2(ar_i+r\ell)\hbar)\)
  q^{2r_i(k-t)\partial}\(x_i^+\)_{-1}^t\vac
  \ot q^{2r\ell\partial}\(x_i^+\)_{-1}^{k-t}\vac.
\end{align*}
\end{prop}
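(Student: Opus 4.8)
The plan is to imitate the proof of Proposition \ref{prop:delta-serre+} verbatim, replacing the normal--ordering identity for $(\zeta_i)_0^k\zeta_j$ by the one for $(\zeta_i)_{-1}^k\vac$. Set $\zeta_i=\Delta(x_i^+)\in F_{\hat\fg,\hbar}^{\ell}\wh\ot F_{\hat\fg,\hbar}^{\ell'}$. By Lemma \ref{lem:normal-ordering-Delta} the fields $\set{Y_\Delta(\Delta(x_i^+),z)}{i\in I}$ satisfy the $q$--locality \eqref{eq:q-local}, so Proposition \ref{prop:normal-ordering-rel-general}(3) applies and yields
\begin{align*}
  Y_\Delta\((\Delta(x_i^+))_{-1}^k(\vac\ot\vac),z\)
  =\prod_{a=1}^{k-1}f_0(2ar_i\hbar)\,
  \:Y_\Delta(\Delta(x_i^+),z_1)\cdots Y_\Delta(\Delta(x_i^+),z_k)\;,
\end{align*}
where $z_a=z+2(k-a)r_i\hbar$. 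First I would apply this to $\vac\ot\vac$.

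Next I would expand the normal--ordered product through Proposition \ref{prop:normal-ordering-J+} for the constant tuple $(i,i,\dots,i)$: writing $\Delta(x_i^+)=x_i^+\ot\vac+q^{2r\ell\partial}E_\ell(h_i)\ot q^{2r\ell\partial}x_i^+$ and selecting one summand in each factor labels the terms by subsets $J\subset\{1,\dots,k\}$, so the product applied to $\vac\ot\vac$ becomes $\sum_J f^+_{i,\dots,i,J,\hbar}(z_1,\dots,z_k)\,X^+_{i,\dots,i,J,\Delta}(z_1,\dots,z_k)$. After specializing $z_a=z+2(k-a)r_i\hbar$ the scalar $f^+_{i,\dots,i,J,\hbar}$ equals $\prod_{a\in J,\,b\notin J}f(2(b-a-1)r_i\hbar)/f(2(b-a)r_i\hbar)$, and since $f(0)=0$ this is nonzero precisely when no pair has $b=a+1$ with $a\in J,\,b\notin J$; hence only the upward--closed sets $J=\{t+1,\dots,k\}$ ($0\le t\le k$) survive.

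For such a $J$ I would evaluate the two surviving ingredients. The telescoping product collapses, and using $f(-z)=-f(z)$ together with $f(2mr_i\hbar)=(q^{r_i}-q^{-r_i})[m]_{q^{r_i}}$ one gets
\begin{align*}
  f^+_{i,\dots,i,J,\hbar}
  =\prod_{b=1}^{t}\frac{f(2(b-k-1)r_i\hbar)}{f(2(b-t-1)r_i\hbar)}
  =\prod_{c=1}^{t}\frac{[\,k-t+c\,]_{q^{r_i}}}{[\,c\,]_{q^{r_i}}}
  =\binom{k}{t}_{q^{r_i}}.
\end{align*}
The sign factor $\prod_{a<b,\,a\notin J,\,b\in J}(-1)^{\delta_{i_a,i_b}+1}$ is trivial since all indices equal $i$. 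In $X^+_{i,\dots,i,J,\Delta}$ the two strings of $x_i^+$'s are pure--power normal--ordered products: reading Proposition \ref{prop:normal-ordering-rel-general}(3) in reverse inside each tensor factor — the specialized arguments coincide with the standard positions about the base points $2(k-t)r_i\hbar$ and $2r\ell\hbar$ — gives the first factor $(\prod_{a=1}^{t-1}f_0(2ar_i\hbar))^{-1}q^{2(k-t)r_i\partial}(x_i^+)_{-1}^t\vac$ and the second factor $(\prod_{a=1}^{k-t-1}f_0(2ar_i\hbar))^{-1}q^{2r\ell\partial}(x_i^+)_{-1}^{k-t}\vac$, while the $E_\ell(h_i)$--slots produce $\prod_{a=0}^{k-t-1}\exp(\wt h_i^+(2(ar_i+r\ell)\hbar))$ via Proposition \ref{prop:Y-E} and the vanishing $\wt h_i^-(w)\vac=0$.

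Finally I would collect the scalars. Combining $\prod_{a=1}^{k-1}f_0(2ar_i\hbar)$, the factor $\binom{k}{t}_{q^{r_i}}$, and the two inverse $f_0$--products, and writing $f_0(2mr_i\hbar)=\frac{q^{r_i}-q^{-r_i}}{2r_i\hbar}\cdot\frac{[m]_{q^{r_i}}}{m}$, the $f_0$--ratio telescopes to $f_0(2tr_i\hbar)\binom{k-1}{t}_{q^{r_i}}\binom{k-1}{t}^{-1}$, so the $t$--th summand carries the coefficient $f_0(2tr_i\hbar)\binom{k-1}{t}_{q^{r_i}}\binom{k}{t}_{q^{r_i}}\binom{k-1}{t}^{-1}$; letting $z\to 0$ then gives the stated identity. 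The main obstacle is the bookkeeping of the $f$-- and $f_0$--factors, in particular verifying that after the specialization $z_a=z+2(k-a)r_i\hbar$ the arguments of the two $x_i^+$--strings align with the standard normal--ordering positions so that Proposition \ref{prop:normal-ordering-rel-general}(3) can be inverted factor by factor, and confirming that the combined $f_0$--ratio telescopes exactly into the claimed $q$--binomial coefficient.
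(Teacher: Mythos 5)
Your proposal is correct and follows essentially the same route as the paper's proof: reduce to a normal-ordered product via Proposition \ref{prop:normal-ordering-rel-general}(3) (justified by Lemma \ref{lem:normal-ordering-Delta}), expand by Proposition \ref{prop:normal-ordering-J+}, observe that only the upward-closed sets $J=\{t+1,\dots,k\}$ survive with $f^{+}_{i,\dots,i,J,\hbar}=\binom{k}{t}_{q^{r_i}}$, reassemble each tensor factor as a $(-1)$-power by inverting Proposition \ref{prop:normal-ordering-rel-general}(3), and telescope the $f_0$-factors into $f_0(2tr_i\hbar)\binom{k-1}{t}_{q^{r_i}}\binom{k-1}{t}^{-1}$. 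Your handling of the $E_\ell(h_i)$-slots (producing $\exp\bigl(\wt h_i^{+}(\cdot)\bigr)$ because the annihilation part acts trivially on the vacuum after reordering) is the correct reading and matches the stated result, even though Proposition \ref{prop:normal-ordering-J+} and the paper's own computation carry the factors with the label $\wt h_i^{-}$.
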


\begin{prop}\label{prop:delta-int-}
For $i\in I$ and $k\ge 0$, we have that
\begin{align*}
  &\(\Delta(x_i^-)\)_{-1}^k(\vac\ot\vac)\\
  =&\sum_{t=0}^k(x_i^-)_{-1}^{k-t}\vac
  \ot q^{2(k-t)r_i\partial}(x_i^-)_{-1}^t\vac
  \ot f_0(2tr_i\hbar)\binom{k-1}{t}_{q^{r_i}}
  \binom{k}{t}_{q^{r_i}}\binom{k-1}{t}\inv.
\end{align*}
\end{prop}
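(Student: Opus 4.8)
The final statement, Proposition \ref{prop:delta-int-}, computes $\(\Delta(x_i^-)\)_{-1}^k(\vac\ot\vac)$ as a sum. The plan is to mirror exactly the proof of Proposition \ref{prop:delta-int+}, but working with the $x_i^-$ generators, which behave far more simply than the $x_i^+$ generators because $\Delta(x_i^-)=x_i^-\ot\vac+\vac\ot x_i^-$ carries no $E_\ell(h_i)$ factor.

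First I would apply Proposition \ref{prop:normal-ordering-rel-general}(3), using that $\set{Y(\Delta(x_i^-),z)}{i\in I}$ satisfies the $q$-locality relation \eqref{eq:q-local} (this is the ``$-$'' case of Lemma \ref{lem:normal-ordering-Delta} with $i=j$), to write
\begin{align*}
  &Y_\Delta\(\(\Delta\(x_i^-\)\)_{-1}^k\(\vac\ot\vac\),z\)\\
  =&\prod_{a=1}^{k-1}f_0(2ar_i\hbar)
    \:Y_\Delta\(\Delta\(x_i^-\),z+2(k-1)r_i\hbar\)\cdots Y_\Delta\(\Delta\(x_i^-\),z\)\;.
\end{align*}
Next I would fix the $k$-tuple $(i,\dots,i)$ and invoke Proposition \ref{prop:normal-ordering-J-} to expand the normally ordered product applied to $\vac\ot\vac$ as a sum over subsets $J\subset\{1,\dots,k\}$. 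As in the proof of Proposition \ref{prop:delta-int+}, the key point is to determine for which $J$ the scalar factor $f_{i,\dots,i,J,\hbar}^-$ evaluated at the shifted arguments $z+2r_i(k-a)\hbar$ is nonzero. Since $f_{i,\dots,i,J,\hbar}^-(z_1,\dots,z_k)=\prod_{a\not\in J,b\in J}f(z_a-z_b)^{-\delta_{i_a,i_b}+q^{-r_ia_{ii}}}$ and $f(0)=0$, a vanishing argument identical to the one in Proposition \ref{prop:delta-int+} (but with the roles of $J$ and its complement interchanged, matching the subset structure in Proposition \ref{prop:normal-ordering-J-}) should force $J=\{1,2,\dots,t\}$ for some $0\le t\le k$, and the corresponding surviving factor should evaluate to $\binom{k}{t}_{q^{r_i}}$.

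The computation is then assembled using Proposition \ref{prop:normal-ordering-rel-general}(4) to rewrite the two normally ordered factors back in terms of $\(x_i^-\)_{-1}^t\vac$ and $\(x_i^-\)_{-1}^{k-t}\vac$, contributing the $f_0$ quotients that combine with $\prod_{a=1}^{k-1}f_0(2ar_i\hbar)$ to produce the final coefficient $f_0(2tr_i\hbar)\binom{k-1}{t}_{q^{r_i}}\binom{k}{t}_{q^{r_i}}\binom{k-1}{t}\inv$. Finally I would take $z\to 0$. The essential simplification relative to the ``$+$'' case is that the $X_{i_1,\dots,i_n,J,\Delta}^-$ in Proposition \ref{prop:normal-ordering-J-} contains no $\exp(\wt h_i^-)$ factors (these appear only in the ``$+$'' version, Proposition \ref{prop:normal-ordering-J+}), so the output is a clean tensor $\(x_i^-\)_{-1}^{k-t}\vac\ot q^{2(k-t)r_i\partial}\(x_i^-\)_{-1}^t\vac$ with no surviving exponential operators.

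The main obstacle I anticipate is purely bookkeeping: correctly tracking the shift $q^{2(k-t)r_i\partial}$ on the second tensor factor, which arises because $\Delta(x_i^-)$ has its second component $\vac\ot x_i^-$ sitting at the later slots of the tuple and hence acquires the accumulated shift from the normal-ordering prescription. I expect the sign factor $\prod_{a\not\in J,b\in J}(-1)^{\delta_{i_a,i_b}+1}$ from Proposition \ref{prop:normal-ordering-J-} to be trivially $+1$ here since all indices equal $i$, so $\delta_{i_a,i_b}+1=2$ is even; this is why no signs appear in the final answer. Beyond verifying that the shifted-argument evaluation of $f_{i,\dots,i,J,\hbar}^-$ reproduces the $q$-binomial $\binom{k}{t}_{q^{r_i}}$ via the same telescoping product of $f$-values as in Proposition \ref{prop:delta-int+}, everything else is routine substitution.
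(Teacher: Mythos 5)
Your proposal matches the paper's proof essentially step for step: apply Proposition \ref{prop:normal-ordering-rel-general}(3) to express $\(\Delta(x_i^-)\)_{-1}^k(\vac\ot\vac)$ via the normally ordered product, expand over subsets $J$ using Proposition \ref{prop:normal-ordering-J-}, observe that only $J=\{1,\dots,t\}$ survives with factor $\binom{k}{t}_{q^{r_i}}$, recombine into $(x_i^-)_{-1}^t\vac$ and $(x_i^-)_{-1}^{k-t}\vac$ with the correct shift $q^{2(k-t)r_i\partial}$, and set $z=0$. Your observations about the absence of $\exp(\wt h_i^-)$ factors and the trivial sign are also exactly what makes the ``$-$'' case cleaner than Proposition \ref{prop:delta-int+}, so the argument is correct as outlined.
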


\section{Quantum lattice vertex algebras}\label{sec:qlattice}

Let $\ell$ be a fixed positive integer, and let $Q_L$ be the lattice generated by long roots of $\g$.
In this section, we construct a quantum lattice VA $V_{\sqrt \ell Q_L}^{\eta_\ell}$ (see Theorem \ref{thm:qlatticeVA}) associated with the lattice $\sqrt \ell Q_L$ and a formal series $\eta_\ell(z)\in \C((z))[[\hbar]]$ (see \eqref{eq:def-eta}) following the framework in \cite{JKLT-Defom-va} (see also \cite[Section 4]{K-q-lattice-va}).
The main purpose of this section is to establish an embedding $V_{\sqrt\ell Q_L}^{\eta_\ell}\hookrightarrow L_{\hat\g,\hbar}^\ell$.

From \cite[\S 6.7]{Kac-book}, we get that
\begin{align}
    Q_L=\oplus_{i\in I}\Z r/r_i\al_i.
\end{align}
To enhance readability and convenience, we set
\begin{align}
  \beta_i=\sqrt\ell r/r_i\al_i\quad\te{for }i\in I.
\end{align}
From \eqref{eq:bilinear-form}, we get that
\begin{align}
  \<\beta_i,\beta_j\>=\ell r/r_j a_{ij}\quad\te{for }i\in I.
\end{align}
Then $\sqrt\ell Q_L$ is a positive definite even lattice.
We first recall the lattice VA $V_{\sqrt\ell Q_L}$ associated to the specific lattice $\sqrt\ell Q_L$ (see \cite[Section 6.4-6.5]{LL} for the construction of general lattice VAs).

Let $\hat\h$ be the Lie subalgebra of $\hat\g$ spanned by $\set{c,h_i(n)}{i\in I,n\in\Z}$,
and let $\h(n)$ be the subspace of $\hat\h$ spanned by $h_i(n)$ for $n\in\Z$.
Define the following two abelian Lie subalgebras
\begin{align*}
  \hat\h^\pm=\oplus_{n>0}\h(\pm n)
\end{align*}
and identify $\h$ with $\h(0)$. Set
\begin{align*}
  \hat\h'=\hat\h^+\oplus\hat\h^-\oplus\C c
\end{align*}
which is a Heisenberg algebra. Then $\hat\h=\hat\h'\oplus \h$, a direct sum of Lie algebras.

We fix a total order $<$ on the index set $I$. Let $\epsilon:\sqrt\ell Q_L\times \sqrt\ell Q_L\to \C^\times$
be the bimultiplicative map defined by
\begin{align}
    \epsilon(\beta_i,\beta_j)=\begin{cases}
        1,&\mbox{if }i\le j,\\
        \exp(\pi\<\beta_i,\beta_j\>\sqrt{-1}),&\mbox{if }i>j.
    \end{cases}
\end{align}
Then $\epsilon$ is a $2$-cocycle of $\sqrt\ell Q_L$ satisfying the following relations:
\begin{align*}
    \epsilon(\beta,\gamma)\epsilon(\gamma,\beta)\inv=(-1)^{\<\beta,\gamma\>},\quad \epsilon(\beta,0)=1=\epsilon(0,\beta)\quad \te{for }\beta,\gamma\in \sqrt\ell Q_L.
\end{align*}
Denote by $\C_\epsilon[\sqrt\ell Q_L]$ the $\epsilon$-twisted group algebra of $\sqrt\ell Q_L$, which by definition has a designated basis $\{e_\beta\,\mid\,\beta\in \sqrt\ell Q_L\}$ with relations
\begin{align}
    e_\beta\cdot e_\gamma=\epsilon(\beta,\gamma)e_{\beta+\gamma}\quad\te{for }\beta,\gamma\in \sqrt\ell Q_L.
\end{align}

Make $\C_{\epsilon}[\sqrt\ell Q_L]$ an $\hat \h$-module by letting $\hat\h'$
act trivially and letting $\h$ act by
\begin{eqnarray}
  h e_\be=\<h,\be\>e_\be \   \  \mbox{ for }h\in \h,\  \be\in \sqrt\ell Q_L.
\end{eqnarray}
Note that $S(\hat\h^-)$ is naturally an $\hat\h$-module of level $1$.
Define
\begin{eqnarray}
V_{\sqrt\ell Q_L}=S(\hat\h^-)\otimes \C_{\epsilon}[\sqrt\ell Q_L],
\end{eqnarray}
the tensor product of $\hat \h$-modules, which is an $\hat \h$-module of level $1$.
Set
$$\vac=1\ot e_0\in  V_{\sqrt\ell Q_L}.$$
Identify $\h$ and $\C_{\epsilon}[{\sqrt\ell Q_L}]$ as subspaces of $V_{\sqrt\ell Q_L}$ via the correspondence
$$h\mapsto h(-1)\otimes 1\   (h\in\h) \quad\te{and}\quad e_\al\mapsto 1\otimes e_\al\   (\al\in {\sqrt\ell Q_L}).$$
For $\al\in {\sqrt\ell Q_L}$ set
\begin{align}\label{eq:def-E}
  E^\pm(\al,z)=\exp\(\sum_{n\in\Z_+} \frac{\al(\mp n)}{\pm n}z^{\pm n} \)
\end{align}
on $V_{\sqrt\ell Q_L}$.
For $\al\in {\sqrt\ell Q_L}$,  define
$z^{\al}:\   \C_{\epsilon}[{\sqrt\ell Q_L}]\rightarrow \C_{\epsilon}[{\sqrt\ell Q_L}][z,z^{-1}]$ by
\begin{eqnarray}
z^\al\cdot e_\be=z^{\<\al,\be\>}e_\be\   \   \   \mbox{ for }\be\in {\sqrt\ell Q_L}.
\end{eqnarray}
Then there exists a VA structure on $V_{\sqrt\ell Q_L}$, which is uniquely determined by
the conditions that $\vac$ is the vacuum vector and that
\begin{align*}
&Y_{\sqrt\ell Q_L}(h,z)=h(z),\quad
Y_{\sqrt\ell Q_L}(e_\al,z)=E^+(\al,z)E^-(\al,z)e_\al z^\al\quad\te{for } h\in\h,\,\,\al\in {\sqrt\ell Q_L}.
\end{align*}

To simplify notation, we adopt the following convention.
\begin{convention}\label{con:log}
We use the standard expansion
\begin{align*}
  \log f(z)/z=&\log \frac{e^{z/2}-e^{-z/2}}{z}
  =\log \left( 1+z^2\sum_{n=1}^\infty \frac{z^{2n-2}}{4^n(2n+1)!} \right)\\
  =&\sum_{m=1}^\infty \frac{(-1)^{m-1}}{m}\left(z^2\sum_{n=1}^\infty \frac{z^{2n-2}}{4^n(2n+1)!} \right)^m\in z^2\C[[z^2]].
\end{align*}
Additionally, for $g(q)=\sum_k a_kq^k\in\Z[q,q\inv]$, we employ the standard expansion
\begin{align*}
  &\log f(z)^{g(q)}-g(1)\log z=\sum_k a_k\log f(z+k\hbar)-g(1)\log z\\
  =&g(1)\log f(z)/z+\sum_k\sum_{n=1}^\infty a_k \frac{k^n\hbar^n}{n!}\pdiff z n \log f(z)\in\C((z))[[\hbar]].
\end{align*}
\end{convention}

Let $$D=\diag\set{r/r_i}{i\in I}.$$ Then $AD$ is symmetric.
Set
\begin{align*}
  A(z)=([a_{ij}]_{e^{r_iz}})_{i,j\in I},\quad D(z)=\diag\set{[r/r_i]_{e^{z}}}{i\in I}.
\end{align*}
Since both $A$ and $D$ are invertible, we define
\begin{align}\label{eq:def-C-ell-z}
  C^\ell(z)=(c_{ij}^\ell(z))_{i,j\in I}
  =\frac{1}{z}\(e^{r\ell z}[\ell]_{e^{rz}}A(z)D(z)D\inv A\inv \ell\inv -E\),
\end{align}
where $E$ stands for the identity matrix.
Define $\eta_\ell(\cdot,z):\h\to \h\ot \C((z))[[\hbar]]$ as follows
\begin{align}\label{eq:def-eta}
  \eta_\ell(\sqrt\ell r/r_i\al_i,z)=&\hbar\sum_{j\in I}\sqrt\ell r/r_j\al_j\ot
  c_{ij}^\ell\(\hbar\pd{z}\)\pd{z}\log f(z)\\
  &\nonumber+\sqrt\ell r/r_i\al_i\ot\log f(z)/z.
\end{align}
It is straightforward to verify that $\eta_\ell$ satisfies the following condition:
\begin{align}\label{eq:eta-neg-cond}
    &\eta_\ell(\beta_i,z)|_{\hbar=0}\in \h\ot z\C[[z]],\quad
    \eta_\ell(\beta_i,z)^-\in \h\ot \hbar\C[z\inv][[\hbar]]\quad\te{for }i\in I.
\end{align}
Moreover, we have
\begin{lem}\label{lem:eta-basic}
For $i,j\in I$, we have that
\begin{align}
  &\<\eta_\ell(\beta_i,z),\beta_j\>
  =\log f(z)^{ [a_{ij}]_{q^{r_i}}[r\ell/r_j]_{q^{r_j}}q^{r\ell} }-a_{ij}r\ell/r_j \log z,
    \label{eq:eta-sp}\\
  &e^{\<\eta_\ell(\beta_i,z),\beta_j\>}=f(z)^{[a_{ij}]_{q^{r_i}}[r\ell/r_j]_{q^{r_j}}q^{r\ell} }z^{-a_{ij}r\ell/r_j}.
    \label{eq:eta-sp-exp}
\end{align}
\end{lem}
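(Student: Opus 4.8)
The plan is to evaluate $\<\eta_\ell(\beta_i,z),\beta_j\>$ directly from \eqref{eq:def-eta} and to observe that $C^\ell(z)$ was defined in \eqref{eq:def-C-ell-z} precisely so that the resulting operator collapses to the claimed $q$-number. First I would record the form: from $\<\al_i,\al_j\>=r_ia_{ij}/r$ and $\beta_i=\sqrt\ell\,(r/r_i)\al_i$ one gets $\<\beta_k,\beta_j\>=r\ell a_{kj}/r_j$ (symmetric, by $r_ia_{ij}=r_ja_{ji}$). Pairing \eqref{eq:def-eta} with $\beta_j$, using the $\C((z))[[\hbar]]$-linearity of $\<\cdot,\cdot\>$ in the first slot and reading $\log f(z)/z$ as $\log(f(z)/z)=\log f(z)-\log z$ (which is forced by \eqref{eq:eta-h=0-cond}, since $f_0=f/z\in 1+z^2\C[[z^2]]$), I obtain
\begin{align*}
  \<\eta_\ell(\beta_i,z),\beta_j\>
  =\hbar\sum_{k\in I}\frac{r\ell a_{kj}}{r_j}\,c_{ik}^\ell(\hbar\pd{z})\,\pd{z}\log f(z)
  +\frac{r\ell a_{ij}}{r_j}\log f(z)-\frac{r\ell a_{ij}}{r_j}\log z.
\end{align*}
Since $\pd{z}\log f(z)=z\inv+(\text{power series})$ carries no $\log z$, the last summand already matches the $-a_{ij}r\ell/r_j\,\log z$ of \eqref{eq:eta-sp}, and it remains to identify the operator acting on $\log f(z)$ in the first two summands.

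The core is a matrix identity from \eqref{eq:def-C-ell-z}. Writing $x=\hbar\pd{z}$ and $q=e^{\hbar}$ (so that $q^{m\pd{z}}$ shifts by $m\hbar$, whence $q^{r_i\pd{z}}=e^{r_ix}$ under the substitution), \eqref{eq:def-C-ell-z} rearranges to
\begin{align*}
  x\,C^\ell(x)+I=e^{r\ell x}[\ell]_{e^{rx}}\,A(x)D(x)D\inv A\inv\,\ell\inv,
\end{align*}
whose right-hand side equals $I$ at $x=0$; hence $C^\ell$ is a genuine matrix of power series and $c_{ik}^\ell(\hbar\pd{z})$ is a well-defined operator. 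Because $\hbar\,c_{ik}^\ell(\hbar\pd{z})\pd{z}=c_{ik}^\ell(x)\,x=[xC^\ell(x)]_{ik}$, I would contract the entrywise identity against the vector $(r\ell a_{kj}/r_j)_k$, write $[xC^\ell(x)]_{ik}=[xC^\ell(x)+I]_{ik}-\delta_{ik}$, and note that the $-\delta_{ik}$ contributions produce exactly $-r\ell a_{ij}/r_j$, cancelling the standalone $+r\ell a_{ij}/r_j$ term. Using $A\inv A=I$ to collapse $\sum_k(A(x)D(x)D\inv A\inv)_{ik}a_{kj}=(A(x)D(x)D\inv)_{ij}$, the two summands combine into the single operator
\begin{align*}
  \frac{r}{r_j}e^{r\ell x}[\ell]_{e^{rx}}(A(x)D(x)D\inv)_{ij}
  =e^{r\ell x}[\ell]_{e^{rx}}[a_{ij}]_{e^{r_ix}}[r/r_j]_{e^{r_jx}},
\end{align*}
where the diagonal structure of $D(x),D$ gives $(A(x)D(x)D\inv)_{ij}=[a_{ij}]_{e^{r_ix}}[r/r_j]_{e^{r_jx}}(r_j/r)$.

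Finally I would invoke the elementary telescoping identity $[\ell]_{e^{rx}}[r/r_j]_{e^{r_jx}}=[r\ell/r_j]_{e^{r_jx}}$, immediate from $[\ell]_{e^{rx}}=(e^{r\ell x}-e^{-r\ell x})/(e^{rx}-e^{-rx})$ and $[r/r_j]_{e^{r_jx}}=(e^{rx}-e^{-rx})/(e^{r_jx}-e^{-r_jx})$, to rewrite the operator as $[a_{ij}]_{q^{r_i\pd{z}}}[r\ell/r_j]_{q^{r_j\pd{z}}}q^{r\ell\pd{z}}$; applying it to $\log f(z)$ yields $\log f(z)^{[a_{ij}]_{q^{r_i}}[r\ell/r_j]_{q^{r_j}}q^{r\ell}}$ in the sense of the $P(z)^{g(q)}$ convention. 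Together with the $\log z$ term this proves \eqref{eq:eta-sp}, and \eqref{eq:eta-sp-exp} follows by exponentiation, using $e^{\log f(z)^{g(q)}}=f(z)^{g(q)}$ (with $g(q)=[a_{ij}]_{q^{r_i}}[r\ell/r_j]_{q^{r_j}}q^{r\ell}$) and $e^{-a_{ij}r\ell/r_j\,\log z}=z^{-a_{ij}r\ell/r_j}$. I expect the only real difficulty to be bookkeeping: keeping the substitution $x=\hbar\pd{z}$ and the $A\inv$-cancellation straight through the matrix manipulation; there is no conceptual obstacle, as $C^\ell$ was engineered precisely to make this identity hold.
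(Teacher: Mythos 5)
Your proposal is correct and follows essentially the same route as the paper: both evaluate the pairing via $\<\beta_k,\beta_j\>=r\ell a_{kj}/r_j$, contract the defining identity of $C^\ell$ against (a column of) $\ell AD$ so that $A\inv$ cancels and the $-\delta_{ik}$ term absorbs the standalone $\log f(z)$ contribution, and finish with the telescoping $[\ell]_{q^{r}}[r/r_j]_{q^{r_j}}=[r\ell/r_j]_{q^{r_j}}$. The paper's proof is simply a compressed version of your computation (it states the matrix identity $z\ell C^\ell(z)AD=e^{r\ell z}[\ell]_{e^{rz}}A(z)D(z)-\ell AD$ and writes the result in two lines), so there is nothing to add.
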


\begin{proof}
Note that
\begin{align*}
  \left({ [a_{ij}]_{q^{r_i}}[r\ell/r_j]_{q^{r_j}}q^{r\ell} }\right)|_{q=1}=a_{ij}r\ell/r_j.
\end{align*}
The RHS of \eqref{eq:eta-sp} is a well-defined series in $\C((z))[[\hbar]]$ (see Convention \ref{con:log}).
Based on the definition of $C^\ell(z)$ as shown in \eqref{eq:def-C-ell-z}, we see that
\begin{align*}
  z\ell C^\ell(z)AD=e^{r\ell z}[\ell]_{e^{rz}}A(z)D(z)-\ell AD.
\end{align*}
Consequently, we have that
\begin{align*}
  &\<\eta_\ell(\beta_i,z),\beta_j\>
  =\hbar\sum_{k\in I}\ell  a_{kj}r/r_j c_{ik}^\ell\(\hbar\pd{z}\)\pd{z}\log f(z)
  +\ell r/r_j a_{ij}\log f(z)/z\nonumber\\
  =&\sum_{k\in I}\left(\hbar\pd z\right)\ell c_{ik}^\ell\left(\hbar\pd z \right)a_{kj}\ r/r_j\log f(z)+\ell a_{ij}\ r/r_j\log f(z)/z\\
  =&\left(e^{r\ell \hbar\pd z}[\ell]_{e^{r\hbar\pd z}}[a_{ij}]_{e^{r_i\hbar\pd z}}[r/r_j]_{e^{\hbar\pd z}}-\ell a_{ij}r/r_j\right)\log f(z)+\ell a_{ij}\ r/r_j\log f(z)/z\\
  =&\log f(z)^{ [a_{ij}]_{q^{r_i}}[r\ell/r_j]_{q^{r_j}}q^{r\ell} }- a_{ij}r\ell/r_j\log z.
\end{align*}
This completes the proof of \eqref{eq:eta-sp}.
The relation \eqref{eq:eta-sp-exp} follows immediately from \eqref{eq:eta-sp}.
\end{proof}

For a $\C[[\hbar]]$-module $W$, and a linear map $\varphi(\cdot,z):\h\to W[[z,z\inv]]$,
we define (see \cite{EK-qva})
\begin{align}\label{eq:def-Phi}
  \Phi(h\ot f(z),\varphi)=\Res_{z_1}\varphi(h,z_1)f(z-z_1).
\end{align}
Define $\Phi(G(z),\varphi)$ for $G(z)\in \h\ot \C((z))[[\hbar]]$ in the obvious way.
And extend the bilinear form $\<\cdot,\cdot\>$ on $\h$ to a $\C((z))[[\hbar]]$-linear form on $\h\ot \C((z))[[\hbar]]$.
By using \cite[Theorem 4.11, Proposition 4.12]{K-q-lattice-va}, we have that

\begin{thm}\label{thm:qlatticeVA}
There exists an $\hbar$-adic quantum VA structure on $V_{\sqrt\ell Q_L}[[\hbar]]$ with the vertex operator map $Y_{\sqrt\ell Q_L}^{\eta_\ell}$ uniquely determined by $(i\in I)$:
\begin{align*}
    &Y_{\sqrt\ell Q_L}^{\eta_\ell}(\beta_i,z)=Y_{\sqrt\ell Q_L}(\beta_i,z)+\Phi(\eta_\ell'(\beta_i,z),Y_{\sqrt\ell Q_L}),\\
    &Y_{\sqrt\ell Q_L}^{\eta_\ell}(e_{\pm\beta_i},z)=Y_{\sqrt\ell Q_L}(e_{\pm\beta_i},z)\exp\(\pm \Phi(\eta_\ell(\beta_i,z),Y_{\sqrt\ell Q_L}) \),
\end{align*}
and the quantum Yang-Baxter operator $S_{\sqrt\ell Q_L}^{\eta_\ell}(z)$ uniquely determined by $(i,j\in I)$:
\begin{align*}
  &S_{\sqrt\ell Q_L}^{\eta_\ell}(z)(\beta_j\ot \beta_i)=\beta_j\ot \beta_i+\vac\ot\vac\ot \pdiff{z}{2} \log f(z)^{[a_{ij}]_{q^{r_i}}[r\ell/r_j]_{q^{r_j}}(q^{r\ell}-q^{-r\ell}) },\\
  &S_{\sqrt\ell Q_L}^{\eta_\ell}(z)(e_{\pm\beta_j}\ot \beta_i)=e_{\pm\beta_j}\ot \beta_i\pm e_{\pm\beta_j}\ot\vac\ot
  \pd{z}\log f(z)^{ [a_{ij}]_{q^{r_i}}[r\ell/r_j]_{q^{r_j}}(q^{r\ell}-q^{-r\ell}) },\\
  &S_{\sqrt\ell Q_L}^{\eta_\ell}(z)(\beta_j\ot e_{\pm\beta_i})=\beta_j\ot e_{\pm\beta_i}\mp\vac\ot e_{\pm\beta_i}\ot
  \pd{z}\log f(z)^{ [a_{ij}]_{q^{r_i}}[r\ell/r_j]_{q^{r_j}}(q^{r\ell}-q^{-r\ell}) },\\
  &S_{\sqrt\ell Q_L}^{\eta_\ell}(z)(e_{\epsilon_1\beta_j}\ot e_{\epsilon_2\beta_i})
  =e_{\epsilon_1\beta_j}\ot e_{\epsilon_2\beta_i}\ot f(z)^{\epsilon_1\epsilon_2[a_{ij}]_{q^{r_i}} [r\ell/r_j]_{q^{r_j}}(q^{-r\ell}-q^{r\ell}) }.
\end{align*}
We denote this $\hbar$-adic quantum VA by $V_{\sqrt\ell Q_L}^{\eta_\ell}$.
\end{thm}

To construct the embedding $V_{\sqrt{\ell} Q_L}^{\eta_\ell} \hookrightarrow L_{\hat{\mathfrak{g}},\hbar}^\ell$,
we first introduce the following special case of \cite[Definition 4.13]{K-q-lattice-va}.
\begin{de}
Define $\mathcal A_\hbar^{\eta_\ell}({\sqrt\ell Q_L})$ to be the category, where the objects are topologically free $\C[[\hbar]]$-modules $W$ equipped with fields $\al_{i,\hbar}(z),\,e_{i,\hbar}^\pm(z)\in\E_\hbar(W)$ ($i\in I$), satisfying the conditions that ($i,j\in I$):
\begin{align}
  \tag{A1}&[\beta_{i,\hbar}(z_1),\beta_{j,\hbar}(z_2)]
    =[a_{ij}]_{q^{r_i\pd{z_2}}}[r\ell/r_j]_{q^{r_j\pd{z_2}}}\label{A1}\\
  &\quad\times\(\iota_{z_1,z_2}q^{-r\ell\pd{z_2}}-\iota_{z_2,z_1}q^{r\ell\pd{z_2}}\)
  \pd{z_1}\pd{z_2}\log f(z_1-z_2),\nonumber\\
  \tag{A2}&[\beta_{i,\hbar}(z_1),e_{j,\hbar}^\pm(z_2)]=\pm e_{j,\hbar}^\pm(z_2)
    [a_{ij}]_{q^{r_i\pd{z_2}}}[r\ell/r_j]_{q^{r_j\pd{z_2}}}\label{A2}\\
  &\quad\times\(\iota_{z_1,z_2}q^{-r\ell\pd{z_2}}-\iota_{z_2,z_1}q^{r\ell\pd{z_2}}\)
    \pd{z_1}\log f(z_1-z_2),\nonumber\\
  \tag{A3}&\iota_{z_1,z_2}f(z_1-z_2)^{q^{-r_ia_{ij}}[r\ell/r_i]_{q^{r_i}}[r\ell/r_j]_{q^{r_j}}}
    e_{i,\hbar}^\pm(z_1)e_{j,\hbar}^\pm(z_2)\label{A3}\\
  &\quad=\iota_{z_2,z_1}f(z_1-z_2)^{q^{r_ia_{ij}}[r\ell/r_i]_{q^{r_i}}[r\ell/r_j]_{q^{r_j}}}
    e_{j,\hbar}^\pm(z_2)e_{i,\hbar}^\pm(z_1),\nonumber\\
  \tag{A4}&\iota_{z_1,z_2}f(z_1-z_2)^{(q^{r_ia_{ij}}+\delta_{ij}+\delta_{ij}q^{2r\ell})
    [r\ell/r_i]_{q^{r_i}}[r\ell/r_j]_{q^{r_j}}}
    e_{i,\hbar}^\pm(z_1)e_{j,\hbar}^\mp(z_2)\label{A4}\\
  &\quad=\iota_{z_2,z_1}f(z_1-z_2)^{(q^{-r_ia_{ij}}+\delta_{ij}+\delta_{ij}q^{2r\ell})
        [r\ell/r_i]_{q^{r_i}}[r\ell/r_j]_{q^{r_j}}}
    e_{j,\hbar}^\mp(z_2)e_{i,\hbar}^\pm(z_1),\nonumber\\
  \tag{A5}&\frac{d}{dz}e_{i,\hbar}^\pm(z)=\pm \beta_{i,\hbar}(z)^+e_{i,\hbar}^\pm(z)
    \pm e_{i,\hbar}^\pm(z)\beta_{i,\hbar}(z)^-
    \label{A5}\\
  &\quad-e_{i,\hbar}^\pm(z)\(\(f_0'(z)/f_0(z)\)^{[2]_{q^{r_i}}[r\ell/r_i]_{q^{r_i}}q^{r\ell}}\)|_{z=0}
    ,\nonumber\\
  \tag{A6}&\iota_{z_1,z_2}f(z_1-z_2)^{q^{r\ell}[2]_{q^{r_i}}[r\ell/r_i]_{q^{r_i}}}
    e_{i,\hbar}^+(z_1)e_{i,\hbar}^-(z_2)\label{A6}\\
  &\quad=\iota_{z_2,z_1}f(z_1-z_2)^{q^{-r\ell}[2]_{q^{r_i}}[r\ell/r_i]_{q^{r_i}}}
    e_{i,\hbar}^-(z_2)e_{i,\hbar}^+(z_1),\nonumber\\
  \tag{A7}&\left.\(\iota_{z_1,z_2}f(z_1-z_2)^{q^{r\ell}[2]_{q^{r_i}}[r\ell/r_i]_{q^{r_i}}}
    e_{i,\hbar}^+(z_1)e_{i,\hbar}^-(z_2)\)\right|_{z_2=z_1}=1.\label{A7}
\end{align}
\end{de}

The following is a special case of \cite[Proposition 4.14]{K-q-lattice-va}.

\begin{prop}\label{prop:qlatticeVA-mod-to-A-mod}
Let $(W,Y_W^{\eta_\ell})$ be a
$V_{\sqrt\ell Q_L}^{\eta_\ell}$-module. Then
\begin{align*}
  (W,\{Y_W^{\eta_\ell}(\beta_i,z)\}_{i\in I},\{Y_W^{\eta_\ell}(e_{\pm\beta_i},z)\}_{i\in I})\in\obj \mathcal A_\hbar^{\eta_\ell}({\sqrt\ell Q_L}).
\end{align*}
\end{prop}

\begin{proof}
Combining Lemma \ref{lem:eta-basic} and \cite[Proposition 4.14]{K-q-lattice-va}, we get that the fields $Y_W^{\eta_\ell}(\beta_i,z)$,
$Y_W^{\eta_\ell}(e_{\pm\beta_i},z)$ ($i\in I$) satisfy the relations \eqref{A1}, \eqref{A2}, \eqref{A5}, \eqref{A6}, \eqref{A7} and the following relations:
\begin{align}
  &\tag{A3$'$}\label{A3'}\iota_{z_1,z_2}f(z_1-z_2)^{-q^{r\ell}[a_{ij}]_{q^{r_i}}[r\ell/r_j]_{q^{r_j}}}
  Y_W^{\eta_\ell}(e_{\pm\beta_i},z_1)Y_W^{\eta_\ell}(e_{\pm\beta_j},z_2)\\
  =&\nonumber \iota_{z_2,z_1}f(-z_2+z_1)^{-q^{-r\ell}[a_{ij}]_{q^{r_i}}[r\ell/r_j]_{q^{r_j}}}
  Y_W^{\eta_\ell}(e_{\pm\beta_j},z_2)Y_W^{\eta_\ell}(e_{\pm\beta_i},z_1),\\
  &\tag{A4$'$}\label{A4'}\iota_{z_1,z_2}f(z_1-z_2)^{q^{r\ell}[a_{ij}]_{q^{r_i}}[r\ell/r_j]_{q^{r_j}}}
  Y_W^{\eta_\ell}(e_{\pm\beta_i},z_1)Y_W^{\eta_\ell}(e_{\mp\beta_j},z_2)\\
  =&\nonumber \iota_{z_2,z_1}f(-z_2+z_1)^{q^{-r\ell}[a_{ij}]_{q^{r_i}}[r\ell/r_j]_{q^{r_j}}}
  Y_W^{\eta_\ell}(e_{\mp\beta_j},z_2)Y_W^{\eta_\ell}(e_{\pm\beta_i},z_1).
\end{align}
For $i,j\in I$, we let
\begin{align*}
  F_{ij}(z)=f(z)^{[r\ell/r_i+a_{ij}]_{q^{r_i}} [r\ell/r_j]_{q^{r_j}} },\quad
  G_{ij}(z)=f(z)^{ [r\ell/r_i-a_{ij}]_{q^{r_i}}[r\ell/r_j]_{q^{r_j}}+\delta_{ij}(1+q^{2r\ell})[r\ell/r_i]_{q^{r_i}}^2 }.
\end{align*}

From \eqref{eq:sym}, we have that
\begin{align*}
  a_{ij}<0 \Longrightarrow a_{ij}=\begin{cases}
           -r,&\mbox{if }i\in I_S,\,j\in I_L,\\
           -1,&\mbox{otherwise}.
         \end{cases}
\end{align*}
Since $r_i=1$ for all $i\in I_S$ (see \eqref{eq:def-r-i}) and $\ell\in\Z_+$, we have that $r\ell/r_i+a_{ij}\ge 0$ for any $i,j\in I$.
It implies that $F_{ij}(z)\in\C[[z,\hbar]]$.
On the other hand, the only possibility for $r\ell/r_i-a_{ij}$ to be negative is that
\begin{align*}
  i=j,\quad r=r_i,\quad\te{and}\quad\ell=1.
\end{align*}
However, in this case, we have that
\begin{align*}
  G_{ij}(z)=f(z)^{-1+1+q^{2r}}=f(z)^{q^{2r}}\in\C[[z,\hbar]].
\end{align*}
Therefore, we also have that $G_{ij}(z)\in\C[[z,\hbar]]$ for all $i,j\in I$.

Notice that
\begin{align*}
  &[r\ell/r_i\pm a_{ij}]_{q^{r_i}}[r\ell/r_j]_{q^{r_j}}\mp q^{r\ell}[a_{ij}]_{q^{r_i}}[r\ell/r_j]_{q^{r_j}}\\
  =&\frac{1}{q^{r_i}-q^{-r_i}}[r\ell/r_j]_{q^{r_j}}
  \(q^{r\ell\pm r_ia_{ij}}-q^{-r\ell\mp r_ia_{ij}}-q^{r\ell\pm r_ia_{ij}}+q^{r\ell\mp r_ia_{ij}}\)\\
  =&q^{\mp r_ia_{ij}}[r\ell/r_i]_{q^{r_i}}[r\ell/r_j]_{q^{r_j}},\\
  &[r\ell/r_i\pm a_{ij}]_{q^{r_i}}[r\ell/r_j]_{q^{r_j}}\mp q^{-r\ell}[a_{ji}]_{q^{r_j}}[r\ell/r_i]_{q^{r_i}}\\
  =&\frac{q^{r\ell}-q^{-r\ell}}{(q^{r_i}-q^{-r_i})(q^{r_j}-q^{-r_j})}
    \(q^{r\ell\pm r_ia_{ij}}-q^{-r\ell\mp r_ia_{ij}}-q^{-r\ell\pm r_ja_{ji}}+q^{-r\ell\mp r_ja_{ji}} \)\\
  =&q^{\pm r_ia_{ij}}[r\ell/r_i]_{q^{r_i}}[r\ell/r_j]_{q^{r_j}}.
\end{align*}
Then by multiplying $F_{ij}(z_1-z_2)$ on both hand sides of \eqref{A3'}, we get \eqref{A3},
and by multiplying $G_{ij}(z_1-z_2)$ on both hand sides of \eqref{A4'}, we get \eqref{A4}.
\end{proof}

%

From \cite[Corollary 4.18]{K-q-lattice-va}, we immediately get the following result.

\begin{lem}\label{lem:qlattice-universal-property}
Let $(V,Y,\vac_V)$ be an $\hbar$-adic nonlocal VA containing a subset
$\set{\beta_i,e_i^\pm}{i\in I}$, such that
\begin{align*}
  (V,\{Y(\beta_i,z)\}_{i\in I},\{Y(e_i^\pm,z)\}_{i\in I})\in \obj\mathcal A_\hbar^{\eta_\ell}({\sqrt\ell Q_L}).
\end{align*}
Then there is a unique $\hbar$-adic nonlocal VA homomorphism
$\psi:V_{\sqrt\ell Q_L}^{\eta_\ell}\to V$, such that
\begin{align*}
  \psi(\beta_i)=\beta_i,\quad \psi(e_{\pm\beta_i})=e_i^\pm\quad\te{for }i\in I.
\end{align*}
Moreover, $\psi$ is injective.
\end{lem}

The following result is an immediate consequence of \cite[Corollary 4.19]{K-q-lattice-va}.

\begin{lem}\label{lem:h--to-e}
Let $(W,Y_W^{\eta_\ell})$ be a $V_{\sqrt\ell Q_L}^{\eta_\ell}$-module.
If there exists $w\in W$ such that
\begin{align*}
  Y_W^{\eta_\ell}(\beta_i,z)^-w=0\quad\te{for }i\in I,
\end{align*}
then
\begin{align*}
  &Y_W^{\eta_\ell}(e_{\pm\beta_i},z)w\in W[[z]]\quad\te{for }i\in I.
\end{align*}
\end{lem}

The following is the main result of this section, which is a quantum analogue of \cite[Proposition 4.1]{DW-para-structure-double-comm}.

\begin{thm}\label{thm:qlattice-inj}
There exists a unique $\hbar$-adic quantum VA embedding from $V_{\sqrt\ell Q_L}^{\eta_\ell}$
to $L_{\hat\g,\hbar}^\ell$, such that
\begin{align}\label{eq:qlattice-embedding}
  &\sqrt\ell r/r_i\al_i\mapsto [r_i]_{q^\partial}\inv h_i,\quad
  e_{\pm\sqrt\ell r/r_i\al_i}\mapsto \sqrt{c_{i,\ell}} q^{(-r\ell+r_i)\partial}\(x_i^\pm\)_{-1}^{r\ell/r_i}\vac
\end{align}
where
\begin{align}
  c_{i,\ell}=\frac{f_0(2r\ell \hbar)^\half f_0(2(r\ell-r_i) \hbar)^{-\half}}{([r\ell/r_i]_{q^{r_i}}!)^2 f_0(2r_i\hbar)^{r\ell/r_i-\half} }.
\end{align}
Here $f_0(z)\in 1+z^2\C[[z^2]]$ is defined in \eqref{eq:def-f-0} and for any $k\in\C$, $f_0(z)^k\in \C[[z]]$ denotes the standard expansion.
\end{thm}

The proof of Theorem \ref{thm:qlattice-inj} for $\g=\ssl_2$ is presented in Section \ref{subsec:sl2-case}.
Assuming this result holds, we prove Theorem \ref{thm:qlattice-inj} for general $\g$ in the remainder of this section.
To enhance readability and convenience, we define
\begin{align}
  &f_i^\pm=\sqrt{c_{i,\ell}} q^{(r_i-r\ell)\partial}\(x_i^\pm\)_{-1}^{r\ell/r_i}\vac
  \quad\te{for }i\in I.
\end{align}
To apply Lemma \ref{lem:qlattice-universal-property}, we must demonstrate that $L_{\hat\g,\hbar}^\ell$ equipped with the fields $Y_{\wh\ell}([r_i]_{q^\partial}\inv h_i,z)$, $Y_{\wh\ell}(f_i^\pm,z)$ ($i\in I$) is an object of $\mathcal A_\hbar^{\eta_\ell}(\sqrt\ell Q_L)$.
We first prove that it belongs to the following category containing $\mathcal A_\hbar^{\eta_\ell}(\sqrt\ell Q_L)$ as a full subcategory.

\begin{de}
Let $\wh{\mathcal A}_\hbar^{\eta_\ell}(\sqrt\ell Q_L)$ be the category whose objects are topologically free $\C[[\hbar]]$-modules $W$, equipped with fields $\beta_{i,\hbar}(z),e_{i,\hbar}^\pm(z)\in\E_\hbar(W)$ ($i\in I$) satisfying the relations
\eqref{A1}, \eqref{A2}, \eqref{A3} and \eqref{A4}.
\end{de}

\begin{lem}\label{lem:AQ-sp1-4}
Let $X=F,V,L$. Then $X_{\hat\g,\hbar}^\ell$ equipped with fields $Y_{\wh\ell}([r_i]_{q^\partial}\inv h_i,z)$, $Y_{\wh\ell}(f_i^\pm,z)$ ($i\in I$) is an object of $\wh{\mathcal A}_\hbar^{\eta_\ell}(\sqrt\ell Q_L)$.
\end{lem}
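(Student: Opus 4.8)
The plan is to check the four defining relations \eqref{eq:AQ-sp1}--\eqref{eq:AQ-sp4} of $\wh{\mathcal A}_\hbar^{\eta_\ell}(\sqrt\ell Q_L)$ one at a time for the fields $Y_{\wh\ell}(h_i,z)$ and $Y_{\wh\ell}(f_i^\pm,z)$, working inside $F_{\hat\fg,\hbar}^\ell$. Since $h_i$ and $f_i^\pm=\sqrt{c_{i,\ell}}\,q^{(r_i-r\ell)\partial}(x_i^\pm)_{-1}^{r\ell/r_i}\vac$ are specific vectors and all four relations are identities among the associated fields, they pass to the quotients $V_{\hat\fg,\hbar}^\ell$ and $L_{\hat\fg,\hbar}^\ell$ automatically, so it suffices to argue for $X=F$. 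The first relation \eqref{eq:AQ-sp1} is immediate: under \eqref{eq:qlattice-embedding} the field $\beta_{i,\hbar}(z)$ is $Y_{\wh\ell}(h_i,z)$, and \eqref{eq:AQ-sp1} coincides with \eqref{eq:local-h-1} of Lemma \ref{lem:M-wh-ell}, which holds because $F_{\hat\fg,\hbar}^\ell\in\obj\mathcal M_{\wh\ell}(\fg)$.

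The crucial preparatory step is to express $Y_{\wh\ell}(f_i^\pm,z)$ as a normal-ordered product of $r\ell/r_i$ copies of $Y_{\wh\ell}(x_i^\pm,\cdot)$. The fields $Y_{\wh\ell}(x_i^\pm,\cdot)$ satisfy the $q$-locality \eqref{eq:q-local} (this is \eqref{eq:local-h-3}), so Proposition \ref{prop:normal-ordering-rel-general}(3) applies with $\zeta_i=x_i^\pm$ and $k=r\ell/r_i$; combining it with $Y_{\wh\ell}(q^{c\partial}v,z)=Y_{\wh\ell}(v,z+c\hbar)$ presents $Y_{\wh\ell}(f_i^\pm,z)$ as $\sqrt{c_{i,\ell}}\prod_{a=1}^{r\ell/r_i-1}f_0(2ar_i\hbar)$ times a normal-ordered product $\:Y_{\wh\ell}(x_i^\pm,z+w_0)\cdots Y_{\wh\ell}(x_i^\pm,z+w_{r\ell/r_i-1})\;$ whose arguments $w_b=(r_i-r\ell)\hbar+2br_i\hbar$ are symmetrically distributed about $z$. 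This symmetric distribution of shifts, forced precisely by the prefactor $q^{(r_i-r\ell)\partial}$, is exactly what later makes the outputs come out as clean $q$-numbers.

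For \eqref{eq:AQ-sp2} I would use that the bracket $[Y_{\wh\ell}(h_i,z_1),Y_{\wh\ell}(x_j^\pm,z_2)]$ in \eqref{eq:local-h-2} is of Heisenberg type, namely a differential-operator-valued scalar times $Y_{\wh\ell}(x_j^\pm,z_2)$. Consequently the bracket of $Y_{\wh\ell}(h_i,z_1)$ with the normal-ordered product representing $e_{j,\hbar}^\pm(z_2)$ obeys the Leibniz rule, and summing the contributions of the $r\ell/r_j$ symmetrically shifted factors collapses, via the geometric identity $\sum_{b=0}^{r\ell/r_j-1}q^{2br_j\pd{z_2}}=[r\ell/r_j]_{q^{r_j\pd{z_2}}}q^{(r\ell/r_j-1)r_j\pd{z_2}}$, to the prescribed coefficient $[a_{ij}]_{q^{r_i\pd{z_2}}}[r\ell/r_j]_{q^{r_j\pd{z_2}}}$ appearing in \eqref{eq:AQ-sp2}.

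The relations \eqref{eq:AQ-sp3} and \eqref{eq:AQ-sp4} are the substantive ones. Here I would commute the entire block $e_{i,\hbar}^\pm(z_1)$ past the block $e_{j,\hbar}^\pm(z_2)$ (resp.\ $e_{j,\hbar}^\mp(z_2)$) by swapping each factor $Y_{\wh\ell}(x_i^\pm,z_1+w_a)$ past each $Y_{\wh\ell}(x_j^\pm,z_2+w_b')$ using \eqref{eq:local-h-3} (resp.\ \eqref{eq:local-h-4}). The accumulated structure function is then a product over the $(r\ell/r_i)\times(r\ell/r_j)$ grid of shifted factors $f(z_1-z_2+\cdots)^{q^{\pm r_ia_{ij}}-q^{\mp r_ia_{ij}}}$; invoking the operator-exponent convention $P(z)^{g(q)}$, the symmetric shifts $\{w_a\}$ and $\{w_b'\}$ are absorbed into the $q$-number operators and the grid product reassembles as $f(z_1-z_2)^{q^{\mp r_ia_{ij}}[r\ell/r_i]_{q^{r_i}}[r\ell/r_j]_{q^{r_j}}}$, with the normal-ordering corrections of \eqref{eq:def-normal-ordering} cancelling between the two sides of the relation. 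In the mixed $+/-$ case the self-contraction term in \eqref{eq:local-h-4} additionally contributes the factor $f(z_1-z_2)^{\delta_{ij}(1+q^{2r\ell})[r\ell/r_i]_{q^{r_i}}^2}$ recorded in $Q_{ij}(z)$. The main obstacle is precisely this reassembly: establishing the $q$-number identity that the grid product of shifted $f$-factors collapses to a single operator-exponential whose exponent is the product of the $q$-numbers $[r\ell/r_i]_{q^{r_i}}$ and $[r\ell/r_j]_{q^{r_j}}$, while correctly bookkeeping the diagonal $i=j$ terms and the extra $q^{2r\ell}$-shift of the mixed case. This computation is parallel to the normal-ordering arguments of \cite{K-Quantum-aff-va} invoked earlier, and once it is settled all four relations hold in $F_{\hat\fg,\hbar}^\ell$, hence in $V_{\hat\fg,\hbar}^\ell$ and $L_{\hat\fg,\hbar}^\ell$.
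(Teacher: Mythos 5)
Your proposal is correct and follows essentially the same route as the paper: reduce \eqref{eq:AQ-sp1} to \eqref{eq:local-h-1}, present $Y_{\wh\ell}(f_i^\pm,z)$ via Proposition \ref{prop:normal-ordering-rel-general}(3) as a normal-ordered product of symmetrically shifted copies of $Y_{\wh\ell}(x_i^\pm,\cdot)$, derive \eqref{eq:AQ-sp2} by summing the Leibniz contributions into the $q$-number $[r\ell/r_j]_{q^{r_j\pd{z_2}}}$, and obtain \eqref{eq:AQ-sp3}--\eqref{eq:AQ-sp4} from the grid product of structure functions coming from \eqref{eq:local-h-3}, \eqref{eq:local-h-4} and Lemma \ref{lem:normal-ordering-general}. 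The only difference is one of exposition: you spell out the grid-reassembly of the $f$-exponents that the paper leaves as a one-line citation.
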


\begin{proof}
The equation \eqref{A1} follows immediate from the equation \eqref{eq:local-h-1}.
According to Proposition \ref{prop:normal-ordering-rel-general},
\begin{align}
  Y_{\wh\ell}(f_i^\pm&,z)
  \label{eq:pf-AQ-sp-temp1}
  =\sqrt{c_{i,\ell}}\prod_{a=1}^{r\ell/r_i}f_0(2ar_i\hbar)
  \:Y_{\wh\ell}(x_i^\pm,z+(r\ell-r_i)\hbar)\\
  &\times Y_{\wh\ell}(x_i^\pm,z+(r\ell-2r_i)\hbar)\cdots
    Y_{\wh\ell}(x_i^\pm,z-(r\ell-r_i)\hbar)\;.\nonumber
\end{align}
Then from \eqref{eq:local-h-2}, we have that
\begin{align*}
  &[Y_{\wh\ell}([r_i]_{q^\partial}\inv h_i,z_1),Y_{\wh\ell}(f_j^\pm,z_2)]=
  \sqrt{c_{j,\ell}}\prod_{a=1}^{r\ell/r_j}f_0(2ar_j\hbar)\\
  \times&[r_i]_{q^\pd{z_1}}\inv \bigg[Y_{\wh\ell}(h_i,z_1),
  \:Y_{\wh\ell}(x_j^\pm,z_2+(r\ell-r_j)\hbar)
  \cdots
    Y_{\wh\ell}(x_j^\pm,z_2-(r\ell-r_j)\hbar)\;\bigg]\\
  =&\pm \sqrt{c_{j,\ell}}\prod_{a=1}^{r\ell/r_j}f_0(2ar_j\hbar)
  \:Y_{\wh\ell}(x_j^\pm,z_2+(r\ell-r_j)\hbar)\\
  &\times Y_{\wh\ell}(x_j^\pm,z_2+(r\ell-2r_j)\hbar)\cdots
    Y_{\wh\ell}(x_j^\pm,z_2-(r\ell-r_j)\hbar)\;\\
  &\times [a_{ij}]_{q^{r_i\pd{z_2}}}[r\ell/r_j]_{q^{r_j\pd{z_2}}}
  \(\iota_{z_1,z_2}q^{-r\ell\pd{z_2}}-\iota_{z_2,z_1}q^{r\ell\pd{z_2}}\)
  \pd{z_1}\log f(z_1-z_2)\\
  =&\pm Y_{\wh\ell}(f_j^\pm,z_2)
    [a_{ij}]_{q^{r_i\pd{z_2}}}[r\ell/r_j]_{q^{r_j\pd{z_2}}}
    \(\iota_{z_1,z_2}q^{-r\ell\pd{z_2}}-\iota_{z_2,z_1}q^{r\ell\pd{z_2}}\)
    \pd{z_1}\log f(z_1-z_2),
\end{align*}
which proves \eqref{A2}.
Finally, \eqref{A3} and \eqref{A4} can be derived from \eqref{eq:pf-AQ-sp-temp1}, \eqref{eq:local-h-3},
\eqref{eq:local-h-4} and Proposition \ref{prop:normal-ordering-rel-general}.
\end{proof}

The proof of Theorem \ref{thm:qlattice-inj} for a general $\g$ is now ready to be completed.

\vspace{2mm}

\noindent\emph{Proof of Theorem \ref{thm:qlattice-inj}:}
Proposition \ref{prop:universal-qaff} yields a $\C[[\hbar]]$-linear quantum VA homomorphism
\begin{align*}
  \varsigma_i:L_{\hat\ssl_2,r_i\hbar}^{r\ell/r_i}
  \to L_{\hat\g,\hbar}^\ell.
\end{align*}
By utilizing Theorem \ref{thm:classical-limit-L}, the map $\varsigma_i$ induces the following $\C$-linear map
\begin{align*}
  \bar \varsigma_i: L_{\hat\ssl_2}^{r\ell/r_i}\cong L_{\hat\ssl_2,r_i\hbar}^{r\ell/r_i}/\hbar L_{\hat\ssl_2,r_i\hbar}^{r\ell/r_i}
  \to L_{\hat\g,\hbar}^\ell/\hbar L_{\hat\g,\hbar}^\ell\cong L_{\hat\g}^\ell.
\end{align*}
From Remark \ref{rem:aff-vas}, we have that $\bar\varsigma_i$ is a $\hat\ssl_2$-module map and $L_{\hat\ssl_2}^{r\ell/r_i}$ is a simple $\hat\ssl_2$-module.
Then $\bar\varsigma_i$ must be injective, since $\bar\varsigma_i(\vac)=\vac\ne 0$.
Applying Lemma \ref{lem:topo-free-inj-surj}, the map $\varsigma_i$ is also injective.
Let $\{h_1,x_1^\pm\}$ be the standard generators of $L_{\hat\ssl_2,r_i\hbar}^{r\ell/r_i}$,
and let $\Z\al_1$ be the root lattice of $\ssl_2$.
Considering the previously mentioned assumption that Theorem \ref{thm:qlattice-inj} holds true for $\g=\ssl_2$,
we can infer from Proposition \ref{prop:qlatticeVA-mod-to-A-mod} that
\begin{align*}
  &\(L_{\hat\ssl_2,r_i\hbar}^{r\ell/r_i}, Y_{\wh{r\ell/r_i}}(h_1,z),
  Y_{\wh{r\ell/r_i}}(f_1^\pm,z)\)\in\obj \mathcal A_{r_i\hbar}^{\eta_{r\ell/r_i}}(\sqrt{r\ell/r_i}\Z\al_1).
\end{align*}
As $\varsigma_i$ is an injective $\C[[\hbar]]$-linear quantum VA homomorphism,
the fields $Y_{\wh\ell}(h_i,z)$, $Y_{\wh\ell}(f_i^\pm,z)$ satisfy the relations \eqref{A5}, \eqref{A6} and \eqref{A7} on $L_{\hat\g,\hbar}^\ell$.
By combining Lemma \ref{lem:AQ-sp1-4}, we find that
\begin{align*}
  \(L_{\hat\g,\hbar}^\ell,\{Y_{\wh\ell}(h_i,z)\}_{i\in I},\{Y_{\wh\ell}(f_i^\pm,z)\}_{i\in I}\)\in \obj \mathcal A_\hbar^{\eta_\ell}(\sqrt\ell Q_L).
\end{align*}
Thus, by utilizing Lemma \ref{lem:qlattice-universal-property}, we complete the proof.

\section{Quantum parafermion vertex algebras}\label{sec:qpara}

In this section, we study the structure of quantum parafermion VA, that is, the commutant of $\h$ in a simple quantum affine VA $L_{\hat\g,\hbar}^\ell$.
We first present some basic results about commutant of $\hbar$-adic nonlocal VAs.
Then recall some fundamental results of parafermion VAs.
Our central objective is to prove quantum analogues of these classical results.

\begin{de}
Let $V$ be an ($\hbar$-adic) nonlocal VA, and let $U\subset V$ be a subset of $V$.
Define
\begin{align*}
  &\Com_V(U)=\set{v\in V}{[Y(u,z_1),Y(v,z_2)]=0\quad\te{for any }u\in U.}.
\end{align*}
\end{de}

\begin{rem}\label{rem:com-alt-def}
  For any $u\in U$ and $v\in \Com_V(U)$, we have that
  \begin{align*}
    Y(u,z_1)Y(v,z_2)=Y(v,z_2)Y(u,z_1)\in\E_\hbar^{(2)}(V).
  \end{align*}
  Then
  \begin{align*}
    Y(Y(u,z_0)v,z_2)=\(Y(u,z_1)Y(v,z_2)\)|_{z_1=z_2+z_0}\in \End(V)[[z_0,z_2,z_2\inv]].
  \end{align*}
  By applying on $\vac$ and taking $z_2\to 0$, we get that
  \begin{align*}
    Y(u,z_0)v\in V[[z_0]].
  \end{align*}
  Therefore,
  \begin{align}\label{eq:com-alt-rel}
    \Com_V(U)\subset\set{v\in V}{Y(u,z)^-v=0\quad\te{for any }u\in U}.
  \end{align}
\end{rem}

It is straightforward to verify the following result.
\begin{lem}\label{lem:Com-basic1}
Let $V$ be an $\hbar$-adic nonlocal VA, and let $U\subset V$ be a subset of $V$.
Then
\begin{align*}
  &\Com_V(\bar U)=\Com_V(U),\quad \Com_V([U])=\Com_V(U),\\
  &\overline{\Com_V(U)}=\Com_V(U),\quad \left[\Com_V(U)\right]=\Com_V(U).
\end{align*}
Moreover, $\Com_V(U)$ is an $\hbar$-adic nonlocal subVA of $V$.
\end{lem}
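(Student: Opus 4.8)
\emph{The plan} is to settle the four displayed set-identities first and then the subalgebra assertion, using only that $\Com_V(\cdot)$ is inclusion-reversing (so in each identity one inclusion comes for free from $U\subset\bar U$, $U\subset[U]$, $\Com_V(U)\subset\overline{\Com_V(U)}$, and $\Com_V(U)\subset[\Com_V(U)]$), that the vertex operator map is $\C[[\hbar]]$-linear, and that $V$ is topologically free. For the two identities involving closures, $\Com_V(\bar U)=\Com_V(U)$ and $\overline{\Com_V(U)}=\Com_V(U)$, the key observation is that $\C[[\hbar]]$-linearity makes $Y(\cdot,z)$ continuous for the $\hbar$-adic topology: if $a-a'\in\hbar^nV$ then $Y(a,z)-Y(a',z)\in\hbar^n\E_\hbar(V)$. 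Thus for $v\in\Com_V(U)$ and $u\in\bar U$, choosing $u_k\in U$ with $u_k\to u$, the vanishing $[Y(u_k,z_1),Y(v,z_2)]=0$ forces $[Y(u,z_1),Y(v,z_2)]\in\hbar^n\End_{\C[[\hbar]]}(V)[[z_1^{\pm 1},z_2^{\pm 1}]]$ for every $n$; since this operator space is separated, the commutator vanishes and $\Com_V(U)\subset\Com_V(\bar U)$. The inclusion $\overline{\Com_V(U)}\subset\Com_V(U)$ is proved identically, applying continuity in the second slot of the bracket. For the two identities involving $[\cdot]$ I would instead use torsion-freeness of $V$, hence of $\End_{\C[[\hbar]]}(V)[[z_1^{\pm 1},z_2^{\pm 1}]]$: if $u\in[U]$ with $\hbar^nu\in U$, then $\hbar^n[Y(u,z_1),Y(v,z_2)]=[Y(\hbar^nu,z_1),Y(v,z_2)]=0$ gives $[Y(u,z_1),Y(v,z_2)]=0$, so $\Com_V(U)\subset\Com_V([U])$; the same cancellation, applied to $\hbar^nv\in\Com_V(U)$, yields $[\Com_V(U)]\subset\Com_V(U)$.

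\emph{For the subalgebra structure}, membership of $\vac$ is immediate from $Y(\vac,z)=\id$ (vacuum property \eqref{eq:vacuum-property}), since $[Y(u,z_1),\id]=0$ for all $u$. The heart of the matter is closure under the products $v(m)v'$ for $v,v'\in\Com_V(U)$. Here I would invoke weak associativity in the form $Y(Y(v,z_0)v',z_2)=Y_\E(Y(v,z_2),z_0)Y(v',z_2)$ together with the inductive-limit formula defining $Y_\E$ recalled in Section \ref{sec:qvas}. Fix $u\in U$ and set $c(x)=Y(u,x)$. Because $v,v'\in\Com_V(U)$, the field $c(x)$ commutes with both $Y(v,z_1)$ and $Y(v',z_2)$, hence with the product $Y(v,z_1)Y(v',z_2)$. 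Passing to the reductions $\pi_n$ modulo $\hbar^n$, $c(x)$ commutes with $(z_1-z_2)^{k_n}\pi_n(Y(v,z_1))\pi_n(Y(v',z_2))$, with its specialization at $z_1=z_2+z_0$, and with the resulting limit; therefore $c(x)$ commutes with $Y(Y(v,z_0)v',z_2)$. Extracting the coefficient of $z_0^{-m-1}$ gives $[Y(u,x),Y(v(m)v',z_2)]=0$, i.e.\ $v(m)v'\in\Com_V(U)$ for every $m\in\Z$. In particular $\partial v=v(-2)\vac\in\Com_V(U)$, so $\Com_V(U)$ is $\partial$-stable, and combined with closedness one sees that $Y(v,z)v'$ lies in $\Com_V(U)((z))[[\hbar]]$, so $Y$ restricts to a vertex operator map into $\E_\hbar(\Com_V(U))$.

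\emph{The final point} is that $\Com_V(U)$ is topologically free, which upgrades it from a submodule closed under the vertex operations to a genuine $\hbar$-adic nonlocal vertex subalgebra. As a submodule of the separated, torsion-free module $V$ it is separated and torsion-free; closedness ($\overline{\Com_V(U)}=\Com_V(U)$) makes it $\hbar$-adically complete inside the complete $V$; and $[\cdot]$-invariance ($[\Com_V(U)]=\Com_V(U)$) guarantees $\hbar^nV\cap\Com_V(U)=\hbar^n\Com_V(U)$, so the subspace topology coincides with the intrinsic $\hbar$-adic topology. By the criterion recalled in Section \ref{sec:qvas} (torsion-free, separated, complete $\Rightarrow$ topologically free), this completes the verification.

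\emph{The main obstacle} I anticipate is the product step of the second paragraph: one must check that the regularization and inductive limit defining $Y_\E$ genuinely commute with the bracket against $c(x)=Y(u,x)$, rather than merely commuting termwise. This is where compatibility of the maps $\pi_n$ with the limit, and separatedness of $\E_\hbar^{(2)}(V)$, are essential; by contrast the four set-identities and the topological-freeness conclusion are formal consequences of $\C[[\hbar]]$-linearity, continuity, and torsion-freeness.
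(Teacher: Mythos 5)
Your proof is correct. The paper offers no argument for this lemma (it is dismissed as "straightforward to verify"), and your write-up supplies exactly the expected verification: the four set identities follow formally from $\C[[\hbar]]$-linearity of $Y$, separatedness, and torsion-freeness; the closure under products correctly reduces to the weak-associativity axiom $Y_\E(Y(v,z),z_0)Y(v',z)=Y(Y(v,z_0)v',z)$ together with the observation that the regularization, substitution $z_1=z+z_0$, and inverse limit defining $Y_\E$ all commute with bracketing against $Y(u,x)$; and the topological-freeness step via the criterion (torsion-free, separated, complete) plus the identity $\hbar^nV\cap\Com_V(U)=\hbar^n\Com_V(U)$ coming from $[\cdot]$-invariance is exactly what is needed.
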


\begin{lem}\label{lem:Com-basic2}
Let $V$ be an $\hbar$-adic nonlocal VA, and let $U_1,U_2\subset V$ be closed $\C[[\hbar]]$-submodules of $V$, such that
$[U_i]=U_i$ for $i=1,2$.
Suppose further that $U_2\subset\Com_V(U_1)$
and
\begin{align*}
  U_2/\hbar U_2=\Com_{V/\hbar V}(U_1/\hbar U_1).
\end{align*}
Then
\begin{align*}
  U_2=\Com_V(U_1).
\end{align*}
\end{lem}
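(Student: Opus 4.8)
The plan is to prove the final statement, Lemma \ref{lem:Com-basic2}, which asserts that under the hypotheses $U_2\subset\Com_V(U_1)$ and $U_2/\hbar U_2=\Com_{V/\hbar V}(U_1/\hbar U_1)$, we actually have equality $U_2=\Com_V(U_1)$. Since $U_2\subset\Com_V(U_1)$ is given, the entire task reduces to proving the reverse inclusion $\Com_V(U_1)\subset U_2$. The key tool is reduction modulo $\hbar$ combined with a careful $\hbar$-adic approximation argument, using the fact that both $U_2$ and $\Com_V(U_1)$ are closed $\C[[\hbar]]$-submodules satisfying $[\,\cdot\,]=\cdot$ (the latter for $\Com_V(U_1)$ by Lemma \ref{lem:Com-basic1}, the former by hypothesis).

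First I would fix an arbitrary $v\in\Com_V(U_1)$ and aim to show $v\in U_2$. The natural first step is to pass to the classical limit: the image $\bar v$ of $v$ in $V/\hbar V$ lies in $\Com_{V/\hbar V}(U_1/\hbar U_1)$, because the vanishing of $[Y(u,z_1),Y(v,z_2)]$ for all $u\in U_1$ descends to the quotient $V/\hbar V$. By the hypothesis $U_2/\hbar U_2=\Com_{V/\hbar V}(U_1/\hbar U_1)$, we get $\bar v\in U_2/\hbar U_2$, so there exists $v_0\in U_2$ with $v-v_0\in\hbar V$. Write $v-v_0=\hbar v^{(1)}$ for some $v^{(1)}\in V$; here I must check that $v^{(1)}$ again lies in $\Com_V(U_1)$. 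This follows since $v\in\Com_V(U_1)$, $v_0\in U_2\subset\Com_V(U_1)$, and $\Com_V(U_1)$ is a $\C[[\hbar]]$-submodule, so $\hbar v^{(1)}\in\Com_V(U_1)$; then because $[\Com_V(U_1)]=\Com_V(U_1)$, we conclude $v^{(1)}\in\Com_V(U_1)$. This closure-under-division property is exactly what makes the induction self-sustaining.

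Next I would iterate: applying the same argument to $v^{(1)}\in\Com_V(U_1)$ produces $v_1\in U_2$ with $v^{(1)}-v_1\in\hbar V$, hence $v\equiv v_0+\hbar v_1\pmod{\hbar^2 V}$ with $v_0+\hbar v_1\in U_2$. Continuing inductively, I construct a sequence $v_0,v_1,v_2,\dots\in U_2$ such that the partial sums $s_n=\sum_{k=0}^{n}\hbar^k v_k$ satisfy $v-s_n\in\hbar^{n+1}V$ and each $v^{(n)}\in\Com_V(U_1)$. The partial sums $s_n$ form a Cauchy sequence in the $\hbar$-adic topology, and since $U_2$ is closed, their limit lies in $U_2$. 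Because $V$ is topologically free hence separated, $\cap_{n}\hbar^n V=0$, so the limit of $s_n$ equals $v$. Therefore $v\in U_2$, completing the reverse inclusion.

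The main obstacle — and the only place that requires genuine care rather than routine bookkeeping — is verifying at each stage that the ``remainder'' element $v^{(n)}$ genuinely remains in $\Com_V(U_1)$, so that the inductive hypothesis can be reapplied. This hinges precisely on the identity $[\Com_V(U_1)]=\Com_V(U_1)$ from Lemma \ref{lem:Com-basic1}: dividing a commutant element by $\hbar$ (when the quotient lies in $V$) keeps it in the commutant. A secondary point to handle cleanly is the convergence step, where I must invoke that $U_2$ is closed (so the $\hbar$-adic limit of the $U_2$-valued partial sums stays in $U_2$) and that $V$ is separated (so the limit is forced to equal $v$ rather than merely agree with it modulo every power of $\hbar$). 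Both of these are guaranteed by the standing assumption that all modules in sight are topologically free.
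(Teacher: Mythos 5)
Your proof is correct and follows essentially the same route as the paper: both reduce to the classical limit, observe that $\Com_V(U_1)/\hbar\Com_V(U_1)\subset\Com_{V/\hbar V}(U_1/\hbar U_1)=U_2/\hbar U_2$, and conclude from topological freeness. The only difference is presentational — the paper invokes the standard fact that an injection of topologically free modules which is surjective modulo $\hbar$ is an isomorphism, whereas you unwind that fact into the explicit $\hbar$-adic successive approximation (correctly using $[\Com_V(U_1)]=\Com_V(U_1)$ to divide remainders by $\hbar$ and the closedness of $U_2$ to pass to the limit).
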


\begin{proof}
Notice that
\begin{align*}
  \Com_V(U_1)/\hbar \Com_V(U_1)\subset
  \Com_{V/\hbar V}(U_1/\hbar U_1).
\end{align*}
Then
\begin{equation*}
  \begin{tikzcd}
    U_2/\hbar U_2
    \arrow[d, phantom, sloped, "\subset"]
    \arrow[r, equal]
    &\Com_{V/\hbar V}(U_1/\hbar U_1)\\
    \Com_V(U_1)/\hbar \Com_V(U_1)
    \arrow[ur, phantom, sloped, "\subset"]&
  \end{tikzcd}
\end{equation*}
It implies that
\begin{align*}
  U_2/\hbar U_2= \Com_V(U_1)/\hbar \Com_V(U_1).
\end{align*}
It follows from Lemma \ref{lem:Com-basic1} that all the $\C[[\hbar]]$-submodules are topologically free.
Therefore, the injection $U_2\to \Com_V(U_1)$ must be an isomorphism.
\end{proof}

We identify $\h$ with a subspace of $L_{\hat\g}^\ell$ through the embedding that maps the simple coroots $h_i$ and the generators $h_i\in L_{\hat\g}^\ell$ (see Definition \ref{de:affVAs}).
For $\ell\in\Z_+$, we denote by
\begin{align*}
  K_{\hat\g}^\ell=\Com_{L_{\hat\g}^\ell}(\h)
\end{align*}
the parafermion VA (see \cite[Section 4]{DW-para-structure-gen}).
The following result was proved in \cite{DW-para-structure-gen} (see \cite{DR-para-rational}).
\begin{thm}\label{thm:para-classical-gen}
For each $i\in I$, define
\begin{align*}
  W_i^2=&(x_i^+)_{-1}x_i^--\frac{1}{2r_i}\partial h_i-\frac{1}{2r_ir\ell}(h_i)_{-1}h_i,\\
  W_i^3=&(x_i^+)_{-2}x_i^--(x_i^+)_{-1}\partial x_i^--\frac{2}{r\ell}(h_i)_{-1}(x_i^+)_{-1}x_i^-\\
  &+\frac{1}{6r_i}\partial^2h_i+\frac{1}{r_ir\ell}(h_i)_{-1}\partial h_i
  +\frac{2}{3r_ir^2\ell^2}(h_i)_{-1}^2h_i.\nonumber
\end{align*}
Then $K_{\hat\g}^\ell$ is generated by the set
\begin{align}
  \set{W_i^2,\,W_i^3}{i\in I}.
\end{align}
\end{thm}

The following double commutant property
was proved in \cite{DW-para-structure-double-comm}.
\begin{thm}\label{thm:classical-double-commutant}
$L_{\hat\g}^\ell$ has a subVA isomorphic to $V_{\sqrt\ell Q_L}$, and
\begin{align}
    \Com_{L_{\hat\g}^\ell}\(V_{\sqrt\ell Q_L}\)=K_{\hat\g}^\ell\quad\te{and}\quad
    \Com_{L_{\hat\g}^\ell}\(K_{\hat\g}^\ell\)=V_{\sqrt\ell Q_L}.
\end{align}
\end{thm}

The following result gives the $\hbar$-adic analogues of $W_i^2$ and $W_i^3$ in quantum affine VAs,
whose proof will be given in Section \ref{subsec:pf-prop-W}.

\begin{prop}\label{prop:W}
Let $X=V$ or $L$.
For each $i\in I$ and $\ell\in\C^\times$, we define
\begin{align}
  W_i(z)=&\exp\(\(\frac{1-e^{z\partial}}{\partial [r\ell]_{q^{\partial}}} h_i\)_{-1}\)Y_{\wh\ell}(x_i^+,z)x_i^--\frac{\vac}{(q^{r_i}-q^{-r_i})z}\\
  &
    +\frac{f_0(2(r_i+r\ell)\hbar)^\half
        f_0(2(r_i-r\ell)\hbar)^{-\half}
    \vac}{(q^{r_i}-q^{-r_i})(z+2r\ell\hbar)}.\nonumber
\end{align}
Then
\begin{align}
  W_i(z)\in X_{\hat\g,\hbar}^\ell[[z]],\label{eq:prop-W-well-defined}
\end{align}
and
\begin{align}
  Y_{\wh\ell}(h_i,z_1)^-W_j(z)=0\quad\te{in }X_{\hat\g,\hbar}^\ell.\label{eq:prop-W-parafermion}
\end{align}
Moreover, for another $\ell'\in\C$, we have that
\begin{align}
  S_{\ell,\ell'}(z_1)(W_i(z)\ot u)=W_i(z)\ot u\quad\te{for any }u\in X_{\hat\g,\hbar}^{\ell'},
  \label{eq:prop-W-S-invariant}
\end{align}
where $S_{\ell,\ell'}(z)$ is defined in Theorem \ref{thm:quotient-algs}.
\end{prop}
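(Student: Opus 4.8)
The plan is to establish the three assertions of Proposition~\ref{prop:W} in the order \eqref{eq:prop-W-well-defined}, \eqref{eq:prop-W-parafermion}, \eqref{eq:prop-W-S-invariant}, using the commutation formulas collected in Subsection~\ref{subsec:some-formulas} together with the explicit action of the quantum Yang-Baxter operator from Theorem~\ref{thm:quotient-algs} and Lemma~\ref{lem:S-ell-ell-prime}. First I would rewrite $W_i(z)$ in terms of the operators $h_i^\pm(z)$ and $\wt h_i^\pm(z)$ introduced in Lemmas~\ref{lem:com-formulas} and \ref{lem:com-formulas2}. The key observation is that the exponential prefactor $\exp((\frac{1-e^{z\partial}}{\partial[r\ell/r_i]_{q^{r_i\partial}}}h_i)_{-1})$ is designed so that, when applied to $Y_{\wh\ell}(x_i^+,z)x_i^-$, it precisely cancels the negative part of the Heisenberg field acting on the two-point function; this is the $\hbar$-adic analogue of the classical subtraction $-\tfrac{r_i}{2r\ell}(h_i)_{-1}h_i$ appearing in $W_i^2$ (see Theorem~\ref{thm:para-classical-gen}). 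To prove \eqref{eq:prop-W-well-defined}, I would compute $Y_{\wh\ell}(x_i^+,z)x_i^-$ explicitly: by Proposition~\ref{prop:ideal-def-alt} its singular part is controlled by $A_i(z)$, whose only singularities are the simple poles at $z=0$ and $z=-2r\ell\hbar$ with residues proportional to $\vac$ and $E_\ell(h_i)$. The two correction terms $-\frac{\vac}{(q^{r_i}-q^{-r_i})z}$ and $+\frac{(\cdots)\vac}{(q^{r_i}-q^{-r_i})(z+2r\ell\hbar)}$ are chosen to subtract exactly these two poles, where the constant in the second term is $Y_{\wh\ell}(E_\ell(h_i),z)$ evaluated appropriately via Proposition~\ref{prop:Y-E} and the normalization of $E_\ell(h_i)$ in \eqref{eq:def-E-h}; after the exponential prefactor is applied and the poles cancelled, the result lies in $X_{\hat\g,\hbar}^\ell[[z]]$.

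For \eqref{eq:prop-W-parafermion} I would apply $Y_{\wh\ell}(h_i,z_1)^-$ to $W_j(z)$ and show the result vanishes. The strategy is to commute $Y_{\wh\ell}(h_i,z_1)^-=h_i^-(z_1)^-$ past the exponential prefactor and past $Y_{\wh\ell}(x_j^+,z)$, collecting the contributions from \eqref{eq:com-formulas-2}, \eqref{eq:Sing-Y-1}, \eqref{eq:Sing-Y-2} and the commutator of $h_i^-(z_1)$ with the prefactor, which is governed by \eqref{eq:Y-action-1}. The design of the prefactor $\frac{1-e^{z\partial}}{\partial[r\ell/r_i]_{q^{r_i\partial}}}h_i$ guarantees that the contribution of $[h_i^-(z_1),(\cdots)_{-1}]$ cancels the contribution of $h_i^-(z_1)$ acting on $Y_{\wh\ell}(x_j^+,z)x_j^-$; the leftover singular terms then match the singular parts produced by acting on the two explicit $\vac$-correction terms, and everything cancels. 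This is the $\hbar$-adic incarnation of the fact that $W_j^2$ lies in the commutant $K_{\hat\g}^\ell=\Com_{L_{\hat\g}^\ell}(\h)$.

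The main obstacle, and the step demanding the most care, will be \eqref{eq:prop-W-S-invariant}. Here I must show that $S_{\ell,\ell'}(z_1)$ fixes $W_i(z)\ot u$ for every $u$. Since $X_{\hat\g,\hbar}^{\ell'}$ is generated by $\{h_k,x_k^\pm\mid k\in I\}$, by the hexagon identities \eqref{eq:multqyb-hex1}, \eqref{eq:multqyb-hex2} and the multiplicativity results in Lemmas~\ref{lem:S-special-tech-gen3} and \ref{lem:S-special-tech-gen4} it suffices to check $S_{\ell,\ell'}(z_1)(W_i(z)\ot u)=W_i(z)\ot u$ for $u\in\{h_k,x_k^\pm\}$. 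To do this I would expand $W_i(z)$ using the formula from \eqref{eq:prop-W-well-defined} and apply $S_{\ell,\ell'}(z_1)$ termwise, invoking Lemma~\ref{lem:S-ell-ell-prime} for the action on $A_i$-type expressions, Lemma~\ref{lem:S-E} for the action on $E_\ell(h_i)$, and \eqref{eq:S-twisted-2}, \eqref{eq:S-twisted-4} for the elementary generators. The delicate point is that the individual summands of $W_i(z)$ are \emph{not} separately $S$-invariant: the Heisenberg prefactor acquires a nontrivial scalar factor under $S_{\ell,\ell'}(z_1)$, the term $Y_{\wh\ell}(x_i^+,z)x_i^-$ acquires a reciprocal factor, and these must be shown to cancel exactly. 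I expect the required cancellation to follow from the algebraic identities in Lemma~\ref{lem:special-tau-tech1} relating the various $\wh{\ell,\ell'}$ components, together with the observation that $W_i(z)$ built from $x_i^+$ and $x_i^-$ has total "charge zero," so the $x^+$ and $x^-$ exponents of $f(z_1-\cdot)$ produced by $S_{\ell,\ell'}$ sum to zero; the same charge-neutrality that underlies \eqref{eq:prop-W-parafermion} is what forces $S$-invariance. Assembling these cancellations precisely, keeping track of the shifts $z\mapsto z+2r\ell\hbar$ and the $q$-power bookkeeping, is the part where the bulk of the computation lies.
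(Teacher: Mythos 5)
Your proposal is correct and follows essentially the same route as the paper: identifying the singular part of $Y_{\wh\ell}(x_i^+,z)^-x_i^-$ via the relation $A_i(z)=0$ and cancelling the two poles at $z=0$ and $z=-2r\ell\hbar$ against the correction terms (with the prefactor collapsing $E_\ell(h_i)$ to a multiple of $\vac$); cancelling the commutator of $Y_{\wh\ell}(h_i,z_1)^-$ with the exponential prefactor against its action on $Y_{\wh\ell}(x_j^+,z)x_j^-$; and reducing $S$-invariance to the generators $h_k,x_k^\pm$ via the hexagon identities, where the scalar factor picked up by the Heisenberg prefactor exactly cancels the reciprocal factor picked up by $Y_{\wh\ell}(x_i^+,z)x_i^-$. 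This is precisely the structure of Lemmas 5.40--5.43 in the paper.
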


We identify $\h$ with a subspace of $L_{\hat\g,\hbar}^\ell$ through the embedding that maps the simple coroots $h_i$ and the generators $h_i\in L_{\hat\g,\hbar}^\ell$ (see Definition \ref{de:L-tau}).
By utilizing Lemma \ref{lem:Com-basic1}, we define the following $\hbar$-adic nonlocal subVAs of $L_{\hat\g,\hbar}^\ell$.
\begin{de}
Define
\begin{align}
  K_{\hat\g,\hbar}^\ell=\Com_{L_{\hat\g,\hbar}^\ell}(\h).
\end{align}
\end{de}

The following result show that $K_{\hat\g,\hbar}^\ell$ is a quantization of the parafermion VA $K_{\hat\g}^\ell$, by presenting a generating subset for $K_{\hat\g,\hbar}^\ell$--a quantum analogue of Theorem \ref{thm:para-classical-gen}.

\begin{thm}\label{thm:K-space-desc}
$K_{\hat\g,\hbar}^\ell$ is generated by the set
\begin{align}\label{eq:para-gen}
  \set{W_i(0),\,W_i(-2r\ell\hbar)}{i\in I}.
\end{align}
Moreover,
\begin{align}\label{eq:para-alt}
  K_{\hat\g,\hbar}^\ell=\set{u\in L_{\hat\g,\hbar}^\ell}{Y_{\wh\ell}(h_i,z)^-u=0\quad\te{for any }i\in I},
\end{align}
and for any $\ell'\in\Z_+$, any $u\in K_{\hat\g,\hbar}^\ell$ and any $v\in L_{\hat\g,\hbar}^{\ell'}$, we have that
\begin{align}\label{eq:para-S-inv}
  S_{\ell,\ell'}(z)(u\ot v)=u\ot v.
\end{align}
Additionally, $K_{\hat\g,\hbar}^\ell$ is an $\hbar$-adic quantum subVA of $L_{\hat\g,\hbar}^\ell$ and further an $\hbar$-adic VA.
Furthermore, we have that
\begin{align}\label{eq:para-classical-limit}
  K_{\hat\g,\hbar}^\ell/\hbar K_{\hat\g,\hbar}^\ell\cong K_{\hat\g}^\ell.
\end{align}
\end{thm}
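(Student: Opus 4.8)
The plan is to prove Theorem \ref{thm:K-space-desc} by establishing its four assertions in a logically dependent order, using the commutant machinery of Lemmas \ref{lem:Com-basic1} and \ref{lem:Com-basic2} together with the structural results about $W_i(z)$ in Proposition \ref{prop:W}. I would begin with the description \eqref{eq:para-alt}, since it identifies $K_{\hat\fg,\hbar}^\ell$ with a more tractable ``singular annihilator'' space. One containment is immediate from Remark \ref{rem:com-alt-def}, specifically \eqref{eq:com-alt-rel}, which gives
\begin{align*}
  K_{\hat\fg,\hbar}^\ell\subset\set{u\in L_{\hat\fg,\hbar}^\ell}{Y_{\wh\ell}(h_i,z)^-u=0\ \te{for all }i\in I}.
\end{align*}
For the reverse containment, I would use Remark \ref{rem:Jacobi-S}: the commutator $[Y(h_i,z_1),Y(u,z_2)]$ is governed by $Y(Y(h_i,z_1-z_2)^-u-\cdots,z_2)$ together with the $S$-locality term. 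Here the key leverage is \eqref{eq:para-S-inv}: once $S_{\ell,\ell}(z)$ fixes $u\ot v$, the $S$-locality relation \eqref{eq:qyb-locality} collapses to ordinary commutativity, so $Y_{\wh\ell}(h_i,z)^-u=0$ forces genuine commutation. Thus \eqref{eq:para-alt} and \eqref{eq:para-S-inv} should be proved together, with \eqref{eq:para-S-inv} following from \eqref{eq:prop-W-S-invariant} extended from the generators $W_i(0),W_i(-2r\ell\hbar)$ to all of $K_{\hat\fg,\hbar}^\ell$ via the $S$-invariance propagation argument of Proposition \ref{prop:vacuum-like} (the generating set is $S$-closed, being fixed, so condition \eqref{eq:s-closed} holds trivially).

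Next I would address the generation claim, that $K_{\hat\fg,\hbar}^\ell=\<\set{W_i(0),W_i(-2r\ell\hbar)}{i\in I}\>$. By \eqref{eq:prop-W-parafermion} each $W_i(0)$ and $W_i(-2r\ell\hbar)$ lies in the annihilator space, hence by \eqref{eq:para-alt} in $K_{\hat\fg,\hbar}^\ell$; since $K_{\hat\fg,\hbar}^\ell$ is a closed subalgebra invariant under $[\cdot]$ (Lemma \ref{lem:Com-basic1}), the generated subalgebra $U_2:=\<\set{W_i(0),W_i(-2r\ell\hbar)}{i\in I}\>$ is contained in $K_{\hat\fg,\hbar}^\ell$. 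The reverse inclusion is where the classical limit and Lemma \ref{lem:Com-basic2} do the work. I would set $U_1=\overline{[\Span_{\C[[\hbar]]}\h]}$ and verify the hypotheses of Lemma \ref{lem:Com-basic2} with this $U_2$: both are closed, $[\cdot]$-invariant, and $U_2\subset\Com_{L_{\hat\fg,\hbar}^\ell}(\h)$. The decisive input is the classical-limit identity $U_2/\hbar U_2=\Com_{L_{\hat\fg}^\ell}(\h)=K_{\hat\fg}^\ell$. Here Lemma \ref{lem:W-mod-h} is crucial: it identifies $\pi(W_i(0))=W_i^2$ and $\pi((W_i(0)-W_i(-2r\ell\hbar))/r\ell\hbar)=\partial W_i^2+W_i^3$, so the images of the chosen generators under the classical-limit projection are exactly the generators of $K_{\hat\fg}^\ell$ recorded in Theorem \ref{thm:para-classical-gen} and Remark \ref{rem:para-gen-classical}. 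Granting that the classical limit of a generated subalgebra is the subalgebra generated by the classical limits of the generators, I would conclude $U_2/\hbar U_2=K_{\hat\fg}^\ell$, and then Lemma \ref{lem:Com-basic2} yields $U_2=K_{\hat\fg,\hbar}^\ell$, giving both the generation statement and \eqref{eq:para-classical-limit} simultaneously.

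The main obstacle I anticipate is the compatibility between taking classical limits and taking generated subalgebras --- namely justifying that $\<\set{W_i(0),W_i(-2r\ell\hbar)}{i}\>/\hbar\,(\cdots)$ equals the classical vertex subalgebra generated by the projected elements $W_i^2,W_i^3$. The generation operation $\<\cdot\>$ is defined via an infinite closure process (the chain $S^{(1)}\subset S^{(2)}\subset\cdots$ followed by $\overline{[\cup_n S^{(n)}]}$), and the projection $\pi$ need not commute with infinite closures on the nose; one must check that $\pi$ maps each $S^{(n)}$ onto the corresponding classical stage and that the topological closure does not introduce extra elements modulo $\hbar$. I would handle this by showing that the quotient map $L_{\hat\fg,\hbar}^\ell\to L_{\hat\fg}^\ell$ is a surjective $\hbar$-adic nonlocal VA homomorphism carrying products $u(m)v$ to products, so that each finite stage $S^{(n)}$ maps onto its classical analogue, and then invoking topological freeness (Lemma \ref{lem:Com-basic1} guarantees all relevant submodules are topologically free) to control the closure. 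Once this limit-compatibility is secured, the remaining verifications are the propagation of $S$-invariance and the commutator computation via Remark \ref{rem:Jacobi-S}, both of which are structurally routine given Proposition \ref{prop:W}.
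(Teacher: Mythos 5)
There is a genuine gap, and it sits at the heart of the theorem: the inclusion $K_2\subset K_{\hat\fg,\hbar}^\ell$, where $K_2$ denotes the right-hand side of \eqref{eq:para-alt}. Your plan derives this inclusion from Remark \ref{rem:Jacobi-S} plus the $S$-triviality \eqref{eq:para-S-inv}, but \eqref{eq:para-S-inv} is only available (via \eqref{eq:prop-W-S-invariant} and the hexagon identities) on the subalgebra $K_1$ generated by the $W_i(0),W_i(-2r\ell\hbar)$ — and knowing that a given $u$ with $Y_{\wh\ell}(h_i,z)^-u=0$ lies in $K_1$ is precisely what is at stake. For a general $u\in K_2$ you have no control over $S_{\ell,\ell}(z)(u\ot h_i)$, so the commutator does not collapse and the argument is circular. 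The same circularity infects your placement of the generators themselves: you put $W_i(0)$ into $K_{\hat\fg,\hbar}^\ell$ ``by \eqref{eq:para-alt}'', whereas the correct route is Proposition \ref{prop:W} directly (vanishing singular part \emph{together with} $S$-triviality, fed into Remark \ref{rem:Jacobi-S}). The paper closes the real gap with an $\hbar$-adic successive approximation that your proposal lacks: for $u\in K_2\setminus\hbar L_{\hat\fg,\hbar}^\ell$, the image $\pi(u)$ lies in $K_{\hat\fg}^\ell$, hence by Theorem \ref{thm:para-classical-gen} is a finite sum of products of $W_i^2,W_i^3$, which lifts via Lemma \ref{lem:W-mod-h} to $u'\in K_1$ with $u-u'\in\hbar L_{\hat\fg,\hbar}^\ell$; iterating and using that $K_1$ is closed gives $K_2\subset K_1$, which simultaneously yields \eqref{eq:para-gen}, \eqref{eq:para-alt}, \eqref{eq:para-S-inv} and \eqref{eq:para-classical-limit}. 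Without this (or an equivalent) step, \eqref{eq:para-alt} is not proved.

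Two further remarks. First, your route to the generation claim via Lemma \ref{lem:Com-basic2} with $U_2=K_1$ is viable and genuinely different from the paper's: the hypothesis $K_1/\hbar K_1=K_{\hat\fg}^\ell$ holds because $\pi(K_1)\supset K_{\hat\fg}^\ell$ (finite products of generators lift stage by stage) and $\pi(K_1)\subset\pi(K_2)\subset K_{\hat\fg}^\ell$ (the classical Heisenberg commutant is the singular-annihilator space), so the closure-compatibility issue you flag as the main obstacle is actually moot — the delicate direction is handled by the containment $K_1\subset K_2$, not by commuting $\pi$ with the closure operation. Second, the same ``equal mod $\hbar$ implies equal'' mechanism applied to the pair $K_1\subset K_2$ would repair your proof of \eqref{eq:para-alt} without the paper's explicit iteration; but as written the proposal supplies neither argument.
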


\begin{proof}
Let $K_1=\<\set{W_i(0),\,W_i(-2r\ell\hbar)}{i\in I}\>$ and let
$K_2$ be the right hand side of \eqref{eq:para-alt}.
Then we have that
\begin{align}\label{eq:K-space-desc-temp1}
  K_1\subset K_{\hat\g,\hbar}^\ell\subset K_2,
\end{align}
where the first ``$\subset$'' follows from Proposition \ref{prop:W} and the second ``$\subset$'' follows from Remark \ref{rem:com-alt-def}.
For each $i\in I$, we denote
\begin{align*}
  \iota\(W_i^2\)=W_i(0),\quad \iota\(W_i^3\)=(r\ell\hbar)\inv (W_i(0)-W_i(-2r\ell\hbar))-\partial W_i(0).
\end{align*}
Then $K_1=\<\{\iota(W_i^2),\,\iota(W_i^3)\,|\,i\in I\}\>$.
Let $\pi$ be the composition map
\begin{align*}
  \xymatrix{
    L_{\hat\g,\hbar}^\ell\ar[r]& L_{\hat\g,\hbar}^\ell/\hbar L_{\hat\g,\hbar}^\ell\ar[r]& L_{\hat\g}^\ell.
  }
\end{align*}
Then one can straightforwardly verify that
\begin{align}\label{eq:W-mod-h}
  &\pi(\iota(W_i^2))=W_i^2,\quad \pi(\iota(W_i^3))=W_i^3\quad\te{for }i\in I.
\end{align}
Note that
\begin{align}\label{eq:pi-K2}
  \pi(K_2)\subset&\set{v\in L_{\hat\g}^\ell}{Y(h_i,z)^-v=0\quad\te{for any }i\in I}\\
  =&\set{v\in L_{\hat\g}^\ell}{[Y(h_i,z_1),Y(v,z_2)]=0\quad\te{for any }i\in I}
  =\Com_{L_{\hat\g}^\ell}(\h)=K_{\hat\g}^\ell,\nonumber
\end{align}
where the first equation follows from the Borcherds commutator formula (see \cite[Remark 3.1.11]{LL}).
For each $u\in K_2\setminus\hbar L_{\hat\g,\hbar}^\ell$, we have that $\pi(u)\in K_{\hat\g}^\ell$.
By using Theorem \ref{thm:para-classical-gen}, we get that $\pi(u)$ can be written as a finite sum of the form
\begin{align*}
  v^{(1)}_{n_1}\cdots v^{(k)}_{n_k},\quad\te{where }k\in\N,\,v^{(a)}\in\{W_i^2,\,W_i^3\,|\,i\in I\},\,n_a\in\Z,\,(1\le a\le k).
\end{align*}
Denote by $u'$ the corresponding finite sum with $W_i^j$ being replaced by $\iota(W_i^j)$ for $i\in I$, $j=2,3$.
Then $u'\in K_1$. Moreover, we deduce from \eqref{eq:W-mod-h} that
$\pi(u)=\pi(u')$.

Now, we fix an arbitrary $u\in K_2$. Define
\begin{align*}
  N(u)=\sup\sett{n\in\N}{u-v\in\hbar^nL_{\hat\g,\hbar}^\ell\,\,\te{for some }v\in K_1}.
\end{align*}
Suppose that $N(u)<+\infty$.
Then there exists $v\in K_1$ such that
\begin{align*}
  u-v\in K_2\cap\hbar^{N(u)}L_{\hat\g,\hbar}^\ell.
\end{align*}
Let $w=\hbar^{-N(u)}(u-v)$.
Then $w\in K_2\setminus\hbar L_{\hat\g,\hbar}^\ell$, as $[K_2]=K_2$.
From the argument above, we can choose $w'\in K_1$, such that $\pi(w)=\pi(w')$.
That is, $w-w'\in\hbar L_{\hat\g,\hbar}^\ell$.
Define
\begin{align*}
  v'=v+\hbar^{N(u)}w'\in K_1.
\end{align*}
Then
\begin{align*}
  u-v'=u-v-\hbar^{N(u)}w'=\hbar^{N(u)}(w-w')\in\hbar^{N(u)+1}L_{\hat\g,\hbar}^\ell,
\end{align*}
which contradict to the definition of $N(u)$.
Hence, $N(u)$ must be $+\infty$.
It implies that for each $n\in\N$, there exists $v_n\in K_1$, such that $u-v_n\in\hbar^n L_{\hat\g,\hbar}^\ell$.
Since $K_1$ is closed in $L_{\hat\g,\hbar}^\ell$, we get that
\begin{align*}
  u=\lim_{n\to\infty}v_n\in K_1.
\end{align*}
Therefore, $K_2\subset K_1$. We complete the proof of the first statement and \eqref{eq:para-alt}.
Combining these with \eqref{eq:prop-W-S-invariant}, we get the relation \eqref{eq:para-S-inv}.
Consequently, $S_{\ell,\ell}(z)$ acts trivially on $K_{\hat\g,\hbar}^\ell$.
It implies that the $\hbar$-adic nonlocal subVA $K_{\hat\g,\hbar}^\ell$ is not only an $\hbar$-adic quantum subVA of $L_{\hat\g,\hbar}^\ell$ but also an $\hbar$-adic VA.

Finally, since $\pi(\iota(W_i^j))=W_i^j$ for $i\in I$, $j=2,3$, we get from Theorem \ref{thm:para-classical-gen}
that
\begin{align*}
  K_{\hat\g,\hbar}^\ell/\hbar K_{\hat\g,\hbar}^\ell=K_{\hat\g,\hbar}^\ell/(K_{\hat\g,\hbar}^\ell\cap \hbar L_{\hat\g,\hbar}^\ell)=\pi(K_1)\supset K_{\hat\g}^\ell,
\end{align*}
where the first equation follows from \cite[Lemma 3.5]{Li-h-adic} and the fact that $[K_{\hat\g,\hbar}^\ell]=K_{\hat\g,\hbar}^\ell$.
By utilizing relation \eqref{eq:pi-K2}, we get the reverse inclusion:
\begin{align*}
  K_{\hat\g,\hbar}^\ell/\hbar K_{\hat\g,\hbar}^\ell=\pi(K_2)\subset K_{\hat\g}^\ell.
\end{align*}
Therefore, we complete the proof of \eqref{eq:para-classical-limit}.
\end{proof}

The following result is the quantum analogue of Theorem \ref{thm:classical-double-commutant}.
\begin{thm}\label{thm:double-comutant}
View $V_{\sqrt\ell Q_L}^{\eta_\ell}$ as a subalgebra of $L_{\hat\g,\hbar}^\ell$ (see Theorem \ref{thm:qlattice-inj}), we have that
\begin{align}\label{eq:double-comutant}
  \Com_{L_{\hat\g,\hbar}^\ell}\(V_{\sqrt\ell Q_L}^{\eta_\ell}\)=K_{\hat\g,\hbar}^\ell,\quad
  \Com_{L_{\hat\g,\hbar}^\ell}\(K_{\hat\g,\hbar}^\ell\)=V_{\sqrt\ell Q_L}^{\eta_\ell}.
\end{align}
\end{thm}

\begin{proof}
Let $u\in K_{\hat\g,\hbar}^\ell$.
From \eqref{eq:para-alt}, we have that
\begin{align*}
  Y_{\wh\ell}(h_i,z)^-u=0\quad\te{for }i\in I.
\end{align*}
View $L_{\hat\g,\hbar}^\ell$ as a $V_{\sqrt\ell Q_L}^{\eta_\ell}$-module.
By utilizing Lemma \ref{lem:h--to-e}, we get that
\begin{align*}
  Y_{\wh\ell}(f_i^\pm,z)^-u=0\quad\te{for }i\in I.
\end{align*}
Combining this with \eqref{eq:para-S-inv}, \cite[Remark 3.1]{K-Coproduct-q-aff-va} and the fact that $V_{\sqrt\ell Q_L}^{\eta_\ell}$ is generated by the set $\set{h_i,f_i^\pm}{i\in I}$, we get that
\begin{align*}
  [Y_{\wh\ell}(u,z_1),Y_{\wh\ell}(v,z_2)]=Y_{\wh\ell}(Y_{\wh\ell}(u,z_1-z_2)^-v-Y_{\wh\ell}(u,-z_2+z_1)^-v,z_2)=0
\end{align*}
for any $u\in V_{\sqrt\ell Q_L}^{\eta_\ell}$ and $v\in K_{\hat\g,\hbar}^\ell$.
Consequently,
\begin{align*}
  \Com_{L_{\hat\g,\hbar}^\ell}\(V_{\sqrt\ell Q_L}^{\eta_\ell}\)\supset K_{\hat\g,\hbar}^\ell\quad\te{and}\quad
  \Com_{L_{\hat\g,\hbar}^\ell}\(K_{\hat\g,\hbar}^\ell\)
  \supset V_{\sqrt\ell Q_L}^{\eta_\ell}.
\end{align*}
Since
\begin{align*}
  L_{\hat\g,\hbar}^\ell/\hbar L_{\hat\g,\hbar}^\ell\cong L_{\hat\g}^\ell,\quad
  K_{\hat\g,\hbar}^\ell/\hbar K_{\hat\g,\hbar}^\ell\cong K_{\hat\g}^\ell,\quad
  V_{\sqrt\ell Q_L}^{\eta_\ell}/\hbar V_{\sqrt\ell Q_L}^{\eta_\ell}\cong V_{\sqrt\ell Q_L},
\end{align*}
we complete the proof by using Theorem \ref{thm:classical-double-commutant} and Lemma \ref{lem:Com-basic2}.
\end{proof}

\section{Proof of Theorem \ref{thm:qlattice-inj} for $\g=\ssl_2$}\label{subsec:sl2-case}

Let $\g=\ssl_2$.
Then $I=\{1\}$, $r=r_1=1$, $Q_L=\Z\al_1$ and the Cartan matrix $A=(2)$.
In this section, we prove Theorem \ref{thm:qlattice-inj} using induction on $\ell$. First, we prove that $L_{\hat\ssl_2,\hbar}^1\cong V_{\Z\al_1}^{\eta_1}$. Then we show that if the case for $\ell$ holds true, it implies that the case for $\ell+1$ also holds true by using the following $\hbar$-adic quantum VA injection (see Remark \ref{rem:inj}) provided in Theorem \ref{thm:coproduct}:
\begin{align}\label{eq:inj}
  \Delta:L_{\hat\ssl_2,\hbar}^{\ell+1}\hookrightarrow L_{\hat\ssl_2,\hbar}^\ell\wh\ot L_{\hat\ssl_2,\hbar}^1.
\end{align}

We need the following analogue of $E_\ell(h_i)$ (see \eqref{eq:def-E-h}) in quantum lattice VA:
\begin{align*}
    &E_\ell(\beta_1)=\( \frac{f_0(2\hbar+2\ell\hbar)}{f_0(2\hbar-2\ell\hbar)} \)^\half
    \exp\( \(-q^{-\ell\partial}2\hbar f_0(2\hbar\partial)\beta_1\)_{-1} \)\vac\in V_{\sqrt\ell \Z\al_1}^{\eta_\ell}.
\end{align*}
The following result rewrite $E_\ell(\beta_1)$ in term of classical lattice VA operators (see \eqref{eq:def-E}).
\begin{lem}\label{lem:E-beta}
We have that
\begin{align*}
    E_\ell(\beta_1)=E^+(\beta_1,-\hbar-\ell\hbar)
    E^+(-\beta_1,\hbar-\ell\hbar)\vac.
\end{align*}
\end{lem}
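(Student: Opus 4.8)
The plan is to unwind the definition of $E_\ell(\beta_1)$ and to isolate, inside the deformed exponential, the \emph{free} Heisenberg part (which will produce exactly $E^+(\beta_1,-\hbar-\ell\hbar)E^+(-\beta_1,\hbar-\ell\hbar)\vac$) from a central scalar correction coming from the deformation $\eta_\ell$; the whole argument runs parallel to the proof of Proposition \ref{prop:Y-E}. Throughout I take $q=e^\hbar$ and $r=r_1=1$.

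First I would simplify the differential operator in the exponent. From $f_0(w)=(e^{w/2}-e^{-w/2})/w$ one gets the operator identity $2\hbar f_0(2\hbar\partial)=\partial^{-1}(q^{\partial}-q^{-\partial})$, so that
\[
  -q^{-\ell\partial}\,2\hbar f_0(2\hbar\partial)\,\beta_1
  =-\partial^{-1}\(q^{(1-\ell)\partial}-q^{-(1+\ell)\partial}\)\beta_1 .
\]
Applying this to $\beta_1=\beta_1(-1)\vac$ and using $\partial^{k-1}\beta_1(-1)\vac=(k-1)!\,\beta_1(-k)\vac$ shows that the state $w:=-q^{-\ell\partial}2\hbar f_0(2\hbar\partial)\beta_1$ equals $\sum_{k\ge1}\frac{z_1^k-z_2^k}{k}\beta_1(-k)\vac$ with $z_1=-\hbar-\ell\hbar$ and $z_2=\hbar-\ell\hbar$; the sum converges $\hbar$-adically since $z_1,z_2\in\hbar\C[[\hbar]]$. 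Next I would record that the undeformed $-1$-mode of $\beta_1(-k)\vac$ is the creation operator $\beta_1(-k)$, and that all $\beta_1(-k)$ commute, so the corresponding free exponential is
\[
  \exp\(\textstyle\sum_{k\ge1}\frac{z_1^k-z_2^k}{k}\beta_1(-k)\)\vac
  =E^+(\beta_1,z_1)E^+(-\beta_1,z_2)\vac ,
\]
using \eqref{eq:def-E}. This is precisely the right-hand side of the lemma, so it remains to check that passing from the undeformed mode to the deformed mode $\(w\)_{-1}$ of $V_{\sqrt\ell\Z\al_1}^{\eta_\ell}$, together with the prefactor, leaves this expression unchanged.

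For the last point I would use Theorem \ref{thm:qlatticeVA}, which gives $Y_Q^{\eta_\ell}(\beta_1,z)=Y_Q(\beta_1,z)+\Phi(\eta_\ell'(\beta_1,z),Y_Q)$ with the correction term involving only the nonnegative Heisenberg modes $\beta_1(m)$ ($m\ge0$). Hence the deformed mode is $\(w\)_{-1}=w_{-1}^{\mathrm{free}}+R$, where $w_{-1}^{\mathrm{free}}=\sum_k\frac{z_1^k-z_2^k}{k}\beta_1(-k)$ is the creation part and $R$ is annihilation (so $R\vac=0$ and $[R,w_{-1}^{\mathrm{free}}]$ is central). The $\hbar$-adic Baker--Campbell--Hausdorff formula (Lemma \ref{lem:exp-cal}, with $\xi=\mathrm{id}$) then yields
\[
  \exp\(\(w\)_{-1}\)\vac
  =e^{\frac12[R,\,w_{-1}^{\mathrm{free}}]}\,E^+(\beta_1,z_1)E^+(-\beta_1,z_2)\vac .
\]
The main work --- and the step I expect to be the real obstacle --- is to evaluate the central scalar $\kappa:=[R,w_{-1}^{\mathrm{free}}]$ and to check that $e^{\kappa/2}$ exactly cancels the prefactor $\(f_0(2(1+\ell)\hbar)/f_0(2(1-\ell)\hbar)\)^{1/2}$. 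Using Lemma \ref{lem:eta-basic} for $\<\eta_\ell(\beta_1,z),\beta_1\>$, the commutator $\kappa$ reduces to a residue of the form $\Res_z z^{-1}\gamma(-z)$ with $\gamma$ the lattice analogue of the bracket in \eqref{eq:com-formulas-6}; carrying out this residue (as in the computation of $[\wt h_i^-(z_1),\wt h_i^+(z_2)]$ in the proof of Proposition \ref{prop:Y-E}) should give $\kappa=\log\frac{f_0(2(1-\ell)\hbar)}{f_0(2(1+\ell)\hbar)}$, so that $e^{\kappa/2}$ is the reciprocal of the prefactor. Combining the three displays then gives $E_\ell(\beta_1)=E^+(\beta_1,-\hbar-\ell\hbar)E^+(-\beta_1,\hbar-\ell\hbar)\vac$, as claimed.
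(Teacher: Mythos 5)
Your proposal is correct and follows essentially the same route as the paper: the paper defines $\wt\beta_1^\pm(z)$ as the lattice analogues of $\wt h_i^\pm(z)$, invokes the same BCH/residue computation as in Proposition \ref{prop:Y-E} (via Lemma \ref{lem:exp-cal}) to get $Y^{\eta_\ell}_{\sqrt\ell\Z\al_1}(E_\ell(\beta_1),z)=\exp(\wt\beta_1^+(z))\exp(\wt\beta_1^-(z))$ with the central scalar cancelling the prefactor, and then applies this to $\vac$ and lets $z\to 0$ — exactly your decomposition of $(w)_{-1}$ into the creation part plus an annihilation part $R$, phrased at the level of the field rather than the state. Your claimed value $\kappa=\log\bigl(f_0(2(1-\ell)\hbar)/f_0(2(1+\ell)\hbar)\bigr)$ is the correct one, matching the residue computed in the proof of Proposition \ref{prop:Y-E} with $r_i=r=1$.
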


\begin{proof}
Set
\begin{align*}
    &\wt \beta_1^-(z)=-q^{-\ell\pd{z}}2\hbar f_0\(2\hbar\pd{z}\)\( Y_{\sqrt\ell \Z\al_1}(\beta_1,z)^-+\Phi(\eta_\ell'(\beta_1,z),Y_{\sqrt\ell \Z\al_1})\),\\
    &\wt \beta_1^+(z)=-q^{-\ell\pd{z}}2\hbar f_0\(2\hbar\pd{z}\) Y_{\sqrt\ell \Z\al_1}(\beta_1,z)^+.
\end{align*}
From a straightforward calculation, we have that
\begin{align*}
  [\wt \beta_1^-(z_1),\wt \beta_1^+(z_2)]
  =\gamma(z_2-z_1),\quad
  \te{where }
  \gamma(z)=\log f(-z)^{ (q^{-2}-q^{2})(q^{2\ell}-1) }.
\end{align*}
Notice that
\begin{align*}
  &\Res_zz\inv\gamma(-z)
  =\Res_zz\inv\log f(z)^{ (q^{2}-q^{-2})(q^{-2\ell}-1) }\\
  =&\Res_zz\inv\log f_0(z)^{ (q^{2}-q^{-2})(q^{-2\ell}-1) }
  =\log\frac{f_0(2\hbar-2\ell\hbar)}{f_0(2\hbar+2\ell\hbar)}.
\end{align*}
Then \cite[Lemma 8.9]{K-Coproduct-q-aff-va} provides that
\begin{align*}
  &Y_{\sqrt\ell \Z\al_1}^{\eta_\ell}(E_\ell(\beta_1),z)
  =\(\frac{f_0(2\hbar+2\ell\hbar)}{f_0(2\hbar-2\ell\hbar)}\)^\half Y_{\sqrt\ell \Z\al_1}^{\eta_\ell}\(\exp\(\(-q^{-\ell\partial}2\hbar f_0(2\partial\hbar) \beta_1\)_{-1}\)\vac,z\)\\
  =&\(\frac{f_0(2\hbar+2\ell\hbar)}{f_0(2\hbar-2\ell\hbar)}\)^\half  \exp(\wt \beta_1^+(z))\exp(\wt \beta_1^-(z))
  \(\frac{f_0(2\hbar-2\ell\hbar)}{f_0(2\hbar+2\ell\hbar)}\)^\half\\
  =&\exp(\wt \beta_1^+(z))\exp(\wt \beta_1^-(z)).
\end{align*}
Notice that
\begin{align*}
    \exp\(\wt \beta_1^+(z)\)=E^+(\beta_1,z-\hbar-\ell\hbar)
        E^+(-\beta_1,z+\hbar-\ell\hbar).
\end{align*}
Then
\begin{align*}
    &E_\ell(\beta_1)=\lim_{z\to 0}Y_{\sqrt\ell \Z\al_1}^{\eta_\ell}(E_\ell(\beta_1),z)\vac
    =\lim_{z\to 0}\exp\(\wt \beta_1^+(z)\)\exp\(\wt \beta_1^-(z)\)\vac\\
    =&\lim_{z\to 0}E^+(\beta_1,z-\hbar-\ell\hbar)E^+(-\beta_1,z+\hbar-\ell\hbar)\vac
    =E^+(\beta_1,-\hbar-\ell\hbar)E^+(-\beta_1,\hbar-\ell\hbar)\vac,
\end{align*}
which complete the proof of lemma.
\end{proof}

When $\ell=1$, Theorem \ref{thm:qlattice-inj} is equivalent to the following result.
\begin{prop}\label{prop:level-1-ssl2-case}
There exists a unique $\hbar$-adic quantum VA isomorphism from $L_{\hat\ssl_2,\hbar}^1$ to $V_{\Z\al_1}^{\eta_1}$  such that
\begin{align*}
  h_1\mapsto \al_1,\quad x_1^\pm\mapsto e_{\pm\al_1}.
\end{align*}
\end{prop}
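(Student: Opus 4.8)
The plan is to realize the desired map as the homomorphism furnished by the universal property of $L_{\hat\ssl_2,\hbar}^1$ and then promote it to an isomorphism by a classical-limit argument. Concretely, I would apply Proposition \ref{prop:universal-qaff} with $\bar h_1=\al_1$ and $\bar x_1^\pm=e_{\pm\al_1}$ in $V_{\Z\al_1}^{\eta_1}$: once $V_{\Z\al_1}^{\eta_1}$, equipped with the fields $Y_{\Z\al_1}^{\eta_1}(\al_1,z)$ and $Y_{\Z\al_1}^{\eta_1}(e_{\pm\al_1},z)$, is shown to lie in $\obj\,\mathcal M_{\wh 1}(\ssl_2)$ and to satisfy the three families of ideal relations \eqref{eq:x+1x-1}, \eqref{eq:x+1x-2}, \eqref{eq:integrable} (the Serre relation \eqref{eq:serre} is vacuous since $a_{11}=2>0$), Proposition \ref{prop:universal-qaff} produces a unique $\hbar$-adic nonlocal VA homomorphism $\psi\colon L_{\hat\ssl_2,\hbar}^1\to V_{\Z\al_1}^{\eta_1}$ with $h_1\mapsto\al_1$ and $x_1^\pm\mapsto e_{\pm\al_1}$.

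For the membership in $\mathcal M_{\wh 1}(\ssl_2)$ I would use that $V_{\Z\al_1}^{\eta_1}$, viewed as a module over itself, is already an object of $\mathcal A_\hbar^{\eta_1}(\Z\al_1)$ by Proposition \ref{prop:qlatticeVA-mod-to-A-mod}. The point is then to translate the $\mathcal A$-relations into the four $\mathcal M_{\wh 1}$-relations \eqref{eq:local-h-1}--\eqref{eq:local-h-4}: formulas \eqref{eq:local-h-1}, \eqref{eq:local-h-2} are immediate from \eqref{eq:AQ-sp1}, \eqref{eq:AQ-sp2} (with $[\,r\ell/r_1]_{q^{r_1\partial}}=[1]_{q^\partial}=1$), while \eqref{eq:local-h-3}, \eqref{eq:local-h-4} follow from \eqref{eq:AQ-sp3}, \eqref{eq:AQ-sp4} after multiplying through by the nonzero scalar power series $f(z_1-z_2)$, using $f(z)^{-1+q^{-2}}f(z)=f(z)^{q^{-2}}$ together with the explicit exponents of Lemma \ref{lem:eta-basic}.

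The genuinely computational, and I expect hardest, step is verifying \eqref{eq:x+1x-1} and \eqref{eq:x+1x-2} with the \emph{exact} normalization $x_1^\pm\mapsto e_{\pm\al_1}$. Here I would evaluate $Y_{\Z\al_1}^{\eta_1}(e_{\al_1},z)e_{-\al_1}=\epsilon(\al_1,-\al_1)\,z^{-2}e^{\<\eta_1(\al_1,z),-\al_1\>}E^+(\al_1,z)\vac$ via Theorem \ref{thm:qlatticeVA}, using $e^{\<\eta_1(\al_1,z),-\al_1\>}=f(z+2\hbar)^{-1}f(z)^{-1}z^2$ from Lemma \ref{lem:eta-basic}. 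The delicate point is that $\tfrac{1}{z(z+2\hbar)}$ must be expanded in $\C((z))[[\hbar]]$ as $\sum_{n\ge0}(-2\hbar)^n z^{-n-2}$ (not treated as regular at $z=0$); extracting the coefficients of $z^{-1}$ and $z^{-2}$ then gives $(e_{\al_1})_0 e_{-\al_1}$ and $(e_{\al_1})_1 e_{-\al_1}$, which I would match against $\tfrac{1}{q-q^{-1}}(\vac-E_1(\al_1))$ and $-\tfrac{2\hbar}{q-q^{-1}}E_1(\al_1)$ by identifying $E_1(\al_1)=E^+(\al_1,-2\hbar)\vac$ through Lemma \ref{lem:E-beta}. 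The integrability relation $(e_{\pm\al_1})_{-1}e_{\pm\al_1}=0$ drops out because $f(z)^{q^2+1}$ vanishes at $z=0$, so $Y_{\Z\al_1}^{\eta_1}(e_{\pm\al_1},z)e_{\pm\al_1}\in V_{\Z\al_1}^{\eta_1}[[z]]$ with vanishing constant term. A short sign bookkeeping fixes $\epsilon(\al_1,-\al_1)=1$, which is forced by comparing classical limits ($\vac-E_1(\al_1)=2\hbar\,\al_1+O(\hbar^2)$ against $q-q^{-1}=2\hbar+O(\hbar^3)$).

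Finally, to see that $\psi$ is an isomorphism I would reduce modulo $\hbar$: by Proposition \ref{prop:classical-limit} and the corresponding statement for $L$, $\psi$ induces $\bar\psi\colon L_{\hat\ssl_2}^1\to V_{\Z\al_1}$ sending $h_1\mapsto\al_1$, $x_1^\pm\mapsto e_{\pm\al_1}$, which is exactly the classical Frenkel--Kac--Segal isomorphism between the level-one affine $\ssl_2$ vertex algebra and the root-lattice vertex algebra, hence bijective. Since both $L_{\hat\ssl_2,\hbar}^1$ and $V_{\Z\al_1}^{\eta_1}$ are topologically free over $\C[[\hbar]]$ and $\bar\psi$ is an isomorphism, the standard $\hbar$-adic argument (a continuous $\C[[\hbar]]$-linear map of topologically free modules that is an isomorphism mod $\hbar$ is an isomorphism) shows $\psi$ is bijective. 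Compatibility with the quantum Yang--Baxter operators is automatic: by Remark \ref{rem:Jacobi-S} the operator $S_{1,1}(z)$ is determined by the vertex operator map through $S$-locality, so the bijective nonlocal VA homomorphism $\psi$ intertwines them, making $\psi$ an $\hbar$-adic quantum VA isomorphism. Uniqueness is immediate since both algebras are generated by $\{h_1,x_1^\pm\}$ and $\{\al_1,e_{\pm\al_1}\}$ respectively. This establishes Theorem \ref{thm:qlattice-inj} in the base case $\ell=1$ for $\fg=\ssl_2$.
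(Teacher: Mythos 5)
Your proposal follows essentially the same route as the paper's proof: obtain the map from the universal properties (Propositions \ref{prop:qlatticeVA-mod-to-A-mod}, \ref{prop:universal-M-tau} and \ref{prop:universal-qaff}) after translating the $\mathcal A_\hbar^{\eta_1}(\Z\al_1)$-relations into the $\mathcal M_{\wh 1}(\ssl_2)$-relations, verify the ideal relations by computing $Y_{\Z\al_1}^{\eta_1}(e_{\al_1},z)e_{\pm\al_1}$ from Theorem \ref{thm:qlatticeVA} together with Lemmas \ref{lem:eta-basic} and \ref{lem:E-beta} (identifying $E_1(\al_1)=E^+(\al_1,-2\hbar)\vac$), and conclude bijectivity from the classical limit and topological freeness. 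The one point where you are looser than the paper is the quantum Yang--Baxter compatibility at the end: $S$-locality \eqref{eq:qyb-locality} only determines $S(z)$ up to the equivalence $\sim$, so it is not literally ``automatic''; the paper instead verifies it by comparing the explicit actions of $S_{1,1}(z)$ and $S_{\Z\al_1}^{\eta_1}(z)$ on the generators (Theorems \ref{thm:quotient-algs} and \ref{thm:qlatticeVAQYB}), which is the small repair your argument needs.
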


\begin{proof}
From Proposition \ref{prop:qlatticeVA-mod-to-A-mod}, we have that
\begin{align*}
    \(V_{\Z\al_1}^{\eta_1},Y_{\Z\al_1}^{\eta_1}(\al_1,z),Y_{\Z\al_1}^{\eta_1}(e_{\pm\al_1},z)\)\in \obj \mathcal A_\hbar^{\eta_1}(\Z\al_1).
\end{align*}
Comparing the relations \eqref{A1}, \eqref{A2}, \eqref{A3} and \eqref{A4}
with the relations \eqref{eq:local-h-1}, \eqref{eq:local-h-2}, \eqref{eq:local-h-3} and \eqref{eq:local-h-4},
we get that
\begin{align*}
    \(V_{\Z\al_1}^{\eta_1},Y_{\Z\al_1}^{\eta_1}(\al_1,z),Y_{\Z\al_1}^{\eta_1}(e_{\pm\al_1},z)\)\in \obj \mathcal M_{\wh 1}(\ssl_2).
\end{align*}
By utilizing Proposition \ref{prop:universal-M-tau}, we obtain a unique $\hbar$-adic nonlocal VA homomorphism $\psi:F_{\hat\ssl_2,\hbar}^1\to V_{\Z\al_1}^{\eta_1}$ such that
\begin{align*}
  \psi(h_1)=\al_1,\quad \psi(x_1^\pm)=e_{\pm\al_1}.
\end{align*}

From \eqref{eq:eta-sp-exp}, we get that
\begin{align*}
  &Y_{\Z\al_1}^{\eta_1}(e_{\al_1},z)e_{-\al_1}
  =f(z)^{-1-q^2}E^+(\al_1,z)\vac.
\end{align*}
Then
\begin{align*}
  Y_{\Z\al_1}^{\eta_1}(e_{\al_1},z)^-e_{\al_1}=&0\quad\te{and}\quad
  Y_{\Z\al_1}^{\eta_1}(e_{\al_1},z)^-e_{-\al_1}
  =\frac{1}{q-q\inv}
    \(\frac{\vac}{z}-\frac{E^+(\al_1,-2\hbar)\vac}
        {z+2\hbar}\).
\end{align*}
By recalling Lemma \ref{lem:E-beta}, we know that
\begin{align*}
  &E^+(\al_1,-2\hbar)\vac=E_1(\al_1).
\end{align*}
Using \eqref{eq:eta-sp-exp}, it is also derived that
\begin{align*}
  &Y_{\Z\al_1}^{\eta_1}(e_{\al_1},z)e_{\al_1}
  =f(z)^{1+q^2}E^+(\al_1,z)e_{2\al_1}.
\end{align*}
It implies that
\begin{align*}
    &(e_{\al_1})_{-1}e_{\al_1}=\Res_zz\inv Y_{\Z\al_1}^{\eta_1}(e_{\al_1},z)e_{\al_1}
    =\Res_zz\inv f(z)^{1+q^2}E^+(\al_1,z)e_{2\al_1}=0.
\end{align*}
By applying Proposition \ref{prop:universal-qaff}, $\psi$ gives rise to a unique $\hbar$-adic nonlocal VA homomorphism $V_{\ssl_2,\hbar}^1\to L_{\Z\al_1}^{\eta_1}$.
Notice that the $\C[[\hbar]]$-module map $\psi$ induces a $\C$-isomorphism
\begin{align*}
  L_{\ssl_2,\hbar}^1/\hbar L_{\ssl_2,\hbar}^1\cong L_{\ssl_2}^1\cong V_{\Z\al_1}\cong V_{\Z\al_1}^{\eta_1}/\hbar V_{\Z\al_1}^{\eta_1}.
\end{align*}
Therefore, $\psi$ must be an $\hbar$-adic nonlocal VA isomorphism.
Finally, through a comparison of the action of quantum Yang-Baxter operators on generators (refer to Theorems \ref{thm:quotient-algs} and \ref{thm:qlatticeVA}), it is concluded that $\psi$ is an $\hbar$-adic quantum VA isomorphism.
\end{proof}

In the remainder of this section, we prove the induction step, that is, if Theorem \ref{thm:qlattice-inj} holds for $\ell$,
then it holds for $\ell+1$.
Let $Y_\Delta$ be the vertex operator map of the twisted tensor product $L_{\hat\ssl_2,\hbar}^\ell\wh\ot L_{\hat\ssl_2,\hbar}^1$.
By Lemma \ref{lem:AQ-sp1-4} and the injection \eqref{eq:inj}, the proof of Theorem \ref{thm:qlattice-inj} for $\ell+1$
reduces to the following two results, whose proofs will be presented in Subsections \ref{subsec:pf-AQ-sp5}
and \ref{subsec:pf-AQ-sp6-7}.


\begin{prop}\label{prop:AQ-sp5}
Let us assume that Theorem \ref{thm:qlattice-inj} holds for $\ell$.
Then the relation \eqref{A5} holds for $\beta_{1,\hbar}(z)=Y_\Delta(\Delta(h_1),z)$ and $e_{1,\hbar}^\pm(z)=Y_\Delta(\Delta(f_1^\pm),z)$.
\end{prop}

\begin{prop}\label{prop:AQ-sp6-7}
Let us assume that Theorem \ref{thm:qlattice-inj} holds for $\ell$.
Then relations \eqref{A6} and \eqref{A7} hold for $\beta_{1,\hbar}(z)=Y_\Delta(\Delta(h_1),z)$ and $e_{1,\hbar}^\pm(z)=Y_\Delta(\Delta(f_1^\pm),z)$.
\end{prop}

The proofs of the above two propositions rely on explicit expressions of $\Delta(f_1^\pm)$.
Recalling the definitions of $h_1^\pm(z)$ and $\wt h_1^\pm(z)$ given in Lemma \ref{lem:com-formulas},
we obtain the following expressions directly from Propositions \ref{prop:delta-int+} and \ref{prop:delta-int-}.

\begin{lem}\label{lem:Delta-f}
In $L_{\hat\ssl_2,\hbar}^\ell\wh\ot L_{\hat\ssl_2,\hbar}^1$, we have that
\begin{align}
    \Delta(f_1^+)\label{eq:Delta-f+}
    =&q^{\partial}\exp\(\wt h_1^+((\ell-1)\hbar)\)f_1^+\ot q^{\ell\partial}f_1^+\\
    &\nonumber\times\frac{f_0(2(\ell-1)\hbar)^{\frac14}f_0(2\ell\hbar)^\half f_0(2(\ell+1)\hbar)^{\frac 14}}{f_0(2\hbar)^\half},\\
    \Delta(f_1^-)\label{eq:Delta-f-}
    =&q^{-\partial}f_1^-\ot q^{\ell\partial}f_1^-
    \frac{f_0(2(\ell-1)\hbar)^{\frac14}f_0(2\ell\hbar)^\half f_0(2(\ell+1)\hbar)^{\frac 14}}{f_0(2\hbar)^\half}.
\end{align}
\end{lem}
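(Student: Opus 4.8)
The plan is to compute both coproducts $\Delta(f_1^\pm)$ by directly applying Propositions \ref{prop:delta-int+} and \ref{prop:delta-int-} with the specialization $\fg=\ssl_2$, $i=1$, $r=r_1=1$, $k=\ell$, and then to identify the $t=0$ and $t=\ell$ terms in the resulting sums as the dominant contributions, while arguing that the intermediate terms vanish. Recall that by definition $f_1^\pm=\sqrt{c_{1,\ell}}\,q^{(1-\ell)\partial}(x_1^\pm)_{-1}^{\ell}\vac$, so the element $\Delta(f_1^\pm)$ is the image of $\sqrt{c_{1,\ell}}\,q^{(1-\ell)\partial}$ applied to $\Delta\big((x_1^\pm)_{-1}^{\ell}\vac\big)$. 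Since $\Delta$ is an $\hbar$-adic nonlocal VA homomorphism (Theorem \ref{thm:coproduct}) and $\Delta(\vac\ot\vac)=\vac\ot\vac$, one has $\Delta\big((x_1^\pm)_{-1}^{\ell}\vac\big)=\big(\Delta(x_1^\pm)\big)_{-1}^{\ell}(\vac\ot\vac)$, which is precisely the quantity computed in Propositions \ref{prop:delta-int+} and \ref{prop:delta-int-}.

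\textbf{First I would} substitute $r=r_1=1$ into Proposition \ref{prop:delta-int-}, obtaining
\begin{align*}
  \big(\Delta(x_1^-)\big)_{-1}^{\ell}(\vac\ot\vac)
  =\sum_{t=0}^{\ell}(x_1^-)_{-1}^{\ell-t}\vac\ot q^{2(\ell-t)\partial}(x_1^-)_{-1}^{t}\vac
    \ot f_0(2t\hbar)\binom{\ell-1}{t}_{q}\binom{\ell}{t}_{q}\binom{\ell-1}{t}^{-1}.
\end{align*}
The key observation is that the $q$-binomial coefficient $\binom{\ell-1}{t}_{q}$ together with the ordinary binomial $\binom{\ell-1}{t}^{-1}$ and the scalar $f_0(2t\hbar)$ must combine so that only $t=0$ survives into the term producing $f_1^-\ot f_1^-$; here one uses that $(x_1^-)_{-1}^{\ell}\vac$ is (up to normalization) $f_1^-$, so the $t=0$ summand $(x_1^-)_{-1}^{\ell}\vac\ot q^{2\ell\partial}\vac$ does \emph{not} give the claimed tensor, and in fact the correct matching requires pairing the $t$-summand carefully against the normalization $c_{1,\ell}$. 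I would therefore track the normalization constant $c_{1,\ell}$ explicitly through the $q^{(1-\ell)\partial}$-shifts and verify that the surviving cross-term $(x_1^-)_{-1}^{\ell}\vac\ot q^{\cdots}(x_1^-)_{-1}^{\ell}\vac$ cannot appear from a single summand but rather that the whole sum collapses because $f_1^-$ is a \emph{single} power $(x_1^-)_{-1}^{\ell}\vac$; the statement $\Delta(f_1^-)=q^{-\partial}f_1^-\ot q^{\ell\partial}f_1^-\cdot(\text{scalar})$ asserts that $\Delta$ of this top power factors as a pure tensor, which forces all $0<t<\ell$ terms to be absorbed and the $t=\ell$ term to vanish by the vanishing relation $(x_1^-)_{-1}^{\ell+1}\vac=0$ in $L_{\hat\ssl_2,\hbar}^{\ell}$ (integrability, Definition \ref{de:L-tau}).

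\textbf{The hard part will be} the $+$ case: Proposition \ref{prop:delta-int+} yields a more intricate sum involving the factors $\exp\big(\wt h_1^+(2(a+\ell)\hbar)\big)$ and the ratios of $f_0$-values, and one must show these conspire to produce exactly the prefactor $q^{\partial}\exp\big(\wt h_1^+((\ell-1)\hbar)\big)$ and the scalar $\tfrac{f_0(2(\ell-1)\hbar)^{1/4}f_0(2\ell\hbar)^{1/2}f_0(2(\ell+1)\hbar)^{1/4}}{f_0(2\hbar)^{1/2}}$. The expected obstacle is bookkeeping: correctly combining the normalization $\sqrt{c_{1,\ell}}$ on both the domain side (level $\ell+1$, via $f_1^+$ at level $\ell+1$) and the two codomain factors ($c_{1,\ell}$ at level $\ell$ and $c_{1,1}$ at level $1$), since $c_{i,\ell}$ is defined in terms of $f_0$-values and $q$-factorials. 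I would reduce the verification to a single scalar identity among $f_0$-values by isolating the surviving summand (again $t=0$ after accounting for integrability $(x_1^+)_{-1}^{\ell+1}\vac=0$), then confirm that the exponential prefactor $\exp\big(\wt h_1^+((\ell-1)\hbar)\big)$ emerges from the product $\prod_{a=0}^{k-t-1}\exp\big(\wt h_1^+(2(a+\ell)\hbar)\big)$ evaluated at the appropriate shift after the $q^{(1-\ell)\partial}$-conjugation; the half-integer exponents $1/4,1/2,1/4$ on the $f_0$-factors are the signature of combining $c_{1,\ell+1}$, $c_{1,\ell}$, and $c_{1,1}$, and matching them is the crux of the computation.
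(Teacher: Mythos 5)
Your overall strategy is the one the paper itself uses: the text introducing the lemma says it ``can be inferred directly from Propositions \ref{prop:delta-int+} and \ref{prop:delta-int-},'' i.e.\ expand $\big(\Delta(x_1^\pm)\big)_{-1}^{k}(\vac\ot\vac)$ and let integrability kill all but one summand, then match normalizations. However, your execution contains an off-by-one error that propagates through the whole argument. The map in question is $\Delta:L_{\hat\ssl_2,\hbar}^{\ell+1}\to L_{\hat\ssl_2,\hbar}^{\ell}\wh\ot L_{\hat\ssl_2,\hbar}^{1}$, so the $f_1^\pm$ being coproducted is the \emph{level-$(\ell+1)$} element $\sqrt{c_{1,\ell+1}}\,q^{-\ell\partial}(x_1^\pm)_{-1}^{\ell+1}\vac$, not $\sqrt{c_{1,\ell}}\,q^{(1-\ell)\partial}(x_1^\pm)_{-1}^{\ell}\vac$ as you write. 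You must therefore apply Propositions \ref{prop:delta-int+} and \ref{prop:delta-int-} with $k=\ell+1$, and conjugate by $q^{-\ell(\partial\ot 1+1\ot\partial)}$ and multiply by $\sqrt{c_{1,\ell+1}}$ at the end.

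With the correct $k=\ell+1$ the survival analysis is clean and is \emph{not} what you describe. In the minus case the summand is $(x_1^-)_{-1}^{\ell+1-t}\vac\ot q^{2(\ell+1-t)\partial}(x_1^-)_{-1}^{t}\vac$: the first factor lives in $L_{\hat\ssl_2,\hbar}^{\ell}$, where $(x_1^-)_{-1}^{\ell+1}\vac=0$ kills $t=0$, and the second lives in $L_{\hat\ssl_2,\hbar}^{1}$, where $(x_1^-)_{-1}^{2}\vac=0$ kills $t\ge 2$; only $t=1$ survives. In the plus case the first factor carries $(x_1^+)_{-1}^{t}\vac$ and the second $(x_1^+)_{-1}^{\ell+1-t}\vac$, so only $t=\ell$ survives — not $t=0$. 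Your alternative justification for the minus case (that the intermediate terms must be ``absorbed'' because the answer is asserted to be a pure tensor) is circular: it assumes the conclusion. The actual mechanism is the two-sided integrability just described, and once the single surviving summand is isolated the remaining work is exactly the scalar bookkeeping you anticipate — checking that $\sqrt{c_{1,\ell+1}}$ times the surviving coefficient ($f_0(2\hbar)[\ell]_q[\ell+1]_q/\ell$ for the minus case, $f_0(2\ell\hbar)[\ell+1]_q$ for the plus case) equals $\sqrt{c_{1,\ell}}\sqrt{c_{1,1}}$ times the stated $f_0$-ratio, and that $q^{-\ell\partial}\exp\big(\wt h_1^+(2\ell\hbar)\big)$ reproduces $q^{\partial}\exp\big(\wt h_1^+((\ell-1)\hbar)\big)$ after commuting the shifts.
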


\subsection{Proof of Proposition \ref{prop:AQ-sp5}}\label{subsec:pf-AQ-sp5}
The main purpose of this subsection is to prove Proposition \ref{prop:AQ-sp5}.
We need the following equivalent conditions for \eqref{A5}.

\begin{lem}\label{lem:AQ-sp5-alt}
In the $\hbar$-adic quantum VA $L_{\hat\ssl_2,\hbar}^\ell$,
the equation \eqref{A5} holds for $\ell$ if and only if the following equation holds true:
\begin{align}\label{eq:AQ-sp5-alt1}
  &\frac{d}{dz}Y_{\wh\ell}(f_1^\pm,z)=\pm h_1^+(z)Y_{\wh\ell}(f_1^\pm,z)\pm Y_{\wh\ell}(f_1^\pm,z)h_1^-(z)
\end{align}
which can also be expressed as the equation:
\begin{align}\label{eq:AQ-sp5-alt2}
  \partial f_1^\pm=\pm(h_1)_{-1}f_1^\pm
  -f_1^\pm\left.\(\pd{z}\log f_0(z)^{[2]_q[\ell]_qq^\ell}\)\right|_{z=0}
\end{align}
\end{lem}

\begin{proof}
From Proposition \ref{prop:normal-ordering-rel-general} and the definition of $f_1^\pm$, we have that
\begin{align*}
  &Y_{\wh\ell}(f_1^\pm,z)=\sqrt{c_{1,\ell}} Y_{\wh\ell}\(\(x_1^\pm\)_{-1}^\ell\vac,z+(1-\ell)\hbar\)\\
  =&c\:Y_{\wh\ell}(x_1^\pm,z+(\ell-1)\hbar)Y_{\wh\ell}(x_1^\pm,z+(\ell-3)\hbar)
    \cdots Y_{\wh\ell}(x_1^\pm,z+(1-\ell)\hbar)\;
\end{align*}
for some $c\in\C[[\hbar]]$.
By utilizing \eqref{eq:com-formulas-2}, we get that
\begin{align}
  &\left[h_1^-(z_1),Y_{\wh\ell}\(f_1^\pm,z_2\)\right]\nonumber\\
  =&c[h_1^+(z_1),\:Y_{\wh\ell}(x_1^\pm,z_2+(\ell-1)\hbar)\cdots
    Y_{\wh\ell}(x_1^\pm,z_2+(1-\ell)\hbar)\;]\nonumber\\
  =&\pm c\:Y_{\wh\ell}(x_1^\pm,z_2+(\ell-1)\hbar)\cdots
    Y_{\wh\ell}(x_1^\pm,z_2+(1-\ell)\hbar)\;\pd{z_1}\log
  f(z_1-z_2)^{[2]_qq^\ell [\ell]_q}\nonumber\\
  =&\pm Y_{\wh\ell}\(f_1^\pm,z_2\)\pd{z_1}\log f(z_1-z_2)^{[2]_qq^\ell[\ell]_q}.\label{eq:com-h--f}
\end{align}
Taking $\Rat_{z_1}$ on the both hand sides of \eqref{eq:com-h--f}, we get that
\begin{align*}
  &[h_1^-(z_1)^+,Y_{\wh\ell}(f_1^\pm,z_2)]
  =\pm Y_{\wh\ell}(f_1^\pm,z_2)\pd{z_1}\log f(z_1-z_2)^{[2]_qq^\ell[\ell]_q}\\
  =&\pm Y_{\wh\ell}(f_1^\pm,z_2)\Rat_{z_1}\pd{z_1}\log f(z_1-z_2)^{[2]_qq^\ell[\ell]_q}\\
  =&\pm Y_{\wh\ell}(f_1^\pm,z_2)\pd{z_1}\log f_0(z_1-z_2)^{[2]_qq^\ell[\ell]_q},
\end{align*}
where $\Rat_{z_1}$ denotes the rational part of a series in variable $z_1$.
Hence, the RHS of \eqref{A5} equals to
\begin{align*}
  &\pm h_1^+(z)Y_{\wh\ell}(f_1^\pm,z)\pm h_1^-(z)^+Y_{\wh\ell}(f_1^\pm,z)\pm Y_{\wh\ell}(f_1^\pm,z)h_1^-(z)^-\\
  &\quad-Y_{\wh\ell}(f_1^\pm,z)\(\pd{z}\log f_0(z)^{[2]_q[\ell]_qq^\ell}\)|_{z=0}\\
  =&\pm h_1^+(z)Y_{\wh\ell}(f_1^\pm,z)\pm Y_{\wh\ell}(f_1^\pm,z)h_1^-(z).
\end{align*}
Therefore, the first assertion is valid.

Let the both hand sides of \eqref{eq:com-h--f} act on $\vac$.
Letting $z_2=0$, we get that
\begin{align}\label{eq:h--f}
  h_1^-(z)f_1^\pm
  =\pm f_1^\pm\ot \pd{z}\log f(z)^{[2]_q[\ell]_qq^\ell}.
\end{align}
Then we have that
\begin{align}\label{eq:h--1-f}
  &(h_1)_{-1}f_1^\pm=\Res_zz\inv Y_{\wh\ell}(h_1,z)f_1^\pm
  =\Res_zz\inv(h_1^+(z)+h_1^-(z))f_1^\pm\\
  =&h_1^+(0)f_1^\pm
  \pm\Res_zz\inv \pd{z}\log f(z)^{[2]_q[\ell]_qq^\ell}
  =h_1^+(0)f_1^\pm \pm\left.\(   \pd{z}\log
  f(z)^{[2]_q[\ell]_qq^\ell}\)\right|_{z=0}.\nonumber
\end{align}
Suppose that the relation \eqref{eq:AQ-sp5-alt1} holds true.
Letting the both hand sides of \eqref{eq:AQ-sp5-alt1} act on $\vac$ and taking $z=0$, we get that
\begin{align}\label{eq:der-f}
  \partial f_1^\pm=\pm h_1^+(0)f_1^\pm.
\end{align}
Combining this with \eqref{eq:h--1-f}, we complete the proof of
\eqref{eq:AQ-sp5-alt2}.

On the other hand, notice that
\begin{align*}
  &Y_{\wh\ell}(h_1,z_1)Y_{\wh\ell}(f_1^\pm,z_2)
  =(h_1^+(z_1)+h_1^-(z_1))Y_{\wh\ell}(f_1^\pm,z_2)\\
  =&h_1^+(z_1)Y_{\wh\ell}(f_1^\pm,z_2)
  +Y_{\wh\ell}(f_1^\pm,z_2)h_1^-(z_1)
  \pm Y_{\wh\ell}(f_1^\pm,z_2)
  \pd{z_1}\log f(z_1-z_2)^{[2]_q[\ell]_qq^\ell},
\end{align*}
where the last equation follows from \eqref{eq:com-h--f}.
Then
\begin{align*}
  &Y_\E\(Y_{\wh\ell}(h_1,z),z_0\)Y_{\wh\ell}(f_1^\pm,z)\\
  =&h_1^+(z+z_0)Y_{\wh\ell}(f_1^\pm,z)
  +Y_{\wh\ell}(f_1^\pm,z)h_1^-(z+z_0)
  \pm Y_{\wh\ell}(f_1^\pm,z)\pd{z_0}\log f(z_0)^{[2]_q[\ell]_qq^\ell}.
\end{align*}
Taking $\Res_{z_0}z_0\inv$ on both hand sides, we get that
\begin{align*}
  &Y_{\wh\ell}\((h_1)_{-1}f_1^\pm,z\)\\
  =&h_1^+(z)Y_{\wh\ell}(f_1^\pm,z)+Y_{\wh\ell}(f_1^\pm,z)h_1^-(z)
  \pm Y_{\wh\ell}(f_1^\pm,z)\left.\(\pd{z_0}\log f(z_0)^{[2]_q[\ell]_qq^\ell}\)\right|_{z_0=0}.
\end{align*}
Therefore, \eqref{eq:AQ-sp5-alt1} can be derived from \eqref{eq:AQ-sp5-alt2}.
\end{proof}

To compute $\partial\Delta(f_1^\pm)$ and $\Delta(h_1)_{-1}\Delta(f_1^\pm)$, we need the following five technical results.

\begin{lem}\label{lem:S-xk-h}
We have that
\begin{align*}
  &S_{\ell,1}(z)\(f_1^\pm\ot h_1\)
  =f_1^\pm\ot h_1\pm f_1^\pm\ot \vac
  \ot \pd{z}\log
  f(z)^{[2]_q[\ell]_q(q-q\inv)}.
\end{align*}
\end{lem}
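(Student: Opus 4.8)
The plan is to reduce the statement to a computation of the deforming $H$-module action on $f_1^\pm$ and then to convert a sum over shifted arguments into a single $q$-number factor. First I would apply the defining formula \eqref{eq:S-twisted-def} for $S_{\ell,1}(z)$ together with the comodule structure $\rho(h_1)=h_1\ot 1+\vac\ot\al_1^\vee$ from Proposition \ref{prop:deform-datum}. Since $\al_1^\vee$ is primitive and $\wh{\ell,1}(1,-z)=\id$, this collapses to exactly two terms,
\[
  S_{\ell,1}(z)(f_1^\pm\ot h_1)=f_1^\pm\ot h_1+\bigl(\wh{\ell,1}(\al_1^\vee,-z)f_1^\pm\bigr)\ot\vac,
\]
so it suffices to show that $\wh{\ell,1}(\al_1^\vee,-z)$ acts on $f_1^\pm$ as the scalar $\pm\pd{z}\log f(z)^{[2]_q[\ell]_q(q-q\inv)}$.

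Next I would compute this scalar. Recalling $f_1^\pm=\sqrt{c_{1,\ell}}\,q^{(1-\ell)\partial}(x_1^\pm)_{-1}^\ell\vac$, I would use Proposition \ref{prop:normal-ordering-rel-general}(3) to write $Y_{\wh\ell}(f_1^\pm,w)$ as a constant multiple of the normal-ordered product of $\ell$ factors $Y_{\wh\ell}(x_1^\pm,\cdot)$, the operator $q^{(1-\ell)\partial}$ recentring the insertion points symmetrically about the origin at spacing $2\hbar$, i.e. at $w+(\ell-1-2a)\hbar$ for $a=0,\dots,\ell-1$. Applying the module--vertex-algebra identity \eqref{eq:mod-va-for-vertex-bialg3} with the primitive element $\al_1^\vee$, together with $\sigma(\al_1^\vee,w)x_1^\pm=\pm x_1^\pm\ot\wh{\ell,1}_{11}^{1,\pm}(w)$ from Proposition \ref{prop:deform-datum} and $\sigma(\al_1^\vee,w)\vac=0$, the action distributes by Leibniz over the $\ell$ identical factors; since every factor equals $x_1^\pm$, the product is reproduced unchanged and what remains is the scalar $\pm\sum_{a=0}^{\ell-1}\wh{\ell,1}_{11}^{1,\pm}\bigl(-z-(\ell-1-2a)\hbar\bigr)$, where $\wh{\ell,1}_{11}^{1,\pm}(w)=\pd{w}\log f(w)^{[2]_q(q-q\inv)}$. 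This is the exact $\ssl_2$-analogue of the $S_{\ell,\ell'}(z)(M_i^\pm\ot h_j)$ computation carried out in Lemma \ref{lem:S-ell-ell-prime}, with $\vac$ replacing the extra factor $x_i^\pm$, so most of that bookkeeping can be reused.

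The hard part will be collapsing this sum of shifted log-derivatives into the single exponent $[2]_q[\ell]_q(q-q\inv)$, which is the only genuinely computational step. Here I would invoke the operator meaning of the notation $P(z)^{g(q)}$ (the product of $q^{m\pd{z}}$-shifts defined just before \eqref{eq:tau-1}) and the geometric-series identity $[\ell]_q=\sum_{a=0}^{\ell-1}q^{\ell-1-2a}$: because $f(z)^{[2]_q[\ell]_q(q-q\inv)}=\prod_{a=0}^{\ell-1}q^{(\ell-1-2a)\pd{z}}f(z)^{[2]_q(q-q\inv)}$, applying $\pd{z}\log$ turns the right-hand side into precisely $\sum_{a=0}^{\ell-1}\wh{\ell,1}_{11}^{1,\pm}\bigl(z+(\ell-1-2a)\hbar\bigr)$, which matches the sum found above once the reflection $z\mapsto -z$, the oddness of $f$, and the overall sign $\pm$ are tracked. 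Throughout I would check that the constant $\sqrt{c_{1,\ell}}$ and the shift $q^{(1-\ell)\partial}$ (which commutes with $S_{\ell,1}(z)$ via \eqref{eq:multqyb-shift-total1}) rescale both tensor factors uniformly and so do not change the form of the answer. Finally, since $S_{\ell,1}(z)$ descends to the quotients, the identical argument proves the statement in $V_{\hat\ssl_2,\hbar}^{\ell}\wh\ot V_{\hat\ssl_2,\hbar}^{1}$ and $L_{\hat\ssl_2,\hbar}^{\ell}\wh\ot L_{\hat\ssl_2,\hbar}^{1}$.
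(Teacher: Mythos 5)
Your proposal is correct and lands on the same key computation as the paper — distributing the twist over the $\ell$ factors of $(x_1^\pm)_{-1}^\ell\vac$ and summing the resulting shifts $q^{\ell-1-2a}$ into $[\ell]_q$ — but it enters that computation from a different side. The paper applies the hexagon identity \eqref{eq:multqyb-hex1} together with \eqref{eq:S-twisted-2} to $S_{\ell,1}(z)\(Y_{\wh\ell}(x_1^\pm,z_1)\cdots Y_{\wh\ell}(x_1^\pm,z_\ell)\vac\ot h_1\)$ with \emph{generic} insertion points $z_1,\dots,z_\ell$, and then extracts $f_1^\pm$ by taking $\Res_{z_1,\dots,z_\ell}z_1\inv\cdots z_\ell\inv$ against the rational expression of Proposition \ref{prop:normal-ordering-rel-general}(4), so that the shift operators $e^{z_a\pd{z}}$ get evaluated at $z_a=2(\ell-a)\hbar$. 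You instead unwind the definition \eqref{eq:S-twisted-def} of $S_{\ell,1}(z)$ through the comodule structure $\rho(h_1)=h_1\ot 1+\vac\ot\al_1^\vee$, reducing everything to $\wh{\ell,1}(\al_1^\vee,-z)f_1^\pm$, which you then compute by the Leibniz rule \eqref{eq:mod-va-for-vertex-bialg3} over the normal-ordered product of Proposition \ref{prop:normal-ordering-rel-general}(3), whose insertion points are already fixed at $(\ell-1-2a)\hbar$. The two routes are logically equivalent — the hexagon identity for $S_{\ell,\ell'}$ is itself a consequence of the deformation construction — but yours trades the residue manipulation for a direct appeal to the $H$-module formalism; your remaining bookkeeping (the reflection $z\mapsto -z$, the oddness of $f$, and $\sum_b q^b=[\ell]_q$ over $b=\ell-1,\ell-3,\dots,1-\ell$) checks out and reproduces the exponent $[2]_q[\ell]_q(q-q\inv)$.
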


\begin{proof}
We can utilize Proposition \ref{prop:normal-ordering-rel-general} to derive the following expression:
\begin{align}\label{eq:S-xk-temp1}
  &\Rat_{z_1\inv,\dots z_\ell\inv}Y_{\wh\ell}(x_1^\pm,z_1)\cdots Y_{\wh\ell}(x_1^\pm,z_\ell)\vac
  =\prod_{a=1}^\ell\frac{z_a}{z_a-2(\ell-a)\hbar}(x_1^\pm)_{-1}^\ell\vac.
\end{align}
By combining equations \eqref{eq:qyb-hex1} and \eqref{eq:S-twisted-2}, we obtain the following result:
\begin{align*}
  &S_{\ell,1}(z)\(Y_{\wh\ell}(x_1^\pm,z_1)\cdots Y_{\wh\ell}(x_1^\pm,z_\ell)\vac\ot h_1\)\\
  =&Y_{\wh\ell}(x_1^\pm,z_1)\cdots Y_{\wh\ell}(x_1^\pm,z_\ell)\vac\ot h_1\nonumber\\
  &\pm Y_{\wh\ell}(x_1^\pm,z_1)\cdots Y_{\wh\ell}(x_1^\pm,z_\ell)\vac\ot \vac
  \ot [2]_{q^{\pd{z}}}\(q^{\pd{z}}-q^{-\pd{z}}\)\sum_{a=1}^\ell e^{z_a\pd{z}}\pd{z}\log f(z)\\
  =&Y_{\wh\ell}(x_1^\pm,z_1)\cdots Y_{\wh\ell}(x_1^\pm,z_\ell)\vac\ot h_1\nonumber\\
  &\pm Y_{\wh\ell}(x_1^\pm,z_1)\cdots Y_{\wh\ell}(x_1^\pm,z_\ell)\vac\ot \vac
  \ot [2]_{q^{\pd{z}}}\(q^{\pd{z}}-q^{-\pd{z}}\)\sum_{a=1}^\ell e^{z_a\pd{z}}\pd{z}\log f(z).
\end{align*}
Combining this with \eqref{eq:S-xk-temp1}, we get that
\begin{align*}
  &S_{\ell,1}(z) \(f_1^\pm\ot h_1\)= S_{\ell,1}(z)\(\sqrt{c_{1,\ell}} q^{(1-\ell)\partial}(x_1^\pm)_{-1}^\ell\vac\ot h_1\)\\
  =&\sqrt{c_{1,\ell}}
  \Res_{z_1,\dots,z_\ell}z_1\inv\cdots z_\ell\inv
  S_{\ell,1}(z)
  \(q^{(1-\ell)\partial}Y_{\wh\ell}(x_1^\pm,z_1)\cdots Y_{\wh\ell}(x_1^\pm,z_\ell)\vac\ot h_1\)\\
  =&\sqrt{c_{1,\ell}} \Res_{z_1,\dots,z_\ell}z_1\inv\cdots z_\ell\inv q^{(1-\ell)\partial}
  Y_{\wh\ell}(x_1^\pm,z_1)\cdots Y_{\wh\ell}(x_1^\pm,z_\ell)\vac\ot h_1\\
  &\pm \sqrt{c_{1,\ell}} \Res_{z_1,\dots,z_\ell}z_1\inv\cdots z_\ell\inv
    q^{(1-\ell)\partial}Y_{\wh\ell}(x_1^\pm,z_1)\cdots Y_{\wh\ell}(x_1^\pm,z_\ell)\vac\ot \vac\\
  &\ot [2]_{q^{\pd{z}}}\(q^{\pd{z}}-q^{-\pd{z}}\)q^{(1-\ell)\pd{z}}\sum_{a=1}^\ell e^{z_a\pd{z}}\pd{z}\log f(z)\\
  =&\Res_{z_1,\dots,z_\ell}\prod_{a=1}^\ell\frac{1}{z_a-2(\ell-a)\hbar}f_1^\pm\ot h_1
  \pm \Res_{z_1,\dots,z_\ell}\prod_{a=1}^\ell\frac{1}{z_a-2(\ell-a)\hbar}f_1^\pm\ot \vac\\
  &\quad\ot [2]_{q^{\pd{z}}}\(q^{\pd{z}}-q^{-\pd{z}}\)q^{(1-\ell)\pd{z}}\sum_{a=1}^\ell e^{z_a\pd{z}}\pd{z}\log f(z)\\
  =&f_1^\pm\ot h_1\pm f_1^\pm\ot \vac\ot
   \pd{z}\log f(z)^{[2]_q[\ell]_q(q-q\inv)}.
\end{align*}
This completes the proof of the lemma.
\end{proof}

\begin{lem}\label{lem:S-Eu-h}
For $u\in L_{\hat\ssl_2,\hbar}^\ell$, $h\in L_{\hat\ssl_2,\hbar}^1$ such that
\begin{align*}
  &\exp(\wt h_1^-(z))u=u\ot g(z),\quad
  S_{\ell,1}(z)(u\ot h)=u\ot h+u\ot \vac\ot f_1(z),\\
  &\quad\te{and}\quad S_{\ell,1}(z)(E_\ell(h_1)\ot h)=E_\ell(h_1)\ot h+E_\ell(h_1)\ot\vac\ot f_2(z),
\end{align*}
for some $f_1(z),f_2(z),g(z)\in \C((z))[[\hbar]]$ with $g(z)$ invertible.
Then
\begin{align*}
  &S_{\ell,1}(z)\(\exp(\wt h_1^+(z_1))u\ot h\)\\
  =&\exp(\wt h_1^+(z_1))u\ot h
  +\exp(\wt h_1^+(z_1))u\ot \vac\ot (f_1(z)+f_2(z+z_1)).
\end{align*}
\end{lem}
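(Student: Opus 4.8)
The plan is to convert the ``half vertex operator'' $\exp(\wt h_1^+(z_1))$ into a genuine vertex operator of $L_{\hat\ssl_2,\hbar}^\ell$ and then let the hexagon identity for $S_{\ell,1}(z)$ do the work. Specializing Proposition \ref{prop:Y-E} to $\fg=\ssl_2$, $i=1$ (so $r=r_1=1$) gives the factorization $Y_{\wh\ell}(E_\ell(h_1),z_1)=\exp(\wt h_1^+(z_1))\exp(\wt h_1^-(z_1))$. Combining this factorization with the first hypothesis on $u$ (which records the action of the remaining exponential factor on $u$ as multiplication by the invertible series $g$), I obtain
\begin{align*}
  \exp(\wt h_1^+(z_1))u=g(z_1)\inv Y_{\wh\ell}(E_\ell(h_1),z_1)u,
\end{align*}
an invertible-scalar multiple of a genuine vertex-operator action on $u$. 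Since $g(z_1)\inv\in\C((z_1))[[\hbar]]$ is a scalar for the $\C((z))[[\hbar]]$-linear operator $S_{\ell,1}(z)$, it then suffices to compute $S_{\ell,1}(z)\big(Y_{\wh\ell}(E_\ell(h_1),z_1)u\ot h\big)$.

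Next I would realize $Y_{\wh\ell}(E_\ell(h_1),z_1)u=Y_{\wh\ell}^{12}(z_1)\big(E_\ell(h_1)\ot u\big)$ inside the triple product $L_{\hat\ssl_2,\hbar}^\ell\wh\ot L_{\hat\ssl_2,\hbar}^\ell\wh\ot L_{\hat\ssl_2,\hbar}^1$, with $E_\ell(h_1)$ and $u$ occupying the first two slots and $h$ the third, and then apply the hexagon identity \eqref{eq:multqyb-hex1} in the form
\begin{align*}
  S_{\ell,1}(z)Y_{\wh\ell}^{12}(z_1)=Y_{\wh\ell}^{12}(z_1)S_{\ell,1}^{23}(z)S_{\ell,1}^{13}(z+z_1).
\end{align*}
I would then evaluate the right-hand side on $E_\ell(h_1)\ot u\ot h$ from the inside out. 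The factor $S_{\ell,1}^{13}(z+z_1)$ acts on the slots carrying $E_\ell(h_1)$ and $h$, so by the third hypothesis it produces $E_\ell(h_1)\ot u\ot h+E_\ell(h_1)\ot u\ot\vac\ot f_2(z+z_1)$. Applying $S_{\ell,1}^{23}(z)$ to this, the first summand is handled by the second hypothesis and the second summand by the vacuum property \eqref{eq:multqyb-vac1} (since $S_{\ell,1}(z)(u\ot\vac)=u\ot\vac$), which yields
\begin{align*}
  E_\ell(h_1)\ot u\ot h+E_\ell(h_1)\ot u\ot\vac\ot\big(f_1(z)+f_2(z+z_1)\big).
\end{align*}

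Finally I would fuse the first two slots back with $Y_{\wh\ell}^{12}(z_1)$ and multiply through by $g(z_1)\inv$, which restores $\exp(\wt h_1^+(z_1))u$ in both surviving terms and gives exactly the asserted formula. The only genuinely delicate point is the opening reduction: recognizing that the first hypothesis is precisely what promotes the creation-only operator $\exp(\wt h_1^+(z_1))$ to the full vertex operator $Y_{\wh\ell}(E_\ell(h_1),z_1)$, since only the latter is governed by the hexagon identity. Everything after that is careful bookkeeping of which tensor slots $S_{\ell,1}^{13}(z+z_1)$ and $S_{\ell,1}^{23}(z)$ act on and of the argument shift $z\mapsto z+z_1$ carried by the $^{13}$-leg; the invertibility of $g$ is needed only to clear the scalar $g(z_1)$ at the two ends of the computation.
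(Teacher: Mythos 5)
Your proof is correct and follows essentially the same route as the paper's: both promote $\exp(\wt h_1^+(z_1))u$ to the genuine vertex operator $Y_{\wh\ell}(E_\ell(h_1),z_1)u$ via Proposition \ref{prop:Y-E} and the first hypothesis (read, as you do, as describing the action of the annihilation factor $\exp(\wt h_1^-(z_1))$ on $u$), then apply the hexagon identity to reduce to $S_{\ell,1}^{23}(z)S_{\ell,1}^{13}(z+z_1)$ acting on $E_\ell(h_1)\ot u\ot h$, evaluate using the second and third hypotheses together with the vacuum property, and finally cancel the invertible scalar $g(z_1)$. The slot bookkeeping and the argument shift $z\mapsto z+z_1$ on the $^{13}$-leg are exactly as in the paper.
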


\begin{proof}
By utilizing Proposition \ref{prop:Y-E}, we get that
\begin{align}\label{eq:S-Eu-h-temp1}
  &Y_{\wh\ell}(E_\ell(h_1),z_1)u
  =\exp\(\wt h_1^+(z_1)\)\exp\(\wt h_1^-(z_1)\)u
  =\exp\(\wt h_1^+(z_1)\)u \ot g(z_1).
\end{align}
In addition, we get from \eqref{eq:qyb-hex1} that
\begin{align*}
  &S_{\ell,1}(z)\(Y_{\wh\ell}(E_\ell(h_1),z_1)u\ot h\)\\
  =&Y_{\wh\ell}(z_1)S_{\ell,1}^{23}(z)S_{\ell,1}^{13}(z+z_1)(E_\ell(h_1)\ot u\ot h)\\
  =&Y_{\wh\ell}(E_\ell(h_1),z_1)u\ot h+Y_{\wh\ell}(E_\ell(h_1),z_1)u\ot\vac\ot\( f_1(z)+f_2(z+z_1) \).
\end{align*}
Combining this with \eqref{eq:S-Eu-h-temp1}, we can conclude that
\begin{align*}
  &S_{\ell,1}(z)\(\exp\(\wt h_1^+(z_1)\)u\ot h\)\ot g(z_1)\\
  =&\exp\(\wt h_1^+(z_1)\)u\ot h\ot g(z_1)
  +\exp\(\wt h_1^+(z_1)\)u\ot \vac\ot g(z_1)(f_1(z)+f_2(z+z_1)).
\end{align*}
Since $g(z)$ is invertible, we complete the proof of the lemma.
\end{proof}

\begin{lem}\label{lem:com-formulas3}
We have that
\begin{align}
  &\left[\wt h_1^-(z_1),Y_{\wh\ell}(f_1^\pm,z_2)\right]
  =\pm Y_{\wh\ell}(f_1^\pm,z_2)
    \log f(z_1-z_2)^{q^{-\ell-1}+q^{1-\ell}-q^{\ell-1}-q^{\ell+1}},\nonumber\\
  &\exp\(\wt h_1^-(z_1)\)Y_{\wh\ell}(f_1^\pm,z_2)
  \label{eq:com-formulas-15}
  =Y_{\wh\ell}(f_1^\pm,z_2)\exp\(\wt h_1^-(z_1)\)\\
  &\qquad\qquad\times  f(z_1-z_2)^{\pm q^{-\ell-1}\pm q^{1-\ell}\mp q^{\ell-1}\mp q^{\ell+1}}.\nonumber
\end{align}
\end{lem}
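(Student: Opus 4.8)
The final statement to prove is Lemma~\ref{lem:com-formulas3}, which computes the commutator of $\wt h_1^-(z_1)$ with $Y_{\wh\ell}(f_1^\pm,z_2)$ and the resulting normal-ordering exchange relation. The plan is to deduce both formulas from the previously established commutation relations between $\wt h_1^\pm(z)$ and $Y_{\wh\ell}(x_1^\pm,z)$, namely \eqref{eq:com-formulas-7} and \eqref{eq:com-formulas-8}, together with the explicit realization of $f_1^\pm$ as a normal-ordered product of the $x_1^\pm$-fields given in \eqref{eq:pf-AQ-sp-temp1}.

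First I would recall from \eqref{eq:pf-AQ-sp-temp1} (specialized to $\fg=\ssl_2$, where $r=r_1=1$) that $Y_{\wh\ell}(f_1^\pm,z_2)$ is, up to a scalar $\sqrt{c_{1,\ell}}\prod_a f_0(2a\hbar)$, the normal-ordered product
\begin{align*}
  \:Y_{\wh\ell}(x_1^\pm,z_2+(\ell-1)\hbar)\cdots Y_{\wh\ell}(x_1^\pm,z_2-(\ell-1)\hbar)\;.
\end{align*}
Since $\wt h_1^-(z_1)$ acts as a derivation on such products and commutes with the normal-ordering factors (which are scalar functions of the $z$'s), the commutator $[\wt h_1^-(z_1),Y_{\wh\ell}(f_1^\pm,z_2)]$ is computed by summing the contributions from \eqref{eq:com-formulas-7} over the $\ell$ tensor factors $x_1^\pm(z_2+(2(\ell-a))\hbar)$ for $a=1,\dots,\ell$, each shifted appropriately. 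Concretely, I would pull $\wt h_1^-(z_1)$ through each factor and collect the resulting scalar $\log f(z_1-z_2-(2(\ell-a)-(\ell-1))\hbar)^{q^{-2}-q^{2}}$ type terms; the telescoping sum over the shifts should assemble into the single exponent $q^{-\ell-1}+q^{1-\ell}-q^{\ell-1}-q^{\ell+1}$. This exponent factors as $(q^{-1}+q)(q^{-\ell}-q^{\ell})=[2]_q(q^{-\ell}-q^\ell)$, which is exactly what a telescoping sum of $\ell$ shifted copies of $q^{-2}-q^2=-(q-q^{-1})[2]_q$ differences produces, so matching this is mainly a bookkeeping check on the geometric-type sum of the shifts.

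Once the commutator formula is in hand, the exponential exchange relation \eqref{eq:com-formulas-15} follows by the standard argument: since $\wt h_1^-(z_1)$ commutes with itself and the commutator $[\wt h_1^-(z_1),Y_{\wh\ell}(f_1^\pm,z_2)]$ is a scalar multiple of $Y_{\wh\ell}(f_1^\pm,z_2)$, one gets $\exp(\wt h_1^-(z_1))Y_{\wh\ell}(f_1^\pm,z_2)=Y_{\wh\ell}(f_1^\pm,z_2)\exp(\wt h_1^-(z_1))\exp(\pm\gamma(z_1-z_2))$ where $\gamma(z)=\log f(z)^{q^{-\ell-1}+q^{1-\ell}-q^{\ell-1}-q^{\ell+1}}$, and exponentiating the logarithm turns the additive scalar into the multiplicative factor $f(z_1-z_2)^{\pm(q^{-\ell-1}+q^{1-\ell}-q^{\ell-1}-q^{\ell+1})}$. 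This is the same Baker--Campbell--Hausdorff-type manipulation used in the proof of the earlier Corollary following Lemma~\ref{lem:com-formulas2}.

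The main obstacle I anticipate is the combinatorial verification in the first step: one must track the precise arguments $z_2+(2(\ell-a))\hbar$ entering each $x_1^\pm$ factor within the normal-ordered product and confirm that the sum $\sum_{a=1}^\ell$ of the shifted scalar terms from \eqref{eq:com-formulas-7} collapses to the closed-form exponent, rather than leaving residual $a$-dependent pieces. The risk is that the normal-ordering prefactors $f(z_s-z_t)^{\cdots}$ interact with the derivation action in a way that contributes additional terms; however, because $\wt h_1^-(z_1)$ acts trivially on these scalar functions of $z_2$ alone, these should drop out, and the computation reduces to a clean telescoping identity for $q$-integers. I would verify the $\ell=1$ base case directly against \eqref{eq:com-formulas-7} as a sanity check before carrying out the general shift sum.
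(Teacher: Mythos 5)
Your proposal is correct and follows essentially the same route as the paper: expand $Y_{\wh\ell}(f_1^\pm,z_2)$ as the normal-ordered product of $x_1^\pm$-fields at the shifted points $z_2+(\ell-2a+1)\hbar$, apply \eqref{eq:com-formulas-7} factor by factor (the normal-ordering prefactors being scalars), and observe that the sum of shifted exponents $(q^{-2}-q^2)\sum_a q^{\ell-2a+1}=(q^{-2}-q^2)[\ell]_q$ equals $q^{-\ell-1}+q^{1-\ell}-q^{\ell-1}-q^{\ell+1}$, after which \eqref{eq:com-formulas-15} follows by the standard exponentiation of a commutator that is a scalar multiple of the field. The paper's proof is exactly this computation, so no further comparison is needed.
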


\begin{proof}
From Proposition \ref{prop:normal-ordering-rel-general} and the definition of $f_1^\pm$, we have that
\begin{align*}
  &Y_{\wh\ell}(f_1^\pm,z)=\sqrt{c_{1,\ell}} Y_{\wh\ell}\(\(x_1^\pm\)_{-1}^\ell\vac,z+(1-\ell)\hbar\)\\
  =&c\:Y_{\wh\ell}(x_1^\pm,z+(\ell-1)\hbar)Y_{\wh\ell}(x_1^\pm,z+(\ell-3)\hbar)
    \cdots Y_{\wh\ell}(x_1^\pm,z+(1-\ell)\hbar)\;
\end{align*}
for some $c\in\C[[\hbar]]$.
By utilizing \eqref{eq:com-formulas-7}, we get that
\begin{align*}
  &\left[\wt h_1^-(z_1),Y_{\wh\ell}\(f_1^\pm,z_2\)\right]
  =c[\wt h_1^+(z_1),\:Y_{\wh\ell}(x_1^\pm,z_2+(\ell-1)\hbar)\cdots
    Y_{\wh\ell}(x_1^\pm,z_2+(1-\ell)\hbar)\;]\\
  =&\pm c\:Y_{\wh\ell}(x_1^\pm,z_2+(\ell-1)\hbar)\cdots
    Y_{\wh\ell}(x_1^\pm,z_2+(1-\ell)\hbar)\;\log
  f(z_1-z_2)^{(q^{-2}-q^2)[\ell]_q}\\
  =&\pm Y_{\wh\ell}\(f_1^\pm,z_2\)\log f(z_1-z_2)^{q^{-\ell-1}+q^{1-\ell}-q^{\ell-1}-q^{\ell+1}}.
\end{align*}
The relation \eqref{eq:com-formulas-15} follows immediate from the preceding relation.
\end{proof}

\begin{lem}\label{lem:S-wt-h-f+-h}
We have that
\begin{align*}
  &S_{\ell,1}(z)\(\exp(\wt h_1^+((\ell-1)\hbar)f_1^+\ot h_1\)\\
  =&\exp(\wt h_1^+((\ell-1)\hbar)f_1^+\ot h_1
  +\exp(\wt h_1^+((\ell-1)\hbar)f_1^+\ot\vac
  \ot \pd{z}\log f(z)^{[2]_q[\ell]_q(q\inv-q^{-3})}.
\end{align*}
\end{lem}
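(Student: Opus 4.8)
The plan is to apply the general compatibility lemma, Lemma~\ref{lem:S-Eu-h}, with the specialization $u=f_1^+$ and $h=h_1$, and with $z_1$ specialized to $(\ell-1)\hbar$. To invoke that lemma I must verify its three hypotheses. First I would compute the scalar $g(z_1)$ governing the action of $\exp(\wt h_1^+(z_1))$ on $f_1^+$; this comes from the normal-ordered expression for $Y_{\wh\ell}(f_1^+,z)$ recorded in the proof of Lemma~\ref{lem:com-formulas3} together with \eqref{eq:com-formulas-13}, applied term by term across the $\ell$ factors $Y_{\wh\ell}(x_1^+,z+(\ell-1-2a)\hbar)$. The key point is only that $g(z_1)$ is \emph{invertible} in $\C((z_1))[[\hbar]]$, which holds because each factor contributes a power of $f(z)$ whose leading term is a unit. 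Second, I would identify $f_1(z)$ from Lemma~\ref{lem:S-xk-h}, namely
\begin{align*}
  f_1(z)=\pd{z}\log f(z)^{[2]_q[\ell]_q(q-q\inv)}.
\end{align*}
Third, I would identify $f_2(z)$ from the $S_{\ell,1}(z)$-action on $E_\ell(h_1)\ot h_1$, which is exactly \eqref{eq:S-E-1} with $\ell'=1$, giving
\begin{align*}
  f_2(z)=\pd{z}\log f(z)^{-[2]_q[1]_q[\ell]_q(q-q\inv)^2q^{-\ell}}.
\end{align*}

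Once these three data are in place, Lemma~\ref{lem:S-Eu-h} yields
\begin{align*}
  S_{\ell,1}(z)\(\exp(\wt h_1^+((\ell-1)\hbar))f_1^+\ot h_1\)
  =\exp(\wt h_1^+((\ell-1)\hbar))f_1^+\ot h_1\\
  +\exp(\wt h_1^+((\ell-1)\hbar))f_1^+\ot\vac\ot\(f_1(z)+f_2(z+(\ell-1)\hbar)\),
\end{align*}
so the whole problem reduces to the scalar identity
\begin{align*}
  f_1(z)+f_2(z+(\ell-1)\hbar)
  =\pd{z}\log f(z)^{[2]_q[\ell]_q(q\inv-q^{-3})}.
\end{align*}
To prove this I would pass every $q^{c\pd{z}}$-shift through $\log f(z)$ using the operator-exponent notation $P(z)^{g(q)}$ from the start of Section~\ref{sec:qaff-va}, turning the shift $z\mapsto z+(\ell-1)\hbar$ in $f_2$ into multiplication of the exponent by $q^{(\ell-1)\partial}$ evaluated appropriately. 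After collecting the two contributions over the common factor $\pd{z}\log f(z)$, the claim becomes the purely combinatorial $q$-number identity
\begin{align*}
  [2]_q[\ell]_q(q-q\inv)-[2]_q[\ell]_q(q-q\inv)^2q^{-\ell}q^{\ell-1}
  =[2]_q[\ell]_q(q\inv-q^{-3}),
\end{align*}
which I would check by expanding $[2]_q=q+q\inv$ and cancelling the common factor $[2]_q[\ell]_q$, leaving $(q-q\inv)-(q-q\inv)^2q^{-1}=q\inv-q^{-3}$, a one-line verification.

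The main obstacle I anticipate is \emph{not} the final scalar bookkeeping but the careful computation of $g(z_1)$ and, more subtly, the correct tracking of the shift argument $z+(\ell-1)\hbar$ inside $f_2$: the hypothesis of Lemma~\ref{lem:S-Eu-h} feeds in $f_2(z+z_1)$ with $z_1=(\ell-1)\hbar$, so the exponent of $f$ acquires a $q^{(\ell-1)\partial}$-type shift that must be reconciled with the $q^{-\ell}$ already present in $f_2$ before the exponents can be combined. I would handle this by working throughout in the exponent ring via the identities $P(z)^{g_1}P(z)^{g_2}=P(z)^{g_1+g_2}$ and $(P(z_1)^{g(q)})|_{z_1=-z}=P(-z)^{g(q\inv)}$ listed after Theorem~\ref{thm:S-tau}, so that the shift becomes a clean multiplication in $\Z[q^{\pm1}]$ rather than a manipulation of power series. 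No hypothesis of Lemma~\ref{lem:S-Eu-h} beyond invertibility of $g$ is needed, so once invertibility is confirmed the remainder is routine.
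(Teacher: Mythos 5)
Your proposal is correct and takes essentially the same route as the paper's proof, which likewise reads off $f_1(z)$ from Lemma~\ref{lem:S-xk-h}, reads off $f_2(z)$ from \eqref{eq:S-E-1} with $\ell'=1$, and applies Lemma~\ref{lem:S-Eu-h} with $z_1=(\ell-1)\hbar$; the shift-and-combine arithmetic you spell out (exponent multiplied by $q^{\ell-1}$, then $(q-q\inv)-(q-q\inv)^2q\inv=q\inv-q^{-3}$) is exactly the step the paper leaves implicit. The one point to adjust is that the invertible scalar $g$ you must exhibit is the one with $\exp(\wt h_1^-(z))f_1^+=f_1^+\ot g(z)$, which is supplied by \eqref{eq:com-formulas-15} of Lemma~\ref{lem:com-formulas3}, rather than the action of $\exp(\wt h_1^+(z))$ on $f_1^+$ via \eqref{eq:com-formulas-13}: the creation part $\wt h_1^+$ does not act on $f_1^+$ by a scalar, and the hypothesis of Lemma~\ref{lem:S-Eu-h} as printed (and as you quote it) carries a superscript typo that its own proof, which really uses $\exp(\wt h_1^-(z_1))u=u\ot g(z_1)$, silently corrects.
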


\begin{proof}
Recall Lemma \ref{lem:S-xk-h} which states that
\begin{align*}
  &S_{\ell,1}(z)(f_1^+\ot h_1)
  =f_1^+\ot h_1+f_1^+\ot\vac\ot \pd{z}\log f(z)^{[2]_q[\ell]_q(q-q\inv) }.
\end{align*}
From \eqref{eq:S-E-1}, we get that
\begin{align*}
  &S_{\ell,1}(z)\(E_\ell(h_1)\ot h_1\)\\
  =&E_\ell(h_1)\ot h_1
  -E_\ell(h_1)\ot \vac
  \ot [2]_{q^{\pd{z}}}
    [\ell]_{q^{\pd{z}}}\(q^{\pd{z}}-q^{-\pd{z}}\)^2
  q^{-\ell\pd{z}}\pd{z}\log f(z)\\
  =&E_\ell(h_1)\ot h_1  +E_\ell(h_1)\ot \vac\ot
  \pd{z}\log f(z)^{-[2]_q[\ell]_q(q-q\inv)^2q^{-\ell}}.
\end{align*}
Applying these two equations to Lemma \ref{lem:S-Eu-h}, we complete the proof of the lemma.
\end{proof}

\begin{lem}\label{lem:h--exp-wt-h-f+}
In the $\hbar$-adic quantum VA $L_{\hat\ssl_2,\hbar}^\ell$, we have that
\begin{align*}
  &h_1^-(z-2\hbar)\exp\(\wt h_1^+((\ell-1)\hbar)\)f_1^+
  =\exp\(\wt h_1^+((\ell-1)\hbar)\)f_1^+
  \ot \pd{z}\log f(z)^{[2]_q[\ell]_qq^\ell}
\end{align*}
\end{lem}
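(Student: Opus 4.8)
The plan is to exploit that $[h_1^-(z_1),\wt h_1^+(z_2)]$ is a scalar, i.e. a multiple of the identity with coefficient in $\C((z))[[\hbar]]$, so that $h_1^-(z-2\hbar)$ may be pushed through the exponential $\exp(\wt h_1^+((\ell-1)\hbar))$ at the cost of a single commutator term. Since $\wt h_1^+(z)=-q^{-\ell\pd{z}}2\hbar f_0(2\hbar\pd{z})h_1^+(z)$ is built from creation operators and its commutator with $h_1^-(z_1)$ is central, centrality gives
\[
  h_1^-(z-2\hbar)\exp\(\wt h_1^+((\ell-1)\hbar)\)
  =\exp\(\wt h_1^+((\ell-1)\hbar)\)\(h_1^-(z-2\hbar)+c(z)\),
\]
where $c(z)=[h_1^-(z-2\hbar),\wt h_1^+((\ell-1)\hbar)]$. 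First I would record, from the $\ssl_2$ specialization ($r=r_1=1$, $a_{11}=2$) of \eqref{eq:com-formulas-5}, that
\[
  c(z)=\left.\(\pd{w}\log f(w)^{[2]_q(q^{\ell}-q^{-\ell})q^{2\ell}}\)\right|_{w=z-(\ell+1)\hbar},
\]
the argument being $z_1-z_2=(z-2\hbar)-(\ell-1)\hbar=z-(\ell+1)\hbar$. I would rather use the single commutator \eqref{eq:com-formulas-5} together with centrality than invoke \eqref{eq:com-formulas-10} directly, as this keeps the exponent transparent.

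Next I would apply both sides to $f_1^+$. The straightened term is handled by \eqref{eq:h--f}, which yields $h_1^-(z-2\hbar)f_1^+=f_1^+\ot\left.\(\pd{w}\log f(w)^{[2]_q[\ell]_qq^{\ell}}\)\right|_{w=z-2\hbar}$, a scalar multiple of $f_1^+$, while the central contribution is $c(z)$ times $\exp(\wt h_1^+((\ell-1)\hbar))f_1^+$. Hence both terms are scalar multiples of $\exp(\wt h_1^+((\ell-1)\hbar))f_1^+$, and the claim reduces to the scalar identity
\[
  q^{-2\pd{z}}[2]_{q^{\pd z}}[\ell]_{q^{\pd z}}q^{\ell\pd z}\,g
  +q^{-(\ell+1)\pd z}[2]_{q^{\pd z}}\(q^{\ell\pd z}-q^{-\ell\pd z}\)q^{2\ell\pd z}\,g
  =[2]_{q^{\pd z}}[\ell]_{q^{\pd z}}q^{\ell\pd z}\,g,
\]
where $g=\pd{z}\log f(z)$ and the two constant shifts of argument ($w=z-2\hbar$ and $w=z-(\ell+1)\hbar$) have been rewritten as the shift operators $q^{-2\pd z}$ and $q^{-(\ell+1)\pd z}$.

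Writing $X=q^{\pd z}$ and cancelling the common factor $[2]_X g$, this becomes the Laurent polynomial statement
\[
  [\ell]_X X^{\ell-2}+\(X^{2\ell-1}-X^{-1}\)=[\ell]_X X^{\ell},
\]
which follows at once from $[\ell]_X(X^{\ell}-X^{\ell-2})=[\ell]_X X^{\ell-2}(X^2-1)=(X^{\ell}-X^{-\ell})X^{\ell-1}=X^{2\ell-1}-X^{-1}$, using $(X^2-1)/(X-X^{-1})=X$. I expect the only substantive work to be bookkeeping: correctly converting the two shifted arguments into shift operators and tracking the exponent-function calculus $P(z)^{g(q)}=\prod_k q^{m_k\pd z}P(z)^{a_k}$ that underlies \eqref{eq:com-formulas-5} and \eqref{eq:h--f}. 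The algebraic core is the one-line Laurent identity above, so there is no genuine conceptual obstacle, only the care needed to keep the centrality argument and the shift conventions exactly aligned.
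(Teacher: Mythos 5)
Your proposal is correct and follows essentially the same route as the paper: the paper also pushes $h_1^-(z-2\hbar)$ through the exponential via the centrality of the commutator (quoting the already-exponentiated identity \eqref{eq:com-formulas-10}, which is itself just \eqref{eq:com-formulas-5} plus centrality), applies \eqref{eq:h--f} to the straightened term, and reduces to the same exponent identity $[2]_q[\ell]_q(q^{\ell}-q^{\ell-2})+[2]_q[\ell]_qq^{\ell-2}=[2]_q[\ell]_qq^{\ell}$ that your Laurent computation verifies. No gaps.
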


\begin{proof}
The following equation is a special case of \eqref{eq:com-formulas-10}:
\begin{align*}
  \left[h_1^-(z_1),\exp\(\wt h_1^+(z_2)\)\right]=\exp\(\wt h_1^+(z_2)\)
  \pd{z_1}\log f(z_1-z_2)^{ [2]_q(q^\ell-q^{-\ell})q^{2\ell} }.
\end{align*}
It implies that
\begin{align*}
  &\left[h_1^-(z-2\hbar),\exp\(\wt h_1^+((\ell-1)\hbar)\)\right]
  =\exp\(\wt h_1^+((\ell-1)\hbar)\)
  \pd{z}\log f(z)^{ [2]_q(q^\ell-q^{-\ell})q^{\ell-1} }.
\end{align*}
Then we have that
\begin{align*}
  &h_1^-(z-2\hbar)\exp\(\wt h_1^+((\ell-1)\hbar)\)f_1^+\\
  =&\left[h_1^-(z-2\hbar),\exp\(\wt h_1^+((\ell-1)\hbar)\)\right]f_1^+
  +\exp\(\wt h_1^+((\ell-1)\hbar)\)
  h_1^-(z-2\hbar)f_1^+\\
  =&\exp\(\wt h_1^+((\ell-1)\hbar)\)f_1^+
  \ot \pd{z}\log f(z)^{[2]_q[\ell]_q(q^\ell-q^{\ell-2}+q^{\ell-2})}\\
  =&\exp\(\wt h_1^+((\ell-1)\hbar)\)f_1^+
  \ot \pd{z}\log f(z)^{[2]_q[\ell]_qq^\ell},
\end{align*}
where the second equation follows from \eqref{eq:h--f}.
\end{proof}

The following proves the ``$+$'' case of Proposition \ref{prop:AQ-sp5}.

\begin{lem}\label{lem:Delta-AQ-sp-5+}
Let us assume that Theorem \ref{thm:qlattice-inj} holds true for $\ell$.
In $L_{\hat\ssl_2,\hbar}^\ell\wh\ot L_{\hat\ssl_2,\hbar}^1$, we have that
\begin{align*}
  &\Delta(\partial f_1^+)=\Delta(h_1)_{-1}\Delta(f_1^+)
  -\Delta(f_1^+)\left.\(\pd{z}\log f(z)^{[2]_q[\ell+1]_qq^{\ell+1}}\)\right|_{z=0}.
\end{align*}
\end{lem}

\begin{proof}
From \eqref{eq:Delta-f+}, we have that
\begin{align*}
  \Delta(f_1^+)=c\(q^{\partial}\exp\(\wt h_1^+((\ell-1)\hbar)\)f_1^+\ot q^{\ell\partial}f_1^+\)
\end{align*}
for some invertible $c\in\C[[\hbar]]$.
Then we have that
\begin{align*}
  &Y_\Delta\(\Delta(h_1),z\)\Delta(f_1^+)\\
  =&cY_\Delta\(q^{-\partial}h_1\ot \vac+\vac\ot q^{\ell\partial}h_1,z\)
  \(q^{\partial}\exp\(\wt h_1^+((\ell-1)\hbar)\)f_1^+\ot q^{\ell\partial}f_1^+\)\\
  =&cY_{\wh\ell}^{12}(z)Y_{\wh 1}^{34}(z)S_{\ell,1}^{23}(-z)
  \Big(
    q^{-\partial}h_1\ot q^{\partial}\exp\(\wt h_1^+((\ell-1)\hbar)\)f_1^+
    \ot\vac\ot q^{\ell\partial}f_1^+\\
  &\quad+\vac\ot q^{\partial}\exp\(\wt h_1^+((\ell-1)\hbar)\)f_1^+
  \ot q^{\ell\partial}h_1\ot q^{\ell\partial}f_1^+\Big)\\
  =&cY_{\wh\ell}^{12}(z)Y_{\wh 1}^{34}(z)
  \Big(
    q^{-\partial}h_1\ot q^{\partial}\exp\(\wt h_1^+((\ell-1)\hbar)\)f_1^+\ot\vac\ot q^{\ell\partial}f_1^+\\
  &\quad+\vac\ot q^{\partial}\exp\(\wt h_1^+((\ell-1)\hbar)\)f_1^+
  \ot q^{\ell\partial}h_1\ot q^{\ell\partial}f_1^+\\
  &\quad+\vac\ot q^{\partial}\exp\(\wt h_1^+((\ell-1)\hbar)\)f_1^+
  \ot \vac\ot q^{\ell\partial}f_1^+\\
  &\quad\quad\ot \pd{(-z)}\log f(-z)^{[2]_q[\ell]_q(q\inv-q^{-3})q^{1-\ell}}
  \Big)\\
  =&cY_{\wh\ell}^{12}(z)Y_{\wh 1}^{34}(z)
  \Big(
    q^{-\partial}h_1\ot q^{\partial}\exp\(\wt h_1^+((\ell-1)\hbar)\)f_1^+\ot\vac\ot q^{\ell\partial}f_1^+\\
  &\quad+\vac\ot q^{\partial}\exp\(\wt h_1^+((\ell-1)\hbar)\)f_1^+
  \ot q^{\ell\partial}h_1\ot q^{\ell\partial}f_1^+\\
  &\quad-\vac\ot q^{\partial}\exp\(\wt h_1^+((\ell-1)\hbar)\)f_1^+
  \ot \vac\ot q^{\ell\partial}f_1^+
  \ot \pd{z}\log f(z)^{[2]_q[\ell]_q(q^{\ell}-q^{\ell+2})}
  \Big)\\
  =&cq^\partial Y_{\wh\ell}(h_1,z-2\hbar)\exp\(\wt h_1^+((\ell-1)\hbar)\)f_1^+\ot q^{\ell\partial}f_1^+\\
  &+cq^\partial \exp\(\wt h_1^+((\ell-1)\hbar)\)f_1^+\ot q^{\ell\partial}Y_{\wh 1}(h_1,z)f_1^+\\
  &-\Delta(f_1^+)\ot \pd{z}\log f(z)^{[2]_q[\ell]_q(q^{\ell}-q^{\ell+2})}\\
  =&cq^\partial \(h_1^+(z-2\hbar)+h_1^-(z-2\hbar)\)\exp\(\wt h_1^+((\ell-1)\hbar)\)f_1^+\ot q^{\ell\partial}f_1^+\\
  &+cq^\partial \exp\(\wt h_1^+((\ell-1)\hbar)\)f_1^+\ot q^{\ell\partial}(h_1^+(z)+h_1^-(z))f_1^+\\
  &-\Delta(f_1^+)\ot \pd{z}\log f(z)^{[2]_q[\ell]_q(q^{\ell}-q^{\ell+2})}\\
  =&\(h_1^+(z-\hbar)\ot 1+1\ot h_1^+(z+\ell\hbar)\)\Delta(f_1^+)+\Delta(f_1^+)
  \ot \pd{z}\log f(z)^{[2]_q[\ell]_qq^{\ell+2}+[2]_qq},
\end{align*}
where the first equation follows from \eqref{eq:Delta-f+}, the third equation follows from Lemma \ref{lem:S-wt-h-f+-h}
and the last equation follows from \eqref{eq:h--f} and Lemma \ref{lem:h--exp-wt-h-f+}.
Applying $\Res_zz\inv$, we get that
\begin{align}\label{eq:Delta-h--1-Delta-f+-temp}
  &\Delta(h_1)_{-1}\Delta(f_1^+)\nonumber\\
  =&\(h_1^+(-\hbar)\ot 1+1\ot h_1^+(\ell\hbar)\)\Delta(f_1^+)\nonumber\\
  &+\Delta(f_1^+)
  \ot \Res_zz\inv\pd{z}\log f(z)^{[2]_q[\ell]_qq^{\ell+2}+[2]_qq}\nonumber\\
  =&\(h_1^+(-\hbar)\ot 1+1\ot h_1^+(\ell\hbar)\)\Delta(f_1^+)\\
  &\nonumber+\Delta(f_1^+)
  \ot \left.\(\pd{z}\log f(z)^{[2]_q[\ell+1]_qq^{\ell+1}}\)\right|_{z=0}.
\end{align}

On the other hand,
we notice that
\begin{align*}
  [\partial, \wt h_1^+(z)]
  =\pd{z}\wt h_1^+(z)
  =h_1^+(z-(\ell+1)\hbar)-h_1^+(z+(1-\ell)\hbar).
\end{align*}
Then we have that
\begin{align}\label{eq:der-Delta-f+-temp}
  &\Delta(\partial f_1^+)=(\partial\ot 1+1\ot\partial)\Delta(f_1^+)\nonumber\\
  =&c(\partial\ot 1+1\ot\partial)q^\partial\exp\(\wt h_1^+((\ell-1)\hbar)\)f_1^+\ot q^{\ell\partial}f_1^+\nonumber\\
  =&cq^\partial \(h_1^+(-2\hbar)-h_1^+(0)\)\exp\(\wt h_1^+((\ell-1)\hbar)\)f_1^+\ot q^{\ell\partial}f_1^+\nonumber\\
  &+cq^\partial \exp\(\wt h_1^+((\ell-1)\hbar)\)h_1^+(0)f_1^+\ot q^{\ell\partial}f_1^+\nonumber\\
  &+cq^\partial\exp\(\wt h_1^+((\ell-1)\hbar)\)f_1^+\ot q^{\ell\partial}h_1^+(0)f_1^+\nonumber\\
  =&(h_1^+(-\hbar)\ot 1+1\ot h_1^+(\ell\hbar))
  \Delta(f_1^+),
\end{align}
where the second equation follows from \eqref{eq:der-f}.
Combining \eqref{eq:Delta-h--1-Delta-f+-temp} and
\eqref{eq:der-Delta-f+-temp}, we complete the proof of the lemma.
\end{proof}

The following proves the ``$-$'' case of Proposition \ref{prop:AQ-sp5}.

\begin{lem}\label{lem:Delta-AQ-sp-5-}
Let us assume that Theorem \ref{thm:qlattice-inj} holds true for $\ell$.
In $L_{\hat\ssl_2,\hbar}^\ell\wh\ot L_{\hat\ssl_2,\hbar}^1$, we have that
\begin{align*}
  &\Delta(\partial f_1^-)=-\Delta(h_1)_{-1}\Delta(f_1^-)
  -\Delta(f_1^-)\left.\(\pd{z}\log f(z)^{[2]_q[\ell+1]_qq^{\ell+1}}\)\right|_{z=0}.
\end{align*}
\end{lem}

\begin{proof}
Let $Y_\Delta$ be the vertex operator map of the twisted tensor product $L_{\hat\ssl_2,\hbar}^\ell\wh\ot L_{\hat\ssl_2,\hbar}^1$.
From \eqref{eq:Delta-f-}, we have that
\begin{align*}
  \Delta(f_1^-)=c\(q^{-\partial}f_1^-\ot q^{\ell\partial}f_1^-\)
\end{align*}
for some invertible $c\in\C[[\hbar]]$.
Then
\begin{align*}
  &Y_\Delta(\Delta(h_1),z)\Delta(f_1^-)\\
  =&cY_\Delta\(q^{-\partial}h_1\ot\vac+\vac\ot q^{\ell\partial}h_1,z\)\(q^{-\partial}f_1^-\ot q^{\ell\partial}f_1^-\)\\
  =&cY_{\wh\ell}^{12}(z)Y_{\wh 1}^{34}(z)S_{\ell,1}^{23}(-z)
  \(q^{-\partial}h_1\ot q^{-\partial}f_1^-\ot \vac\ot q^{\ell\partial}f_1^-
  +\vac\ot q^{-\partial}f_1^-\ot q^{\ell\partial}h_1\ot q^{\ell\partial}f_1^-\)\\
  =&cY_{\wh\ell}^{12}(z)Y_{\wh 1}^{34}(z)
  \Big(
    q^{-\partial}h_1\ot q^{-\partial}f_1^-\ot \vac\ot q^{\ell\partial}f_1^-
    +
    \vac\ot q^{-\partial}f_1^-\ot q^{\ell\partial}h_1\ot q^{\ell\partial}f_1^-\\
  &\quad
    -\vac\ot q^{-\partial}f_1^-\ot \vac\ot q^{\ell\partial}f_1^-
    \ot \pd{(-z)}\log f(-z)^{[2]_q[\ell]_q(q-q\inv)q^{-\ell-1}}
  \Big)\\
  =&cq^{-\partial}Y_{\wh\ell}(h_1,z)f_1^-\ot q^{\ell\partial}f_1^-
  +cq^{-\partial}f_1^-\ot q^{\ell\partial}Y_{\wh 1}(h_1,z)f_1^-\\
  &-\Delta(f_1^-)\ot \pd{(-z)}\log f(-z)^{[2]_q[\ell]_q(q-q\inv)q^{-\ell-1}}\\
  =&cq^{-\partial}(h_1^+(z)+h_1^-(z))f_1^-\ot q^{\ell\partial}f_1^-
  +cq^{-\partial}f_1^-\ot q^{\ell\partial}(h_1^+(z)+h_1^-(z))f_1^-\\
  &+\Delta(f_1^-)\ot \pd{z}\log f(z)^{[2]_q[\ell]_q(q\inv-q)q^{\ell+1}}\\
  =&\(h_1^+(z-\hbar)\ot 1+1\ot h_1^+(z+\ell\hbar)\)\Delta(f_1^-)
  +\Delta(f_1^-)\ot \pd{z}\log f(z)^{[2]_q[\ell]_q(q^\ell-q^{\ell+2}-q^\ell)-[2]_qq}\\
  =&\(h_1^+(z-\hbar)\ot 1+1\ot h_1^+(z+\ell\hbar)\)\Delta(f_1^-)
  -\Delta(f_1^-)\ot \pd{z}\log f(z)^{[2]_q[\ell+1]_qq^{\ell+1}},
\end{align*}
where the third equation follows from Lemma \ref{lem:S-xk-h} and relations \cite[Lemma 4.13]{K-Coproduct-q-aff-va}
and the sixth equation follows from \eqref{eq:h--f}.
Applying $\Res_zz\inv$, we get that
\begin{align}\label{eq:Delta-h--1-Delta-f--temp}
  &\Delta(h_1)_{-1}\Delta(f_1^-)\nonumber\\
  =&\(h_1^+(-\hbar)\ot 1+1\ot h_1^+(\ell\hbar)\)\Delta(f_1^-)
  -\Delta(f_1^-)
  \ot \Res_zz\inv\pd{z}\log f(z)^{[2]_q[\ell+1]_qq^{\ell+1}}\nonumber\\
  =&\(h_1^+(-\hbar)\ot 1+1\ot h_1^+(\ell\hbar)\)\Delta(f_1^-)
  -\Delta(f_1^-)
  \ot \left.\(\pd{z}\log f(z)^{[2]_q[\ell+1]_qq^{\ell+1}}\)\right|_{z=0}.
\end{align}

On the other hand, from \eqref{eq:der-f} we have that
\begin{align*}
  &\(\partial\ot 1+1\ot\partial\)\Delta(f_1^-)
  =cq^{-\partial}\partial f_1^-\ot q^{\ell\partial}f_1^-
  +cq^{-\partial}f_1^-\ot q^{\ell\partial}\partial f_1^-\\
  =&-cq^{-\partial}h_1^+(0)f_1^-\ot q^{\ell\partial}f_1^-
  -cq^{-\partial}f_1^-\ot q^{\ell\partial}h_1^+(0)f_1^-
  =-(h_1^+(-\hbar)\ot 1+1\ot h_1^+(\ell\hbar))f_1^-.
\end{align*}
Combining this with \eqref{eq:Delta-h--1-Delta-f--temp}, we complete the proof of lemma.
\end{proof}

\subsection{Proof of Proposition \ref{prop:AQ-sp6-7}}\label{subsec:pf-AQ-sp6-7}
The main purpose of this subsection is to prove Proposition \ref{prop:AQ-sp6-7}.
Recall the operator $h_1^+(z)$ given in Lemma \ref{lem:com-formulas}.
Set
\begin{align*}
  h_1^+(z)=\sum_{n\in\Z_+}h_1(-n)z^{n-1}.
\end{align*}
We define the following analogue of the classical lattice VA operator $E^+(\beta_1,z)$ in $L_{\hat\g,\hbar}^\ell$:
\begin{align*}
    E^+(h_1,z)=\exp\( \sum_{n\in\Z_+}\frac{h_1(-n)}{n}z^n \).
\end{align*}
From the definition of $\wt h_1^+(z)$, we have that
\begin{align}\label{eq:exp-wt-h=E-E}
    &\exp\(\wt h_1^+(z)\)=E^+(h_1,z-(\ell+1)\hbar)E^+(-h_1,z+(1-\ell)\hbar).
\end{align}
The following result implies that relations \eqref{A6} and \eqref{A7} can be established through the subsequent relation:
\begin{align}\label{A8}
  \tag{A8} Y_{\wh\ell}(f_1^+,z)f_1^-=E^+(h_1,z)\vac f(z)^{-[2]_q[\ell]_qq^\ell}.
\end{align}
\begin{lem}\label{lem:A8}
If Theorem \ref{thm:qlattice-inj} holds true for $\ell$, then the relation \eqref{A8} holds true.
On the other hand, the relation \eqref{A8} implies the relations \eqref{A6} and \eqref{A7}.
\end{lem}

\begin{proof}
The first statement follows from the last relation of \cite[Theorem 4.11]{K-q-lattice-va} and \eqref{eq:eta-sp-exp}.
On the other hand, note that the relation \eqref{A4} holds true (see Lemma \ref{lem:AQ-sp1-4}). That is,
\begin{align*}
  &\iota_{z_1,z_2}f(z_1-z_2)^{(q^2+1+q^{2\ell}) [\ell]_q^2 }Y_{\wh\ell}(f_1^+,z_1)Y_{\wh\ell}(f_1^-,z_2)\\
  =&\iota_{z_2,z_1}f(z_1-z_2)^{(q^{-2}+1+q^{2\ell}) [\ell]_q^2 }Y_{\wh\ell}(f_1^-,z_2)Y_{\wh\ell}(f_1^+,z_1).
\end{align*}
Combining this with \eqref{A8} and \cite[(2.9)]{Li-h-adic}, we get that
\begin{align*}
  &z_0\inv\delta\left(\frac{z_1-z_2}{z_0}\right)Y_{\wh\ell}(f_1^+,z_1)Y_{\wh\ell}(f_1^-,z_2)\\
  &\quad-z_0\inv\delta\left(\frac{-z_2+z_1}{z_0}\right)f(-z_2+z_1)^{[2]_q[\ell]_q(q^{-\ell}-q^\ell)}Y_{\wh\ell}(f_1^-,z_2)Y_{\wh\ell}(f_1^+,z_1)\\
  =&z_1\inv\delta\left(\frac{z_2+z_0}{z_1}\right)f(z_0)^{-[2]_q[\ell]_qq^\ell}Y_{\wh\ell}(E^+(h_1,z_0)\vac,z_2).
\end{align*}
Multiplying $f(z_0)^{[2]_q[\ell]_qq^\ell}$ on both hand sides and taking $\Res_{z_0}$ we get the relation \eqref{A6}.
Then we have that
\begin{align*}
  &\left.\left(\iota_{z_1,z_2}f(z_1-z_2)^{[2]_q[\ell]_qq^\ell}Y_{\wh\ell}(f_1^+,z_1)Y_{\wh\ell}(f_1^-,z_2)\right)\right|_{z_1=z_2}\\
  =&\lim_{z_0\to 0}\left(f(z_0)^{[2]_q[\ell]_qq^\ell}Y_\E(Y_{\wh\ell}(f_1^+,z_2),z_0)Y_{\wh\ell}(f_1^-,z_2)\right)\\
  =&\lim_{z_0\to 0}\left(f(z_0)^{[2]_q[\ell]_qq^\ell}Y_{\wh\ell}(Y_{\wh\ell}(f_1^+,z_0)f_1^-,z_2)\right)\\
  =&\lim_{z_0\to 0}Y_{\wh\ell}(E^+(h_1,z_0)\vac,z_2)=1,
\end{align*}
where the first equation follows from \cite[Proposition 4.12]{Li-h-adic},
the second equation follows from \eqref{eq:weak-asso},
the third equation follows from \eqref{A8} and the last equation follows from the definition of $E^+(h_1,z)$.
Therefore, the relation \eqref{A7} holds true.
\end{proof}

Lemma \ref{lem:A8} shows that Proposition \ref{prop:AQ-sp6-7} is an immediate consequence of the following result.
\begin{prop}\label{prop:A8}
Let us assume that Theorem \ref{thm:qlattice-inj} holds for $\ell$.
Then
\begin{align*}
  Y_\Delta(\Delta(f_1^+),z)\Delta(f_1)^-=\Delta(E^+(h_1,z)\vac)f(z)^{-[2]_q[\ell+1]_qq^{\ell+1}}.
\end{align*}
\end{prop}

The rest of this subsection is devoted to proving Proposition \ref{prop:A8}.
We first compute the explicit expression for $Y_\Delta(\Delta(f_1^+),z)\Delta(f_1^-)$.
It requires the following three technical results.

\begin{lem}\label{lem:Y-exp-wt-h-x}
We have that
\begin{align*}
  &Y_{\wh\ell}\(\exp\(\wt h_1^+((\ell-1)\hbar)\)f_1^\pm,z\)\\
  =&\exp\(\wt h_1^+(z+(\ell-1)\hbar)\)Y_{\wh\ell}(f_1^\pm,z)
  \exp\(\wt h_1^-(z+(\ell-1)\hbar)\)\\
  &\qquad\times \(\frac{f_0(2(\ell-1)\hbar)}{f_0(2(\ell+1)\hbar)}\)^\half
  \(\frac{f_0(2\hbar)}{f_0(2(\ell-1)\hbar) f_0(2\ell\hbar)}\)^{\pm 1}.
\end{align*}
\end{lem}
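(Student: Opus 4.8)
The plan is to prove the generating‑function version of the identity in which the constant shift $(\ell-1)\hbar$ is replaced by a formal variable $w$, and only at the end to specialize $w=(\ell-1)\hbar$. Throughout I work with $\fg=\ssl_2$, so that $r=r_1=1$, $[r_i]_{q^\partial}=1$, and I repeatedly use the identity $2\hbar f_0(2\hbar\partial)\partial=q^{\partial}-q^{-\partial}$, which drives every scalar simplification below. First I would record the action of the two singular fields on $f_1^\pm$: by \eqref{eq:h--f} the vector $f_1^\pm$ is an eigenvector for $h_1^-(w)$, hence for $\wt h_1^-(w)$, so that $\exp(\wt h_1^-(w))f_1^\pm=f_1^\pm\,\psi^\pm(w)$ for an explicit scalar series $\psi^\pm(w)$, which the operator calculation above reduces (via the $P(z)^{g(q)}$ shift notation) to a product of shifted values of $f$. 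Combining this with Proposition \ref{prop:Y-E}, namely $Y_{\wh\ell}(E_\ell(h_1),w)=\exp(\wt h_1^+(w))\exp(\wt h_1^-(w))$, lets me solve for the regular factor
\begin{align*}
  \exp(\wt h_1^+(w))f_1^\pm=\psi^\pm(w)^{-1}\,Y_{\wh\ell}(E_\ell(h_1),w)f_1^\pm .
\end{align*}
This is the key device: it trades the bare exponential $\exp(\wt h_1^+(w))$, whose vertex operator is not directly accessible, for the genuine state $Y_{\wh\ell}(E_\ell(h_1),w)f_1^\pm$.

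Next I would apply $Y_{\wh\ell}(\cdot,z)$ to both sides and invoke the associativity axiom $Y_\E(Y(u,z),z_0)Y(v,z)=Y(Y(u,z_0)v,z)$ of the $\hbar$-adic nonlocal VA $L_{\hat\ssl_2,\hbar}^\ell$ to rewrite
\begin{align*}
  Y_{\wh\ell}\(Y_{\wh\ell}(E_\ell(h_1),w)f_1^\pm,z\)
  =\iota_{z,w}\(Y_{\wh\ell}(E_\ell(h_1),z+w)\,Y_{\wh\ell}(f_1^\pm,z)\).
\end{align*}
Substituting Proposition \ref{prop:Y-E} on the right and moving $\exp(\wt h_1^-(z+w))$ to the right past $Y_{\wh\ell}(f_1^\pm,z)$ by the commutation formula \eqref{eq:com-formulas-15} produces precisely the normal‑ordered string $\exp(\wt h_1^+(z+w))Y_{\wh\ell}(f_1^\pm,z)\exp(\wt h_1^-(z+w))$ together with a scalar reordering factor $F^\pm(w)=f(w)^{g^\pm(q)}$, where $g^\pm(q)=\pm(q^{-\ell-1}+q^{1-\ell}-q^{\ell-1}-q^{\ell+1})$ as in Lemma \ref{lem:com-formulas3}. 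Folding in the $\psi^\pm(w)^{-1}$ coming from the first step, the whole scalar prefactor of the normal‑ordered string is $\psi^\pm(w)^{-1}F^\pm(w)$, a ratio of shifted copies of $f$ which I would reduce with the evenness of $f_0$ and with $2\hbar f_0(2\hbar\partial)\partial=q^\partial-q^{-\partial}$.

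The hard part is the final specialization $w=(\ell-1)\hbar$. At this value several of the shifted factors $f(w\mp(\ell-1)\hbar)$ entering $\psi^\pm(w)$ and $F^\pm(w)$ degenerate, acquiring a zero or a pole at $w=(\ell-1)\hbar$, so the substitution cannot be carried out factor by factor: it must be performed only after the $\iota_{z,w}$ expansion has been fixed, and the finite residual value emerges as the ratio of the analytic prefactors of the colliding terms. Since $\exp(\wt h_1^+(w))f_1^\pm$ is manifestly regular in $w$ (a power series in $w$ with coefficients in $L_{\hat\ssl_2,\hbar}^\ell$), this specialization is legitimate $\hbar$‑adically, and the operator part of the right‑hand side then becomes exactly $\exp(\wt h_1^+(z+(\ell-1)\hbar))Y_{\wh\ell}(f_1^\pm,z)\exp(\wt h_1^-(z+(\ell-1)\hbar))$. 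I expect the bulk of the work, and the only genuinely delicate point, to be the organization of these $f_0$‑evaluations: tracking how the normalization $c_0$ of $E_\ell(h_1)$ contributes the factor $(f_0(2(\ell-1)\hbar)/f_0(2(\ell+1)\hbar))^{\half}$ and how the collision of the reordering factor $F^\pm$ with the eigenvalue $\psi^\pm$ at $w=(\ell-1)\hbar$ contributes $(f_0(2\hbar)/(f_0(2(\ell-1)\hbar)f_0(2\ell\hbar)))^{\pm1}$, together assembling into the scalar in the statement.
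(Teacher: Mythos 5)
Your strategy is genuinely different from the paper's. The paper proves this lemma by recognizing the left-hand side as (a multiple of) $Y_{\wh\ell}\(\exp\(\(q^{(\ell-1)\partial}\wt h_1\)_{-1}\)f_1^\pm,z\)$ and applying the Wick-type Lemma \ref{lem:exp-cal} with $\al(z)=\wt h_1^-(z+(\ell-1)\hbar)$, $\beta(z)=\wt h_1^+(z+(\ell-1)\hbar)$ and $\xi(z)=Y_{\wh\ell}(f_1^\pm,z)$; the two factors in the constant are exactly $\exp\(\half\Res_zz\inv\gamma(-z)\)$ and $\exp\(\Res_zz\inv\gamma_1(z)\)$, computed from the commutators \eqref{eq:com-formulas-6} and Lemma \ref{lem:com-formulas3}. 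Your route through $Y_{\wh\ell}(E_\ell(h_1),w)f_1^\pm$ and weak associativity is a legitimate alternative for producing the operator part $\exp(\wt h_1^+(z+w))Y_{\wh\ell}(f_1^\pm,z)\exp(\wt h_1^-(z+w))$, and the specialization $w=(\ell-1)\hbar$ is unproblematic because $\exp(\wt h_1^+(w))f_1^\pm$ is regular in $w$.

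The gap is in the scalar bookkeeping, and it is not cured by being more careful with the limit: there is no collision of zeros and poles at $w=(\ell-1)\hbar$ at all. The eigenvalue $\psi^\pm(w)$ of $\exp(\wt h_1^-(w))$ on $f_1^\pm$ and the reordering factor $F^\pm(w)$ from \eqref{eq:com-formulas-15} are the \emph{same} element of $\C((w))[[\hbar]]$ --- both are the expansion in negative powers of $w$ of $f(w)^{\pm(q^{-\ell-1}+q^{1-\ell}-q^{\ell-1}-q^{\ell+1})}$, since in the associativity substitution $z_1=z_2+w$ the factor $\iota_{z_1,z_2}f(z_1-z_2)^{g}$ becomes precisely $\iota_{w}f(w)^{g}$ --- so $\psi^\pm(w)\inv F^\pm(w)=1$ identically, before any specialization. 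Likewise the normalization of $E_\ell(h_1)$ cannot supply $\(f_0(2(\ell-1)\hbar)/f_0(2(\ell+1)\hbar)\)^{\half}$: Proposition \ref{prop:Y-E} shows that this normalization is exactly absorbed by the Wick contraction, so that $Y_{\wh\ell}(E_\ell(h_1),w)=\exp(\wt h_1^+(w))\exp(\wt h_1^-(w))$ with no constant. Carried to the end, your computation therefore returns the right-hand side with scalar $1$ and contains no mechanism for generating the stated constant. That constant measures the discrepancy between the iterate $\exp\(\(q^{(\ell-1)\partial}\wt h_1\)_{-1}\)f_1^\pm$ and the bare exponential $\exp(\wt h_1^+((\ell-1)\hbar))f_1^\pm$ (the operator $\Res_zz\inv\wt h_1^-(z+(\ell-1)\hbar)$ does not annihilate $f_1^\pm$, and there is a BCH cross-term), a discrepancy your reduction to $Y_{\wh\ell}(E_\ell(h_1),w)$ never isolates. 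Before proceeding you should also settle which normalization is actually consistent --- applying both sides to $\vac$ and letting $z\to0$, where the creation property forces the two sides to agree, is the decisive test --- and in any case supply the step that produces the two residue factors if you want to recover the constant as stated.
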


\begin{proof}
From \eqref{eq:com-formulas-6}, we have that
\begin{align*}
  [\wt h_1^-(z_1+(\ell-1)\hbar),\wt h_1^+(z_2+(\ell-1)\hbar)]
  =\gamma(z_2-z_1),
\end{align*}
where
\begin{align*}
  &\gamma(z)
  =\(q^{2\pd{z}}-q^{-2\pd{z}}\)
  \(q^{-2\ell\pd{z}}-1 \)\log f(-z).
\end{align*}
From Lemma \ref{lem:com-formulas3}, we have that
\begin{align*}
  &[\wt h_1^-(z_1+(\ell-1)\hbar),Y_{\wh\ell}(f_1^\pm,z_2)]
  =Y_{\wh\ell}(f_1^\pm,z_2)\iota_{z_1,z_2}\gamma_1(z_1-z_2),
\end{align*}
where
\begin{align*}
  \gamma_1(z)=\pm
    \log f(z)^{q^{-2} +1-q^{2\ell-2}-q^{2\ell} }.
\end{align*}
Notice that
\begin{align*}
  &\Res_zz\inv\gamma(-z)
  =\Res_zz\inv\(q^{-2\pd{z}}-q^{2\pd{z}}\)\(q^{2\ell\pd{z}}-1\)
  \log f_0(z)
  =\log\frac{f_0(2(\ell-1)\hbar)}{f_0(2(\ell+1)\hbar)},
\end{align*}
and that
\begin{align*}
  \Res_zz\inv \gamma_1(z)
  =&\pm\Res_zz\inv \( q^{-2\pd{z}}+1-q^{2(\ell-1)\pd{z}}-q^{2\ell\pd{z}} \)
    \log f(z)\\
  =&\pm\Res_zz\inv \( q^{-2\pd{z}}+1-q^{2(\ell-1)\pd{z}}-q^{2\ell\pd{z}} \)
    \log f_0(z)\\
  =&\pm\log\frac{f_0(2\hbar)f_0(0)}{f_0(2(\ell-1)\hbar) f_0(2\ell\hbar)}
  =\pm\log\frac{f_0(2\hbar)}{f_0(2(\ell-1)\hbar) f_0(2\ell\hbar)}.
\end{align*}
The by utilizing \cite[Lemma 8.9]{K-Coproduct-q-aff-va}, we complete the proof.
\end{proof}

\begin{lem}\label{lem:Y-E-wt-h-x+-x-}
We have that
\begin{align*}
    &Y_{\wh\ell}\(\exp\(\wt h_1^+((\ell-1)\hbar)\)f_1^+,z\)f_1^-\\
    =&\exp\(\wt h_1^+(z+(\ell-1)\hbar)\)Y_{\wh\ell}(f_1^+,z)f_1^-
    \ot f(z)^{q^{2\ell-2}+q^{2\ell}-q^{-2}-1}\\
    &\qquad\times \frac{f_0(2\hbar)}{f_0(2(\ell-1)\hbar)^\half f_0(2\ell\hbar)f_0(2(\ell+1)\hbar)^\half}.
\end{align*}
\end{lem}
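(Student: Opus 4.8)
The plan is to reduce the statement to the immediately preceding Lemma~\ref{lem:Y-exp-wt-h-x} together with a single ``eigenvalue'' computation, so that the operator part $\exp(\wt h_1^+(z+(\ell-1)\hbar))Y_{\wh\ell}(f_1^+,z)f_1^-$ is carried through untouched and only the two scalar factors need to be produced. First I would take the ``$+$'' case of Lemma~\ref{lem:Y-exp-wt-h-x},
\begin{align*}
  Y_{\wh\ell}\(\exp\(\wt h_1^+((\ell-1)\hbar)\)f_1^+,z\)
  =&\exp\(\wt h_1^+(z+(\ell-1)\hbar)\)Y_{\wh\ell}(f_1^+,z)
  \exp\(\wt h_1^-(z+(\ell-1)\hbar)\)\\
  &\times\(\frac{f_0(2(\ell-1)\hbar)}{f_0(2(\ell+1)\hbar)}\)^\half
  \frac{f_0(2\hbar)}{f_0(2(\ell-1)\hbar)f_0(2\ell\hbar)},
\end{align*}
and apply both sides to $f_1^-$. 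Since the displayed constant and the quantity $\exp(\wt h_1^-(z+(\ell-1)\hbar))f_1^-$ are central scalars, the whole problem collapses to evaluating the action of $\exp(\wt h_1^-(z+(\ell-1)\hbar))$ on $f_1^-$.

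For that action I would show that $f_1^-$ is a scalar eigenvector of the field $\wt h_1^-$. Using that $\wt h_1^-(z)\vac=0$ (which follows from $Y(h_1,z)^-\vac=0$ and $\varepsilon(\al_1^\vee)=0$) together with the vacuum property $\lim_{z_2\to 0}Y_{\wh\ell}(f_1^-,z_2)\vac=f_1^-$, the ``$-$'' case of Lemma~\ref{lem:com-formulas3} gives, after applying the commutator to $\vac$ and letting $z_2\to 0$,
\begin{align*}
  \wt h_1^-(z_1)f_1^-=-f_1^-\,\log f(z_1)^{q^{-\ell-1}+q^{1-\ell}-q^{\ell-1}-q^{\ell+1}}.
\end{align*}
Because the right-hand side is $f_1^-$ times a scalar, iterating yields $\wt h_1^-(z_1)^nf_1^-=f_1^-\,c(z_1)^n$; as $\wt h_1^-$ carries an explicit factor $\hbar$ the exponential converges $\hbar$-adically, so $\exp(\wt h_1^-(z+(\ell-1)\hbar))f_1^-=f_1^-\,f(z+(\ell-1)\hbar)^{-\(q^{-\ell-1}+q^{1-\ell}-q^{\ell-1}-q^{\ell+1}\)}$.

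Finally I would expand the shift operators in the exponent, writing $q=e^\hbar$ so that $q^{m\pd{z}}$ shifts the argument by $m\hbar$. Evaluating the four shifts at $z+(\ell-1)\hbar$ turns the scalar into $f(z-2\hbar)\inv f(z)\inv f(z+2(\ell-1)\hbar)f(z+2\ell\hbar)$, which is exactly $f(z)^{q^{2\ell-2}+q^{2\ell}-q^{-2}-1}$; multiplying by the prefactor from Lemma~\ref{lem:Y-exp-wt-h-x} and simplifying the $f_0$-factors produces the asserted coefficient $\frac{f_0(2\hbar)}{f_0(2(\ell-1)\hbar)^\half f_0(2\ell\hbar)f_0(2(\ell+1)\hbar)^\half}$, which completes the proof. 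The only genuine hazard — and hence the step I would carry out most carefully — is the bookkeeping of the shift operators $q^{m\pd{z}}$ through the substitution $z_1=z+(\ell-1)\hbar$ and their matching against the target exponent $q^{2\ell-2}+q^{2\ell}-q^{-2}-1$; everything else is a direct substitution.
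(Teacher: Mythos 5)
Your proposal is correct and follows essentially the same route as the paper: apply the ``$+$'' case of Lemma~\ref{lem:Y-exp-wt-h-x} to $f_1^-$, then evaluate $\exp\(\wt h_1^-(z+(\ell-1)\hbar)\)f_1^-$ as a scalar multiple of $f_1^-$ via Lemma~\ref{lem:com-formulas3} applied to $\vac$ with $z_2\to 0$. The only cosmetic difference is that you re-derive the exponentiated action from the unexponentiated commutator, whereas the paper invokes the already-exponentiated relation \eqref{eq:com-formulas-15} directly; the shift bookkeeping $f(z+(\ell-1)\hbar)^{g(q)}=f(z)^{q^{\ell-1}g(q)}$ and the resulting exponent $q^{2\ell-2}+q^{2\ell}-q^{-2}-1$ both check out.
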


\begin{proof}
From Lemma \ref{lem:Y-exp-wt-h-x}, we get that
\begin{align}\label{eq:Y-E-wt-h-x+-x--temp}
    &Y_{\wh\ell}\(\exp\(\wt h_1^+((\ell-1)\hbar)\)f_1^+,z\)f_1^-
    =\exp\(\wt h_1^+(z+(\ell-1)\hbar)\)Y_{\wh\ell}(f_1^+,z)
    \\
  &\qquad\exp\(\wt h_1^-(z+(\ell-1)\hbar)\)f_1^-
  \frac{f_0(2\hbar)}{f_0(2(\ell-1)\hbar)^\half f_0(2\ell\hbar)f_0(2(\ell+1)\hbar)^\half}.\nonumber
\end{align}
From Lemma \ref{lem:com-formulas3}, we get that
\begin{align*}
    &\exp\(\wt h_1^-(z_1+(\ell-1)\hbar)\)Y_{\wh\ell}(f_1^-,z_2)\vac
    =Y_{\wh\ell}(f_1^-,z_2)\vac\ot f(z_1-z_2)^{q^{2\ell-2}+q^{2\ell}-q^{-2}-1}.
\end{align*}
Taking $z_2=0$, we get that
\begin{align*}
    &\exp\(\wt h_1^-(z+(\ell-1)\hbar)\)f_1^-
    =f_1^-\ot f(z)^{q^{2\ell-2}+q^{2\ell}-q^{-2}-1}.
\end{align*}
Combining with \eqref{eq:Y-E-wt-h-x+-x--temp}, we complete the proof of lemma.
\end{proof}

\begin{lem}\label{lem:S-x-ell-x+}
We have that
\begin{align*}
    &S_{\ell,1}(z)(f_1^-\ot f_1^+)
    =f_1^-\ot f_1^+\ot f(z)^{q^{\ell+1}+q^{\ell-1}-q^{1-\ell}-q^{-\ell-1}}.
\end{align*}
\end{lem}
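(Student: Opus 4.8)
The plan is to reduce the identity to the computation already performed for Lemma~\ref{lem:S-xk-h}, replacing the role of $h_1$ in the second tensor factor by $x_1^+$. First I would observe that the second factor is at level $\ell'=1$, so that $r\ell'/r_i=1$ and the shift exponent $r_i-r\ell'=0$; hence the defining formula collapses to $f_1^+=\sqrt{c_{1,1}}\,q^{0\cdot\partial}(x_1^+)_{-1}^1\vac=\sqrt{c_{1,1}}\,x_1^+$, a scalar multiple of $x_1^+$. Since this scalar is common to both sides of the asserted equality, the claim is equivalent to
\[
  S_{\ell,1}(z)\(f_1^-\ot x_1^+\)=f_1^-\ot x_1^+\ot f(z)^{q^{\ell+1}+q^{\ell-1}-q^{1-\ell}-q^{-\ell-1}}.
\]

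Next I would expand $f_1^-=\sqrt{c_{1,\ell}}\,q^{(1-\ell)\partial}(x_1^-)_{-1}^\ell\vac$ in the first factor exactly as in the proof of Lemma~\ref{lem:S-xk-h}: using the rationality formula of Proposition~\ref{prop:normal-ordering-rel-general}(4), recorded as \eqref{eq:S-xk-temp1} with $r_i=1$, I would write $(x_1^-)_{-1}^\ell\vac$ as $\Res_{z_1,\dots,z_\ell}z_1\inv\cdots z_\ell\inv$ applied to the product $Y_{\wh\ell}(x_1^-,z_1)\cdots Y_{\wh\ell}(x_1^-,z_\ell)\vac$, with the factors $\prod_{a}\frac{z_a}{z_a-2(\ell-a)\hbar}$ taken into account. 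I would then apply $S_{\ell,1}(z)$ and commute it through each vertex operator $Y_{\wh\ell}(x_1^-,z_a)$ of the first factor by means of the hexagon identity \eqref{eq:qyb-hex-id}. After passing all $\ell$ factors, $S_{\ell,1}(z)$ acts on $\vac\ot x_1^+$ and is trivial there by \eqref{eq:multqyb-vac1}, while each step produces one $S^{13}$-factor whose value on $x_1^-\ot x_1^+$ is, by \eqref{eq:S-twisted-4} with $\epsilon_1=-,\,\epsilon_2=+,\,r_i=1,\,a_{11}=2$, the phase $f(z)^{q^2-q^{-2}}$ at the argument shifted by $z_a$.

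It then remains to assemble the accumulated phases. Moving the overall shift $q^{(1-\ell)\partial}$ across $S_{\ell,1}(z)$ by \eqref{eq:multqyb-shift-total1} converts it into the operator $q^{(1-\ell)\pd{z}}$ acting on the phase; combined with the shifts $z_a=2(\ell-a)\hbar$ forced by the residues, this is precisely the same shift pattern as in Lemma~\ref{lem:S-xk-h}, namely $q^{(1-\ell)\pd{z}}\sum_{a=1}^\ell q^{2(\ell-a)\pd{z}}=\sum_{a=1}^\ell q^{(\ell+1-2a)\pd{z}}$, which is $[\ell]_q$ in shift-operator form. Since here the per-factor contribution multiplies in the exponent of $f(z)$ rather than adding as a derivative, the total exponent is $(q^2-q^{-2})[\ell]_q$; a short telescoping of $q^2-q^{-2}$ against $q^{\ell-1}+q^{\ell-3}+\cdots+q^{1-\ell}$ gives exactly $q^{\ell+1}+q^{\ell-1}-q^{1-\ell}-q^{-\ell-1}$, and the scalars $\sqrt{c_{1,\ell}},\sqrt{c_{1,1}}$ recombine $(x_1^-)_{-1}^\ell\vac$ and $x_1^+$ into $f_1^-$ and $f_1^+$, yielding the stated identity.

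I expect the main obstacle to be the precise bookkeeping of the shift operators $q^{\bullet\,\pd{z}}$ while pulling $S_{\ell,1}(z)$ through the $\ell$-fold product at the distinct points $z_a$, and in particular matching the residue factors $\prod_{a}\frac{z_a}{z_a-2(\ell-a)\hbar}$ against the hexagon shifts so that the sum collapses to $[\ell]_q$. This is the multiplicative analogue of the delicate final computation in Lemma~\ref{lem:S-xk-h}, the difference being that the second factor now contributes the $S$-invariant phase $f(z)^{q^2-q^{-2}}$ from \eqref{eq:S-twisted-4} instead of the derivative term from \eqref{eq:S-twisted-2}, so the $\ell$ contributions combine multiplicatively in the exponent rather than additively.
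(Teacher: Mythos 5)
Your proposal is correct and follows essentially the same route as the paper's proof: expand $f_1^-$ via the rationality formula of Proposition~\ref{prop:normal-ordering-rel-general}(4), pull $S_{\ell,1}(z)$ through the $\ell$-fold product of vertex operators by the hexagon identity so that each factor contributes the phase $f(z)^{q^2-q^{-2}}$ from \eqref{eq:S-twisted-4} at the shifted argument, and collapse the shifts to $(q^2-q^{-2})[\ell]_q=q^{\ell+1}+q^{\ell-1}-q^{1-\ell}-q^{-\ell-1}$. The only cosmetic difference is that you make explicit the reduction $f_1^+=\sqrt{c_{1,1}}\,x_1^+$ at level $1$, which the paper uses silently.
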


\begin{proof}
From \eqref{eq:qyb-hex1} and \eqref{eq:S-twisted-4}, we have that
\begin{align*}
  &S_{\ell,1}(z)\(Y(x_1^-,z_1)Y(x_1^-,z_2)\cdots Y(x_1^-,z_\ell)\vac\ot x_1^+\)\\
  =&Y(x_1^-,z_1)Y(x_1^-,z_2)\cdots Y(x_1^-,z_\ell)\vac\ot x_1^+
  \ot \prod_{a=1}^\ell
  e^{z_a\pd{z}}f(z)^{q^{2}-q^{-2}}.
\end{align*}
Recall from Proposition \ref{prop:normal-ordering-rel-general}, we have that
\begin{align*}
    &\Rat_{z_1\inv,\dots,z_\ell\inv} Y(x_1^-,z_1)Y(x_1^-,z_2)\cdots Y(x_1^-,z_\ell)\vac
    =\prod_{a=1}^\ell\frac{z_a}{z_a-2(\ell-a)\hbar}\(x_1^-\)_{-1}^\ell\vac.
\end{align*}
Then
\begin{align*}
    &S_{\ell,1}(z)(f_1^-\ot f_1^+)=\sqrt{c_{1,\ell}}
    S_{\ell,1}(z)\(q^{(1-\ell)\partial}\(x_1^-\)_{-1}^\ell\vac\ot x_1^+\)\\
    =&\Res_{z_1,\dots,z_\ell}z_1\inv\cdots z_\ell\inv \sqrt{c_{1,\ell}}
    S_{\ell,1}(z)\(q^{(1-\ell)\partial}Y(x_1^-,z_1)Y(x_1^-,z_2)\cdots Y(x_1^-,z_\ell)\vac\ot x_1^+\)\\
    =&f_1^-\ot f_1^+\ot \prod_{a=1}^\ell
        f(z)^{(q^2-q^{-2})q^{2(\ell-a)}q^{1-\ell}}
    =f_1^- \ot f_1^+\ot f(z)^{q^{\ell+1}+q^{\ell-1}-q^{1-\ell}-q^{-\ell-1}}.
\end{align*}
We complete the proof of the lemma.
\end{proof}

The following is the explicit expression of $Y_\Delta(\Delta(f_1^+),z)\Delta(f_1^-)$.

\begin{lem}\label{lem:Delta-f+Delta-f-}
We have
\begin{align*}
  &Y_\Delta(\Delta(f_1^+),z)\Delta(f_1^-)
  =q^{-\partial}E^+(h_1,z)\vac\ot q^{\ell\partial}E^+(h_1,z)\vac
    \ot f(z)^{-[2]_q[\ell+1]_qq^{\ell+1}}.
\end{align*}
\end{lem}

\begin{proof}
From Lemma \ref{lem:Delta-f}, we have that
\begin{align*}
    &Y_\Delta(\Delta(f_1^+),z)\Delta(f_1^-)
    =Y_{\wh\ell}^{12}(z)Y_{\wh 1}^{34}(z)S_{\ell,1}^{23}(-z)\\
    &\quad\( q^{\partial}\exp\(\wt h_1^+((\ell-1)\hbar)\)f_1^+   \ot q^{-\partial}f_1^-
        \ot q^{\ell\partial}f_1^+
        \ot q^{\ell\partial}f_1^-\)\\
    &\quad\times
        \frac{f_0(2(\ell-1)\hbar)^\half f_0(2\ell\hbar)f_0(2(\ell+1)\hbar)^\half}{f_0(2\hbar)}
        \\
    =&Y_{\wh\ell}^{12}(z)Y_{\wh 1}^{34}(z)
    \( q^{\partial}\exp\(\wt h_1^+((\ell-1)\hbar)\)f_1^+   \ot q^{-\partial}f_1^-
        \ot q^{\ell\partial}f_1^+
        \ot q^{\ell\partial}f_1^-\)\\
    &\quad
        \ot f(-z)^{1+q^{2}-q^{2\ell}-q^{2+2\ell}}
        \frac{f_0(2(\ell-1)\hbar)^\half f_0(2\ell\hbar)f_0(2(\ell+1)\hbar)^\half}{f_0(2\hbar)}\\
    =&q^{-\partial}Y_{\wh\ell}\( \exp\(\wt h_1^+((\ell-1)\hbar)\)f_1^+,z+2\hbar \)
        f_1^-\ot q^{\ell\partial}Y_{\wh 1}\(f_1^+,z\)f_1^- \\
    &\quad \ot f(z)^{1+q^{2}-q^{2\ell}-q^{2+2\ell}}
    \frac{f_0(2(\ell-1)\hbar)^\half f_0(2\ell\hbar)f_0(2(\ell+1)\hbar)^\half}{f_0(2\hbar)}\\
    =&q^{-\partial}\exp\(\wt h_1^+(z+(\ell+1)\hbar)\)Y_{\wh\ell}( f_1^+,z+2\hbar  )f_1^-
    \ot q^{\ell\partial}Y_{\wh 1}\(f_1^+,z\)f_1^-,
\end{align*}
where the second equation follows from Lemma \ref{lem:S-x-ell-x+} and \cite[Lemma 4.13]{K-Coproduct-q-aff-va},
and the last equation follows from Lemma \ref{lem:Y-E-wt-h-x+-x-}.
Proposition \ref{prop:level-1-ssl2-case} provides that
\begin{align*}
    &Y_{\wh 1}(f_1^+,z)f_1^-=E^+(h_1,z)\vac\ot f(z)^{-1-q^2}.
\end{align*}
From the induction assumption, we have that
\begin{align*}
    &Y_{\wh\ell}( f_1^+,z+2\hbar  )f_1^-
    =E^+(h_1,z+2\hbar)\vac
        f(z)^{-q^{\ell+2}(q+q\inv)[\ell]_q}.
\end{align*}
Combining these equations, we get that
\begin{align*}
    &Y_\Delta(\Delta(f_1^+),z)\Delta(f_1^-)\\
    =&q^{-\partial}\exp\(\wt h_1^+(z+(\ell+1)\hbar)\)Y_{\wh\ell}( f_1^+,z+2\hbar  )f_1^-
    \ot q^{\ell\partial}Y_{\wh 1}\(f_1^+,z\)f_1^-\\
    =&q^{-\partial}\exp\(\wt h_1^+(z+(\ell+1)\hbar)\)E^+(h_1,z+2\hbar)\vac
    \ot q^{\ell\partial}E^+(h_1,z)\vac
    \ot f(z)^{-1-q^2-q^{\ell+2}(q+q\inv)[\ell]_q}\\
    =&q^{-\partial}E^+(h_1,z)\vac\ot q^{\ell\partial}E^+(h_1,z)\vac
    \ot f(z)^{-[2]_q[\ell+1]_qq^{\ell+1}},
\end{align*}
where the last equation follows from \eqref{eq:exp-wt-h=E-E}.
\end{proof}

Next, we compute the explicit expression for $\Delta(E^+(h_1,z)\vac)$,
which relies on the following technical result.

\begin{lem}\label{lem:E+=exp-wh-h}
For $a\in\C$, we have that
\begin{align*}
    &\exp\(q^{a\partial}\( \frac{e^{z\partial}-1}{\partial}h_1 \)_{-1}\)\vac\\
    =&q^{a\partial}E^+(h_1,z)\vac
    f_0(z)^{-\half[2]_q[\ell]_q\(q^\ell+q^{-\ell}\)}
    \prod_{b=0}^{\ell-1}f_0(2b\hbar)f_0(2(b+1)\hbar).
\end{align*}
\end{lem}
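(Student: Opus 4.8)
The plan is to compute the left-hand side as a Wick/Baker--Campbell--Hausdorff normal-ordering calculation inside $L_{\hat\ssl_2,\hbar}^\ell$, in complete parallel with the proof of Proposition~\ref{prop:Y-E}. Write $w(z)=\frac{e^{z\partial}-1}{\partial}h_1$ and observe, using $\partial^{\,n-1}h_1=(n-1)!\,h_1(-n)\vac$, that $w(z)=\sum_{n\ge1}\frac{z^n}{n}h_1(-n)\vac$. The creation operator $\bigl(w(z)\bigr)_{-1}$ then splits as $C(z)+D(z)$, where $C(z)=\sum_{n\ge1}\frac{z^n}{n}h_1(-n)=\int_0^z h_1^+(s)\,ds$ is the genuine Heisenberg creation part and $D(z)$ is the operator arising from the deformation term $\wh\ell(\al_1^\vee,\cdot)$ in $Y_{\wh\ell}(h_1,\zeta)=Y(h_1,\zeta)+\wh\ell(\al_1^\vee,\zeta)$. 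Since $\bigl(w(z)\bigr)_{-1}\vac=w(z)=C(z)\vac$ we have $D(z)\vac=0$, and since the modes $h_1(-n)$ commute among themselves, $\exp\bigl(C(z)\bigr)\vac=E^+(h_1,z)\vac$. The contraction kernel below is translation invariant, so the shift operator $q^{a\partial}$ can be carried through as an overall translation of arguments; it therefore suffices to treat $a=0$ and reinstate $q^{a\partial}$ on the resulting state, which produces the factor $q^{a\partial}E^+(h_1,z)\vac$ on the right.

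Next I would peel off the scalar by Baker--Campbell--Hausdorff. The commutator $[C(z),D(z)]$ is central: $D(z)$ is assembled from $\wh\ell(\al_1^\vee,\cdot)$, for which $[\wh\ell(\al_1^\vee,z_1),h_1^+(z_2)]=\wh\ell_{11}(z_1-z_2)$ is a $c$-number (cf.\ \eqref{eq:tau-1} and the proof of Lemma~\ref{lem:com-formulas}), while $[h_1^-(z_1),h_1^+(z_2)]=\pd{z_1}\pd{z_2}\log f(z_1-z_2)^{[2]_q[\ell]_q q^\ell}$ by \eqref{eq:com-formulas-1} specialized to $\fg=\ssl_2$ (so $a_{11}=2$, $r=r_1=1$). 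Hence, exactly as in Lemma~\ref{lem:exp-cal} and Proposition~\ref{prop:Y-E},
\begin{align*}
  \exp\bigl(\bigl(w(z)\bigr)_{-1}\bigr)\vac=E^+(h_1,z)\vac\cdot\exp\!\Bigl(-\tfrac12[C(z),D(z)]\Bigr),
\end{align*}
and the entire content of the lemma is the evaluation of this scalar. I would express $-\tfrac12[C(z),D(z)]$ as a residue of the contraction kernel weighted by the coefficients $\tfrac{z^n}{n}$; as in the computation of $\Res_z z^{-1}\gamma(-z)$ in Proposition~\ref{prop:Y-E}, the singular part of $\log f$ drops out of the residue, so that only $f_0$ survives (and $f_0$ is even with $f_0(0)=1$), already producing a power of $f_0(z)$.

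The main obstacle is the exact bookkeeping of the $q$-shift operators. The kernel carries the operator factors $[2]_{q^{\partial}}[\ell]_{q^{\partial}}$ and $q^{\pm\ell\partial}$ inherited from $\wh\ell_{11}$; acting on $\log f$ these must be resolved into the symmetric combination $q^\ell+q^{-\ell}$ appearing in the exponent of $f_0(z)$, and into the finite product $\prod_{a=0}^{\ell-1}f_0(2a\hbar)f_0(2(a+1)\hbar)$. The latter arises from the discretization $2\hbar f_0(2\hbar\partial)[\,\cdot\,]_{q^{\partial}}$ collapsing a $q^{\partial}$-difference into a telescoping product of shifted values of $f_0$, precisely the mechanism already used in Lemma~\ref{lem:com-formulas2} and in the residue computation of Proposition~\ref{prop:Y-E}; the relation \eqref{eq:exp-wt-h=E-E} between $\exp(\wt h_1^+)$ and shifted copies of $E^+(\pm h_1,\cdot)$ is the template for how such shifts recombine. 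I expect this explicit telescoping—together with verifying that the $a$-dependence indeed cancels and that the half-integer powers $f_0(\cdots)^{\pm1/2}$ assemble correctly—to be the only genuinely laborious step, the structural factorization above being routine.
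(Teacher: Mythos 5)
Your proposal is correct and follows essentially the same route as the paper's proof: split the mode $\bigl(\tfrac{e^{z\partial}-1}{\partial}h_1\bigr)_{-1}$ into the Heisenberg creation part and the annihilation-plus-deformation part, apply Baker--Campbell--Hausdorff with the central commutator computed from \eqref{eq:com-formulas-1} (the paper packages this as Lemma~\ref{lem:exp-cal}), replace $\log f$ by $\log f_0$ inside the residue, and telescope the $q$-shift operators into $\prod_{a=0}^{\ell-1}f_0(2a\hbar)f_0(2(a+1)\hbar)$ and $f_0(z)^{-\frac{1}{2}[2]_q[\ell]_q(q^\ell+q^{-\ell})}$. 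Your upfront reduction to $a=0$ via $(q^{a\partial}u)_{-1}=q^{a\partial}\,u_{-1}\,q^{-a\partial}$ is a legitimate shortcut that replaces the paper's concluding verification of $q^{a\partial}E^+(h_1,z)q^{-a\partial}=E^+(h_1,z+a\hbar)E^+(-h_1,a\hbar)$.
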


\begin{proof}
Set
\begin{align*}
    &\wh h_1^\pm(z,w)=\frac{e^{w\pd{z}}-1}{\pd{z}}q^{a\pd{z}} h_1^\pm(z).
\end{align*}
Then we deduce from \eqref{eq:com-formulas-1} that
\begin{align*}
    &[\wh h_1^-(z_1,w),\wh h_1^+(z_2,w)]
    =[2]_{q^{\pd{z_2}}}[\ell]_{q^{\pd{z_2}}}
    q^{-\ell\pd{z_2}}\(e^{-w\pd{z_2}}-1\)\(e^{w\pd{z_2}}-1\)\log f(z_1-z_2).
\end{align*}
That is
\begin{align*}
    &[\wh h_1^-(z_1,w),\wh h_1^+(z_2,w)]
    =\gamma(z_2-z_1),
\end{align*}
where
\begin{align*}
    \gamma(z)=&[2]_{q^{\pd{z}}}[\ell]_{q^{\pd{z}}}q^{-\ell\pd{z}}\(2-e^{-w\pd{z}}-e^{w\pd{z}}\)\log f(-z).
\end{align*}
Notice that
\begin{align*}
    &\Res_zz\inv \gamma(-z)
    =\Res_zz\inv [2]_{q^{\pd{z}}}[\ell]_{q^{\pd{z}}}q^{\ell\pd{z}}\(2-e^{w\pd{z}}-e^{-w\pd{z}}\)\log f(z)\\
    =&\Res_zz\inv [2]_{q^{\pd{z}}}[\ell]_{q^{\pd{z}}}q^{\ell\pd{z}}\(2-e^{w\pd{z}}-e^{-w\pd{z}}\)\log f_0(z)\\
    =&2\Res_zz\inv [2]_{q^{\pd{z}}}[\ell]_{q^{\pd{z}}}q^{\ell\pd{z}}\log f_0(z)
    -\Res_zz\inv [2]_{q^{\pd{z}}}[\ell]_{q^{\pd{z}}}q^{\ell\pd{z}}\log f_0(z+w)\\
    &-\Res_zz\inv [2]_{q^{\pd{z}}}[\ell]_{q^{\pd{z}}}q^{\ell\pd{z}}\log f_0(w-z)\\
    =&2\Res_zz\inv [2]_{q^{\pd{z}}}[\ell]_{q^{\pd{z}}}q^{\ell\pd{z}}\log f_0(z)
    -\Res_zz\inv [2]_{q^{\pd{w}}}[\ell]_{q^{\pd{w}}}q^{\ell\pd{w}}\log f_0(z+w)\\
    &-\Res_zz\inv [2]_{q^{\pd{w}}}[\ell]_{q^{\pd{w}}}q^{-\ell\pd{w}}\log f_0(w-z)\\
    =&2\log\prod_{b=0}^{\ell-1}f_0(2b\hbar)f_0(2(b+1)\hbar)
    -\log f_0(w)^{[2]_q[\ell]_q\(q^\ell+q^{-\ell}\)}
\end{align*}
Then we get from \cite[Lemma 8.9]{K-Coproduct-q-aff-va} that
\begin{align*}
    &Y_{\wh\ell}\(\exp\(\( q^{a\partial}\frac{e^{w\partial}-1}{\partial}h_1 \)_{-1}\)\vac,z\)\vac\\
    =&\exp\(\(\wh h_1^+(z,w)+\wh h_1^-(z,w)\)_{-1}\)\vac
    =\exp\(\wh h_1^+(z,w)\)\exp\(\wh h_1^-(z,w)\)\vac\\
    &\quad\times f_0(w)^{-\half[2]_q[\ell]_q\(q^\ell+q^{-\ell}\)}
    \prod_{b=0}^{\ell-1}f_0(2b\hbar)f_0(2(b+1)\hbar)\\
    =&\exp\(\wh h_1^+(z,w)\)\vac
    f_0(w)^{-\half[2]_q[\ell]_q\(q^\ell+q^{-\ell}\)}
    \prod_{b=0}^{\ell-1}f_0(2b\hbar)f_0(2(b+1)\hbar) .
\end{align*}
Taking $z=0$, we get that
\begin{align}\label{eq:E+=exp-wh-h-temp}
    &\exp\(q^{a\partial}\( \frac{e^{w\partial}-1}{\partial}h_1 \)_{-1}\)\vac\\
    =&\exp\(\wh h_1^+(0,w)\)\vac
    f_0(w)^{-\half[2]_q[\ell]_q\(q^\ell+q^{-\ell}\)}
    \prod_{b=0}^{\ell-1}f_0(2b\hbar)f_0(2(b+1)\hbar)\nonumber\\
    =&E^+(h_1,w+a\hbar)E^+(-h_1,a\hbar)\vac
    f_0(w)^{-\half[2]_q[\ell]_q\(q^\ell+q^{-\ell}\)}
    \prod_{b=0}^{\ell-1}f_0(2b\hbar)f_0(2(b+1)\hbar),\nonumber
\end{align}
where the last equation follows from the following fact
\begin{align*}
    &\wh h_1^+(0,w)=q^{a\pd{z}}\frac{e^{w\pd{z}}-1}{\pd z}h_1^+(z)|_{z=0}\
    =\frac{e^{(w+a\hbar)\pd{z}}-1}{\pd{z}}h_1^+(z)|_{z=0}
    -\frac{e^{a\hbar\pd{z}}-1}{\pd{z}}h_1^+(z)|_{z=0}\\
    =&\sum_{n\in\Z_+}(w+a\hbar)^n \frac{1}{n!}\pdiff{z}{n-1}h_1^+(z)|_{z=0}
    -\sum_{n\in\Z_+}(a\hbar)^n \frac{1}{n!}\pdiff{z}{n-1}h_1^+(z)|_{z=0}\\
    =& \sum_{n\in\Z_+}\frac{h_1(-n)}{n}(w+a\hbar)^n
    -\sum_{n\in\Z_+}\frac{h_1(-n)}{n}(a\hbar)^n.
\end{align*}
Notice that
\begin{align*}
  &q^{a\partial}\sum_{n\in\Z_+}h_1(-n) z^{n-1}=q^{a\partial}Y(h_1,z)^+=Y(h_1,z+a\hbar)^+ q^{a\partial}\\
  =&\sum_{n\in\Z_+}h_1(-n)\sum_{k=0}^{n-1}\binom{n-1}{k}z^{n-1-k}(a\hbar)^kq^{a\partial}\\
  =&\sum_{n\in\Z_+}\sum_{k\in\N}h_1(-n-k)\binom{n+k-1}{k}z^{n-1}(a\hbar)^kq^{a\partial}.
\end{align*}
It shows that
\begin{align*}
  &q^{a\partial}h_1(-n)=\sum_{k\in\N}h_1(-n-k)\binom{n+k-1}{k}(a\hbar)^kq^{a\partial}.
\end{align*}
Then we have that
\begin{align*}
  &q^{a\partial}\sum_{n\in\Z_+}\frac{h_1(-n)}{n}z^n
  =\sum_{n\in\Z_+}
  \sum_{k\in\N}\frac{h_1(-n-k)}{n}\binom{n+k-1}{k}(a\hbar)^k z^n
  q^{a\partial}\\
  =&\sum_{n\in\Z_+}
  \sum_{k\in\N}\frac{h_1(-n-k)}{n+k}\binom{n+k}{k}(a\hbar)^k z^n
  q^{a\partial}
  =\sum_{n\in\Z_+}\frac{h_1(-n)}{n}
  \sum_{k=0}^{n-1}\binom{n}{k}(a\hbar)^k z^{n-k}
  q^{a\partial}\\
  =&\sum_{n\in\Z_+}\frac{h_1(-n)}{n}\((z+a\hbar)^n-(a\hbar)^n\)q^{a\partial}.
\end{align*}
It follows that
\begin{align*}
  &q^{a\partial}E^+(h_1,z)q^{-a\partial}=E^+(h_1,z+a\hbar)E^+(-h_1,a\hbar).
\end{align*}
Combining this with \eqref{eq:E+=exp-wh-h-temp}, we complete the proof.
\end{proof}

The following is the explicit expression for $\Delta(E^+(h_1,z)\vac)$.

\begin{lem}\label{lem:Delta-E+}
We have
\begin{align*}
  \Delta&\(E^+(h_1,z)\vac\)= q^{-\partial}E^+(h_1,z)\vac\ot q^{\ell\partial} E^+(h_1,z)\vac.
\end{align*}
\end{lem}

\begin{proof}
Set
\begin{align*}
  a=q^{-\partial}\frac{e^{z\partial}-1}{\partial}h_1\ot\vac,\quad b=\vac\ot q^{\ell\partial}\frac{e^{z\partial}-1}{\partial}h_1
\end{align*}
and $\wh h_1=a+b$.
Recall the definition of $S_\Delta(z)=S_{1,\ell}^{23}(z)S_{\ell,\ell}^{13}(z)S_{1,1}^{24}(z)S_{\ell,1}^{14}(z)$ (see \eqref{eq:def-S-Delta}).
Then we get that
\begin{align*}
  &S_\Delta(w)(b\ot a)\\
  =&S_{1,\ell}^{23}(w)S_{\ell,\ell}^{13}(w)S_{1,1}^{24}(w)S_{\ell,1}^{14}(w)
  \(\vac\ot q^{\ell\partial}\frac{e^{z\partial}-1}{\partial}h_1\ot q^{-\partial}\frac{e^{z\partial}-1}{\partial}h_1 \ot \vac\)\\
  =&\(1\ot q^{\ell\partial+\ell\pd{w}}\frac{e^{z\partial+z\pd{w}}-1}{\partial+\pd{w}}\ot q^{-\partial+\pd{w}}\frac{e^{z\partial-z\pd{w}}-1}{\partial-\pd{w}}\ot 1\)
   S_{1,\ell}^{23}(w)(\vac\ot h_1\ot h_1\ot\vac)\\
  =&b\ot a+\vac\ot\vac\ot\vac\ot\vac\\
  &\quad\ot q^{(\ell+1)\pd{w}}\frac{e^{z\pd{w}}-1}{\pd{w}}\frac{e^{-z\pd{w}}-1}{-\pd{w}}
   [2]_{q^{\pd{w}}}[\ell]_{q^{\pd{w}}}\(q^{\pd{w}}-q^{-\pd{w}}\)\pdiff{w}{2}\log f(w)\\
  =&b\ot a+\vac\ot\vac\ot\vac\ot\vac\\
  &\quad\ot
  [2]_{q^{\pd{w}}}[\ell]_{q^{\pd{w}}}q^{(\ell+1)\pd{w}}\(q^{\pd{w}}-q^{-\pd{w}}\)
  \(e^{z\pd{w}}+e^{-z\pd{w}}-2\)
  \log f(w),
\end{align*}
where the second equation follows from \cite[Lemma 4.13]{K-Coproduct-q-aff-va} and
the third equation follows from \eqref{eq:S-twisted-1}.
Then
\begin{align*}
  &[Y_\Delta(a,z_1),Y_\Delta(b,z_2)]w\\
  =&Y_\Delta(a,z_1)Y_\Delta(b,z_2)w-Y_\Delta(z_2)Y_\Delta^{23}(z_1)S_\Delta^{12}(z_2-z_1)(b\ot a\ot w)\\
  &+Y_\Delta(z_2)Y_\Delta^{23}(z_1)S_\Delta^{12}(z_2-z_1)(b\ot a\ot w)-Y_\Delta(b,z_2)Y_\Delta(a,z_1)w\\
  =&Y_\Delta(Y_\Delta(a,z_1-z_2)b-Y_\Delta(a,-z_2+z_1)b,z_2)\\
  &+Y_\Delta(z_2)Y_\Delta^{23}(z_1)S_\Delta^{12}(z_2-z_1)(b\ot a\ot w)-Y_\Delta(b,z_2)Y_\Delta(a,z_1)w\\
  =&Y_\Delta(z_2)Y_\Delta^{23}(z_1)\(S_\Delta^{12}(z_2-z_1)(b\ot a\ot w)-b\ot a\ot w\)\\
  =&w\ot \iota_{z_2,z_1}[2]_{q^{\pd{z_2}}}[\ell]_{q^{\pd{z_2}}}q^{(\ell+1)\pd{z_2}}\(q^{\pd{z_2}}-q^{-\pd{z_2}}\)
  \(e^{z\pd{z_2}}+e^{-z\pd{z_2}}-2\)
  \log f(z_2-z_1),
\end{align*}
where the second equation follows from \cite[(2.25)]{Li-h-adic}.
By applying $\Res_{z_1,z_2}z_1\inv z_2\inv$, we get
\begin{align*}
  &[a_{-1},b_{-1}]
  =\Res_{z_1,z_2}z_1\inv z_2\inv \iota_{z_2,z_1}
  [2]_{q^{\pd{z_2}}}[\ell]_{q^{\pd{z_2}}}q^{(\ell+1)\pd{z_2}}\(q^{\pd{z_2}}-q^{-\pd{z_2}}\)\\
  &\quad\times\(e^{z\pd{z_2}}+e^{-z\pd{z_2}}-2\)
  \log f(z_2-z_1)\\
  =&\Res_{z_2}z_2\inv [2]_{q^{\pd{z_2}}}[\ell]_{q^{\pd{z_2}}}q^{(\ell+1)\pd{z_2}}\(q^{\pd{z_2}}-q^{-\pd{z_2}}\)\(e^{z\pd{z_2}}+e^{-z\pd{z_2}}-2\)
  \log f(z_2)\\
  =&\Res_{z_2}z_2\inv [2]_{q^{\pd{z_2}}}[\ell]_{q^{\pd{z_2}}}q^{(\ell+1)\pd{z_2}}\(q^{\pd{z_2}}-q^{-\pd{z_2}}\)
  \log f_0(z_2+z)\\
  &+\Res_{z_2}z_2\inv [2]_{q^{\pd{z_2}}}[\ell]_{q^{\pd{z_2}}}q^{(\ell+1)\pd{z_2}}\(q^{\pd{z_2}}-q^{-\pd{z_2}}\)
  \log f_0(z_2-z)\\
  &-2\Res_{z_2}z_2\inv [2]_{q^{\pd{z_2}}}[\ell]_{q^{\pd{z_2}}}q^{(\ell+1)\pd{z_2}}\(q^{\pd{z_2}}-q^{-\pd{z_2}}\)
  \log f_0(z_2)\\
  =&\Res_{z_2}z_2\inv [2]_{q^{\pd{z}}}[\ell]_{q^{\pd{z}}}q^{(\ell+1)\pd{z}}\(q^{\pd{z}}-q^{-\pd{z}}\)
  \log f_0(z_2+z)\\
  &-\Res_{z_2}z_2\inv [2]_{q^{\pd{z}}}[\ell]_{q^{\pd{z}}}q^{-(\ell+1)\pd{z}}\(q^{\pd{z}}-q^{-\pd{z}}\)
  \log f_0(z-z_2)\\
  &-2\Res_{z_2}z_2\inv [2]_{q^{\pd{z_2}}}[\ell]_{q^{\pd{z_2}}}q^{(\ell+1)\pd{z_2}}\(q^{\pd{z_2}}-q^{-\pd{z_2}}\)
  \log f_0(z_2)\\
  =&2\log f_0(2\hbar)f_0(0)-2\log f_0(2(\ell+1)\hbar)f_0(2\ell\hbar)
  +\log f_0(z)^{ [2]_q\(q^\ell-q^{-\ell}\)\(q^{\ell+1}-q^{-\ell-1}\) }.
\end{align*}
From Baker-Campbell-Hausdorff formula, we have that
\begin{align*}
  &\Delta(E^+(h_1,z)\vac)
  =\Delta\(\exp\(\( \frac{e^{z\partial}-1}{\partial}h_1 \)_{-1}\)\vac\) \\
    &\quad\times f_0(z)^{\half[2]_q[\ell+1]_q\(q^{\ell+1}+q^{-\ell-1}\)}
    \prod_{a=0}^\ell f_0(2a\hbar)\inv f_0(2(a+1)\hbar)\inv \\
  =&\exp\(a_{-1}+b_{-1}\)\(\vac\ot\vac\)
     f_0(z)^{\half[2]_q[\ell+1]_q\(q^{\ell+1}+q^{-\ell-1}\)}
    \prod_{a=0}^\ell f_0(2a\hbar)\inv f_0(2(a+1)\hbar)\inv \\
  =&\exp\(a_{-1}\)\exp\(b_{-1}\) \exp\(-[a_{-1},b_{-1}]/2\)\(\vac\ot\vac\)\\
  &\quad\times
   f_0(z)^{\half[2]_q[\ell+1]_q\(q^{\ell+1}+q^{-\ell-1}\)}
    \prod_{a=0}^\ell f_0(2a\hbar)\inv f_0(2(a+1)\hbar)\inv \\
  =&\exp\(\( q^{-\partial}\frac{e^{z\partial}-1}{\partial}h_1 \)_{-1}\)\vac
    \ot \exp\(\( q^{\ell\partial}\frac{e^{z\partial}-1}{\partial}h_1 \)_{-1}\)\vac \\
  &\quad\times
  \frac{f_0(2\ell\hbar)f_0(2(\ell+1)\hbar)}{f_0(0)f_0(2\hbar)}
  f_0(z)^{ -\half[2]_q\(q^\ell-q^{-\ell}\)\(q^{\ell+1}-q^{-\ell-1}\) }
  \\
  &\quad\times
   f_0(z)^{\half[2]_q[\ell+1]_q\(q^{\ell+1}+q^{-\ell-1}\)}
    \prod_{a=0}^\ell f_0(2a\hbar)\inv f_0(2(a+1)\hbar)\inv \\
  =&\exp\(\( q^{-\partial}\frac{e^{z\partial}-1}{\partial}h_1 \)_{-1}\)\vac
    \ot \exp\(\( q^{\ell\partial}\frac{e^{z\partial}-1}{\partial}h_1 \)_{-1}\)\vac \\
  &\quad\times
  \frac{f_0(2\ell\hbar)f_0(2(\ell+1)\hbar)}{f_0(0)f_0(2\hbar)}
  f_0(z)^{ -\half[2]_q\(q^\ell-q^{-\ell}\)\(q^{\ell+1}-q^{-\ell-1}\) }\\
  &\quad\times
   f_0(z)^{\half[2]_q[\ell+1]_q\(q^{\ell+1}+q^{-\ell-1}\)}
    \prod_{a=0}^\ell f_0(2a\hbar)\inv f_0(2(a+1)\hbar)\inv \\
  =&q^{-\partial}E^+(h_1,z)\vac\ot q^{\ell\partial}E^+(h_1,z)\vac
  f_0(z)^{-\half [2]_q[\ell]_q\(q^{\ell}+q^{-\ell}\)}
    \prod_{a=0}^{\ell-1} f_0(2a\hbar) f_0(2(a+1)\hbar) \\
  &\quad\times
  f_0(z)^{-\half [2]_q^2}f_0(0) f_0(2\hbar)
  \frac{f_0(2\ell\hbar)f_0(2(\ell+1)\hbar)}{f_0(0)f_0(2\hbar)}
  f_0(z)^{ -\half[2]_q\(q^\ell-q^{-\ell}\)\(q^{\ell+1}-q^{-\ell-1}\) }\\
  &\quad\times
   f_0(z)^{\half[2]_q[\ell+1]_q\(q^{\ell+1}+q^{-\ell-1}\)}
    \prod_{a=0}^\ell f_0(2a\hbar)\inv f_0(2(a+1)\hbar)\inv \\
  =&q^{-\partial}E^+(h_1,z)\vac\ot q^{\ell\partial}E^+(h_1,z)\vac,
\end{align*}
where the first equation and the sixth equation follow from Lemma \ref{lem:E+=exp-wh-h}.
\end{proof}

Lemmas \ref{lem:Delta-f+Delta-f-} and \ref{lem:Delta-E+} collectively establish Proposition \ref{prop:A8}, which, when combined with Lemma \ref{lem:A8}, yields Proposition \ref{prop:AQ-sp6-7}.

\section{Proof of Proposition \ref{prop:W}}\label{subsec:pf-prop-W}

In this section, we prove the results of Theorem \ref{prop:W} sequentially.
For a vector space $W$ and $g(z)\in W[[z,z\inv]]$, we also denote the singular part $g(z)^-$ of $g(z)$ by $\Sing_zg(z)$ in this section.
The following result proves \eqref{eq:prop-W-well-defined}.

\begin{lem}\label{lem:W-well-defined}
For $i\in I$ and $\ell\in\C^\times$, we have that
\begin{align*}
  \Sing_zW_i(z)=0.
\end{align*}
\end{lem}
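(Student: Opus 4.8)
The plan is to extract the singular part (in $z$) of each summand in the definition of $W_i(z)$ and verify that they cancel. Write $\Theta_i(z)=\exp\(\(\frac{1-e^{z\partial}}{\partial[r\ell/r_i]_{q^{r_i\partial}}}h_i\)_{-1}\)$ for the exponential prefactor. Since $\frac{1-e^{z\partial}}{\partial}$ is divisible by $z$, this prefactor is a power series in $z$ valued in $\(\End_{\C[[\hbar]]}X_{\hat\fg,\hbar}^\ell\)[[z]]$ with $\Theta_i(0)=\id$; in particular it is regular in $z$. First I would split $Y_{\wh\ell}(x_i^+,z)=Y_{\wh\ell}(x_i^+,z)^++Y_{\wh\ell}(x_i^+,z)^-$. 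The term $\Theta_i(z)Y_{\wh\ell}(x_i^+,z)^+x_i^-$ is a product of two series regular in $z$, hence contributes nothing to $\Sing_z$, so only $\Theta_i(z)Y_{\wh\ell}(x_i^+,z)^-x_i^-$ and the two explicit rational terms matter.

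Next, working in $X_{\hat\fg,\hbar}^\ell$ for $X=V,L$, Proposition \ref{prop:ideal-def-alt} and Definition \ref{de:V-tau} give $A_i(z)=0$, that is
\begin{align*}
Y_{\wh\ell}(x_i^+,z)^-x_i^-=\frac{1}{q^{r_i}-q^{-r_i}}\(\frac{\vac}{z}-\frac{E_\ell(h_i)}{z+2r\ell\hbar}\),
\end{align*}
where $\frac{1}{z+2r\ell\hbar}=\sum_{k\ge0}(-2r\ell)^k\hbar^kz^{-k-1}$ is the expansion in $\C((z))[[\hbar]]$, purely singular in $z$. Using $\Theta_i(0)\vac=\vac$ one gets $\Sing_z\(\Theta_i(z)\frac{\vac}{z}\)=\frac{\vac}{z}$, and writing $\Theta_i(z)E_\ell(h_i)=\sum_{n\ge0}b_nz^n$, a bookkeeping of powers of $z$ yields
\begin{align*}
\Sing_z\(\Theta_i(z)E_\ell(h_i)\tfrac{1}{z+2r\ell\hbar}\)=\frac{\(\Theta_i(z)E_\ell(h_i)\)\big|_{z=-2r\ell\hbar}}{z+2r\ell\hbar}.
\end{align*}
Granting the identity $\(\Theta_i(z)E_\ell(h_i)\)\big|_{z=-2r\ell\hbar}=c_i\vac$ with $c_i=f_0(2(r_i+r\ell)\hbar)^\half f_0(2(r_i-r\ell)\hbar)^{-\half}$, the total singular part of $W_i(z)$ equals $\frac{1}{q^{r_i}-q^{-r_i}}\(\frac{\vac}{z}-\frac{c_i\vac}{z+2r\ell\hbar}\)-\frac{\vac}{(q^{r_i}-q^{-r_i})z}+\frac{c_i\vac}{(q^{r_i}-q^{-r_i})(z+2r\ell\hbar)}=0$.

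The hard part is the identity $\(\Theta_i(z)E_\ell(h_i)\)\big|_{z=-2r\ell\hbar}=c_i\vac$, a purely Heisenberg computation in the subalgebra generated by $h_i$. From \eqref{eq:def-E-h}, $E_\ell(h_i)=c_i\exp\(\(H_i\)_{-1}\)\vac$ with $H_i=-q^{-r\ell\partial}2\hbar f_0(2\partial\hbar)[r_i]_{q^\partial}h_i$, and $\Theta_i(z)=\exp\(\(G_i(z)\)_{-1}\)$ with $G_i(z)=\frac{1-e^{z\partial}}{\partial[r\ell/r_i]_{q^{r_i\partial}}}h_i$. Both $\(G_i(z)\)_{-1}$ and $\(H_i\)_{-1}$ are $\C((z))[[\hbar]]$-combinations of creation modes $h_i(-n)$ ($n\ge1$), which mutually commute, so $\exp\(\(G_i(z)\)_{-1}\)\exp\(\(H_i\)_{-1}\)=\exp\(\(G_i(z)+H_i\)_{-1}\)$ with no Baker--Campbell--Hausdorff correction. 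It thus suffices to show $G_i(-2r\ell\hbar)+H_i=0$. With $q=e^\hbar$, so that $q^{a\partial}$ shifts $z$ by $a\hbar$, one computes $2\hbar f_0(2\partial\hbar)[r_i]_{q^\partial}=\frac{q^{r_i\partial}-q^{-r_i\partial}}{\partial}$ and $[r\ell/r_i]_{q^{r_i\partial}}=\frac{q^{r\ell\partial}-q^{-r\ell\partial}}{q^{r_i\partial}-q^{-r_i\partial}}$; at $z=-2r\ell\hbar$ the numerator becomes $1-q^{-2r\ell\partial}=q^{-r\ell\partial}\(q^{r\ell\partial}-q^{-r\ell\partial}\)$, so the factors $q^{r\ell\partial}-q^{-r\ell\partial}$ cancel and $G_i(-2r\ell\hbar)=q^{-r\ell\partial}\frac{q^{r_i\partial}-q^{-r_i\partial}}{\partial}h_i=-H_i$. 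Hence $\exp\(\(G_i(z)+H_i\)_{-1}\)\vac\big|_{z=-2r\ell\hbar}=\vac$, giving the required identity. I expect the only routine work to be the $q$-number manipulations and the verification of the $\C((z))[[\hbar]]$ expansion conventions; the conceptual content is the cancellation $G_i(-2r\ell\hbar)=-H_i$.
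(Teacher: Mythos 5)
Your proposal is correct and follows essentially the same route as the paper's proof: isolate the singular part using $A_i(z)=0$ from Proposition \ref{prop:ideal-def-alt}, evaluate the regular exponential prefactor at $z=-2r\ell\hbar$ via the Taylor argument, and use the $q$-number identity $\frac{1-q^{-2r\ell\partial}}{\partial[r\ell/r_i]_{q^{r_i\partial}}}=q^{-r\ell\partial}2\hbar f_0(2\partial\hbar)[r_i]_{q^\partial}$ to cancel against the exponential occurring in $E_\ell(h_i)$, leaving exactly the two rational terms that the definition of $W_i(z)$ subtracts off. The only cosmetic difference is the order of operations at the last step: the paper substitutes $z=-2r\ell\hbar$ first and then cancels $\exp\(\(u\)_{-1}\)\exp\(\(-u\)_{-1}\)\vac=\vac$, which requires no commutation hypothesis at all, whereas you merge the exponentials first by asserting the $(-1)$-modes are pure creation operators --- slightly imprecise in the deformed setting, but harmless here since any BCH correction is a central scalar that vanishes at the evaluation point where the two arguments are negatives of each other.
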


\begin{proof}
Notice that
\begin{align}
  &\Sing_z\exp\(\(\frac{1-e^{z\partial}}{\partial [r\ell]_{q^{\partial}}}h_i\)_{-1}\)Y_{\wh\ell}(x_i^+,z)x_i^-\nonumber\\
  =&\Sing_z\exp\(\(\frac{1-e^{z\partial}}{\partial [r\ell]_{q^{\partial}}}h_i\)_{-1}\)Y_{\wh\ell}(x_i^+,z)^-x_i^-\nonumber\\
  =&-\Sing_z\frac{1}{(q^{r_i}-q^{-r_i})(z+2r\ell\hbar)}\exp\(\(\frac{1-e^{z\partial}}{\partial [r\ell]_{q^{\partial}}}h_i\)_{-1}\)E_\ell(h_i)\nonumber\\
  &+\Sing_z\frac{1}{(q^{r_i}-q^{-r_i})z}\exp\(\(\frac{1-e^{z\partial}}{\partial [r\ell]_{q^{\partial}}}h_i\)_{-1}\)\vac\nonumber\\
  =&-\frac{1}{(q^{r_i}-q^{-r_i})(z+2r\ell\hbar)}\exp\(\(\frac{1-q^{-2r\ell\partial}}{\partial [r\ell]_{q^{\partial}}}h_i\)_{-1}\)E_\ell(h_i)
  +\frac{1}{(q^{r_i}-q^{-r_i})z}\vac\nonumber\\
  =&-\frac{\exp\(\(q^{-r\ell\partial}2\hbar f_0(2\partial\hbar)h_i\)_{-1}\)
    E_\ell(h_i)}
    {(q^{r_i}-q^{-r_i})(z+2r\ell\hbar)}+\frac{1}{(q^{r_i}-q^{-r_i})z}\vac\nonumber\\
  =&-\frac{1}{(q^{r_i}-q^{-r_i})(z+2r\ell\hbar)}
  \(\frac{f_0(2(r_i+r\ell)\hbar)}{f_0(2(r_i-r\ell)\hbar)}\)^\half\label{eq:Sing-W-temp1}
  \exp\(\(q^{-r\ell\partial}2\hbar f_0(2\partial\hbar)h_i\)_{-1}\)\\
  &\quad\times\exp\(\(-q^{-r\ell\partial}2\hbar f_0(2\partial\hbar)h_i\)_{-1}\)\vac
  +\frac{1}{(q^{r_i}-q^{-r_i})z}\vac\nonumber\\
  =&-\(\frac{f_0(2(r_i+r\ell)\hbar)}{f_0(2(r_i-r\ell)\hbar)}\)^\half
  \frac{\vac}{(q^{r_i}-q^{-r_i})(z+2r\ell\hbar)}
  +\frac{1}{(q^{r_i}-q^{-r_i})z}\vac\nonumber.
\end{align}
where the equation \eqref{eq:Sing-W-temp1} follows from the definition of $E_\ell(h_i)$ (see \eqref{eq:def-E-h}).
From the definition of $W_i(z)$, we complete the proof.
\end{proof}

The following result proves \eqref{eq:prop-W-parafermion}.
\begin{lem}\label{lem:W-parafermion}
For $i,j\in I$ and $\ell\in\C^\times$, we have that
\begin{align}
  &Y_{\wh\ell}(h_i,z)^- W_j(z_1)=0.\label{eq:W-parafermion-2}
\end{align}
\end{lem}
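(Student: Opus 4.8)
The plan is to show that the singular part of $Y_{\wh\ell}(h_i,z_2)$ annihilates every coefficient of the series $W_j(z_1)$. First I would discard the two multiples of $\vac$ appearing in the definition of $W_j(z_1)$: since $Y_{\wh\ell}(h_i,z)\vac=Y(h_i,z)\vac\in X_{\hat\g,\hbar}^\ell[[z]]$, one has $Y_{\wh\ell}(h_i,z)^-\vac=0$, so both scalar terms are killed and only the leading piece survives. Writing this piece as $A_j(z_1)=B_j(z_1)C_j(z_1)$, where
\begin{align*}
  B_j(z_1)=\exp\(\(\frac{1-e^{z_1\partial}}{\partial[r\ell/r_j]_{q^{r_j\partial}}}h_j\)_{-1}\),\qquad C_j(z_1)=Y_{\wh\ell}(x_j^+,z_1)x_j^-,
\end{align*}
the assertion \eqref{eq:W-parafermion-2} reduces to proving $Y_{\wh\ell}(h_i,z_2)^-A_j(z_1)=0$.

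To evaluate this I would compute $Y_{\wh\ell}(h_i,z_2)A_j(z_1)$ and retain its part singular in $z_2$, by transporting $Y_{\wh\ell}(h_i,z_2)$ to the right through the two factors. The prefactor $B_j(z_1)$ is an exponential of Heisenberg creation modes $h_j(-k)$, and since all brackets $[h_i(m),h_j(-k)]$ are central, the commutator $[Y_{\wh\ell}(h_i,z_2),B_j(z_1)]$ is a scalar $\mu_{ij}(z_2,z_1)$ times $B_j(z_1)$, obtained exactly as in \eqref{eq:Y-action-1} from the contraction \eqref{eq:Sing-Y-1} (equivalently \eqref{eq:com-formulas-1}). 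Crucially, the factor $[r\ell/r_j]_{q^{r_j\pd{z_2}}}$ produced by \eqref{eq:Sing-Y-1} cancels the denominator $[r\ell/r_j]_{q^{r_j\partial}}$ built into $B_j(z_1)$ (using the symmetry $[n]_q=[n]_{q\inv}$), leaving $\mu_{ij}$ of the shape $\frac{1-e^{z_1\pd{z_2}}}{\pd{z_2}}[a_{ij}]_{q^{r_i\pd{z_2}}}q^{r\ell\pd{z_2}}z_2^{-2}$ up to the sign of the shift. Passing $Y_{\wh\ell}(h_i,z_2)$ through $C_j(z_1)$ then produces two further contributions: the commutator with the field $Y_{\wh\ell}(x_j^+,z_1)$, a scalar $\nu_{ij}(z_2,z_1)$ given by \eqref{eq:com-formulas-2}--\eqref{eq:com-formulas-3} (equivalently \eqref{eq:local-h-2}), and the singular action on the state $x_j^-$, namely $Y_{\wh\ell}(h_i,z_2)^-x_j^-=-x_j^-[a_{ij}]_{q^{r_i\pd{z_2}}}q^{r\ell\pd{z_2}}z_2\inv$ by \eqref{eq:Sing-Y-2}, the regular part $Y_{\wh\ell}(h_i,z_2)^+x_j^-$ contributing nothing singular. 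Each of the three terms comes out as a scalar multiple of $A_j(z_1)$.

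Collecting these, the $z_2$-singular part of $Y_{\wh\ell}(h_i,z_2)A_j(z_1)$ is $A_j(z_1)$ times the scalar $(\mu_{ij}+\nu_{ij})^{\mathrm{sing}}-[a_{ij}]_{q^{r_i\pd{z_2}}}q^{r\ell\pd{z_2}}z_2\inv$, so the whole proof comes down to the scalar identity
\begin{align*}
  \(\mu_{ij}(z_2,z_1)+\nu_{ij}(z_2,z_1)\)^{\mathrm{sing}}=[a_{ij}]_{q^{r_i\pd{z_2}}}q^{r\ell\pd{z_2}}z_2\inv .
\end{align*}
I expect this to be the main obstacle, and it is precisely the reason the prefactor of $W_j$ has its particular shape: the operator $\frac{1-e^{z_1\pd{z_2}}}{\pd{z_2}}$ in $\mu_{ij}$ generates a pole at $z_2=z_1$ (through $e^{z_1\pd{z_2}}z_2^{-2}=(z_2-z_1)^{-2}$ followed by the antiderivative $\pd{z_2}\inv$) which must cancel exactly the pole of $\nu_{ij}$ coming from $\pd{z_2}\log f(z_2-z_1)$, leaving only the $z_1$-independent term $z_2\inv$ that reproduces the $x_j^-$ contribution. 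The delicate points are the correct sign of the shift $\partial\mapsto\pm\pd{z_2}$ when transporting \eqref{eq:Sing-Y-1} through $B_j(z_1)$, the matching of the $\iota_{z_2,z_1}$ and $\iota_{z_1,z_2}$ expansions inside $\nu_{ij}$ so that their difference collapses to a genuine $z_2$-singularity, and the bookkeeping of the $q$-shift operators $q^{r_i\pd{z_2}}$ and $q^{r\ell\pd{z_2}}$. Once this ($\ssl_2$-type) scalar computation is carried out, the three contributions cancel and \eqref{eq:W-parafermion-2} follows; the cases $X=V$ and $X=L$ are handled simultaneously, since the entire argument takes place in $F_{\hat\g,\hbar}^\ell$ before passing to any quotient.
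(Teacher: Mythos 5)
Your proposal is correct and follows essentially the same route as the paper: discard the vacuum multiples since $Y_{\wh\ell}(h_i,z)^-\vac=0$, then show that the commutator of $Y_{\wh\ell}(h_i,z)^-$ with the exponential Heisenberg prefactor produces exactly the negative of the singular action on $Y_{\wh\ell}(x_j^+,z_1)x_j^-$. The scalar identity you isolate is precisely what the paper verifies (its two displayed contributions are $[a_{ij}]_{q^{r_i\pd{z}}}q^{r\ell\pd{z}}\bigl(\frac{1}{z-z_1}-\frac{1}{z}\bigr)$ and $[a_{ij}]_{q^{r_i\pd{z}}}q^{r\ell\pd{z}}\bigl(\frac{1}{z}-\frac{1}{z-z_1}\bigr)$, which cancel), and the sign conventions you flag as delicate do work out as you expect.
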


\begin{proof}
From \eqref{eq:local-h-2}, we have that
\begin{align*}
  &[Y_{\wh\ell}(h_i,z)^-,Y_{\wh\ell}(x_j^\pm,z_1)]
  =\pm \Sing_{z}Y_{\wh\ell}(x_j^\pm,z_1)[r_ia_{ij}]_{q^{\pd{z}}}q^{r\ell\pd{z}}\pd{z}\log f(z-z_1)\\
  =&\pm \Sing_{z}Y_{\wh\ell}(x_j^\pm,z_1)[r_ia_{ij}]_{q^{\pd{z}}}q^{r\ell\pd{z}}\frac{1+e^{-z+z_1}}{2-2e^{-z+z_1}}
  =\pm Y_{\wh\ell}(x_{j,\hbar}^\pm,z_1)[r_ia_{ij}]_{q^{\pd{z}}}q^{r\ell\pd{z}}\frac{1}{z-z_1}.
\end{align*}
By applying on $\vac$ and taking $z_1\to 0$, we get that
\begin{align*}
  Y_{\wh\ell}(h_i,z)^-x_j^\pm
  =\pm x_j^\pm [r_ia_{ij}]_{q^{\pd{z}}}q^{r\ell\pd{z}}\frac{1}{z}.
\end{align*}
Then
\begin{align}
  &Y_{\wh\ell}(h_i,z)^-Y_{\wh\ell}(x_j^+,z_1)x_j^-
  =[Y_{\wh\ell}(h_i,z)^-,Y_{\wh\ell}(x_j^\pm,z_1)]x_j^-
  +Y_{\wh\ell}(x_j^+,z_1)Y_{\wh\ell}(h_i,z)^-x_j^-\nonumber\\
  &\quad=Y_{\wh\ell}(x_j^+,z_1)x_j^-\ot [r_ia_{ij}]_{q^{\pd{z}}}q^{r\ell\pd{z}}\(\frac{1}{z-z_1}
  -\frac{1}{z}\).\label{eq:W-parafermion-1}
\end{align}

From \eqref{eq:local-h-1}, we have that
\begin{align*}
  &\left[Y_{\wh\ell}(h_i,z)^-,
  \Res_{z_2}z_2\inv\frac{1-e^{z_1\pd{z_2}}}{\pd{z_2}[r\ell]_{q^{\pd{z_2}}}}Y_{\wh\ell}(h_j,z_2)\right]\nonumber\\
  =&\Sing_z[r_ia_{ij}]_{q^{\pd{z}}}q^{r\ell\pd{z}}\(1-e^{-z_1\pd{z}}\)\pd{z}\log f(z)\nonumber\\
  =&\Sing_z[r_ia_{ij}]_{q^{\pd{z}}}q^{r\ell\pd{z}}\(1-e^{-z_1\pd{z}}\)\frac{1+e^{-z}}{2-2e^{-z}}\nonumber\\
  =&[r_ia_{ij}]_{q^{\pd{z}}}q^{r\ell\pd{z}}\(1-e^{-z_1\pd{z}}\)\frac{1}{z}
  =[r_ia_{ij}]_{q^{\pd{z}}}q^{r\ell\pd{z}}\(\frac{1}{z}-\frac{1}{z-z_1}\).
\end{align*}
Then
\begin{align}
  &\left[Y_{\wh\ell}(h_i,z)^-,\exp\(\(\frac{1-e^{z_1\partial}}{\partial[r\ell]_{q^{\partial}}}h_j\)_{-1}\)\right]\nonumber\\
  =&\exp\(\(\frac{1-e^{z_1\partial}}{\partial[r\ell]_{q^{\partial}}}h_j\)_{-1}\)
  \ot [r_ia_{ij}]_{q^{\pd{z}}}q^{r\ell\pd{z}}\(\frac{1}{z}-\frac{1}{z-z_1}\).\label{eq:S-wtW-temp3}
\end{align}
Combining \eqref{eq:W-parafermion-1} and \eqref{eq:S-wtW-temp3}, we get that
\begin{align*}
  &Y_{\wh\ell}(h_i,z)^-\exp\(\(\frac{1-e^{z_1\partial}}{\partial[r\ell]_{q^{\partial}}}h_j\)_{-1}\)Y_{\wh\ell}(x_j^+,z_1)x_j^-
  =0.
\end{align*}
From the definition of $W_j(z)$ and the fact that
\begin{align*}
  Y_{\wh\ell}(h_i,z)^-\vac=0,
\end{align*}
we complete the proof of the \eqref{eq:W-parafermion-2}.
\end{proof}

We need the following technical result to
compute
\begin{align*}
  S_{\ell,\ell'}(z)(W_j(z_1)\ot h_i)\quad\te{and}\quad
  S_{\ell,\ell'}(z)(W_j(z_1)\ot x_i^\pm).
\end{align*}

\begin{lem}
For $i,j\in I$ and $\ell,\ell'\in\C$, we have that
\begin{align}
  &S_{\ell,\ell'}(z)\(Y_{\wh\ell}(x_j^+,z_1)x_j^-\ot h_i\)
  =Y_{\wh\ell}(x_j^+,z_1)x_j^-\ot h_i\label{eq:S-wtW-h}\\
  &\quad+Y_{\wh\ell}(x_j^+,z_1)x_j^-\ot \vac\ot
  \pd{z}\log \(f(z+z_1)/f(z)\)^{[r_ia_{ij}]_{q}[r\ell']_q(q-q\inv)},\nonumber\\
  &S_{\ell,\ell'}(z)\(Y_{\wh\ell}(x_j^+,z_1)x_j^-\ot x_i^\pm\)\label{eq:S-wtW-x}\\
  &\nonumber\quad=Y_{\wh\ell}(x_j^+,z_1)x_j^-\ot x_i^\pm \ot
  \(f(z+z_1)/f(z)\)^{\pm q^{-r_ia_{ij}}\mp q^{r_ia_{ij}}}.
\end{align}
\end{lem}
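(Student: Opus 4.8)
The plan is to treat $z_1$ as a formal variable, extend $S_{\ell,\ell'}(z)$ coefficientwise in $z_1$ to the first tensor factor, and then use the hexagon identity \eqref{eq:multqyb-hex1} to strip off the vertex operator $Y_{\wh\ell}(x_j^+,z_1)$. Viewing $x_j^+,x_j^-\in F_{\hat\fg,\hbar}^\ell$ and $w\in F_{\hat\fg,\hbar}^{\ell'}$ as occupying slots $1,2,3$, and substituting the $S$-variable as $z$ and the internal vertex variable as $z_1$ into \eqref{eq:multqyb-hex1}, I would first record
\begin{align*}
  S_{\ell,\ell'}(z)\(Y_{\wh\ell}(x_j^+,z_1)x_j^-\ot w\)
  =Y_{\wh\ell}^{12}(z_1)S_{\ell,\ell'}^{23}(z)S_{\ell,\ell'}^{13}(z+z_1)(x_j^+\ot x_j^-\ot w).
\end{align*}
Thus it suffices to evaluate the composite $S_{\ell,\ell'}^{23}(z)S_{\ell,\ell'}^{13}(z+z_1)$ on each basic vector $x_j^+\ot x_j^-\ot h_i$ and $x_j^+\ot x_j^-\ot x_i^\pm$, and then reassemble by applying $Y_{\wh\ell}^{12}(z_1)$.

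For the case $w=h_i$, I would apply $S_{\ell,\ell'}^{13}(z+z_1)$ first using the ``$+$'' instance of \eqref{eq:S-twisted-2}, which reproduces the original vector together with a $\vac$-valued contribution carrying the scalar $\pd{z}\log f(z+z_1)^{[a_{ij}]_{q^{r_i}}[r\ell']_q(q-q\inv)}$; then $S_{\ell,\ell'}^{23}(z)$ acts, where the ``$-$'' instance of \eqref{eq:S-twisted-2} handles $x_j^-\ot h_i$ and the vacuum property \eqref{eq:multqyb-vac2} leaves the already-produced $x_j^-\ot\vac$ term fixed. Recombining via $Y_{\wh\ell}^{12}(z_1)$, the two scalar contributions combine into the difference $\pd{z}\log f(z+z_1)^{\bullet}-\pd{z}\log f(z)^{\bullet}$, which collapses by additivity of the logarithm to $\pd{z}\log\(f(z+z_1)/f(z)\)^{[a_{ij}]_{q^{r_i}}[r\ell']_q(q-q\inv)}$, giving exactly \eqref{eq:S-wtW-h}.

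For the case $w=x_i^\pm$, both evaluations are governed by \eqref{eq:S-twisted-4}: the factor $S_{\ell,\ell'}^{13}(z+z_1)$ acts on $x_j^+\ot x_i^\pm$ (so $\epsilon_1=+$) and yields the scalar $f(z+z_1)^{q^{\mp r_ia_{ij}}-q^{\pm r_ia_{ij}}}$, while $S_{\ell,\ell'}^{23}(z)$ acts on $x_j^-\ot x_i^\pm$ (so $\epsilon_1=-$) and yields $f(z)^{q^{\pm r_ia_{ij}}-q^{\mp r_ia_{ij}}}$. Multiplying these and noting the $f(z)$-exponent is the negative of the $f(z+z_1)$-exponent produces $\(f(z+z_1)/f(z)\)^{q^{\mp r_ia_{ij}}-q^{\pm r_ia_{ij}}}$, and rewriting $q^{\mp r_ia_{ij}}-q^{\pm r_ia_{ij}}=\pm q^{-r_ia_{ij}}\mp q^{r_ia_{ij}}$ yields \eqref{eq:S-wtW-x} after applying $Y_{\wh\ell}^{12}(z_1)$. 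The main obstacle throughout is purely the bookkeeping: keeping straight which tensor slots each $S^{ab}$ factor acts on, and tracking the $+/-$ signs together with the $\epsilon_1\epsilon_2$ products appearing in the exponents. Once the hexagon reduction is in place, each evaluation is a direct substitution into Lemma \ref{lem:S-twisted}, so no further ideas are needed beyond confirming that the two scalar factors assemble into the claimed ratio.
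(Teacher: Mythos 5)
Your proposal is correct and follows essentially the same route as the paper: reduce via the hexagon identity to $Y_{\wh\ell}^{12}(z_1)S_{\ell,\ell'}^{23}(z)S_{\ell,\ell'}^{13}(z+z_1)$ acting on $x_j^+\ot x_j^-\ot w$, then evaluate using \eqref{eq:S-twisted-2} (with opposite signs for the two factors, the $\vac$-term being fixed by the vacuum property) and \eqref{eq:S-twisted-4}, respectively. The sign and exponent bookkeeping in your sketch matches the paper's computation, including the collapse of the two scalar contributions into the ratio $f(z+z_1)/f(z)$.
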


\begin{proof}
From \eqref{eq:qyb-hex1} and \eqref{eq:S-twisted-2}, we have that
\begin{align*}
  &S_{\ell,\ell'}(z)\(Y_{\wh\ell}(x_j^+,z_1)x_j^-\ot h_i\)
  =Y_{\wh\ell}^{12}(z_1)S_{\ell,\ell'}^{23}(z)S_{\ell,\ell'}^{13}(z+z_1)\(x_j^+\ot x_j^-\ot h_i\)\\
  =&Y_{\wh\ell}^{12}(z_1)\Big(x_j^+\ot x_j^-\ot h_i
    +x_j^+\ot x_j^-\ot \vac \ot
    \(e^{z_1\pd{z}}-1\)\pd{z}\log f(z)^{ [r_ia_{ij}]_{q}[r\ell']_{q}(q-q\inv) }  \Big)\\
  =&Y_{\wh\ell}(x_j^+,z_1)x_j^-\ot h_i
  +Y_{\wh\ell}(x_j^+,z_1)x_j^-\ot \vac \ot \pd{z}\log \(f(z+z_1)/f(z)\)^{ [r_ia_{ij}]_{q}[r\ell']_{q}(q-q\inv) },
\end{align*}
which proves \eqref{eq:S-wtW-h}.
From \eqref{eq:qyb-hex1} and \eqref{eq:S-twisted-4}, we have that
\begin{align*}
  &S_{\ell,\ell'}(z)\(Y_{\wh\ell}(x_j^+,z_1)x_j^-\ot x_i^\pm\)
  =Y_{\wh\ell}^{12}(z_1)S_{\ell,\ell'}^{23}(z)S_{\ell,\ell'}^{13}(z+z_1)\(x_j^+\ot x_j^-\ot x_i^\pm\)\\
  &\quad=Y_{\wh\ell}(x_j^+,z_1)x_j^-\ot x_i^\pm\ot
    \(f(z+z_1)/f(z)\)^{\pm q^{-r_ia_{ij}}\mp q^{r_ia_{ij}}},
\end{align*}
which proves \eqref{eq:S-wtW-x}.
\end{proof}

Employing Lemma \ref{lem:W-S-invariant}, the relations \eqref{eq:qyb-hex1}, \eqref{eq:qyb-hex2}
and the fact that both $V_{\hat\g,\hbar}^{\ell'}$ and $L_{\hat\g,\hbar}^{\ell'}$ are generated by the set
$\set{h_i,\,x_i^\pm}{i\in I}$, we conclude that \eqref{eq:prop-W-S-invariant} follows from the result below.

\begin{lem}\label{lem:W-S-invariant}
For $i,j\in I$ and $\ell,\ell'\in\C$ with $\ell\ne 0$, we have that
\begin{align}
  &S_{\ell,\ell'}(z)(W_j(z_1)\ot h_i)=W_j(z_1)\ot h_i,\label{eq:S-W-h}\\
  &S_{\ell,\ell'}(z)(W_j(z_1)\ot x_i^\pm)=W_j(z_1)\ot x_i^\pm.\label{eq:S-W-x}
\end{align}
\end{lem}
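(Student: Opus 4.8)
The plan is to split $W_j(z_1)$ into its main piece and its vacuum corrections and to push each through the braiding $S_{\ell,\ell'}(z)$ separately. Recalling the definition in Proposition \ref{prop:W}, write
$$W_j(z_1)=\Xi_j(z_1)\,Y_{\wh\ell}(x_j^+,z_1)x_j^-+c_1(z_1)\vac+c_2(z_1)\vac,\qquad \Xi_j(z_1)=\exp\(\(\tfrac{1-e^{z_1\partial}}{\partial[r\ell/r_j]_{q^{r_j\partial}}}h_j\)_{-1}\),$$
where $c_1,c_2$ are the two scalar coefficients of $\vac$ occurring there. The two vacuum pieces are disposed of at once: by the vacuum properties \eqref{eq:multqyb-vac1} and \eqref{eq:multqyb-vac2} we have $S_{\ell,\ell'}(z)(\vac\ot h_i)=\vac\ot h_i$ and $S_{\ell,\ell'}(z)(\vac\ot x_i^\pm)=\vac\ot x_i^\pm$, so they are fixed. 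Hence the whole content of \eqref{eq:S-W-h} and \eqref{eq:S-W-x} is that $S_{\ell,\ell'}(z)$ produces no net correction on the main term $\Xi_j(z_1)Y_{\wh\ell}(x_j^+,z_1)x_j^-\ot u$ for $u=h_i$, and only the trivial multiplicative factor for $u=x_i^\pm$.

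I would compute $S_{\ell,\ell'}(z)$ on the main term in two layers. The outer layer is the action on the vertex-operator factor $Y_{\wh\ell}(x_j^+,z_1)x_j^-$, which is already recorded in the lemma immediately preceding: \eqref{eq:S-wtW-h} gives an additive correction proportional to $\pd{z}\log\(f(z+z_1)/f(z)\)$ and \eqref{eq:S-wtW-x} gives the multiplicative factor $\(f(z+z_1)/f(z)\)^{\pm q^{-r_ia_{ij}}\mp q^{r_ia_{ij}}}$. The inner layer is the contribution of $\Xi_j(z_1)$. Since its exponent is a series in the modes of $h_j$, I would transport the basic actions $S_{\ell,\ell'}(z)(h_j\ot h_i)$ and $S_{\ell,\ell'}(z)(h_j\ot x_i^\pm)$ from \eqref{eq:S-twisted-1} and \eqref{eq:S-twisted-3} through the operator $\tfrac{1-e^{z_1\partial}}{\partial[r\ell/r_j]_{q^{r_j\partial}}}$ by means of the shift relations \eqref{eq:multqyb-shift-total1} and \eqref{eq:multqyb-shift-total2}, which replace $\partial\ot 1$ by $\partial\ot 1+\pd{z}$ and, on the vacuum-valued part of the correction, by $\pd{z}$ alone. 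Combining the two layers is then exactly the setting of the technical Lemmas \ref{lem:S-special-tech-gen2}--\ref{lem:S-special-tech-gen4}: for $u=h_i$ the exponent's correction is of the additive $\vac\ot\vac$ type and adds to the vertex-operator correction (Lemma \ref{lem:S-special-tech-gen4}(1)), while for $u=x_i^\pm$ the exponent's correction is of the $\vac\ot u$ type and exponentiates to a scalar factor, which then multiplies the factor of \eqref{eq:S-wtW-x}.

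The decisive step is to show these two corrections cancel (additively for $h_i$, to the reciprocal for $x_i^\pm$). This is the exact $S$-picture analogue of the cancellation carried out inside a single algebra in the proof of Lemma \ref{lem:W-parafermion}: there the factor $\tfrac{1}{[r\ell/r_j]_{q^{r_j\partial}}}$ was inserted into the exponent precisely so that, after $Y_{\wh\ell}(h_i,z)^-$ acts on $h_j$ (compare \eqref{eq:Sing-Y-1} and \eqref{eq:Sing-Y-2}), the residual $q$-shift operator matches and kills the correction coming from $Y_{\wh\ell}(x_j^+,z_1)x_j^-$, giving \eqref{eq:prop-W-parafermion}. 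Here I expect the same tuning to operate, the key analytic input being the telescoping identity $\tfrac{1-e^{z_1\partial}}{\partial}\,\pdiff{z}{2}\log f(z)=\pd{z}\log f(z)-\pd{z}\log f(z+z_1)$ (together with $f(-z)=-f(z)$), which converts the second-derivative correction of \eqref{eq:S-twisted-1} into exactly the $z,z_1$-difference of first derivatives appearing in \eqref{eq:S-wtW-h}, and analogously for \eqref{eq:S-twisted-3} versus \eqref{eq:S-wtW-x}.

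The main obstacle I anticipate is precisely this final cancellation. It demands careful bookkeeping of the operator-valued $q$-shifts $q^{r_j\partial}$ carried by the exponent against the scalar $q$-numbers $[r\ell/r_j]_{q^{r_j}}$, $[a_{ij}]_{q^{r_i}}$ appearing in the explicit corrections \eqref{eq:S-twisted-1}--\eqref{eq:S-twisted-3}: one must verify that the factor $\tfrac{1}{[r\ell/r_j]_{q^{r_j\partial}}}$ introduced in $\Xi_j(z_1)$ recombines correctly under $\pd{z}$ with the $[r\ell/r_j]_{q^{r_j}}$ present in \eqref{eq:S-twisted-1}, so that the surviving operator matches the correction of \eqref{eq:S-wtW-h} term by term in the expansion in $z_1$. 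Once both the $u=h_i$ and $u=x_i^\pm$ cases are settled this way, equations \eqref{eq:S-W-h} and \eqref{eq:S-W-x} follow, and together with the regularity \eqref{eq:prop-W-well-defined} they yield the $S$-invariance \eqref{eq:prop-W-S-invariant} of Proposition \ref{prop:W} on the generators $h_i,x_i^\pm$.
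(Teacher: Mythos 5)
Your proposal is correct and follows essentially the same route as the paper's proof: split off the vacuum terms, act on the exponential prefactor via \eqref{eq:S-twisted-1}/\eqref{eq:S-twisted-3} transported through the shift relations \eqref{eq:multqyb-shift-total1}--\eqref{eq:multqyb-shift-total2}, act on $Y_{\wh\ell}(x_j^+,z_1)x_j^-$ via \eqref{eq:S-wtW-h}/\eqref{eq:S-wtW-x}, and combine with Lemma \ref{lem:S-special-tech-gen4} so that the $[r\ell/r_j]_{q^{r_j\partial}}$ normalization makes the two corrections cancel additively for $h_i$ and multiplicatively for $x_i^\pm$. The telescoping identity you isolate is exactly the cancellation the paper carries out.
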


\begin{proof}
From \eqref{eq:S-twisted-1} and \cite[Lemma 4.13]{K-Coproduct-q-aff-va}, we have that
\begin{align*}
  &S_{\ell,\ell'}(z)\(\frac{1-e^{z_1\partial}}{\partial [r\ell]_{q^{\partial}}}h_j\ot h_i\)
  =\frac{1-e^{z_1\partial\ot1+z_1\pd{z}}}{(\partial\ot1+\pd{z}) [r\ell]_{q^{\partial\ot1+\pd{z}}}}S_{\ell,\ell'}(z)(h_j\ot h_i)\\
  =&\frac{1-e^{z_1\partial}}{\partial [r\ell]_{q^{\partial}}}h_j\ot h_i+\vac\ot\vac\ot [r_ia_{ij}]_{q^{\pd{z}}}[r\ell]_{q^{\pd{z}}}[r\ell']_{q^{\pd{z}}}
  \(q^{\pd{z}}-q^{-\pd{z}}\)\frac{1-e^{z_1\pd{z}}}{\pd{z}[r\ell]_{q^{\pd{z}}}}\pdiff{z}{2}\log f(z)\\
  =&\frac{1-e^{z_1\partial}}{\partial [r\ell]_{q^{\partial}}}h_j\ot h_i
  +\vac\ot\vac
  \ot  \(1-e^{z_1\pd{z}}\)\pd{z}\log f(z)^{[r_ia_{ij}]_{q}[r\ell']_q(q-q\inv) }.
\end{align*}
Combining this with \eqref{eq:S-wtW-h} and \cite[Lemma 4.16]{K-Coproduct-q-aff-va}, we get that
\begin{align*}
  &S_{\ell,\ell'}(z)\(W_j(z_1)\ot h_i\)\\
  =&S_{\ell,\ell'}(z)\(\exp\(\(\frac{1-e^{z_1\partial}}{\partial [r\ell]_{q^{\partial}}}h_j\)_{-1}\)Y_{\wh\ell}(x_j^+,z_1)x_j^-\ot h_i\)\\
  &-\frac{\vac\ot h_i}{(q^{r_i}-q^{-r_i})z}
    +\(\frac{f_0(2(r_i+r\ell)\hbar)}{f_0(2(r_i-r\ell)\hbar)}\)^\half\frac{\vac\ot h_i}{(q^{r_i}-q^{-r_i})(z+2r\ell\hbar)}\\
  =&\exp\(\(\frac{1-e^{z_1\partial}}{\partial [r\ell]_{q^{\partial}}}h_j\)_{-1}\)Y_{\wh\ell}(x_j^+,z_1)x_j^-\ot h_i\\
  &+\exp\(\(\frac{1-e^{z_1\partial}}{\partial [r\ell]_{q^{\partial}}}h_j\)_{-1}\)Y_{\wh\ell}(x_j^+,z_1)x_j^-\ot \vac
     \\
  &\quad\ot
    \(e^{z_1\pd{z}}-1+1-e^{z_1\pd{z}}\)\log f(z)^{ [r_ia_{ij}]_{q}[r\ell']_{q}(q-q\inv) }\\
  &-\frac{\vac\ot h_i}{(q^{r_i}-q^{-r_i})z}
    +\(\frac{f_0(2(r_i+r\ell)\hbar)}{f_0(2(r_i-r\ell)\hbar)}\)^\half\frac{\vac\ot h_i}{(q^{r_i}-q^{-r_i})(z+2r\ell\hbar)}\\
  =&W_j(z_1)\ot h_i,
\end{align*}
which proves \eqref{eq:S-W-h}.
From \eqref{eq:S-twisted-3} and \eqref{eq:qyb-der-shift}, we have that
\begin{align*}
  &S_{\ell,\ell'}(z)\(\frac{1-e^{z_1\partial}}{\partial [r\ell]_{q^{\partial}}}h_j\ot x_i^\pm\)
  =\frac{1-e^{z_1\partial\ot1+z_1\pd{z}}}{(\partial\ot1+\pd{z}) [r\ell]_{q^{\partial\ot1+\pd{z}}}}S_{\ell,\ell'}(z)(h_j\ot x_i^\pm)\\
  =&\frac{1-e^{z_1\partial}}{\partial [r\ell]_{q^{\partial}}}h_j\ot x_i^\pm
  \mp\vac\ot x_i^\pm \ot [r_ja_{ji}]_{q^{\pd{z}}}[r\ell]_{q^{\pd{z}}}
  \(q^{\pd{z}}-q^{-\pd{z}}\)\frac{1-e^{z_1\pd{z}}}{\pd{z}[r\ell]_{q^{\pd{z}}}}\pd{z}\log f(z)\\
  =&\frac{1-e^{z_1\partial}}{\partial [r\ell]_{q^{\partial}}}h_j\ot x_i^\pm
  \mp\vac\ot x_{i,\hbar}^\pm \ot \(1-e^{z_1\pd{z}}\)\log f(z)^{q^{r_ja_{ji}}-q^{-r_ja_{ji}}}\\
  =&\frac{1-e^{z_1\partial}}{\partial [r\ell]_{q^{\partial}}}h_j\ot x_i^\pm
  -\vac\ot x_i^\pm \ot \log \(f(z+z_1)/f(z)\)^{ \pm q^{-r_ia_{ij}}\mp q^{r_ia_{ij}} }\\
  =&\frac{1-e^{z_1\partial}}{\partial [r\ell]_{q^{\partial}}}h_j\ot x_i^\pm
  -\vac\ot x_i^\pm \ot \log
  \(f(z+z_1)/f(z)\)^{\pm q^{-r_ia_{ij}}\mp q^{r_ia_{ij}}}.
\end{align*}
Combining this with \eqref{eq:S-wtW-x} and \cite[Lemma 4.16]{K-Coproduct-q-aff-va}, we get that
\begin{align*}
  &S_{\ell,\ell'}(z)\(W_j(z_1)\ot x_i^\pm\)\\
  =&S_{\ell,\ell'}(z)\(\exp\(\(\frac{1-e^{z_1\partial}}{\partial [r\ell]_{q^{\partial}}}h_j\)_{-1}\)Y_{\wh\ell}(x_j^+,z_1)x_j^-\ot x_i^\pm\)\\
  &-\frac{\vac\ot x_i^\pm}{(q^{r_i}-q^{-r_i})z}
    +\(\frac{f_0(2(r_i+r\ell)\hbar)}{f_0(2(r_i-r\ell)\hbar)}\)^\half\frac{\vac\ot x_i^\pm}{(q^{r_i}-q^{-r_i})(z+2r\ell\hbar)}\\
  =&\exp\(\(\frac{1-e^{z_1\partial}}{\partial [r\ell]_{q^{\partial}}}h_j\)_{-1}\)Y_{\wh\ell}(x_j^+,z_1)x_j^-\ot x_i^\pm\\
  &\quad\ot
  \(f(z+z_1)/f(z)\)^{\pm q^{-r_ia_{ij}}\mp q^{r_ia_{ij}}}
  \(f(z+z_1)/f(z)\)^{\mp q^{-r_ia_{ij}}\pm q^{r_ia_{ij}}}\\
  &-\frac{\vac\ot x_i^\pm}{(q^{r_i}-q^{-r_i})z}
    +\(\frac{f_0(2(r_i+r\ell)\hbar)}{f_0(2(r_i-r\ell)\hbar)}\)^\half\frac{\vac\ot x_i^\pm}{(q^{r_i}-q^{-r_i})(z+2r\ell\hbar)}\\
  =&W_j(z_1)\ot x_i^\pm,
\end{align*}
which proves \eqref{eq:S-W-x}.
\end{proof}

\section*{Acknowledgment}
The author would like to thank the referee for the valuable comments that greatly improved the exposition of the paper.

Part of this paper was finished during my visit at Xiamen University and Tianyuan Mathematical Center in Southeast China, in
August 2023. I am very grateful to Professor Shaobin Tan, Fulin Chen, Qing Wang for their hospitality.

\section*{Declarations}
{\bf Funding:} This study was funded by National Natural Science Foundation of China grant no. 12371027.

{\bf Conflict of interest:} The author declares that he has no conflict of interest.

\bibliographystyle{unsrt}


\begin{bibdiv}
\begin{biblist}

\bib{ADJR-para-irr-mods-fusion}{article}{
      author={Ai, C.},
      author={Dong, C.},
      author={Jiao, X.},
      author={Ren, L.},
       title={The irreducible modules and fusion rules for the {P}arafermion
  vertex operator algebras},
        date={2018},
     journal={Trans. Amer. Math. Soc.},
      volume={370},
       pages={5963\ndash 5981},
}

\bib{ALY-para-cofiniteness-2}{article}{
      author={Arakawa, T.},
      author={Lam, C.},
      author={Yamada, H.},
       title={Zhu's algebra, ${C}_2$-algebra and ${C}_2$-cofiniteness of
  parafermion vertex operator algebras},
        date={2014},
     journal={Adv. Math.},
      volume={264},
       pages={261\ndash 295},
}

\bib{ALY-para-W-algebra}{article}{
      author={Arakawa, T.},
      author={Lam, C.},
      author={Yamada, H.},
       title={Parafermion vertex operator algebras and w-algebras},
        date={2019},
     journal={Trans. Amer. Math. Soc.},
      volume={371},
       pages={4277\ndash 4301},
}

\bib{BJK-qva-BCD}{article}{
      author={Butorac, M.},
      author={Jing, N.},
      author={Ko{\v{z}}i{\'{c}}, S.},
       title={$\hbar$-adic quantum vertex algebras associated with rational
  ${R}$-matrix in types ${B}$, ${C}$ and ${D}$},
        date={2019},
     journal={Lett. Math. Phys.},
      volume={109},
       pages={2439\ndash 2471},
}

\bib{bk}{article}{
      author={Bakalov, B.},
      author={Kac, V.},
       title={Field algebras},
        date={2003},
     journal={Internat. Math. Res. Notices},
      volume={3},
       pages={123\ndash 159},
}

\bib{CM-orbifold}{article}{
      author={Carnahan, S.},
      author={Miyamoto, M.},
       title={Regularity of fixed-point vertex operator subalgebras},
     journal={arXiv preprint arXiv:1603.05645},
}

\bib{DJX-qdim-qGalois}{article}{
      author={Dong, C.},
      author={Jiao, X.},
      author={Xu, F.},
       title={Quantum dimensions and quantum {G}alois theory},
        date={2013},
     journal={Trans. Amer. Math. Soc.},
      volume={365},
       pages={6441\ndash 6469},
}

\bib{DKR-trace-para}{article}{
      author={Dong, C.},
      author={Kac, V.},
      author={Ren, L.},
       title={Trace functions of the parafermion vertex operator algebras},
        date={2019},
     journal={Adv. Math.},
      volume={348},
       pages={1\ndash 17},
}

\bib{DLW-para-gen-2}{article}{
      author={Dong, C.},
      author={Lam, C.},
      author={Wang, Q.},
       title={The structure of parafermion vertex operator algebras},
        date={2010},
     journal={J. Algebra},
      volume={323},
       pages={371\ndash 381},
}

\bib{DLY-para-gen-1}{article}{
      author={Dong, C.},
      author={Lam, C.},
      author={Yamada, H.},
       title={W-algebras related to parafermion algebras},
        date={2009},
     journal={J. Algebra},
      volume={322},
       pages={2366\ndash 2403},
}

\bib{DL}{book}{
      author={Dong, C.},
      author={Lepowsky, J.},
       title={{Generalized Vertex Algebras and Relative Vertex Operators}},
      series={Prog. Math.},
   publisher={Birkh\"{a}user},
     address={Boston},
        date={1993},
      volume={112},
}

\bib{DLM}{article}{
      author={Dong, C.},
      author={Li, H.},
      author={Mason, G.},
       title={{Regularity of rational vertex operator algebras}},
        date={1997},
     journal={Adv. Math.},
      volume={132},
       pages={148\ndash 166},
}

\bib{DR-para-rational}{article}{
      author={Dong, C.},
      author={Ren, L.},
       title={Representations of the parafermion vertex operator algebras},
        date={2017},
     journal={Adv. Math.},
      volume={315},
       pages={88\ndash 101},
}

\bib{DW-para-structure-gen}{article}{
      author={Dong, C.},
      author={Wang, Q.},
       title={The structure of parafermion vertex operator algebras: General
  case},
        date={2010},
     journal={Comm. Math. Phys.},
      volume={299},
       pages={783\ndash 792},
}

\bib{DW-para-cofiniteness-1}{article}{
      author={Dong, C.},
      author={Wang, Q.},
       title={On ${C}_2$-cofiniteness of parafermion vertex operator algebras},
        date={2011},
     journal={J. Algebra},
      volume={328},
       pages={420\ndash 431},
}

\bib{DW-para-structure-double-comm}{article}{
      author={Dong, C.},
      author={Wang, Q.},
       title={Parafermion vertex operator algebras},
        date={2011},
     journal={Front. Math. China},
      volume={6},
       pages={567\ndash 579},
}

\bib{DW-para-irr-mods-fusion-ssl2}{article}{
      author={Dong, C.},
      author={Wang, Q.},
       title={Quantum dimensions and fusion rules for parafermion vertex
  operator algebras},
        date={2016},
     journal={Proc. Amer. Math. Soc.},
      volume={144},
       pages={1483\ndash 1492},
}

\bib{Dr-hopf-alg}{article}{
      author={Drinfeld, V.},
       title={Hopf algebras and quantum yang-baxter equation},
        date={1985},
     journal={Soviet Math. Dokl.},
      volume={283},
       pages={1060\ndash 1064},
}

\bib{Dr-new}{inproceedings}{
      author={{D}rinfeld, V.},
       title={A new realization of {Y}angians and quantized affine algebras},
        date={1988},
   booktitle={Soviet {M}ath. {D}okl},
      volume={36},
       pages={212\ndash 216},
}

\bib{EK-qva}{article}{
      author={Etingof, P.},
      author={Kazhdan, D.},
       title={Quantization of {L}ie bialgebras, {P}art {V}: {Q}uantum vertex
  operator algebras},
        date={2000},
     journal={Selecta Math.},
      volume={6},
      number={1},
       pages={105},
}

\bib{FJ-vr-qaffine}{article}{
      author={Frenkel, I.},
      author={Jing, N.},
       title={Vertex representations of quantum affine algebras},
        date={1988},
     journal={Proc. Nat. Acad. Sci. U.S.A.},
      volume={85},
      number={24},
       pages={9373\ndash 9377},
}

\bib{FZ}{article}{
      author={Frenkel, I.},
      author={Zhu, Y.},
       title={Vertex operator algebras associated to representations of affine
  and {V}irasoro algebras},
        date={1992},
     journal={Duke Math. J.},
      volume={66},
       pages={123\ndash 168},
}

\bib{Gar-loop-alg}{article}{
      author={Garland, Howard},
       title={The arithmetic theory of loop algebras},
        date={1978},
        ISSN={0021-8693},
     journal={J. Algebra},
      volume={53},
      number={2},
       pages={480 \ndash  551},
  url={http://www.sciencedirect.com/science/article/pii/0021869378902946},
}

\bib{JimboM}{book}{
      author={{J}imbo, M.},
      author={Miwa, T.},
       title={Algebraic analysis of solvable lattice models},
   publisher={Amer. Math. Soc.},
        date={1994},
      volume={85},
}

\bib{J-KM}{article}{
      author={Jing, N.},
       title={Quantum {K}ac-{M}oody algebras and vertex representations},
        date={1998},
     journal={Lett. Math. Phys.},
      volume={44},
      number={4},
       pages={261\ndash 271},
}

\bib{JKLT-Defom-va}{article}{
      author={Jing, N.},
      author={Kong, F.},
      author={Li, H.},
      author={Tan, S.},
       title={Deforming vertex algebras by vertex bialgebras},
        date={2024},
     journal={Comm. Cont. Math.},
      volume={26},
       pages={2250067},
}

\bib{Kac-book}{book}{
      author={Kac, V.},
       title={{Infinite dimensional {L}ie algebras}},
   publisher={Cambridge University Press},
        date={1994},
        ISBN={0521466938},
}

\bib{Kac-VA}{book}{
      author={Kac, V.},
       title={Vertex algebras for beginners},
       series={University Lecture Series},
       volume={10},
   publisher={American Mathematical Society},
        date={1997},
        ISBN={082181396X},
}

\bib{Kassel-topologically-free}{book}{
      author={Kassel, C.},
       title={Quantum groups, volume 155 of graduate texts in mathematics},
   publisher={Springer-Verlag, New York},
        date={1995},
}

\bib{K-Quantum-aff-va}{article}{
      author={Kong, F.},
       title={Quantum affine vertex algebras associated to untwisted quantum
  affinization algebras},
        date={2023},
     journal={Comm. Math. Phys.},
      volume={402},
       pages={2577\ndash 2625},
}

\bib{K-q-lattice-va}{article}{
      author={Kong, F.},
       title={Representations of quantum lattice vertex algebras},
        journal={J. Pure Appl. Algebra},
        date={2025},
        volume={229},
        pages={107832},
}

\bib{K-Coproduct-q-aff-va}{article}{
      author={Kong, F.},
       title={Twisted tensor products of quantum affine vertex algebras and
  coproducts},
        journal={J. Algebra},
        date={2025},
        volume={662},
        pages={72-122},
}

\bib{Kozic-qva-tri-A}{article}{
      author={Ko{\v{z}}i{\'{c}}, S.},
       title={On the quantum affine vertex algebra associated with
  trigonometric ${R}$-matrix},
        date={2021},
     journal={Selecta Math. (N. S.)},
      volume={27},
       pages={45},
}

\bib{K-qva-phi-mod-BCD}{article}{
      author={Ko{\v{z}}i{\'{c}}, S.},
       title={$\hbar$-adic quantum vertex algebras in types ${B}$, ${C}$, ${D}$
  and their $\phi$-coordinated modules},
        date={2021},
     journal={J. Phys. A: Math. Theor.},
      volume={54},
       pages={485202},
}

\bib{LL}{book}{
      author={Lepowsky, J.},
      author={Li, H.},
       title={{Introduction to vertex operator algebras and their
  representations}},
   publisher={Birkh\"{a}user Boston Incoporation},
        date={2004},
      volume={227},
}

\bib{Li-local}{article}{
      author={Li, H.},
       title={{Local systems of vertex operators, vertex superalgebras and
  modules}},
        date={1996},
     journal={J. Pure Appl. Algebra},
      volume={109},
       pages={143\ndash 195},
}

\bib{li-g1}{article}{
      author={Li, H.},
       title={Axiomatic ${G}_{1}$-vertex algebras},
        date={2003},
     journal={Comm. Cont. Math.},
      volume={5},
      pages={1\ndash 47},
}

\bib{Li-nonlocal}{article}{
      author={Li, H.},
       title={Nonlocal vertex algebras generated by formal vertex operators},
        date={2006},
     journal={Selecta Math.},
      volume={11},
      number={3-4},
       pages={349},
}

\bib{Li-smash}{article}{
      author={Li, H.},
       title={A smash product construction of nonlocal vertex algebras},
        date={2007},
     journal={Comm. Cont. Math.},
      volume={9},
      number={05},
       pages={605\ndash 637},
}

\bib{Li-h-adic}{article}{
      author={Li, H.},
       title={{$\hbar$-adic quantum vertex algebras and their modules}},
        date={2010},
     journal={Comm. Math. Phys.},
      volume={296},
       pages={475\ndash 523},
}

\bib{Li-G-phi}{article}{
      author={Li, H.},
       title={G-equivariant $\phi$-coordinated quasi modules for quantum vertex algebras},
        date={2013},
     journal={J. Math. Phys.},
      volume={54},
       pages={051704},
}

\bib{LS-twisted-tensor}{article}{
      author={Li, H},
      author={Sun, J.},
       title={Twisted tensor products of nonlocal vertex algebras},
        date={2011},
        ISSN={0021-8693},
     journal={Journal of Algebra},
      volume={345},
      number={1},
       pages={266 \ndash  294},
  url={http://www.sciencedirect.com/science/article/pii/S002186931100425X},
}

\bib{MP1}{article}{
      author={Meurman, A.},
      author={Primc, M.},
       title={{Vertex Operator Algebras and Representations of Affine {L}ie
  Algebras}},
        date={1996},
     journal={Acta Appl. Math.},
      volume={44},
       pages={207\ndash 215},
}

\bib{MP2}{article}{
      author={Meurman, A.},
      author={Primc, M.},
       title={{Annihilating Fields of Standard Modules of
  $\widetilde{\mathfrak{sl}(2,{\mathbb C})}$ and Combinatorial Identities}},
        date={1999},
     journal={Mem. Amer. Math. Soc.},
      volume={652},
}

\bib{Naka-quiver}{article}{
      author={Nakajima, H.},
       title={Quiver varieties and finite dimensional representations of
  quantum affine algebras},
        date={2001},
     journal={J. Amer. Math. Soc.},
      volume={14},
      number={1},
       pages={145\ndash 238},
}

\bib{RS-RTT}{article}{
      author={Reshetikhin, Y.},
      author={Semenov-{T}ian {S}hansky, A.},
       title={Central extensions of quantum current groups},
        date={1990},
     journal={Lett. Math. Phys.},
      volume={19},
       pages={133\ndash 142},
}

\bib{R-free-conformal-free-va}{article}{
      author={Roitman, M.},
       title={On free conformal and vertex algebras},
        date={1999},
     journal={J. Algebra},
      volume={217},
       pages={496\ndash 527},
}

\bib{S-iter-twisted-tensor}{article}{
      author={Sun, J.},
       title={Iterated twisted tensor products of nonlocal vertex algebras},
        date={2013},
        ISSN={0021-8693},
     journal={Journal of Algebra},
      volume={381},
       pages={233 \ndash  259},
  url={http://www.sciencedirect.com/science/article/pii/S0021869313000902},
}

\end{biblist}
\end{bibdiv}

\end{document}